\newtheorem{thm}{Theorem}[section]
\newtheorem{lemma}[thm]{Lemma}
\newtheorem{cor}[thm]{Corollary}
\newtheorem{prop}[thm]{Proposition}
\newtheorem{conj}[thm]{Conjecture}
\newtheorem{Definition}[thm]{Definition}
\newenvironment{definition}
  {\begin{Definition}\rm}{\end{Definition}}
\newtheorem{Example}[thm]{Example}
\newenvironment{example}
  {\begin{Example}\rm}{\hfill$\blacksquare$\end{Example}}
\newtheorem{Remark}[thm]{Remark}
\newenvironment{remark}
  {\begin{Remark}\rm}{\hfill$\blacksquare$\end{Remark}}
\numberwithin{equation}{section}
\apptocmd{\sloppy}{\hbadness 10000\relax}{}{}
\newcommand{\emailhref}[1]{\email{\href{#1}{#1}}}
\def \join {\vee}
\def \meet {\wedge}
\title{The CDE property for skew vexillary permutations}
\author[S. Hopkins]{Sam Hopkins}\emailhref{shopkins@umn.edu}
\address{School of Mathematics, University of Minnesota, Minneapolis, MN 55455}
\date{\today}
\subjclass[2010]{06A07, 05A99, 05E05, 05E18}
\keywords{Weak order, reduced word, coincidental down-degree expectations (CDE), vexillary permutation, semidistributive lattice, homomesy, rowmotion, Grothendieck polynomial}
\begin{document}

\begin{abstract}
We prove a conjecture of Reiner, Tenner, and Yong which says that the initial weak order intervals corresponding to certain vexillary permutations have the coincidental down-degree expectations (CDE) property. Actually our theorem applies more generally to certain ``skew vexillary'' permutations (a notion we introduce), and shows that these posets are in fact ``toggle CDE.'' As a corollary we obtain a homomesy result for rowmotion acting on semidistributive lattices in the sense of Barnard and of Thomas and Williams.
\end{abstract}

\maketitle

\section{Introduction} \label{sec:intro}

Let $w \in \mathfrak{S}_n$ be a permutation. Consider the following two probability distributions on the set of permutations $u \in \mathfrak{S}_n$ which are less than or equal to $w$ in weak order~$(\mathfrak{S}_n,\leq)$. For the first distribution: select $u$ uniformly at random among all permutations $u \leq w$. For the second distribution: choose a reduced word $w=s_{i_1}s_{i_2}\cdots s_{i_{\ell(w)}}$ of~$w$ uniformly at random; then choose $k \in \{0,1,\ldots,\ell(w)\}$ uniformly at random; and finally define $u \coloneqq  s_{i_1}s_{i_2}\cdots s_{i_k}$. In general these two distributions will be quite different. Our main result is that for a large family of $w$ (``skew vexillary permutations of balanced shape''), although these two distributions are indeed different, the expected number of descents of the random permutation~$u$ is nevertheless the same for both.

This result is an instance of the ``coincidental down-degree expectations'' phenomenon introduced by Reiner, Tenner, and Yong~\cite{reiner2018poset}.

\begin{definition}[{See~\cite[Definition 2.1]{reiner2018poset}}] \label{def:cde}
Let $P$ be a finite poset. Let $\mathrm{uni}_P$ denote the uniform probability distribution on $P$. Let $\mathrm{maxchain}_P$ denote the probability distribution where each $p \in P$ occurs with probability proportional to the number of maximal chains containing $p$. Let~$\mathrm{ddeg}\colon P \to \mathbb{N}$ denote the \emph{down-degree} statistic: $\mathrm{ddeg}(p)$ is the number of elements of $P$ which $p\in P$ covers. If $\mu$ is a discrete probability distribution on a finite set~$X$ and $f\colon X \to \mathbb{R}$ is some statistic on $X$, we use the notation~$\mathbb{E}(\mu; f)$ to denote the expectation of $f$ with respect to~$\mu$. Finally, we say that $P$ has the \emph{coincidental down-degree expectations (CDE) property} if
\[\mathbb{E}(\mathrm{uni}_P; \mathrm{ddeg}) = \mathbb{E}(\mathrm{maxchain}_P; \mathrm{ddeg}).\]
In this case we also say that $P$ \emph{is CDE}.
\end{definition}

The coincidence of the expected number of descents for the two distributions on permutations described in the first paragraph of this introduction can be recast, in the language of Definition~\ref{def:cde}, as saying that the weak order interval $[e,w]$ between the identity permutation $e$ and our chosen permutation~$w$ is CDE. This is because the maximal chains in this weak order interval naturally correspond to the reduced words of $w$, and similarly the down-degree of a permutation in weak order is its number of descents. 

Note that $\mathbb{E}(\mathrm{uni}_P; \mathrm{ddeg})$ is the \emph{edge density} of $P$, i.e., the number of edges of the Hasse diagram of $P$ divided by the number of elements of $P$. As part of our main result, we will not only establish that $\mathbb{E}(\mathrm{uni}_{[e,w]}; \mathrm{ddeg}) = \mathbb{E}(\mathrm{maxchain}_{[e,w]}; \mathrm{ddeg})$ for the aforementioned family of permutations~$w$, but we will also give a simple formula for the edge density of these posets~$[e,w]$. There is no a priori reason to expect a simple formula for the edge density of a poset, so our result says that these posets~$[e,w]$ have a very special combinatorial structure.

Let us now briefly review the history of the study of CDE posets and explain how our result fits into this history.\footnote{The papers~\cite{chan2018genera} and~\cite{chan2017expected} do not use the term ``CDE'' because they appeared before the paper of Reiner-Tenner-Yong~\cite{reiner2018poset} which introduced this term. But their results amount to showing that various posets are CDE, so that is how we will describe these results in this introduction.} 

The first instance of a poset being shown to be CDE occurred in the context of the algebraic geometry of curves. Chan, L\'{o}pez Mart\'{i}n, Pflueger, and Teixidor i Bigas~\cite{chan2018genera} showed that the interval $[\varnothing,b^a]$ in Young's lattice of partitions between the empty shape~$\varnothing$ and the $a\times b$ rectangle $b^a \coloneqq  (\overbrace{b,b,\cdots,b}^{a})$ is CDE with edge density $ab/(a+b)$. This was the key combinatorial result these authors needed to reprove a product formula for the genus of Brill-Noether loci of dimension one.

Subsequently, Chan, Haddadan, Hopkins and Moci~\cite{chan2017expected} extended the combinatorial result of~\cite{chan2018genera} to apply to many more shapes beyond rectangles. They showed that if~$\sigma = \lambda/\nu$ is a ``balanced'' skew shape of height $a$ and width $b$, then the interval $[\nu,\lambda]$ in Young's lattice is also CDE with edge density $ab/(a+b)$. Here a skew shape $\sigma$ is \emph{balanced} if it is connected and all its outward corners occur exactly on the main antidiagonal of the smallest rectangle containing~$\sigma$. Rectangles $b^a$ are balanced, as are \emph{staircases} $\delta_d \coloneqq  (d-1,d-2,\ldots,1)$. Furthermore, if $\sigma$ is a balanced shape then the shape $\sigma\circ b^a$ obtained from $\sigma$ by replacing every box with an $a\times b$ rectangle is also balanced. So for instance the \emph{rectangular staircases} $\delta_d\circ b^a$ are also balanced shapes.

Actually, Chan-Haddadan-Hopkins-Moci proved something stronger: they proved that the interval $[\nu,\lambda]$ of Young's lattice corresponding to a balanced shape $\sigma = \lambda/\nu$ is \emph{toggle CDE (tCDE)}. 

To explain what tCDE means we need to discuss ``toggling'' in a distributive lattice. Any finite distributive lattice~$L$ (e.g., any interval of Young's lattice) can be written in an unique (up to isomorphism) way as~$L=\mathcal{J}(P)$, where~$\mathcal{J}(P)$ denotes the set of order ideals of a finite poset $P$ ordered by containment (this is the Fundamental Theorem for Finite Distributive Lattices; see, e.g.,~\cite[Theorem 3.4.1]{stanley2012ec1}). For an order ideal $I \in \mathcal{J}(P)$, we say that~$p \in P$ \emph{can be toggled into~$I$} if $p \notin I$ and~$I \cup \{p\}$ is an order ideal; similarly, we say~$p$ \emph{can be toggled out of~$I$} if $p \in I$ and~$I\setminus \{p\}$ is an order ideal. \emph{Toggling} $p$ in $I$ is the operation of adding $p$ to $I$ if it can be toggled into~$I$, removing $p$ from~$I$ if it can be toggled out of~$I$, and leaving $I$ unchanged otherwise. (This terminology derives from Striker and Williams~\cite{striker2012promotion}, who popularized the study of the ``toggle group'' of a poset, following Cameron and Fon-der-Flaass~\cite{cameron1995orbits}.) A probability distribution~$\mu$ on $\mathcal{J}(P)$ is called \emph{toggle-symmetric} if for each~$p\in P$ the probability that~$p$ can be toggled into $I$ is the same as the probability that~$p$ can be toggled out of $I$ when $I$ is distributed according to~$\mu$. We say that a distributive lattice $\mathcal{J}(P)$ is \emph{toggle CDE (tCDE)} if $\mathbb{E}(\mu;\mathrm{ddeg})$ is the same for every toggle-symmetric distribution $\mu$ on $\mathcal{J}(P)$. For a distributive lattice~$L$, both $\mathrm{uni}_L$ and $\mathrm{maxchain}_L$ are always toggle-symmetric distributions, so $L$ being tCDE implies that it is~CDE. 

The upgrade from CDE to tCDE in~\cite{chan2017expected} was not just a generalization for generalization's sake: the ``toggle perspective'' was crucial in establishing the result, and was successfully applied later in other contexts as well.

For instance, Hopkins~\cite{hopkins2017cde} adapted the techniques of~\cite{chan2017expected} to establish that various intervals of the shifted version of Young's lattice (corresponding to ``shifted balanced shapes'') are tCDE. In doing so he answered in the affirmative a conjecture of Reiner-Tenner-Yong, and also completed the case-by-case proof that the distributive lattice $\mathcal{J}(P)$ corresponding to a minuscule poset~$P$ is tCDE. Shortly thereafter, Rush~\cite{rush2016orbits} proved in a uniform way that $\mathcal{J}(P)$ is tCDE when $P$ is a minuscule poset.

All of the examples of CDE posets discussed above are distributive lattices. There are some families of posets (such as chains, or self-dual posets of constant Hasse diagram degree) which are CDE for straightforward reasons (see~\cite[Corollary 2.19, Proposition 2.20]{reiner2018poset}). It is also known that the Cartesian product of graded CDE posets is CDE~\cite[Proposition~2.13]{reiner2018poset}. Beyond these simple examples and the distributive lattices discussed above, the only other major family of posets known to be CDE was found by Reiner-Tenner-Yong: this family consists of initial weak order intervals~$[e,w]$ for certain dominant permutations~$w \in \mathfrak{S}_n$. Note importantly that these intervals~$[e,w]$ are {\bf not} distributive lattices (indeed, the only intervals of weak order which are distributive lattices are isomorphic to intervals of Young's lattice).

Let us describe more precisely these CDE weak order intervals. Recall that an \emph{inversion} of $w \in \mathfrak{S}_n$ is a pair $(i,j) \in \mathbb{Z}^2$ with $1\leq i < j\leq n$ for which~$w(i)>w(j)$. Also recall that the \emph{Rothe diagram} of $w \in \mathfrak{S}_n$ is the set of all pairs~$(i,w(j)) \in \mathbb{Z}^2$ for which~$(i,j)$ is an inversion of $w$. Let $\lambda$ be a straight shape. We say that~$w \in \mathfrak{S}_n$ is \emph{dominant of shape~$\lambda$} if its Rothe diagram is equal to (the Young diagram of) $\lambda$. 

Reiner-Tenner-Yong~\cite[Theorem 1.1]{reiner2018poset} proved that if~$\lambda = \delta_d \circ b^a$ is a rectangular staircase, and $w \in \mathfrak{S}_n$ is a dominant permutation of shape~$\lambda$, then~$[e,w]$ is CDE with edge density $(d-1)ab/(a+b)$. To do this they employed tableaux and the theory of Schur polynomials, Schubert polynomials, Grothendieck polynomials, et cetera (for a more precise account of what Reiner-Tenner-Yong did, see Remarks~\ref{rem:stable_grothendieck} and~\ref{rem:dominant_counting_edges}).

Reiner-Tenner-Yong also conjectured a significant generalization of their result. Let~$\lambda$ be a straight shape. We say that $w \in \mathfrak{S}_n$ is \emph{vexillary of shape~$\lambda$} if its Rothe diagram can be transformed to~$\lambda$ via some permutation of rows and columns. There is (essentially) a unique dominant permutation of a given shape~$\lambda$, but there are in general many vexillary permutations of shape~$\lambda$. Reiner-Tenner-Yong conjectured the following:

\begin{conj}[{\cite[Conjecture 1.2]{reiner2018poset}}] \label{conj:rty}
Let $\lambda = \delta_d \circ b^a$ be a rectangular staircase and $w \in \mathfrak{S}_n$ a vexillary permutation of shape $\sigma$. Then $[e,w]$ is CDE with edge density $(d-1)ab/(a+b)$.
\end{conj}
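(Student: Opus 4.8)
The plan is to generalize the tCDE machinery from Young's lattice to these non-distributive weak order intervals, using the vexillary structure to reduce to the already-known balanced shape case.

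The plan is to realize each weak order interval $[e,w]$ as a semidistributive lattice and to extend the toggle-CDE machinery of Chan-Haddadan-Hopkins-Moci from distributive lattices to this broader class, so that the balanced-shape hypothesis does the same work for skew vexillary permutations that it does for skew shapes in Young's lattice.

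First I would recall that weak order on $\mathfrak{S}_n$ is semidistributive, and hence so is every interval $[e,w]$. In a finite semidistributive lattice each element $u$ has a canonical join representation, and its down-degree equals the number of joinands therein; for weak order this recovers the fact that $\mathrm{ddeg}(u)$ is the number of descents of $u$. Using Reading's non-crossing arc diagrams as a model for these canonical join representations, I would define, following Barnard and Thomas-Williams, a notion of toggling a join-irreducible and of a toggle-symmetric distribution, so that $[e,w]$ being \emph{toggle CDE} means $\mathbb{E}(\mu;\mathrm{ddeg})$ is constant over all toggle-symmetric $\mu$. As in the distributive case, both $\mathrm{uni}_{[e,w]}$ and $\mathrm{maxchain}_{[e,w]}$ are toggle-symmetric — the latter because the maximal chains of $[e,w]$ are exactly the reduced words of $w$ — so toggle CDE implies CDE, which is what Conjecture~\ref{conj:rty} asks for.

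The main content is then to prove the toggle-CDE property. I would introduce \emph{skew vexillary} permutations — those whose Rothe diagram can be rearranged, by permuting rows and columns, into a skew shape $\lambda/\nu$ — and impose that this shape be balanced. Adapting the key idea of Chan-Haddadan-Hopkins-Moci, the aim is to write $\mathrm{ddeg}$ as the edge-density constant plus a sum of ``toggle-difference'' statistics, each with vanishing expectation under every toggle-symmetric distribution. Here the balanced condition is precisely what guarantees such a decomposition with the correct constant: for the rectangular staircase $\lambda = \delta_d\circ b^a$, whose bounding rectangle has height $(d-1)a$ and width $(d-1)b$, the balanced-shape edge density is $\tfrac{(d-1)a\cdot(d-1)b}{(d-1)a+(d-1)b} = \tfrac{(d-1)ab}{a+b}$, matching the value in the conjecture. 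Getting this decomposition to hold requires identifying the local toggle structure of the non-distributive interval $[e,w]$ — encoded in the arc diagrams of its join-irreducibles — with the order-ideal toggle structure of the balanced skew shape.

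That identification is the step I expect to be the main obstacle. Because $[e,w]$ is not distributive, there is no poset $P$ with $[e,w] = \mathcal{J}(P)$, and the canonical join representations can overlap in ways with no analogue in Young's lattice; verifying that the balanced hypothesis still forces the toggle-difference terms to cancel — and that the resulting constant is insensitive to which vexillary representative of the shape was chosen — will require a careful analysis of the arcs produced by vexillary permutations. Once this local computation is in hand, the homomesy corollary is formal: the uniform distribution on any rowmotion orbit is toggle-symmetric, so toggle CDE immediately gives that $\mathrm{ddeg}$ is homomesic under the rowmotion of Barnard and of Thomas-Williams.
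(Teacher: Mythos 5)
Your outline does follow the same route as the paper's actual proof: pass to the semidistributive setting via Barnard's canonical joins, define toggle-symmetric distributions and tCDE for $[e,w]$, prove tCDE using the balanced-shape hypothesis, deduce CDE, and observe that the rectangular staircase $\delta_d\circ b^a$ is balanced of height $(d-1)a$ and width $(d-1)b$, so the edge density $(d-1)a\cdot(d-1)b/\bigl((d-1)a+(d-1)b\bigr)=(d-1)ab/(a+b)$ comes out right. But there are two genuine gaps. The first is a step that is false as you justify it: you assert that $\mathrm{maxchain}_{[e,w]}$ is toggle-symmetric ``as in the distributive case,'' offering only the remark that maximal chains of $[e,w]$ are reduced words of $w$. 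In the semidistributive world this is not formal and not true in general: Example~\ref{ex:tcde_not_cde} exhibits a \emph{graded} semidistributive lattice $L$ for which $\mathbb{E}(\mathrm{maxchain}_L;\mathcal{T}_3)\neq 0$, and which is tCDE but \emph{not} CDE. So ``tCDE implies CDE'' is exactly what can fail, and the reduced-word observation does no work toward it. The paper must prove toggle-symmetry of the maxchain distribution for weak order intervals (Lemma~\ref{lem:tcde_toggle_sym}, giving Corollary~\ref{cor:tcde_implies_cde}) by a real argument: for each inverse inversion $(i,j)$ one constructs an involution $\tau_{(i,j)}$ on maximal chains that negates the aggregated statistic $\mathcal{T}_{(i,j)}$, and this rests on a bubble-sorting analysis (Propositions~\ref{prop:gr_nonzero}, \ref{prop:maxchain_toggle1} and~\ref{prop:maxchain_toggle2}) showing that along any single maximal chain only one label $g((i,j),\mathbf{x})$ with underlying pair $(i,j)$ is ever toggleable. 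Without this, your reduction of the conjecture to tCDE has a hole precisely where the semidistributive setting differs from the distributive one.

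The second gap is the one you flag yourself: the decomposition of $\mathrm{ddeg}$ into a constant plus terms of vanishing expectation is the paper's hardest content, and your proposal defers it as ``the main obstacle'' rather than supplying it, so what you have is a plan rather than a proof. For what it is worth, the paper's resolution is not an identification of the toggle structure of $[e,w]$ with that of the balanced skew shape itself. Instead, for each \emph{cross-saturated} box $(i,j)$ of $\mathrm{Inv}^{-1}(w)$ it defines a ``rook'' $\widehat{R}_{(i,j)}$, an explicit signed combination of the statistics $\mathcal{T}^{\pm}_{g((i',j'),\mathbf{x})}$ over the cross-saturation of $(i,j)$, with two properties: its expectation under any toggle-symmetric $\mu$ equals the sum of $\mathbb{E}(\mu;\mathcal{T}^{-})$ over the row and column through $(i,j)$ (Lemma~\ref{lem:rook_perm_attack}), and pointwise $\widehat{R}_{(i,j)}(w')=1$ for every $w'\in[e,w]$ (Lemma~\ref{lem:rook_perm_one}). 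The latter is proved by transporting each evaluation into a rectangle $\square_k$ and invoking the rectangle-rook identity on $[\varnothing,k^{n-k}]$ (Propositions~\ref{prop:perm_rect} and~\ref{prop:cross_sat_rect}); so the reduction is to a rectangle, and it only works on cross-saturated boxes. Balancedness then enters solely through a choice of integer weights $c_{(i,j)}$ supported on cross-saturated boxes with constant row and column sums ($a$ and $b$) and total $ab$ (Corollary~\ref{cor:rook_placement}), which yields $(a+b)\,\mathbb{E}(\mu;\mathrm{ddeg})=ab$ for every toggle-symmetric $\mu$ (Theorem~\ref{thm:main}). Your intuition about where the balanced condition is used is correct, but without the permutation rooks (or an equivalent device) neither the tCDE claim nor, by the first gap, the passage from tCDE to CDE is established.
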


Our main result proves that Conjecture~\ref{conj:rty} is true. In fact, we show that there is nothing particularly special about rectangular staircases in this conjecture: the important thing is that the shape is balanced. Furthermore, our result will apply to skew shapes as well. Thus to state the result, we need to introduce the notion of ``skew vexillary'' permutations: we say that $w \in \mathfrak{S}_n$ is \emph{skew vexillary of shape~$\sigma=\lambda/\nu$} if its Rothe diagram can be transformed to~$\sigma$ via some permutation of rows and columns. (Note that this notion of skew vexillary is not standard; indeed, see Remark~\ref{rem:other_skew_vex} for other papers in which ``skew vexillary'' was used to mean something different.) 

Our main result is:

\begin{thm} \label{thm:intro_main}
Let $\sigma = \lambda / \nu$ be a balanced shape of height $a$ and width $b$ and $w \in \mathfrak{S}_n$ a skew vexillary permutation of shape $\sigma$. Then $[e,w]$ is CDE with edge density $ab/(a+b)$.
\end{thm}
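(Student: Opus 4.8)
The plan is to reduce the statement about weak order intervals (which are not distributive lattices) to the already-established tCDE result for balanced skew shapes in Young's lattice. The key conceptual bridge is that a skew vexillary permutation $w$ of shape $\sigma = \lambda/\nu$ should have a weak order interval $[e,w]$ whose combinatorial structure — specifically its down-degrees and its maximal chain structure — is governed by the same skew shape $\sigma$ that appears in Young's lattice. First I would make precise the relationship between reduced words of $w$ and standard Young tableaux (or saturated chains) of the shape $\sigma$: for a vexillary permutation, the Stanley symmetric function is a single Schur function $s_\sigma$, so the reduced words of $w$ are in bijection (via the Edelman--Greene correspondence) with standard Young tableaux of shape $\sigma$, and hence with maximal chains in the interval $[\nu,\lambda]$ of Young's lattice. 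This means $\mathbb{E}(\mathrm{maxchain}_{[e,w]}; \mathrm{ddeg})$ is controlled by the same data as $\mathbb{E}(\mathrm{maxchain}_{[\nu,\lambda]}; \mathrm{ddeg})$ in Young's lattice.

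The harder direction concerns the uniform distribution, i.e.\ the edge density of $[e,w]$. Here the plan is to show that although $[e,w]$ itself is not distributive, its edge density equals $ab/(a+b)$, matching the Young's lattice interval $[\nu,\lambda]$. I would introduce a ``toggle'' structure directly on the weak order interval (or on an auxiliary distributive lattice attached to it) so that the notion of toggle-symmetric distribution makes sense and so that both $\mathrm{uni}_{[e,w]}$ and $\mathrm{maxchain}_{[e,w]}$ are toggle-symmetric. This is the content of the stronger ``toggle CDE'' claim promised in the abstract. Concretely, for a skew vexillary $w$ I expect the down-degree of $u \leq w$ (its number of descents that can be undone within the interval) to be expressible in terms of a configuration on the boxes of $\sigma$, reducing the weak-order combinatorics to box-counting on the balanced skew shape. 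Once down-degrees are identified with a statistic on order ideals of the poset $P$ associated to $\sigma$, the tCDE result of Chan--Haddadan--Hopkins--Moci applies verbatim to give the common expectation $ab/(a+b)$.

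The main obstacle, I expect, will be the first step: establishing that the relevant statistics on $[e,w]$ really do transfer to the skew shape $\sigma$ \emph{independently of which} skew vexillary representative $w$ is chosen. There are many skew vexillary permutations of a given shape $\sigma$ (unlike the dominant case, where the permutation is essentially unique), and a priori their weak order intervals $[e,w]$ could look quite different as posets even if their reduced-word sets match up. The delicate part is to prove that the down-degree statistic, viewed through whatever bijection identifies maximal chains with tableaux, is invariant of the representative and agrees with the Young's lattice down-degree. I anticipate this requires a careful analysis of how permuting rows and columns of the Rothe diagram affects descents, likely using properties of the Edelman--Greene insertion or an explicit compatible-sequence description of reduced words. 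If that invariance can be shown, then the balanced condition on $\sigma$ feeds directly into the known tCDE machinery and yields both the CDE equality and the clean edge-density formula $ab/(a+b)$.
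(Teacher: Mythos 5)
Your proposal contains two genuine gaps, both at the points where you defer to existing machinery.

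First, the reduction of the uniform/edge-density computation to the Chan--Haddadan--Hopkins--Moci result ``verbatim'' cannot work, because $[e,w]$ is not isomorphic to $[\nu,\lambda]$ and its down-degree data does not transfer to order ideals of $P_\sigma$. Already for $\sigma=(2,1)$ the dominant permutation $w=321$ gives an interval with $6$ elements and $6$ edges, while $[\varnothing,(2,1)]$ has $5$ elements and $5$ edges; for $\sigma=(3,2,2)/(1,1)$ the three intervals have $12$, $14$, and $15$ elements. Only the \emph{ratio} (edge density) coincides, so there is no global identification of elements of $[e,w]$ with configurations on the boxes of $\sigma$ under which $\mathrm{ddeg}$ is preserved. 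What actually works (and is what the paper does) is a \emph{local} transfer: one defines toggles on $[e,w]$ via Barnard's canonical $\gamma$-labeling of the semidistributive lattice, builds ``rook'' statistics $\widehat{R}_{(i,j)}$ as linear combinations of the resulting toggleability statistics supported on a cross-saturated box of $\mathrm{Inv}^{-1}(w)$, and proves that each rook is identically $1$ by showing that, for each fixed $u\in[e,w]$ separately, the toggleable inverse inversions inside the parabolic rectangle $\square_k$ match the toggleable boxes of some partition inside a rectangle. The CHHM rectangle lemma is invoked only through this pointwise correspondence, never through a poset isomorphism; the balanced condition then enters via a weighted placement of rooks on cross-saturated boxes so every row and column is attacked the right number of times.

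Second, your maxchain step is not available in the skew setting, and your tCDE-implies-CDE step is not free. The Edelman--Greene/Stanley symmetric function argument matches \emph{counts} of maximal chains, but the maxchain expectation of $\mathrm{ddeg}$ requires matching the down-degree-weighted counts (``nearly reduced words'' versus ``barely set-valued tableaux''), which for vexillary $w$ Reiner--Tenner--Yong obtained from the Grothendieck polynomial identity $G_w=G_\lambda$; the skew analogue $G_w=G_{\lambda/\nu}$ is only conjectural, so $\mathbb{E}(\mathrm{maxchain}_{[e,w]};\mathrm{ddeg})=\mathbb{E}(\mathrm{maxchain}_{[\nu,\lambda]};\mathrm{ddeg})$ is not known for skew vexillary $w$. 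Moreover, in a general semidistributive lattice the maxchain distribution need \emph{not} be toggle-symmetric, and tCDE does not imply CDE (the paper exhibits a graded counterexample), so declaring that both $\mathrm{uni}$ and $\mathrm{maxchain}$ are toggle-symmetric is precisely the nontrivial claim. The paper closes this gap by an explicit involution $\tau_{(i,j)}$ on maximal chains of $[e,w]$ (reversing portions of bubble-sorting sequences) that negates the net toggleability statistic $\mathcal{T}_{(i,j)}$, which is special to weak order intervals. Without an argument of this kind, your chain of implications does not reach the CDE conclusion.
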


We prove Theorem~\ref{thm:intro_main} by once again adopting the ``toggle perspective.'' Thus our approach is very different from that of Reiner-Tenner-Yong: we will not use tableaux or symmetric functions at all in this paper.

As mentioned, the weak order intervals $[e,w]$ we are studying are not distributive lattices, so notions related to toggling do not directly make sense when applied to these posets. However, intervals of weak order are \emph{semi}distributive lattices. And recently various authors have considered extending the idea of toggling from distributive to semidistributive lattices. Following the work of Reading~\cite{reading2015noncrossing} in the case of weak order on permutations, Barnard~\cite{barnard2016canonical} defined a canonical way to label the edges of the Hasse diagram (i.e., the cover relations) of any semidistributive lattice with join irreducible elements. As Thomas and Williams~\cite{thomas2017rowmotion} further emphasized, this edge labeling gives a natural way to extend the notion of toggling to a semidistributive lattice. This extended notion of toggling also allows us to generalize the notions of toggle-symmetric distributions, and tCDE posets, to the semidistributive setting. Note that we really are generalizing the distributive setting because when these generalized notions are applied to a distributive lattice we recapture the previous notions of toggle-symmetric, tCDE, et cetera.

Hence, what we will actually end up proving is that when $w \in \mathfrak{S}_n$ is a skew vexillary permutation of balanced shape the interval $[e,w]$ is tCDE, for this semidistributive generalization of tCDE.

It has been previously observed (see~\cite[\S3.2]{chan2017expected},~\cite[\S7]{hopkins2017cde},~\cite[\S1.3]{rush2016orbits}) that the study of tCDE distributive lattices has interesting applications to ``dynamical algebraic combinatorics''~\cite{roby2016dynamical, striker2017dynamical}. Namely, for any distributive lattice $\mathcal{J}(P)$ there is a certain invertible operator $\mathrm{row}\colon \mathcal{J}(P)\to \mathcal{J}(P)$ called \emph{rowmotion} which has been the focus of research of many authors~\cite{brouwer1974period, fonderflaass1993orbits, cameron1995orbits, panyushev2009orbits, armstrong2013uniform, striker2012promotion, rush2013orbits, rush2015orbits, grinberg2016birational1, grinberg2015birational2, striker2018rowmotion}. For an order ideal $I\in\mathcal{J}(P)$, $\mathrm{row}(I)$ is the order ideal generated by the minimal elements of $P$ not in $I$. As first pointed out by Striker~\cite[Lemma~6.2]{striker2015toggle}, it is easy to see that, for any fixed rowmotion orbit~$\mathcal{O}$, the distribution $\mu$ on~$\mathcal{J}(P)$ which is uniform on~$\mathcal{O}$ and zero outside~$\mathcal{O}$ is toggle-symmetric. Hence, if~$\mathcal{J}(P)$ is tCDE with edge density~$c$ then $\mathbb{E}(\mu; \mathrm{ddeg}) = c$ for such a distribution~$\mu$. In other words, the average down-degree is equal to the same constant~$c$ along all of the orbits of rowmotion. This is a \emph{homomesy} result in the sense of Propp and Roby~\cite{propp2015homomesy, einstein2013combinatorial}: a statistic $f\colon X \to \mathbb{R}$ is said to be $c$-\emph{mesic} with respect to the action of an invertible operator $\varphi\colon X \to X$ on a combinatorial set~$X$ if the average of $f$ is equal to the same constant $c \in \mathbb{R}$ along every orbit of~$\varphi$. Note that in the context of rowmotion, the statistic $\mathrm{ddeg}$ is called the ``antichain cardinality,'' and hence this result is called the antichain cardinality homomesy for rowmotion.

For any semidistributive lattice~$L$, Barnard's edge labeling~\cite{barnard2016canonical} yields a generalization of romwotion, also denoted~$\mathrm{row}\colon L \to L$. (This generalized rowmotion was further studied by Thomas-Williams~\cite{thomas2017rowmotion}.) It is again easy to see that for a semidistributive lattice~$L$ the distribution $\mu$ which is uniform on a rowmotion orbit is toggle-symmetric in our generalized sense of toggle-symmetry. So we obtain the following corollary, generalizing the antichain cardinality homomesy for rowmotion beyond the distributive setting:

\begin{cor}
Let $\sigma = \lambda / \nu$ be a balanced shape of height $a$ and width $b$ and $w \in \mathfrak{S}_n$ a skew vexillary permutation of shape $\lambda$. Then $\mathrm{ddeg}$ is $ab/(a+b)$-mesic with respect to the action of~$\mathrm{row}$ on $[e,w]$.
\end{cor}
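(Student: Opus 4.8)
The plan is to deduce the corollary directly from the main theorem (Theorem~\ref{thm:intro_main}), using the machinery already assembled in the introduction rather than proving anything from scratch. The key conceptual point, which the introduction has set up, is that a homomesy statement of this kind follows immediately once one knows (i) that the relevant uniform-on-an-orbit distributions are toggle-symmetric in the generalized semidistributive sense, and (ii) that the poset in question is tCDE (in that same generalized sense). The theorem already supplies (ii): a skew vexillary permutation of balanced shape $\sigma = \lambda/\nu$ of height $a$ and width $b$ yields an interval $[e,w]$ that the paper proves is tCDE with edge density $ab/(a+b)$. So the remaining work is purely to establish (i) and then read off the conclusion.

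First I would recall that $[e,w]$, as an interval of weak order on $\mathfrak{S}_n$, is a semidistributive lattice, so Barnard's canonical edge labeling by join irreducibles applies and defines the generalized rowmotion operator $\mathrm{row}\colon [e,w] \to [e,w]$. Next I would verify the toggle-symmetry of the orbit distribution: fix any rowmotion orbit $\mathcal{O} \subseteq [e,w]$ and let $\mu$ be the distribution that is uniform on $\mathcal{O}$ and zero elsewhere. The claim is that $\mu$ is toggle-symmetric in the generalized sense. This is the semidistributive analogue of Striker's observation~\cite{striker2015toggle} in the distributive case: because $\mathrm{row}$ is built from the toggle operators indexed by join irreducibles, and $\mathrm{row}$ permutes the elements of $\mathcal{O}$ cyclically, for each join irreducible $j$ the number of elements of $\mathcal{O}$ into which $j$ can be toggled must equal the number out of which $j$ can be toggled (each ``toggle in'' of $j$ along the orbit is matched by a corresponding ``toggle out''). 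Averaging over the orbit then gives equal probabilities, which is exactly toggle-symmetry.

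Having established that $\mu$ is toggle-symmetric, the tCDE property of $[e,w]$ says precisely that $\mathbb{E}(\mu;\mathrm{ddeg})$ takes the same value for every toggle-symmetric $\mu$, and in particular equals the edge density $ab/(a+b)$. Since $\mu$ was the uniform distribution on an arbitrary rowmotion orbit, this says the average of $\mathrm{ddeg}$ along every orbit of $\mathrm{row}$ equals the constant $ab/(a+b)$. By the definition of homomesy recalled in the introduction, this is exactly the statement that $\mathrm{ddeg}$ is $ab/(a+b)$-mesic with respect to $\mathrm{row}$, completing the proof.

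I do not expect any serious obstacle here, since all the hard analysis lives in the main theorem; the only point requiring genuine care is the verification that uniform-on-an-orbit distributions are toggle-symmetric in the \emph{generalized} (semidistributive) sense. In the distributive case this is classical~\cite{striker2015toggle}, but one must check that the argument transports cleanly through Barnard's edge labeling and the Thomas--Williams definition of generalized rowmotion. Concretely, the subtlety is to confirm that ``$j$ can be toggled into $I$'' and ``$j$ can be toggled out of $I$'' are governed by the edge labels in such a way that one step of $\mathrm{row}$ converts each toggle-in into a matching toggle-out around the cycle; once this combinatorial bookkeeping is confirmed, the mesic conclusion is immediate.
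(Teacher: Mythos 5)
Your overall route is the same as the paper's: combine the tCDE statement (Theorem~\ref{thm:main}) with the fact that uniform-on-an-orbit distributions for semidistributive rowmotion are toggle-symmetric (Lemma~\ref{lem:striker_semi}), and read off the homomesy via Corollary~\ref{cor:striker_semi}. However, your justification of the toggle-symmetry step rests on a false premise. You argue that the orbit distribution is toggle-symmetric ``because $\mathrm{row}$ is built from the toggle operators indexed by join irreducibles.'' That is true in the distributive case (Theorem~\ref{thm:rowmotion_toggles}, due to Cameron and Fon-der-Flaass), but it fails in the semidistributive setting: as the paper emphasizes in Section~\ref{sec:semi_rowmotion}, for arbitrary semidistributive lattices---and in particular for weak order intervals---rowmotion \emph{cannot} be computed as a composition of toggles (it cannot be run ``in slow motion'' in the Thomas--Williams sense). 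So the mechanism you invoke is unavailable, and the ``combinatorial bookkeeping'' you defer to the last paragraph is precisely the content that still needs a proof; as written, your argument has a genuine gap at its only nontrivial step.

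The correct argument is shorter and needs no toggle decomposition at all: it follows directly from how $\mathrm{row}$ is defined through Barnard's labeling. By definition, $\mathrm{row}(y)$ is the unique element $x$ with $D^{\gamma}(x)=U^{\gamma}(y)$. Hence for any $p\in\mathrm{Irr}(L)$, $p$ can be toggled into $y$ (i.e., $p\in U^{\gamma}(y)$) if and only if $p$ can be toggled out of $\mathrm{row}(y)$ (i.e., $p\in D^{\gamma}(\mathrm{row}(y))$); equivalently, $\mathcal{T}_p(y)=1$ if and only if $\mathcal{T}_p(\mathrm{row}(y))=-1$. Since $\mathrm{row}$ permutes the orbit $\mathcal{O}$, summing over $\mathcal{O}$ gives
\[\sum_{y\in\mathcal{O}}\mathcal{T}^{+}_p(y)=\sum_{y\in\mathcal{O}}\mathcal{T}^{-}_p(\mathrm{row}(y))=\sum_{y\in\mathcal{O}}\mathcal{T}^{-}_p(y),\]
so $\mathbb{E}(\mu;\mathcal{T}_p)=0$ for the uniform-on-orbit distribution $\mu$, which is toggle-symmetry. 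With this replacement your proof goes through verbatim and then coincides with the paper's proof of Corollary~\ref{cor:balanced_vex_homo}.
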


We now briefly outline the rest of the paper. In Section~\ref{sec:background} we review some basic background on lattices, Young's lattice, and weak order. In Section~\ref{sec:skew_vex} we introduce the family of skew vexillary permutations of a given shape, compare this family to other commonly studied families of permutations, and discuss some basic properties of this family. In Section~\ref{sec:tcde} we review Barnard's labeling of the edges of a semidistributive lattice and use it to define toggling, toggle-symmetric distributions, and the tCDE property in the semidistributive setting. We show that for intervals of weak order, the tCDE property implies the CDE property (although this is actually not true for general semidistributive lattices). In Section~\ref{sec:main} we recall the results and techniques of Chan-Haddadan-Hopkins-Moci, and then we prove our main result which says that the initial weak order intervals corresponding skew vexillary permutations of balanced shape are tCDE. The key technical tool here is the use of ``rook'' random variables (extending an idea from~\cite{chan2017expected}). In Section~\ref{sec:rowmotion} we apply our result to deduce the aforementioned rowmotion homomesy corollary. In Section~\ref{sec:future} we briefly discuss possible future directions.

\medskip

\noindent {\bf Acknowledgements}: I thank Vic Reiner for useful discussions and encouragement during this project. I also thank the anonymous referees for their many helpful comments. This research would not have been possible without the use of Sage mathematics software~\cite{Sage-Combinat, sagemath}. I was supported by NSF grant~\#1802920.

\section{Background on lattices, Young's lattice, and weak order} \label{sec:background}

In this section we review the basic background on lattices, Young's lattice, and weak order that we will need in the rest of the paper. The reader who feels comfortable with these objects may safely skip over this section.

\subsection{Lattices} 
We generally use standard notation and terminology for poset-theoretic concepts (as in, e.g.,~\cite[Chapter 3]{stanley2012ec1}), but let us briefly review these here.

All posets in this paper will be finite unless explicitly stated otherwise.

Let $P$ be a poset. An \emph{antichain} of $P$ is a subset of elements which are pairwise incomparable. A \emph{chain} of~$P$ is a subset of elements which are pairwise comparable. The \emph{length} of a chain is its number of elements minus one. A chain is \emph{maximal} if it is maximal by containment. We say $P$ is \emph{graded of rank $r$} if all its maximal chains have the same length~$r$.

We use $P^*$ to denote the \emph{dual poset} to a poset $P$, which is the poset with the same elements as $P$ but with $x \leq_{P^*} y$ if and only if $y \leq_P x$.

We use $P_1 \times P_2$ to denote the \emph{Cartesian product} of two posets $P_1$, $P_2$; this is the poset with elements $(x_1,x_2)\in P_1\times P_2$ and $(x_1,x_2) \leq (y_1,y_2)$ if and only if $x_1 \leq y_1$ and $x_2\leq y_2$.

For $x,y \in P$ we say that $y$ \emph{covers} $x$ if $x \leq y$ and for any $z\neq x \in P$ with~$x \leq z$ we have~$y \leq z$; we denote this cover relation by $x \lessdot y$. The \emph{Hasse diagram} of $P$ is the graph on the elements of $P$ with edges $\{x,y\}$ whenever $x \lessdot y$; we draw this graph in the plane with $x$ below $y$ if $x \leq y$. We often depict a poset via its Hasse diagram. We say $P$ is \emph{connected} if its Hasse diagram is connected.

If $x,y\in P$ are two elements of $P$ then the \emph{join} of $x$ and $y$, denoted $x \join y$, is, if it exists, the minimal element in $P$ greater than or equal to both $x$ and $y$; dually, the \emph{meet} of $x$ and $y$, denoted $x \meet y$, is, if it exists, the maximal element in $P$ less or equal to both $x$ and $y$.

A \emph{lattice} $L$ is a poset such that both $x \join y$ and $x \meet y$ exist for every $x,y \in L$. An element of $L$ is \emph{join-irreducible} if it cannot be written as a join of other elements; equivalently, $x \in L$ is join-irreducible if $x$ covers a unique element. We use~$\mathrm{Irr}(L)$ to denote the join-irreducible elements of $L$. We consider $\mathrm{Irr}(L)$ as a poset with its partial order induced from $L$. There is obviously a dual notion of \emph{meet-irreducible} element.

The lattice $L$ is \emph{distributive} if the meet operation distributes over the join operation in the sense that $x\meet (y\join z) = (x\meet y)\join (x\meet z)$ for every $x,y,z \in L$. 

There is another equivalent way to view finite distributive lattices. An \emph{order ideal} of a poset $(P,\leq)$ is a subset $I\subseteq P$ for which $x,y\in P$ with $x \leq y$ and $y \in I$ implies~$x \in I$. The set of order ideals of $P$ ordered by containment is denoted~$\mathcal{J}(P)$ and is always a distributive lattice. For a finite distributive lattice $L$ we have~$L \simeq \mathcal{J}(\mathrm{Irr}(L))$ via the isomorphism $x \mapsto \{p\colon p\in \mathrm{Irr}(L), p\leq x\}$. And similarly for a finite poset $P$ we have that~$P\simeq \mathrm{Irr}(\mathcal{J}(P))$ via the isomorphism $q \mapsto \{p\colon p\in P,p\leq q\}$. This establishes a bijection between finite distributive lattices and finite posets (see, e.g.,~\cite[Theorem~3.4.1]{stanley2012ec1}).

Every finite distributive lattice $L$ is graded of length $\#\mathrm{Irr}(L)$.

A lattice $L$ is \emph{semidistributive} if it satisfies two specific weakenings of the distributive law (see~\cite{freese1995free}). But, like the description of (finite) distributive lattices in terms of order ideals, there is another more combinatorial way to define semidistributivity.  For a subset $S=\{s_1,\ldots,s_k\}\subseteq L$ we write $\join S \coloneqq  s_1\join s_2 \join \cdots \join s_k$. A \emph{join representation} of $x \in L$ is way of writing $x = \join S$ where $S\subseteq L$ is an antichain. We can order the antichains of $L$ by declaring $S \leq T$ if for every $x \in S$ there exists a~$y \in T$ with~$x \leq y$. A \emph{canonical join representation} of $x \in L$ is, if it exists, a join representation $x = \join S$ such that $S \leq T$ for every other join representation $x = \join T$. There is obviously a dual notion of \emph{canonical meet representation}. A lattice $L$ is \emph{semidistributive} if a canonical join representation and a canonical meet representation exist for each $x \in L$ (see~\cite[Theorem 2.24]{freese1995free}).

All distributive lattices are semidistributive: the canonical join representation of and order ideal~$I\in \mathcal{J}(\mathrm{Irr}(L))$ is $I = \join \mathrm{max}(I)$, where $\mathrm{max}(I)$ is the set of maximal elements of $I$; and similarly for the canonical meet representation.

Unlike distributive lattices, semidistributive lattices need not be graded.

For a poset $P$ and $x,y\in P$ with $x \leq y$, the \emph{(closed) interval from $x$ to $y$} is defined to be $[x,y] \coloneqq  \{z\in P\colon x \leq z\leq y\}$. Every interval of a lattice is again a lattice; similarly, every interval of a distributive lattice is a distributive lattice and every interval of a semidistributive lattice is a semidistributive lattice.

\subsection{Young's lattice} \label{sec:young_lat_defs}

We also follow standard notation for partitions and Young's lattice (as in, e.g.,~\cite[\S7.2]{stanley1999ec2}) but we will review these briefly here.

A \emph{partition} $\lambda = (\lambda_1,\lambda_2,\ldots)$ is a sequence $\lambda_1 \geq \lambda_2 \geq \cdots \in \mathbb{N}$  of weakly decreasing nonnegative integers that is eventually zero. The \emph{size} of $\lambda$ is $|\lambda| \coloneqq  \sum_{i=1}^{\infty} \lambda_i$ and the \emph{length} of $\lambda$ is $\ell(\lambda) \coloneqq  \mathrm{min}\{i\in \mathbb{N}\colon \lambda_{i+1}=0\}$. We also write $\lambda = (\lambda_1,\ldots,\lambda_{\ell(\lambda)})$ as a shorthand. There is a unique partition of size $0$ called the \emph{empty shape} and denoted~$\varnothing$. \emph{Young's lattice} $\mathbb{Y}$ is the infinite poset of all partitions with $\nu \leq \lambda$ if $\nu_i \leq \lambda_i$ for all~$i=1,2,\ldots$. Young's lattice is a distributive lattice: in fact we have $\mathbb{Y}=\mathcal{J}(\mathbb{N}\times\mathbb{N})$. 

We will always consider partitions ordered according to Young's lattice, and if we write $[\nu,\lambda]$ for partitions $\nu,\lambda$, that always means an interval of Young's lattice.

It is very helpful to visualize partitions via their Young diagrams. So let us briefly discuss diagrams in general. By a \emph{diagram} $D$ we mean a finite subset~$D\subseteq \mathbb{Z}^2$. We think of the elements $(i,j)\in D$ of a diagram as being ``boxes'' placed at those coordinates. We use ``matrix coordinates'' where the box at $(1,1)$ is northwest of the one at~$(2,2)$. A \emph{row} of a diagram $D$ is the set of boxes $(i,j) \in D$ for some fixed~$i\in\mathbb{Z}$; similarly, a \emph{column} of $D$ is the set of boxes~$(i,j)\in D$ for some fixed~$j\in\mathbb{Z}$. The \emph{transpose} of~$D$, denoted $D^t$, is the diagram with boxes $(j,i)$ for~$(i,j)\in D$. The \emph{rotation} of $D$, denoted~$D^{\mathrm{rot}}$, is the diagram obtained from $D$ by rotating it  $180^\circ$ about some fixed point. An important diagram is the rectangle $[a]\times[b]$ for $a,b\in\mathbb{N}$, where we use the standard notation $[a]\coloneqq \{1,2,\ldots,a\}$.

The \emph{Young diagram} of a partition $\lambda$ is $\{(i,j)\colon 1 \leq i \leq \ell(\lambda), 1 \leq j \leq \lambda_i\}$. In other words, we have $\lambda_i$ boxes left-justified in the $i$th row of the Young diagram. For example, the Young diagram of $\lambda=(4,3,3,3)$ is
\[\ydiagram{4,3,3,3} \]
Note that the initial interval $[\varnothing,\lambda]$ of Young's lattice is equal to $\mathcal{J}(P_{\lambda})$ where $P_{\lambda}$ is the poset on the boxes of the Young diagram of $\lambda$ with the partial order induced from $\mathbb{Z}^2$. We often implicitly identify a partition with its Young diagram. Observe that $\nu \leq \lambda$ if and only if the Young diagram of~$\nu$ is contained in the Young diagram of~$\lambda$.

For two partitions $\nu \leq \lambda$, the \emph{skew shape} $\sigma = \lambda/\nu$ is the set-theoretic difference of the Young diagrams of $\lambda$ and $\nu$. For example, the skew shape $\sigma=(4,3,3,3)/(2,1,1)$ is
\[ \ydiagram{2+2,1+2,1+2,3} \]
The interval $[\nu,\lambda]$  of Young's lattice is equal to $\mathcal{J}(P_{\sigma})$ where $P_{\sigma}$ is the poset on the boxes of $\sigma=\lambda/\nu$ with the partial order induced from $\mathbb{Z}^2$. We also often refer to a skew shape as just a \emph{shape}, and sometimes we refer to an ordinary partition as a \emph{straight shape} to distinguish it from a skew shape.

Some straight shapes of particular significance for us, also discussed in Section~\ref{sec:intro}, are the $a\times b$ \emph{rectangles} $b^a \coloneqq  (\overbrace{b,b,\cdots,b}^{a})$ and the \emph{staircases} $\delta_d \coloneqq  (d-1,d-2,\ldots,1)$. Observe that the Young diagram of $b^a$ is $[a]\times[b]$. For any shape~$\sigma$, we use $\sigma\circ b^a$ to denote the shape obtained from $\sigma$ by replacing each box with an $a\times b$ rectangle. In this way we get the \emph{rectangular staircases} $\delta_d \circ b^a$. For instance, $\delta_3\circ 2^1$ is
\[\ydiagram{4,2} \]

We say that the shape $\sigma$ is \emph{connected} if the poset $P_{\sigma}$ on its boxes is connected. If~$\sigma$ is disconnected we write $\sigma = \sigma_1 \sqcup \sigma_2$ to denote that $\sigma$ is the union of the shapes $\sigma_1$ and $\sigma_2$, where no box of $\sigma_1$ is in the same row or column as any box of $\sigma_2$. If $\sigma$ is connected, then we say it has \emph{height} $a$ and \emph{width} $b$ if the rectangle $b^a$ is the smallest rectangle which (up to translation) $\sigma$ is contained within.

Now we review the nonstandard notion of ``balanced'' shapes from Chan-Haddadan-Hopkins-Moci~\cite{chan2017expected}, which is crucial for the statement of our main result.

\begin{definition}
Let $\sigma = \lambda/\nu$ be a connected skew shape of height $a$ and width~$b$. We assume that $\sigma$ has been translated so that it lies in the rectangle $a\times b$. The \emph{main antidiagonal} of $\sigma$ is the line connecting the northeast and southwest corners of the boundary of the rectangle $a\times b$. A \emph{corner} of $\sigma$ is a point where two line segments which are part of the boundary of $\sigma$ meet. We say the corner is \emph{outward} if no box of~$\sigma$ intersects both of the line segments (except at the corner point). We say that $\sigma$ is \emph{balanced} if all its outward corners are exactly on its main antidiagonal.
\end{definition}

\begin{example}
Let $\sigma=(8,8,8,2)/(4,4)$. Below we draw $\sigma$ with its main antidiagonal in red and its two outward corners marked with black circles:
\begin{center}
\begin{tikzpicture}
\node at (0,0) {\ydiagram{4+4,4+4,8,2}};
\def\x{0.6}
\draw[red,thick] (-4*\x,-2*\x) -- (4*\x,2*\x);
\fill[black] (-2*\x,-1*\x) circle (0.1cm);
\fill[black] (0*\x,0*\x) circle (0.1cm);
\end{tikzpicture}
\end{center}
All the outward corners of $\sigma$ are on its main antidiagonal, so $\sigma$ is balanced.
\end{example}

\begin{example}
Let $\sigma=(4,4,3,1)/(2)$. Below we draw $\sigma$ with its main antidiagonal in red and its three outward corners marked with black circles:
\begin{center}
\begin{tikzpicture}
\node at (0,0) {\ydiagram{2+2,4,3,1}};
\def\x{0.6}
\draw[red,thick] (-2*\x,-2*\x) -- (2*\x,2*\x);
\fill[black] (-1*\x,-1*\x) circle (0.1cm);
\fill[black] (0*\x,1*\x) circle (0.1cm);
\fill[black] (1*\x,0*\x) circle (0.1cm);
\end{tikzpicture}
\end{center}
Two of the outward corners of $\sigma$ are off its main antidiagonal, so $\sigma$ is not balanced.
\end{example}

It is easy to see that rectangles $b^a$ and staircases $\delta_d$ are balanced. Moreover, it is also easy to see that if $\sigma$ is balanced then $\sigma \circ b^a$ is balanced for any $a,b\in\mathbb{N}$. So  in particular the rectangular staircases $\delta_d \circ b^a$ are balanced.

We note that there are $3^{\mathrm{gcd}(a,b)-1}$ balanced shapes of height $a$ and width $b$, of which $2^{\mathrm{gcd}(a,b)-1}$ are straight shapes.

Young's lattice is intimately related to the combinatorics of tableaux, and hence symmetric function theory. For instance, maximal chains in $[\nu,\lambda]$ correspond exactly to the ``Standard Young Tableaux'' of shape $\sigma=\lambda/\nu$. But we will not review tableaux here because we will not need them.

\subsection{Weak order} \label{sec:weak_order_def}

We will now review weak order on the symmetric group. A good reference for weak order in the more general context of Coxeter groups is~\cite[\S3]{bjorner2005coxeter}.

The \emph{symmetric group} $\mathfrak{S}_n$ is the group of all permutations of~$\{1,\ldots,n\}$. We write a permutation $w \in \mathfrak{S}_n$ in \emph{one-line notation} as $w=w_1w_2\ldots w_n$ with $w_i \coloneqq  w(i)$. Sometimes we refer to these $w_i$ as the \emph{letters} of the permutation. We multiply permutations ``on the right'' so that $w = vu$ means that~$w(i) = v(u(i))$ for~$u,v,w\in \mathfrak{S}_n$. The \emph{inverse} of $w \in \mathfrak{S}_n$ is the permutation $w^{-1} \in \mathfrak{S}_n$ with $w^{-1}(i)=j$ if and only if $w(j) = i$. The \emph{reverse-complement} of $w\in \mathfrak{S}_n$ is the permutation $w^{\mathrm{rc}}$ defined by~$w^{\mathrm{rc}}\coloneqq  (n+1-w_n)(n+1-w_{n-1})\cdots (n+1-w_1) \in \mathfrak{S}_n$. Inversion and reverse-complementation commute. We use $e$ to denote the \emph{identity permutation} of $\mathfrak{S}_n$ which has $e(i) = i$ for all $1\leq i \leq n$. Note that by abuse of notation we use this same notation $e$ for the identity element of $\mathfrak{S}_n$ for all $n$.

For $1 \leq i < j \leq n$ we use $s_{(i,j)} \in \mathfrak{S}_n$ to denote the \emph{transposition} of $i$ and $j$: this is the permutation with $s_{(i,j)}(i) = j$, $s_{(i,j)}(j) = i$ and $s_{(i,j)}(k) = k$ for all $k\notin \{i,j\}$. For~$1 \leq i < n$ we use $s_{i}\coloneqq s_{(i,i+1)}$ to denote the $i$th \emph{simple transposition}. It is well-known that the simple transpositions $s_i$ for $1\leq i < n$ generate $\mathfrak{S}_n$ and satisfy the \emph{Coxeter relations}:
\begin{align}
s_i^2 &= 1 \qquad &\textrm{ for $1\leq i < n$}; \label{eqn:cox_inv} \\
s_is_{i+1}s_i &= s_{i+1}s_is_{i+1} \qquad &\textrm{ for $1\leq i < n-1$}; \label{eqn:cox_braid} \\
s_is_j &= s_js_i \qquad &\textrm{ for $1\leq i,j< n$ with $|i-j| \geq 2$}. \label{eqn:cox_comm}
\end{align}
We remark that reverse complementation corresponds to the automorphism of $\mathfrak{S}_n$ that swaps generators $s_i$ and $s_{n-i}$ for all $1 \leq i < n$.

The \emph{length} $\ell(w)$ of $w \in \mathfrak{S}_n$ is the minimum length $k$ of an expression $w=s_{i_1}s_{i_2}\cdots s_{i_k}$ as a product of simple transpositions. (We have $\ell(e)=0$ corresponding to the empty product.)

An \emph{inversion} of a permutation $w \in \mathfrak{S}_n$ is a pair $(i,j) \in\mathbb{Z}^2$ with $1 \leq i < j \leq n$ such that $w(i) > w(j)$. We use $\mathrm{Inv}(w)$ to denote the set of inversions of $w$. We write~$\mathrm{Inv}^{-1}(w) \coloneqq  \mathrm{Inv}(w^{-1})$. Note~$(i,j)\in\mathrm{Inv}(w)$ if and only if $(w(j),w(i))\in\mathrm{Inv}^{-1}(w)$. The length of a permutation is equal to its number of inversions: $\ell(w) = \#\mathrm{Inv}(w)$ for all $w \in \mathfrak{S}_n$. Hence $\ell(w)=\ell(w^{-1})$ for all~$w\in \mathfrak{S}_n$.

\emph{Weak order} is the poset $(\mathfrak{S}_n,\leq)$ with $u \leq w$ for $u,w\in \mathfrak{S}_n$ if and only if there is some sequence $s_{i_1},s_{i_2},\ldots,s_{i_k}$ of simple transpositions  such that $w = us_{i_1},s_{i_2},\ldots,s_{i_k}$ and~$\ell(us_{i_1}\cdots s_{i_j})= \ell(u)+j$ for all $1\leq j \leq k$. Observe that if $u \lessdot w$ in weak order then there is some simple transposition $s_i$ with $w = us_i$ and $\ell(w)=\ell(u)+1$. It is well-known that the weak order relation corresponds exactly to containment of inverse inversions: that is, we have $u \leq w$ if and only if $\mathrm{Inv}^{-1}(u) \subseteq \mathrm{Inv}^{-1}(w)$.

Clearly the identity $e \in \mathfrak{S}_n$ is the unique minimal element in weak order. There is also a unique maximal element in weak order denoted $w_0 \in \mathfrak{S}_n$: this is the permutation $w_0\coloneqq n (n-1) (n-2)\cdots 1$. Hence weak order is graded of rank $\ell(w_0)=\binom{n}{2}$. It is a theorem of Bj\"{o}rner that weak order is a lattice~\cite{bjorner1984orderings}. In fact weak order is known to be a semidistributive lattice~\cite{duquenne1994permutation}.

We will always consider permutations in~$\mathfrak{S}_n$ partially ordered according to weak order, and if we write $[u,w]$ for permutations $u,w\in \mathfrak{S}_n$ then we mean an interval of weak order. 

Here are a few basic facts about weak order intervals. First of all, we always have that~$[u,w]\simeq [e,u^{-1}w]$ via the map $v \mapsto u^{-1}v$. So, at least if we care only about the abstract poset structure of these intervals, we lose nothing by restricting our attention to \emph{initial} intervals $[e,w]$. Next, note that $[e,w]\simeq [e,w^{-1}]^*$ via the map $v \mapsto w^{-1}v$. Finally, note that~$[e,w]\simeq [e,w^{\mathrm{rc}}]$ via $v \mapsto v^{\mathrm{rc}}$. 

Weak order is very useful for understanding \emph{reduced words}. A \emph{reduced word} of $w\in \mathfrak{S}_n$ is a minimal length way of writing $w$ as a product of simple transpositions: in other words, it is a choice of sequence $s_{i_1},\ldots,s_{i_{\ell(w)}}$ such that $w=s_{i_1}\cdots s_{i_{\ell(w)}}$. Reduced words of $w$ correspond bijectively to maximal chains in $[e,w]$. It is a theorem of Matsumoto and Tits that all reduced words for $w$ are related by a sequence moves of the form~\eqref{eqn:cox_braid} and~\eqref{eqn:cox_comm}, i.e., we never need to use the relation~\eqref{eqn:cox_inv} to go between reduced words (see, e.g.,~\cite[Theorem 3.3.1]{bjorner2005coxeter}).

Reduced words are very important in Schubert calculus, but we will not explain the fundamental objects of Schubert calculus (like ``Schubert polynomials'') because we will not need these.

As mentioned, weak order is a lattice. Hence we might be interested in its join irreducible elements. To describe these we need to introduce descents of permutations. 

A \emph{descent} of $w\in \mathfrak{S}_n$ is a pair $(w_k,w_{k+1})$ for $1\leq k < n$ with $w_k > w_{k+1}$; we say this descent is \emph{at position $k$}. The descents of $w\in \mathfrak{S}_n$ exactly correspond to the elements which $w$ covers in weak order: for $1\leq k < n$ we have $ws_k \lessdot w$ if and only if~$w$ has a descent at position $k$. And if $(w_k,w_{k+1})$ is a descent of $w\in \mathfrak{S}_n$ at position~$k$, then we have $\mathrm{Inv}^{-1}(w)=\mathrm{Inv}^{-1}(u)\cup\{(w_{k+1},w_k)\}$ where $u\coloneqq ws_k$.

\begin{definition} \label{def:gr}
A permutation $w \in \mathfrak{S}_n$ is \emph{Grassmannian} if it has at most one descent.
\end{definition}

By definition we have that $\mathrm{Irr}(\mathfrak{S}_n)$ consists of the non-identity Grassmannian permutations.

Let us also explain another way in which the Grassmannian permutations are significant. For $1 \leq k < n$ we view $\mathfrak{S}_k \times \mathfrak{S}_{n-k}\subseteq \mathfrak{S}_n$ where the first factor acts only on $\{1,2,\ldots,k\}$ and the second factor acts only on $\{k+1,\ldots,n\}$. This is a so-called ``maximal parabolic'' subgroup of $\mathfrak{S}_n$, and the general theory of Coxeter groups says that the cosets of such a subgroup have distinguished minimal length representatives. The minimal length representatives of the left cosets $w(\mathfrak{S}_k \times \mathfrak{S}_{n-k})$ are exactly the Grassmannian permutations $w\in \mathfrak{S}_n$ having at most one descent at position $k$. 

As mentioned, weak order is a \emph{semidistributive} lattice, so we should also be interested in its canonical join representations. But in fact we will save discussion of canonical join representations in weak order for Section~\ref{sec:weak_order_edge_labels}.

\section{Skew vexillary permutations} \label{sec:skew_vex}

\subsection{Skew vexillary permutations and sub-families} \label{sec:skew_vex_def}

Our principal objects of interest in this paper are the initial weak order intervals $[e,w]$ where $w$ is a skew vexillary permutation. In this section we introduce this notion of ``skew vexillary.''

Recall that for a permutation $w\in \mathfrak{S}_n$, the \emph{Rothe diagram} of $w$ is the diagram which has boxes $(i,w(j))$ for all $(i,j) \in \mathrm{Inv}(w)$.

\begin{definition}
Let $\sigma = \lambda/\nu$ be a skew shape. We say that $w \in \mathfrak{S}_n$ is \emph{skew vexillary of shape~$\sigma$} if its Rothe diagram can be transformed to~$\sigma$ via some permutation of rows and columns.\footnote{Here and throughout ``permutation of rows and columns'' means that we permute the rows and columns separately; that is, we are not allowed to replace a row with a column or vice-versa.} We say that $w$ is \emph{skew vexillary} if it is skew vexillary of some shape.
\end{definition}

\begin{example}
Consider $w=31542$. To draw the Rothe diagram of $w\in \mathfrak{S}_n$, we can do the following: place an $X$ at every $(i,w(i)) \in [n]\times[n]$, and at each $X$ draw a ray emanating to the right and a ray emanating downwards; the boxes of $[n]\times [n]$ which have no $X$ markings and no rays passing through them are the boxes of the Rothe diagram of~$w$. So the Rothe diagram in our case looks like:
\begin{center}
\begin{tikzpicture}[scale=0.6]
\node at (6,5) {\color{red} $3$};
\node at (6,4) {\color{red} $1$};
\node at (6,3) {\color{red} $5$};
\node at (6,2) {\color{red} $4$};
\node at (6,1) {\color{red} $2$};
\node at (0,5) {$1$};
\node at (0,4) {$2$};
\node at (0,3) {$3$};
\node at (0,2) {$4$};
\node at (0,1) {$5$};
\node at (1,6) {$1$};
\node at (2,6) {$2$};
\node at (3,6) {$3$};
\node at (4,6) {$4$};
\node at (5,6) {$5$};
\node at (1,5) {\huge $\blacksquare$};
\node at (2,5) {\huge $\blacksquare$};
\node at (3,5) {\huge x};
\draw (3,5) -- (5.5,5);
\draw (3,5) -- (3,0.5);
\node at (1,4) {\huge x};
\draw (1,4) -- (5.5,4);
\draw (1,4) -- (1,0.5);
\node at (2,3) {\huge $\blacksquare$};
\node at (4,3) {\huge $\blacksquare$};
\node at (5,3) {\huge x};
\draw (5,3) -- (5.5,3);
\draw (5,3) -- (5,0.5);
\node at (2,2) {\huge $\blacksquare$};
\node at (4,2) {\huge x};
\draw (4,2) -- (5.5,2);
\draw (4,2) -- (4,0.5);
\node at (2,1) {\huge x};
\draw (2,1) -- (5.5,1);
\draw (2,1) -- (2,0.5);
\end{tikzpicture}
\end{center}
By applying the permutation $\pi_r=14325$ to the rows and $\pi_c=42135$ to the columns of this Rothe diagram, we transform it to the following diagram:
\begin{center}
\begin{tikzpicture}[scale=0.6]
\node at (0,5) {$1$};
\node at (0,4) {$4$};
\node at (0,3) {$3$};
\node at (0,2) {$2$};
\node at (0,1) {$5$};
\node at (1,6) {$4$};
\node at (2,6) {$2$};
\node at (3,6) {$1$};
\node at (4,6) {$3$};
\node at (5,6) {$5$};
\node at (2,5) {\huge $\blacksquare$};
\node at (3,5) {\huge $\blacksquare$};
\node at (2,4) {\huge $\blacksquare$};
\node at (1,3) {\huge $\blacksquare$};
\node at (2,3) {\huge $\blacksquare$};
\end{tikzpicture}
\end{center}
which is the shape $(3,2,2)/(1,1)$. Hence $w$ is skew vexillary of shape $(3,2,2)/(1,1)$.
\end{example}

\begin{example}
Consider $w=246153$.  Its Rothe diagram is:
\begin{center}
\begin{tikzpicture}[scale=0.6]
\node at (7,6) {\color{red} $2$};
\node at (7,5) {\color{red} $4$};
\node at (7,4) {\color{red} $6$};
\node at (7,3) {\color{red} $1$};
\node at (7,2) {\color{red} $5$};
\node at (7,1) {\color{red} $3$};
\node at (0,6) {$1$};
\node at (0,5) {$2$};
\node at (0,4) {$3$};
\node at (0,3) {$4$};
\node at (0,2) {$5$};
\node at (0,1) {$6$};
\node at (1,7) {$1$};
\node at (2,7) {$2$};
\node at (3,7) {$3$};
\node at (4,7) {$4$};
\node at (5,7) {$5$};
\node at (6,7) {$6$};
\node at (1,6) {\huge $\blacksquare$};
\node at (2,6) {\huge x};
\draw (2,6) -- (6.5,6);
\draw (2,6) -- (2,0.5);
\node at (1,5) {\huge $\blacksquare$};
\node at (3,5) {\huge $\blacksquare$};
\node at (4,5) {\huge x};
\draw (4,5) -- (6.5,5);
\draw (4,5) -- (4,0.5);
\node at (1,4) {\huge $\blacksquare$};
\node at (3,4) {\huge $\blacksquare$};
\node at (5,4) {\huge $\blacksquare$};
\node at (6,4) {\huge x};
\draw (6,4) -- (6.5,4);
\draw (6,4) -- (6,0.5);
\node at (1,3) {\huge x};
\draw (1,3) -- (6.5,3);
\draw (1,3) -- (1,0.5);
\node at (3,2) {\huge $\blacksquare$};
\node at (5,2) {\huge x};
\draw (5,2) -- (6.5,2);
\draw (5,2) -- (5,0.5);
\node at (3,1) {\huge x};
\draw (3,1) -- (6.5,1);
\draw (3,1) -- (3,0.5);
\end{tikzpicture}
\end{center}
We claim that no permutation of rows and columns will transform this Rothe diagram to a skew shape. Hence $w$ is not skew vexillary.
\end{example}

We note that the following are obviously equivalent:
\begin{itemize}
\item $w$ is skew vexillary of shape $\sigma$;
\item $w$ is skew vexillary of shape $\sigma^{\mathrm{rot}}$;
\item $\mathrm{Inv}(w)$ can be transformed to $\sigma$ via some permutation of rows and columns;
\item $w^{-1}$ is skew vexillary of shape $\sigma^t$;
\item $(w^{\mathrm{rc}})^{-1} = (w^{-1})^{\mathrm{rc}}$ is skew vexillary of shape $\sigma$.
\end{itemize}

\begin{remark} \label{rem:other_skew_vex}
Our notion of skew vexillary is not standard. Indeed, the term ``skew vexillary'' has been used by other authors to mean other things on at least two occasions. In~\cite[Proposition 2.3]{billey1993schubert}, Billey, Jockusch, and Stanley call a permutation $w$ skew vexillary if its Schubert polynomial $\mathfrak{S}_w$ is equal to a flagged skew Schur function. This is morally similar to our definition (see Remark~\ref{rem:stable_grothendieck}), but not exactly the same. Billey-Jockusch-Stanley do at one point discuss permutations whose Rothe diagram is a skew shape up to permutation of rows and columns (see~\cite[Proposition 2.4]{billey1993schubert}), but they give no special name to these permutations. Meanwhile, in~\cite[\S5.2]{klein2014counting} Klein, Lewis, and Morales call a permutation $w\in \mathfrak{S}_n$ skew vexillary if the {\bf complement} in $[n]\times[n]$ of its Rothe diagram is a skew shape up to permutation of rows and columns. This is almost the ``opposite'' of our notion of skew vexillary! Klein-Lewis-Morales~\cite[Proposition 5.6]{klein2014counting} show that their notion of skew vexillary is characterized by the avoidance of nine patterns. (Recall that we say the permutation $w=w_1w_2\cdots w_n$ \emph{contains} the pattern $\pi=\pi_1\cdots\pi_k$ if there is some sequence $1\leq i_1 < i_2 < \cdots < i_k \leq n$ with $w_{i_1}\cdots w_{i_k}$ order-isomorphic to $\pi_1\cdots \pi_k$; and we say that $w$ \emph{avoids} $\pi$ otherwise.)  As we will discuss below (see Remark~\ref{rem:pattern_avoid}), we do not believe our notion of skew vexillary is characterized by any pattern avoidance condition.
\end{remark}

Now let us discuss some important sub-families of skew vexillary permutations.

\begin{definition}
Let $\lambda$ be a straight shape. We say that $w \in \mathfrak{S}_n$ is \emph{vexillary of shape~$\lambda$} if its Rothe diagram can be transformed to~$\lambda$ via some permutation of rows and columns. We say that $w$ is \emph{vexillary} if it is vexillary of some shape.
\end{definition}

This definition of vexillary permutations directly motivated our definition of skew vexillary permutations. Lascoux and Sch\"{u}tzenberger~\cite{lascoux1982polynomes} (see also Wachs~\cite{wachs1985flagged}) showed that a permutation is vexillary if and only if it is $2143$-avoiding. Another equivalent characterization of vexillary permutations: a permutation $w$ is vexillary of shape $\lambda$ if and only if its Stanley symmetric function~$F_w$ is equal to the Schur function $s_{\lambda}$~\cite[Theorem 4.1]{stanley1984reduced}. We will discuss this characterization more in a moment; see Remark~\ref{rem:stable_grothendieck}. Clearly vexillary permutations are by definition skew vexillary.

\begin{definition}
Let $\lambda$ be a straight shape. We say that $w \in \mathfrak{S}_n$ is \emph{dominant of shape~$\lambda$} if the Rothe diagram of $w$ is equal to $\lambda$. We say that $w$ is \emph{dominant} if it is dominant of some shape.
\end{definition}

It is well-known that $w$ is dominant if and only if it is 132-avoiding (see, e.g.,~\cite[Proposition 13.2]{postnikov2009chains}). There are other equivalent characterizations of dominant permutations: for instance, the Schubert polynomial $\mathfrak{S}_w$ is equal to a single monomial if and only if $w$ is dominant (see, e.g.,~\cite[Proposition 13.3]{postnikov2009chains}). Dominant permutations are often the ``simplest'' permutations in the context of combinatorial Schubert calculus. Dominant permutations are vexillary, and hence skew vexillary.

\begin{definition}
We say that $w\in \mathfrak{S}_n$ is \emph{fully commutative} if starting from any reduced word for $w$ we can obtain any other reduced word for $w$ by applying a sequence of commutation relations (i.e., we only need to use the relations~\eqref{eqn:cox_comm}, not~\eqref{eqn:cox_braid}).
\end{definition}

It is well-known that $w$ is fully commutative if and only if it is 321-avoiding~\cite[Theorem 2.1]{billey1993schubert}. Stembridge~\cite{stembridge1995fully} showed, in the more general context of Coxeter groups, that the initial interval $[e,w]$ of weak order is a distributive lattice if and only if $w$ is fully commutative. In fact, in the case of the symmetric group, we can say more precisely the following (see~\cite[\S2]{billey1993schubert}): if $w \in \mathfrak{S}_n$ is fully commutative, then after deleting empty rows and columns and reflecting across a vertical line, the Rothe diagram of $w$ becomes a skew shape $\sigma=\lambda/\nu$; and in this case $[e,w] \simeq [\nu,\lambda]$. So we see that fully commutative permutations are skew vexillary. We also see that the intervals of weak order which are distributive lattices are isomorphic to intervals of Young's lattice.

In some sense the skew vexillary permutations are the ``smallest'' natural family of permutations containing both the vexillary and the fully commutative permutations.

We note that the intersection of the set of vexillary permutations and the set of fully commutative permutations is exactly the set of permutations which are either Grassmannian or inverse Grassmannian (see~\cite[Corollary 2.5]{billey1993schubert}). (Recall the notion of Grassmannian permutation was introduced in Definition~\ref{def:gr}; and $w\in \mathfrak{S}_n$ is \emph{inverse Grassmannian} if its inverse $w^{-1}$ is Grassmannian.) For an inverse Grassmannian permutation $w$, we have $[e,w]\simeq [\varnothing,\lambda]$ for some straight shape $\lambda$; and similarly, for a Grassmannian permutation $w$ we have  $[e,w]\simeq [\varnothing,\lambda]^*$ for some straight shape $\lambda$.

Figure~\ref{fig:perm_classes} depicts the containment relationships between the various classes of permutations discussed in this section. We remark that all these classes are closed under inversion, except that the inverse of a Grassmannian permutation is inverse Grassmannian, and vice-versa. Meanwhile, all these classes except for the dominant permutations are closed under reverse-complementation.

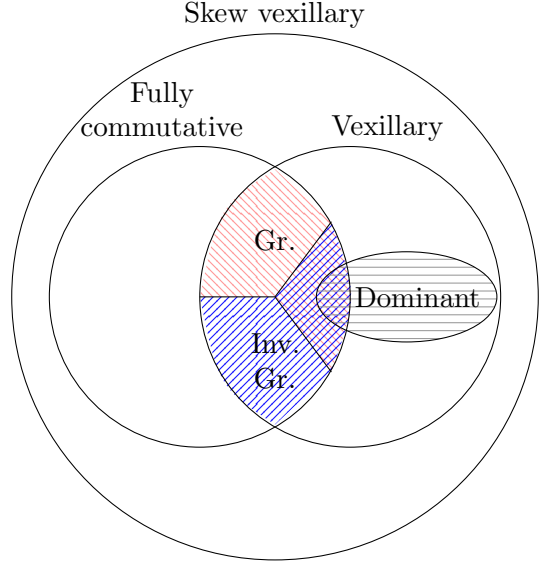
\begin{figure}
\begin{tikzpicture}
\fill[pattern=north west lines, pattern color=red!40] (-1,0)  .. controls (-0.75,1.2) .. (0,1.7)--(0.75,1) .. controls (1.1,0) .. (0.75,-1) -- (0,0);
\fill[pattern=north east lines, pattern color=blue!90] (-1,0)  .. controls (-0.75,-1.2) .. (0,-1.7)--(0.75,-1) .. controls (1.1,0) .. (0.75,1) -- (0,0);
\fill[pattern=horizontal lines, pattern color=black!40] (1.75,0) ellipse (1.2 and 0.6);
\draw (1.75,0) ellipse (1.2 and 0.6);
\node at (0,3.75) {Skew vexillary};
\node at (1.5,2.25) {Vexillary};
\node at (-1.5,2.5) {\parbox{1in}{\begin{center}Fully \\ commutative\end{center}}};
\node at (0,0.75) {Gr.};
\node at (0,-0.75) {\parbox{0.25in}{\begin{center}Inv. \\ Gr.\end{center}}};
\node at (1.9,0) {Dominant};
\draw (0,0) circle (3.5);
\draw (1,0) circle (2);
\draw (-1,0) circle (2);
\draw (-1,0) -- (0,0);
\draw (0,0) -- (0.75,1);
\draw (0.75,-1) -- (0,0);
\end{tikzpicture}
\caption{The containment relationships between the various classes of permutations discussed in Section~\ref{sec:skew_vex_def}. Permutations which are both Grassmannian and inverse Grassmannian are called \emph{bigrassmannian} permutations. The bigrassmannian permutations include the dominant permutations of rectangular shape, which are those permutations that simultaneously avoid $132$ and $321$.} \label{fig:perm_classes}
\end{figure}

\begin{remark} \label{rem:pattern_avoid}
As mentioned above, many of the important sub-families of skew vexillary permutations are characterized by pattern avoidance conditions. We doubt that the skew vexillary permutations themselves are characterized by pattern avoidance. This is because, based on computations with Sage~\cite{Sage-Combinat, sagemath}, all $120$ permutations in~$\mathfrak{S}_5$ are skew vexillary, whereas exactly $682$ of the $720$ permutations in $\mathfrak{S}_6$ are skew vexillary. Thus, if there was a pattern avoidance classification of skew vexillary permutations, it would have to include at a minimum the avoidance of $38$ patterns of length six.
\end{remark}

\begin{remark} \label{rem:stable_grothendieck}
Reiner-Tenner-Yong~\cite[Theorem 5.1]{reiner2018poset} showed that for any vexillary permutation $w$ of shape $\lambda$, one has $\mathbb{E}(\mathrm{maxchain}_{[e,w]};\mathrm{ddeg})=\mathbb{E}(\mathrm{maxchain}_{[\varnothing,\lambda]};\mathrm{ddeg})$. Let us explain how they did this. First note that if $P$ is a graded poset of rank $r$ then
\begin{equation} \label{eqn:maxchain}
 \mathbb{E}(\mathrm{maxchain}_P;\mathrm{ddeg}) = \frac{\#\{(C,x,y)\colon \textrm{$C$ is a maximal chain of $P$, $y \in C$, and $x \lessdot y$} \}}{(r+1)\cdot\#\{C\colon \textrm{$C$ is a maximal chain of $P$}\}}.
 \end{equation}
Reiner-Tenner-Yong in fact showed something stronger than~$\mathbb{E}(\mathrm{maxchain}_{[e,w]};\mathrm{ddeg})=\mathbb{E}(\mathrm{maxchain}_{[\varnothing,\lambda]};\mathrm{ddeg})$ when $w$ is a vexillary permutation of shape $\lambda$; they showed that all of the individual quantities appearing in the quotient~\eqref{eqn:maxchain} are the same for $P=[e,w]$ and for $P=[\varnothing,\lambda]$. Here is their argument. First of all, it is clear in this situation that $[e,w]$ and $[\varnothing,\lambda]$ are both graded posets of rank~$\ell(w) = |\lambda|$. Next, recall that the number of maximal chains of~$[e,w]$ is the number of reduced words of $w$, and the number of maximal chains of~$[\varnothing,\lambda]$ is the number of Standard Young Tableaux of shape $\lambda$. These quantities can be read off from the stable Schubert polynomial (a.k.a., Stanley symmetric function) $F_w$ of $w$, and from the Schur function $s_{\lambda}$ of $\lambda$, respectively. Thus the equality of these two numbers when $w$ is a vexillary permutation of shape~$\lambda$ follows from the fact that in this case $F_w = s_{\lambda}$~\cite[Corollary 4.2]{stanley1984reduced}. (A bijective proof of this equality is given by the famous Edelman-Greene bijection~\cite{edelman1987balanced}.) Finally, the quantity 
\[ \#\{(C,x,y)\colon \textrm{$C$ is a maximal chain of $P$, $y \in C$, and $x \lessdot y$} \} \] 
is the number of ``nearly reduced words'' for $w$ when $P=[e,w]$, and is the number of ``barely set-valued tableaux'' of shape~$\lambda$ when $P=[\varnothing,\lambda]$ (these terms were coined by Reiner-Tenner-Yong). These quantities can be read off from the stable Grothendieck polynomial $G_w$ of $w$, and from the stable Grothendieck polynomial $G_{\lambda}$ of shape~$\lambda$, respectively. So the equality of these two numbers when $w$ is a vexillary permutation of shape~$\lambda$ follows from the fact that in this case $G_w = G_{\lambda}$ (see~\cite[Lemma 5.4]{reiner2018poset}, who cite a formula from~\cite{knutson2009grobner}). It is reasonable to ask whether the above approach can be extended to address skew vexillary permutations as well. When $w$ is a skew vexillary permutation of skew shape $\sigma = \lambda/\nu$, we do have that $F_w = s_{\lambda/\nu}$ (this follows from the work of~\cite{reiner1995key} and~\cite{kraskiewicz2004schubert}; see~\cite[Proposition 2.4]{billey1993schubert}).\footnote{Note however that unlike the situation for vexillary permutations, it is not known whether $F_w$ being a skew Schur function is equivalent to $w$ being skew vexillary. Complicating matters is the fact that unlike for usual Schur functions, there can be nontrivial equalities (i.e., beyond $s_{\sigma}=s_{\sigma^{\mathrm{rot}}}$) for skew Schur functions~\cite{billera2006decomposable, reiner2007coincidences, mcnamara2009towards}.} There is a notion of skew stable Grothendieck polynomial $G_{\lambda/\nu}$ going back to Buch~\cite{buch2002littlewood}. We suspect that when $w$ is a skew vexillary permutation of shape $\lambda/\nu$ we have $G_w=G_{\lambda/\nu}$, but we have not found any statement to this effect in the literature (and we do not have much computational evidence to support our suspicion). If this were the case, then one could conclude that $\mathbb{E}(\mathrm{maxchain}_{[e,w]};\mathrm{ddeg})=\mathbb{E}(\mathrm{maxchain}_{[\nu,\lambda]};\mathrm{ddeg})$ for $w$ a skew vexillary permutation of shape $\lambda/\nu$. We do not know if this equality of expectation always holds. Our methods will avoid any use of Schubert polynomials, Grothendieck polynomials, et cetera, but will apply only to certain skew shapes $\sigma=\lambda/\nu$ (the balanced ones).
\end{remark}

\begin{remark} \label{rem:dominant_counting_edges}
It is worth contrasting what happens for the maxchain distribution (as discussed in the previous Remark~\ref{rem:stable_grothendieck}) with what happens for the uniform distribution. First of all, it certainly need not be the case that $\mathbb{E}(\mathrm{uni}_{[e,w]};\mathrm{ddeg})=\mathbb{E}(\mathrm{uni}_{[\varnothing,\lambda]};\mathrm{ddeg})$ when $w$ is a vexillary permutation of shape $\lambda$ (see~\cite[Example 5.2]{reiner2018poset}). Even when this equality of expectations does hold (which we will show happens for instance when $\lambda$ is balanced), it need not be the case that the terms of the quotient 
\[\mathbb{E}(\mathrm{uni}_P;\mathrm{ddeg})=\#P/\#\{y\lessdot x \in P\}\] 
match up for $P=[e,w]$ and $P=[\varnothing,\lambda]$ (see Examples~\ref{ex:21} and~\ref{ex:322_11} below). This is another difference from the case of the maxchain distribution. In order to compute $\mathbb{E}(\mathrm{uni}_{[e,w]};\mathrm{ddeg})$ for dominant permutations $w$ of rectangular staircase shape, Reiner-Tenner-Yong directly computed the number of elements and number of edges of (the Hasse diagram of) the poset $[e,w]$. To do this, they explained how for any permutation $w$ the quantity $\#[e,w]$ is given by the number of linear extensions of some other poset~$P_w$ (the \emph{noninversion poset} of~$w$; see~\cite[\S5.2]{reiner2018poset}), and showed that when $w$ is a dominant permutation this poset $P_w$ is a \emph{forest} poset, which means that it has a simple product formula computing its number of linear extensions. The number of edges can similarly be obtained by counting linear extensions of certain quotients of $P_w$. It appears that the techniques used by Reiner-Tenner-Yong to compute $\mathbb{E}(\mathrm{uni}_{[e,w]};\mathrm{ddeg})$ for dominant permutations rely heavily on very special properties of dominant permutations (e.g., the poset $P_w$ will not be a forest in general), and so cannot be adapted to other vexillary permutations. We believe it is hopeless to try to compute the number of elements and number of edges of the poset $[e,w]$ in general.\footnote{Indeed, Dittmer and Pak~\cite{dittmer2018counting} give a precise sense in which counting the number of elements of an initial interval of weak order is a computationally intractable problem.} We will only obtain formulas for the edge densities of posets, not for the number of edges or number of elements.
\end{remark}

\subsection{Basic properties of initial weak order intervals for skew vexillary permutations} \label{sec:skew_vex_basic_props}

We want to study the collection of permutations $w$ which are skew vexillary of some fixed shape $\sigma = \lambda/\nu$ (or more precisely, the collection of isomorphism classes of posets $[e,w]$ for such $w$) as one ``family.'' Let us review some basic properties of this family and then go over some examples.
 
\begin{prop} \label{prop:rothe_disconnected}
Let $w \in \mathfrak{S}_n$. Let $\Gamma_w$ denote the graph on the boxes of the Rothe diagram of $w$ where two boxes are adjacent if they belong to the same row or the same column. Suppose that $\Gamma_w$ is disconnected. Then there is some $1 \leq k < n$ for which we can write $w =(u,v) \in \mathfrak{S}_k \times \mathfrak{S}_{n-k}\subseteq \mathfrak{S}_n$ with $u\neq e$ and $v \neq e$.
\end{prop}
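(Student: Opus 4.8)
The plan is to work directly with the Rothe diagram, which I will write $D(w)$, using the elementary \emph{box criterion} (visible in the ``X-with-rays'' picture above): a cell $(i,c)$ lies in $D(w)$ if and only if $w(i)>c$ and $w^{-1}(c)>i$, i.e.\ $(i,c)$ sits strictly northwest of the permutation dot in its row and of the dot in its column. The conclusion is equivalent to the existence of a \emph{nontrivial split point}, an index $1\le k\le n-1$ with $w(\{1,\dots,k\})=\{1,\dots,k\}$ on which $w$ restricts to a non-identity permutation on each side. I would first reduce ``both factors nontrivial'' to ``some nontrivial split point exists'' by strong induction on $n$: if a split point $k$ produces $w=(u,v)$ with, say, $v=e$, then every box lies in the first block, so $\Gamma_u=\Gamma_w$ is disconnected for $u\in\mathfrak{S}_k$ with $k<n$, and the inductive hypothesis splits $u$, hence $w$, with both factors nontrivial. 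Thus the real content is: \emph{if $\Gamma_w$ is disconnected then $w$ has a split point $k$ with $1\le k\le n-1$.}

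The key tool is a \textbf{bridging lemma}: if $(i_1,c_1),(i_2,c_2)\in D(w)$ with $i_1\le i_2$ and $c_1\ge c_2$, then the northwest corner $(i_1,c_2)$ is also a box, since $w(i_1)>c_1\ge c_2$ and $w^{-1}(c_2)>i_2\ge i_1$. This corner shares a row with $(i_1,c_1)$ and a column with $(i_2,c_2)$, so any two boxes in ``northeast--southwest'' position lie in the same component of $\Gamma_w$; consequently two boxes in \emph{different} components are strictly ordered northwest--southeast, $i_1<i_2\iff c_1<c_2$. I would then upgrade this to a \textbf{separation lemma}: distinct components occupy disjoint, and moreover totally separated, sets of rows (and columns). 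The no-interleaving step is the one place a short argument is needed: if an $A$-box lay above and another $A$-box below some $B$-box $(i_2,c_2)$, then a path inside $A$ joining them must avoid row $i_2$ (no $A$-box lies in row $i_2$, which belongs to $B$) and hence crosses it through a shared column $d$, giving $A$-boxes $(a,d)$ and $(a',d)$ with $a<i_2<a'$; the strict northwest--southeast relation applied to each of these against $(i_2,c_2)$ forces both $d<c_2$ and $d>c_2$, a contradiction. Hence the components are linearly ordered from northwest to southeast; let $A$ be the first one, lying in rows $\le r^\ast$ and columns $\le c^\ast$ (with $r^\ast,c^\ast\ge1$), while every other box lies in rows $>r^\ast$ and columns $>c^\ast$. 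In particular the two ``off-diagonal'' regions---rows $\le r^\ast$ with columns $>c^\ast$, and rows $>r^\ast$ with columns $\le c^\ast$---contain no boxes.

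To produce the split point I would set $k:=\max\{w(i):1\le i\le r^\ast\}$, attained at some $i^\ast\le r^\ast$. \textbf{The hard part is verifying that this $k$ is genuinely a split point}, and this is exactly where the subtlety of the problem lives: the boxes \emph{undershoot} the block size, so $k$ cannot be read off from the rows and columns used by $A$ alone and one must bring in the permutation dots. For instance $w=2143$ is a direct sum of two blocks of size $2$, yet its only box is $(1,1)$, so $r^\ast=c^\ast=1$ while the correct split point is $k=2$. The box-free ``northeast'' region does the work: if a value $c'$ with $c^\ast<c'<k$ sat at a position exceeding $i^\ast$, then $(i^\ast,c')$ would be a box in that forbidden region, so all values in $(c^\ast,k]$ occupy positions $\le i^\ast\le r^\ast$; the transpose of this statement (via $w\leftrightarrow w^{-1}$, under which $\Gamma_w\cong\Gamma_{w^{-1}}$ and the roles of $r^\ast,c^\ast$ swap) controls the values $\le c^\ast$ symmetrically, and combining the two forces $w(\{1,\dots,k\})=\{1,\dots,k\}$. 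The bound $k\le n-1$ also follows from the box-free region, since $k=n$ would place value $n$ in the top rows and then any non-$A$ box's column would create a forbidden box; and $u\ne e$ because $A$ contributes a box to the first block, while $v\ne e$ is delivered by the induction above. I expect this final verification---turning the clean geometric separation into the exact combinatorial identity $w(\{1,\dots,k\})=\{1,\dots,k\}$---to be the main obstacle, precisely because of the boxes-undershoot-the-block phenomenon.
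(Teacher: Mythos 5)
Your proposal is correct, but it takes a genuinely different route from the paper's proof. The paper argues the contrapositive by a row-by-row scan: if $\{1,\dots,k\}\neq\{w(1),\dots,w(k)\}$ for every $k$ weakly past the first occupied row, then every row of the Rothe diagram containing a box is joined, through the column indexed by the minimal value missing from $\{w(1),\dots,w(k)\}$, to an earlier occupied row, so $\Gamma_w$ would be connected; nontriviality of both factors then comes from choosing the \emph{minimal} split point rather than from induction on $n$. You instead analyze the component geometry directly: your bridging lemma forces boxes of distinct components to be strictly NW--SE ordered, your path-crossing argument upgrades this to total separation of row sets and column sets, and you construct the split point explicitly as $k=\max\{w(i):i\le r^*\}$. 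Your route is longer but proves a stronger structural fact (the components of any Rothe diagram are linearly ordered from northwest to southeast, with box-free off-diagonal regions) together with an explicit formula for the split point; the paper's route is shorter and needs none of this geometry. One step you must still write out is the final ``combining,'' since a priori the transposed quantity $k'=\max\{w^{-1}(c):c\le c^*\}$ need not equal $k$. This is easily repaired by two cases. If $k\le k'$: positions $1,\dots,r^*$ carry values $\le k$ by definition of $k$, and positions $r^*+1,\dots,k$ carry values $\le c^*<k$ by your transposed statement (here $c^*<k$ because the box of $A$ in column $c^*$ forces $w(i_0)>c^*$ for some $i_0\le r^*$), so $w(\{1,\dots,k\})\subseteq\{1,\dots,k\}$ and hence equality holds. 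If $k'\le k$: symmetrically, values $1,\dots,c^*$ sit at positions $\le k'\le k$ by definition of $k'$, and values $c^*+1,\dots,k$ sit at positions $\le i^*\le r^*\le k$ by your forbidden-region statement, so $w^{-1}(\{1,\dots,k\})\subseteq\{1,\dots,k\}$ and equality holds. Either way $k$ is a split point (and a posteriori $k=k'$), so with this short addition your proof is complete.
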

\begin{proof}
Imagine building up the Rothe diagram of $w$ row-by-row. Let $i$ be the first row which contains a box of the Rothe diagram in it (such a row exists because $\Gamma_w$ is disconnected). All the boxes in row~$i$ are certainly connected in $\Gamma_w$, so there must be some other row which has a box in it as well. So let~$i\leq k < n$ and suppose that the $(k+1)$st row of the Rothe diagram contains a box in it. Suppose further that $\{1,\ldots,k\}\neq \{w(1),\ldots,w(k)\}$. Then let $x \in \{1,\ldots,k\}$ be minimal such that $x \notin \{w(1),\ldots,w(k)\}$. Note that $w(k) > x$; indeed otherwise the $(k+1)$st row would not contain a box of the Rothe diagram in it. Thus there is a box in the $(k+1)$st row and $x$th column of the Rothe diagram. But there also must be $1 \leq j \leq k$ such that there is a box in the $j$th row and $x$th column, because some $j \in \{1,\ldots,k\}$ must have $w(j)>x$. So in this case all boxes in the $(k+1)$st row belong to the connected component of $\Gamma_w$ containing a box in some previous row. Thus if for each $i \leq k < n$ we have that $\{1,\ldots,k\}\neq\{w(1),\ldots,w(k)\}$, by induction we get that $\Gamma_w$ must be connected. Since $\Gamma_w$ is disconnected by assumption, there is $i \leq k < n$ with $\{1,\ldots,k\}= \{w(1),\ldots,w(k)\}$. Writing $w =(u,v) \in \mathfrak{S}_k \times \mathfrak{S}_{n-k}\subseteq \mathfrak{S}_n$, we see that $u\neq e$ because $k\geq i$; by choosing the minimal such $k$ we can also guarantee~$v \neq e$.
\end{proof}

\begin{cor} \label{cor:shape_disconnected}
Let $\sigma$ be a disconnected skew shape. Let $w$ be a skew vexillary permutation of shape $\sigma$. Then $[e,w] \simeq [e,u]\times [e,v]$, where $u$ is skew vexillary of shape $\sigma_1\neq \varnothing$ and $v$ is skew vexillary of shape $\sigma_2\neq \varnothing$, with $\sigma=\sigma_1\sqcup\sigma_2$.
\end{cor}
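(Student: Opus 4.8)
The plan is to combine the structural dichotomy of Proposition~\ref{prop:rothe_disconnected} with the fact (recalled in Section~\ref{sec:background}) that weak order intervals behave multiplicatively under the parabolic decomposition $\mathfrak{S}_k \times \mathfrak{S}_{n-k} \subseteq \mathfrak{S}_n$. First I would observe that since $w$ is skew vexillary of the disconnected shape $\sigma$, its Rothe diagram can be transformed to $\sigma$ by permuting rows and columns. Because such permutations preserve the row/column-adjacency structure, the graph $\Gamma_w$ of Proposition~\ref{prop:rothe_disconnected} is isomorphic (as a graph on boxes) to the corresponding adjacency graph of $\sigma$ itself; and that graph is disconnected precisely because $\sigma = \sigma_1 \sqcup \sigma_2$ with no box of $\sigma_1$ sharing a row or column with any box of $\sigma_2$. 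Hence the hypothesis of Proposition~\ref{prop:rothe_disconnected} is satisfied, and we may write $w = (u,v) \in \mathfrak{S}_k \times \mathfrak{S}_{n-k} \subseteq \mathfrak{S}_n$ with $u \neq e$ and $v \neq e$ for some $1 \leq k < n$.

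Next I would establish the poset isomorphism $[e,w] \simeq [e,u] \times [e,v]$. This should follow directly from the characterization of weak order by containment of inverse inversions, $x \leq w \iff \mathrm{Inv}^{-1}(x) \subseteq \mathrm{Inv}^{-1}(w)$, recalled in Section~\ref{sec:weak_order_def}. When $w = (u,v)$ lives in the parabolic $\mathfrak{S}_k \times \mathfrak{S}_{n-k}$, its inversions split into those entirely among the first $k$ positions (the inversions of $u$) and those entirely among the last $n-k$ positions (the inversions of $v$), with no inversions straddling the two blocks; the same holds for $\mathrm{Inv}^{-1}$. Consequently any $x \leq w$ must itself lie in $\mathfrak{S}_k \times \mathfrak{S}_{n-k}$ and decompose as $x = (x_1, x_2)$ with $x_1 \leq u$ and $x_2 \leq v$ independently, giving the desired product decomposition of the interval. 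The map $x \mapsto (x_1,x_2)$ is the isomorphism.

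Finally I would check that $u$ is skew vexillary of shape $\sigma_1$ and $v$ of shape $\sigma_2$. The Rothe diagram of $w = (u,v)$ is the disjoint (row- and column-separated) union of the Rothe diagram of $u$ sitting in the top-left $k \times k$ block and the Rothe diagram of $v$ sitting in the bottom-right $(n-k)\times(n-k)$ block. Since the row/column permutation carrying $\Gamma_w$ to $\sigma$ must send each connected component to a connected component of $\sigma$, it restricts to row/column permutations carrying the Rothe diagram of $u$ to one of $\{\sigma_1,\sigma_2\}$ and that of $v$ to the other; relabeling if necessary, $u$ is skew vexillary of shape $\sigma_1$ and $v$ of shape $\sigma_2$, and both are nonempty because $u,v \neq e$.

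I expect the main obstacle to be the second step: carefully justifying that every $x \leq w$ in weak order actually respects the parabolic block structure (i.e.\ that $\leq w$ forces $x \in \mathfrak{S}_k \times \mathfrak{S}_{n-k}$) and that the order factors as a direct product. Although intuitively clear from the inversion description, this requires an honest argument that no element below $w$ can acquire a straddling inverse inversion, which then cleanly yields the product structure.
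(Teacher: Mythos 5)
Your proposal is correct and follows essentially the same route as the paper's proof: permuting rows and columns preserves row/column adjacency, so $\Gamma_w$ is disconnected, Proposition~\ref{prop:rothe_disconnected} yields $w=(u,v)\in\mathfrak{S}_k\times\mathfrak{S}_{n-k}$ with $u,v\neq e$, and the block structure of the Rothe diagram gives the shapes $\sigma_1$ and $\sigma_2$. The only difference is that you carefully justify the isomorphism $[e,w]\simeq[e,u]\times[e,v]$ via containment of inverse inversion sets (correctly), whereas the paper states this as an immediate consequence of $w=(u,v)$.
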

\begin{proof}
Permuting rows and columns preserves the relationship ``being in the same row or column'' for two boxes of a diagram, so $\sigma$ being disconnected implies that the graph~$\Gamma_w$ from Proposition~\ref{prop:rothe_disconnected} is disconnected. Hence there is some $1 \leq k < n$ with $w = (u,v) \in \mathfrak{S}_k \times \mathfrak{S}_{n-k}\subseteq \mathfrak{S}_n$ such that $u\neq e$ and $v \neq e$. The boxes of the Rothe diagram in the upper $k\times k$ square of $[n]\times[n]$ are equal, up to permutation of rows and columns, to a skew shape $\sigma_1\neq \varnothing$. In other words, $u$ is skew vexillary of shape~$\sigma_1$. Hence, $v$ must be skew vexillary of shape $\sigma_2$, where $\sigma=\sigma_1\sqcup\sigma_2$. The fact that $w = (u,v)$ implies that $[e,w] \simeq [e,u]\times [e,v]$, as desired.
\end{proof}

The properties of posets we are interested in studying (e.g., the CDE property) are preserved under Cartesian products (at least for graded posets; see, e.g.,~\cite[Proposition 2.13]{reiner2018poset}). So in light of Corollary~\ref{cor:shape_disconnected}, we will throughout the rest of the paper restrict our attention to connected shapes.

By the \emph{initial fixed points} of a permutation $w \in \mathfrak{S}_n$ we mean the maximal sequence of letters $w_1,w_2,\cdots,w_k$ for which $w(i) =i $ for all $1 \leq i \leq k$. Similarly, the \emph{terminal fixed points} are the maximal sequence of letters $w_{k},w_{k+1},\cdots,w_n$ for which $w(i) =i $ for all $k \leq i \leq n$. In the following proposition and corollary, let us use $\widetilde{w}$ to denote the permutation obtained from $w$ by deleting its initial and terminal fixed points and then reindexing (so $\widetilde{w}$ will belong to a smaller symmetric group than~$w$, in general).

\begin{prop} \label{prop:finite}
Let $\sigma$ be a connected skew shape of height $a$ and width $b$ and~$w \in \mathfrak{S}_n$ a skew vexillary permutation of shape $\sigma$. Then $\widetilde{w} \in \mathfrak{S}_m$ where $m \leq a+b$.
\end{prop}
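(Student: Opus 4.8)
The plan is to work entirely with the one-line notation of $w$ and to translate everything into two classical statistics. Call a position $i$ a \emph{right-to-left minimum} (RL-min) if $w(i) < w(j)$ for all $j > i$, and a \emph{left-to-right maximum} (LR-max) if $w(i) > w(j)$ for all $j < i$. The reason these are the right objects is that row $i$ of the Rothe diagram is \emph{empty} exactly when $i$ is an RL-min, and column $v$ is empty exactly when the position $w^{-1}(v)$ is an LR-max. Writing $f \coloneqq \min\{i\colon w(i)\neq i\}$ and $g \coloneqq \max\{i\colon w(i)\neq i\}$ for the first and last non-fixed positions, the initial fixed points are exactly $1,\dots,f-1$ and the terminal fixed points are exactly $g+1,\dots,n$, so $\widetilde{w}\in\mathfrak{S}_m$ with $m = g-f+1 \eqqcolon M$.

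First I would record the bookkeeping. Permuting rows and columns changes neither the number of nonempty rows nor the number of nonempty columns, and for a connected skew shape of height $a$ and width $b$ every one of the $a$ rows and $b$ columns of its bounding rectangle is nonempty; hence $a$ equals the number of non-RL-min positions and $b$ equals the number of non-LR-max positions. A direct check shows that every fixed point lying outside $[f,g]$ (that is, every position in $1,\dots,f-1$ or $g+1,\dots,n$) is \emph{simultaneously} an RL-min and an LR-max. Therefore, letting $\rho$ and $\lambda$ denote the number of RL-min and LR-max positions lying inside $[f,g]$, counting RL-minima gives $a = M-\rho$ and counting LR-maxima gives $b = M-\lambda$. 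Consequently the desired bound $m = M \le a+b = 2M-(\rho+\lambda)$ is equivalent to the single inequality $\rho+\lambda \le M$.

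The crux, and the step I expect to be the main obstacle, is proving $\rho+\lambda\le M$, which I would deduce from the sharper claim that no position in $[f,g]$ is \emph{both} an RL-min and an LR-max; granting this, the RL-min and LR-max positions form disjoint subsets of the $M$-element interval $[f,g]$, and the inequality is immediate. So suppose $i\in[f,g]$ were both. Then every value to the left of position $i$ is smaller than $w(i)$ and every value to its right is larger; counting the values below and above $w(i)$ forces $w(i)=i$, and moreover $w(\{1,\dots,i-1\})=\{1,\dots,i-1\}$ and $w(\{i+1,\dots,n\})=\{i+1,\dots,n\}$. In other words $w$ splits as $w=(u,v)\in\mathfrak{S}_i\times\mathfrak{S}_{n-i}\subseteq\mathfrak{S}_n$. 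Since $w(i)=i$ while $f$ and $g$ are by definition non-fixed, we must have $f<i<g$, so the non-fixed positions $f$ and $g$ witness $u\neq e$ and $v\neq e$.

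Finally I would convert this splitting into a contradiction with connectedness. When $w=(u,v)$ with $u,v\neq e$, the inversions of $w$ are the disjoint union of those of $u$ and those of $v$, and the corresponding boxes of the Rothe diagram occupy rows and columns lying in $\{1,\dots,i-1\}$ for the $u$-part and in $\{i+1,\dots,n\}$ for the $v$-part. These two nonempty sets of boxes share no row and no column, so the graph $\Gamma_w$ of Proposition~\ref{prop:rothe_disconnected} is disconnected. Because the relation ``lying in a common row or column'' is preserved by the row and column permutations carrying the Rothe diagram to $\sigma$, the shape $\sigma$ likewise decomposes as $\sigma=\sigma_1\sqcup\sigma_2$ with both parts nonempty (and for a skew shape this decomposition coincides with disconnectedness of $P_\sigma$, since the boxes in each row and each column of a skew shape are contiguous). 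This contradicts the hypothesis that $\sigma$ is connected, finishing the argument. I anticipate that the only delicate points will be the careful justification of $a=M-\rho$ and $b=M-\lambda$ (i.e.\ that the fixed points outside $[f,g]$ account for exactly the missing RL-minima and LR-maxima) and the identification of the $\Gamma$-decomposition of $\sigma$ with disconnectedness of $P_\sigma$.
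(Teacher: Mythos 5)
Your proof is correct and follows essentially the same route as the paper's: the key step in both is that a position which is simultaneously a right-to-left minimum and a left-to-right maximum (equivalently, indexes an empty row and an empty column of the Rothe diagram) forces a splitting $w=(u,v)\in\mathfrak{S}_k\times\mathfrak{S}_{n-k}$ with $u,v\neq e$, which disconnects the Rothe diagram and hence $\sigma$. The paper then counts directly --- each of the $m$ positions of $\widetilde{w}$ injects into the $a$ nonempty rows or $b$ nonempty columns --- which is just the complementary form of your $\rho+\lambda\le M$ bookkeeping.
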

\begin{proof}
First observe that $\widetilde{w}$ is also skew vexillary of shape $\sigma$. Now suppose $1 \leq k \leq m$ is such that $(i,k),(k,j) \notin \mathrm{Inv}(\widetilde{w})$ for all $i \in \{1,\ldots,k-1\}$ and $j \in \{k+1,\ldots,m\}$. In other words, $\widetilde{w}(i) \leq \widetilde{w}(k)$ for all $i \in \{1,\ldots,k-1\}$ and $\widetilde{w}(j) \geq \widetilde{w}(k)$ for all $j \in \{k+1,\ldots,m\}$. This implies that $\widetilde{w} \in \mathfrak{S}_k \times \mathfrak{S}_{n-k}$. Because $\widetilde{w}$ has no initial or terminal fixed points, both the upper $k\times k$ and the lower $(n-k)\times(n-k)$ square of $[n]\times[n]$ must have some boxes of the Rothe diagram of $\widetilde{w}$ in them. That $\widetilde{w} \in \mathfrak{S}_k \times \mathfrak{S}_{n-k}$ thus implies this Rothe diagram is disconnected. But that would imply that $\sigma$ is disconnected. Hence for each $1 \leq k \leq m$ there must be $i \in \{1,\ldots,k-1\}$ with $(i,k) \in \mathrm{Inv}(\widetilde{w})$ or $j \in \{k+1,\ldots,m\}$ with $(k,j) \in \mathrm{Inv}(\widetilde{w})$. In other words, each $1 \leq k \leq m$ corresponds to either a row or a column of $\sigma$ (possibly both). But the number of rows plus the number of columns of $\sigma$ equals $a+b$, so indeed $m \leq a+b$.
\end{proof}

\begin{cor} \label{cor:finite}
Let $\sigma$ be a connected skew shape of height $a$ and width $b$. Let $w$ be a skew vexillary permutation of shape $\sigma$. Then $[e,w]\simeq [e,u]$, where $u \in \mathfrak{S}_{a+b}$ is a skew vexillary permutation of shape $\sigma$.
\end{cor}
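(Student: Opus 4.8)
The plan is to combine Proposition~\ref{prop:finite} with the observation that deleting (or appending) fixed points at the two ends of a permutation changes neither the isomorphism type of its initial weak order interval nor the shape of its Rothe diagram. Concretely, I would establish the chain of isomorphisms $[e,w]\simeq[e,\widetilde{w}]\simeq[e,u]$, where $\widetilde{w}\in\mathfrak{S}_m$ is the permutation from Proposition~\ref{prop:finite} (so that $m\leq a+b$) and $u\in\mathfrak{S}_{a+b}$ is obtained from $\widetilde{w}$ by padding with $a+b-m$ terminal fixed points.

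First I would handle the deletion of fixed points. If $w$ has $k$ initial fixed points, then $w(i)=i$ for $1\leq i\leq k$, so $w=(e,v)\in\mathfrak{S}_k\times\mathfrak{S}_{n-k}\subseteq\mathfrak{S}_n$ with $e$ the identity of $\mathfrak{S}_k$. By the product decomposition of initial intervals already used in the proof of Corollary~\ref{cor:shape_disconnected}, we get $[e,w]\simeq[e,e]\times[e,v]\simeq[e,v]$, where the reindexed $v$ is exactly $w$ with its initial fixed points removed. A symmetric argument (writing $w=(v,e)$) handles the terminal fixed points, and iterating yields $[e,w]\simeq[e,\widetilde{w}]$. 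Moreover, the end fixed points contribute no boxes to the Rothe diagram: a box $(i,w(j))$ requires an inversion $(i,j)\in\mathrm{Inv}(w)$, and neither a position nor a value of an initial or terminal fixed point can be part of an inversion. Hence $\widetilde{w}$ has the same nonempty rows and columns of its Rothe diagram as $w$, so it is skew vexillary of the same shape $\sigma$, and Proposition~\ref{prop:finite} applies to give $\widetilde{w}\in\mathfrak{S}_m$ with $m\leq a+b$.

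Finally I would pad back up. Given $\widetilde{w}\in\mathfrak{S}_m$ with $m\leq a+b$, define $u\in\mathfrak{S}_{a+b}$ by $u(i)=\widetilde{w}(i)$ for $1\leq i\leq m$ and $u(i)=i$ for $m<i\leq a+b$; that is, $u=(\widetilde{w},e)\in\mathfrak{S}_m\times\mathfrak{S}_{a+b-m}$. Exactly as above, $[e,u]\simeq[e,\widetilde{w}]$ and the appended fixed points add no boxes to the Rothe diagram, so $u$ is skew vexillary of shape $\sigma$. Concatenating the isomorphisms gives $[e,w]\simeq[e,u]$ with $u\in\mathfrak{S}_{a+b}$ skew vexillary of shape $\sigma$, which is the claim.

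As for the main obstacle: there is essentially none, since the substantive work has been absorbed into Proposition~\ref{prop:finite}. The only point demanding care is confirming that deleting and appending end fixed points genuinely preserves both the interval up to isomorphism and the Rothe-diagram shape. Both follow immediately from the inversion-set description of weak order (via $\mathrm{Inv}^{-1}$) together with the parabolic product decomposition of initial intervals, so the corollary is a routine bookkeeping consequence of the proposition.
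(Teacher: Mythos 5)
Your proposal is correct and follows essentially the same route as the paper: delete the end fixed points to get $\widetilde{w}$, invoke Proposition~\ref{prop:finite} to get $\widetilde{w}\in\mathfrak{S}_m$ with $m\leq a+b$, and pad with terminal fixed points to land in $\mathfrak{S}_{a+b}$. The only difference is that you spell out the justifications the paper leaves implicit (the parabolic product decomposition showing fixed points are not ``seen'' by weak order, and the invariance of the Rothe diagram, hence of the shape $\sigma$), which is harmless extra bookkeeping.
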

\begin{proof}
Initial and terminal fixed points are not ``seen'' by weak order, so $[e,w]\simeq [e,\widetilde{w}]$. We have $\widetilde{w} \in \mathfrak{S}_m$ with $m \leq a+b$  by Proposition~\ref{prop:finite}; then by appending some terminal fixed points we can obtain the desired $u \in \mathfrak{S}_{a+b}$.
\end{proof}

Corollary~\ref{cor:finite} (together with  Corollary~\ref{cor:shape_disconnected}) says that for any fixed shape $\sigma = \lambda / \nu$, the collection of isomorphism classes of posets $[e,w]$ for $w$ a skew vexillary permutation of shape $\sigma$ is always finite. Let us make a few more remarks about this collection of (isomorphism classes of) posets.

This collection always contains the distributive lattices $[\nu,\lambda]$ and $[\nu,\lambda]^*$ which correspond to fully commutative~$w$. It is not hard to show that if $\sigma = b^a$ is a rectangle, then this collection consists of a single element which is the self-dual poset $[\varnothing,b^a]$; and otherwise, this collection has some other posets in it which are not distributive lattices (see also Section~\ref{sec:constructing_skew_vex}). As we will see in the examples below, the distributive lattices~$[\nu,\lambda]$ and $[\nu,\lambda]^*$ appear to be the ``smallest'' posets in this collection.

If $\sigma=\lambda$ is actually a straight shape, then this collection also always has the poset~$[e,w]$ for $w$ dominant of shape $\lambda$. This dominant poset appears to be the ``biggest'' poset in the collection.

Another observation about this collection: it is closed under duality. This is because, as mentioned at the beginning of Section~\ref{sec:skew_vex_def}, if $w$ is skew vexillary of shape $\sigma$ then so is $(w^{\mathrm{rc}})^{-1}$; but $[e,w]\simeq [e,w^{\mathrm{rc}}]$, and hence $[e,w]\simeq [e,(w^{\mathrm{rc}})^{-1}]^{*}$.

Now let's see some examples of these collections of posets.

\begin{figure}
\begin{tikzpicture}
\node (1) at (0,0) {$1234$};
\node (2) at (0,1) {$1324$};
\node (3) at (1,2) {$3124$};
\node (4) at (-1,2) {$1342$};
\node (5) at (0,3) {$3142$};
\draw (1) -- (2);
\draw (2) -- (3);
\draw (2) -- (4);
\draw (3) -- (5);
\draw (4) -- (5);
\end{tikzpicture} \qquad
\begin{tikzpicture}
\node (1) at (0,0) {$1234$};
\node (2) at (-1,1) {$1243$};
\node (3) at (1,1) {$2134$};
\node (4) at (0,2) {$2143$};
\node (5) at (0,3) {$2413$};
\draw (1) -- (2);
\draw (1) -- (3);
\draw (2) -- (4);
\draw (3) -- (4);
\draw (4) -- (5);
\end{tikzpicture} \qquad
\begin{tikzpicture}
\node (1) at (0,0) {$123$};
\node (2) at (-1,1) {$213$};
\node (3) at (1,1) {$132$};
\node (4) at (-1,2) {$231$};
\node (5) at (1,2) {$312$};
\node (6) at (0,3) {$321$};
\draw (1) -- (2);
\draw (1) -- (3);
\draw (2) -- (4);
\draw (3) -- (5);
\draw (4) -- (6);
\draw (5) -- (6);
\end{tikzpicture}
\caption{Example~\ref{ex:21}: the initial weak order intervals corresponding to vexillary permutations of shape~$(2,1)$.} \label{fig:21}
\end{figure}

\begin{example} \label{ex:21}
Let $\lambda = (2,1)$. Up to isomorphism there are three posets of the form~$[e,w]$ for $w$ a vexillary permutation of shape~$\lambda$: these are $[1234,3142]\simeq[\varnothing,(2,1)]$ for the inverse Grassmannian permutation $3142$; $[1234,2413]\simeq[\varnothing,(2,1)]^{*}$ for the Grassmannian permutation $2413$; and the self-dual poset $[123,321]$ for the dominant permutation $321$. These are depicted in Figure~\ref{fig:21}.  One can check that all these posets are CDE with edge density~$1$.
\end{example}

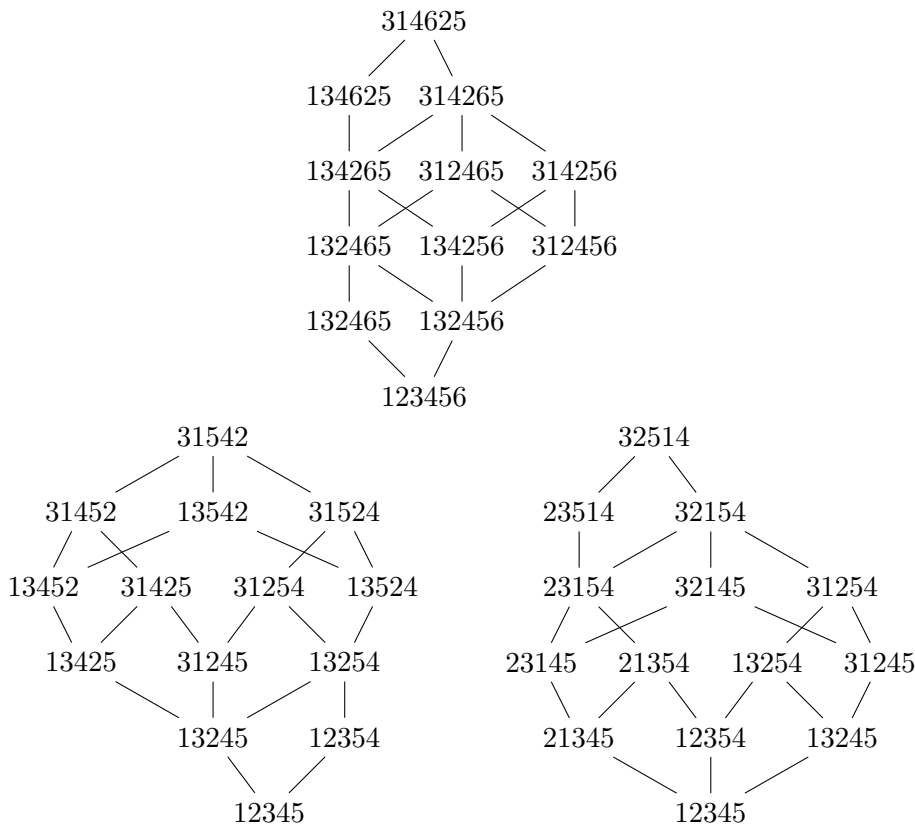
\begin{figure}
\begin{tikzpicture}
\node (1) at (0,0) {$123456$};
\node (2) at (-1,1) {$132465$};
\node (3) at (0.5,1) {$132456$};
\node (4) at (-1,2) {$132465$};
\node (5) at (0.5,2) {$134256$};
\node (6) at (2,2) {$312456$};
\node (7) at (-1,3) {$134265$};
\node (8) at (0.5,3) {$312465$};
\node (9) at (2,3) {$314256$};
\node (10) at (-1,4) {$134625$};
\node (11) at (0.5,4) {$314265$};
\node (12) at (0,5) {$314625$};
\draw (1) -- (2);
\draw (1) -- (3);
\draw (2) -- (4);
\draw (3) -- (4);
\draw (3) -- (5);
\draw (3) -- (6);
\draw (4) -- (7);
\draw (4) -- (8);
\draw (5) -- (7);
\draw (5) -- (9);
\draw (6) -- (8);
\draw (6) -- (9);
\draw (7) -- (10);
\draw (7) -- (11);
\draw (8) -- (11);
\draw (9) -- (11);
\draw (10) -- (12);
\draw (11) -- (12);
\end{tikzpicture}

\begin{tikzpicture}
\node (1) at (2,0) {$12345$};
\node (2) at (1.25,1) {$13245$};
\node (3) at (3,1) {$12354$};
\node (4) at (-0.5,2) {$13425$};
\node (5) at (1.25,2) {$31245$};
\node (6) at (3,2) {$13254$};
\node (7) at (-1,3) {$13452$};
\node (8) at (0.5,3) {$31425$};
\node (9) at (2,3) {$31254$};
\node (10) at (3.5,3) {$13524$};
\node (11) at (-0.5,4) {$31452$};
\node (12) at (1.25,4) {$13542$};
\node (13) at (3,4) {$31524$};
\node (14) at (1.25,5) {$31542$};
\draw (1) -- (2);
\draw (1) -- (3);
\draw (2) -- (4);
\draw (2) -- (5);
\draw (2) -- (6);
\draw (3) -- (6);
\draw (4) -- (7);
\draw (4) -- (8);
\draw (5) -- (8);
\draw (5) -- (9);
\draw (6) -- (9);
\draw (6) -- (10);
\draw (7) -- (11);
\draw (7) -- (12);
\draw (8) -- (11);
\draw (9) -- (13);
\draw (10) -- (12);
\draw (10) -- (13);
\draw (11) -- (14);
\draw (12) -- (14);
\draw (13) -- (14);
\end{tikzpicture} \qquad
\begin{tikzpicture}
\node (2) at (1.25,1) {$12345$};
\node (4) at (-0.5,2) {$21345$};
\node (5) at (1.25,2) {$12354$};
\node (6) at (3,2) {$13245$};
\node (7) at (-1,3) {$23145$};
\node (8) at (0.5,3) {$21354$};
\node (9) at (2,3) {$13254$};
\node (10) at (3.5,3) {$31245$};
\node (11) at (-0.5,4) {$23154$};
\node (12) at (1.25,4) {$32145$};
\node (13) at (3,4) {$31254$};
\node (14) at (1.25,5) {$32154$};
\node (1) at (-0.5,5) {$23514$};
\node (3) at (0.5,6) {$32514$};
\draw (2) -- (4);
\draw (2) -- (5);
\draw (2) -- (6);
\draw (4) -- (7);
\draw (4) -- (8);
\draw (5) -- (8);
\draw (5) -- (9);
\draw (6) -- (9);
\draw (6) -- (10);
\draw (7) -- (11);
\draw (7) -- (12);
\draw (8) -- (11);
\draw (9) -- (13);
\draw (10) -- (12);
\draw (10) -- (13);
\draw (11) -- (14);
\draw (12) -- (14);
\draw (13) -- (14);
\draw (11) -- (1);
\draw (1) -- (3);
\draw (14) -- (3);
\end{tikzpicture}
\caption{Example~\ref{ex:322_11}: the initial weak order intervals corresponding to skew vexillary permutations of shape~$(3,2,2)/(1,1)$.} \label{fig:322_11}
\end{figure}

\begin{example} \label{ex:322_11}
Let $\sigma = (3,2,2)/(1,1)$. Up to isomorphism there are three posets of the form $[e,w]$ for $w$ a skew vexillary permutation of shape~$\sigma$: these are the self-dual poset $[123456,314625]\simeq[(1,1),(3,2,2)]$ for the fully commutative permutation~$314625$; and the dual posets $[12345,31542]$ and $[12345,32514]$. These are depicted in Figure~\ref{fig:322_11}.  One can check that all these posets are CDE with edge density~$3/2$.
\end{example}

Examples~\ref{ex:21} and~\ref{ex:322_11} both concern balanced shapes so our main result will explain why all the posets in these examples are CDE.

\section{Toggle-symmetric distributions for semidistributive lattices} \label{sec:tcde}

\subsection{Toggling for semidistributive lattices} \label{sec:tcde_def}
Let $L$ be a semidistributive lattice. We now explain a canonical labeling of the cover relations (a.k.a., edges of the Hasse diagram) of $L$ which was first introduced by Barnard~\cite{barnard2016canonical}, building on work of Reading~\cite{reading2015noncrossing} in the case of weak order on $\mathfrak{S}_n$. 

For any cover relation $x \lessdot y \in L$, we define the edge labeling
\[\gamma(x \lessdot y) \coloneqq  \mathrm{min}\{z\in L\colon x\join z = y\}. \]
The semidistributivity of $L$ guarantees that for any $x\lessdot y$ the set $\{z\in L\colon x\join z = y\}$ has a unique minimal element (see~\cite[Proposition 3.4]{barnard2016canonical}), and thus that this label $\gamma(x\lessdot y)\in L$ always exists. Moreover, it is easy to see that we always have $\gamma(x\lessdot y) \in \mathrm{Irr}(L)$.

We call $\gamma$ the \emph{canonical $\gamma$-labeling} of the edges (of the Hasse diagram) of $L$. As mentioned, these labels are join irreducible elements of $L$.

For example, consider the case of a distributive lattice $L= \mathcal{J}(P)$: for $I,J \in \mathcal{J}(P)$ with $I\lessdot J$ we have $\gamma(I\lessdot J) = p$ where $p\in P$ is such that $J = I\cup\{p\}$. This fundamental example explains the ``toggling'' terminology: the toggles we define below will toggle the status of $p$ in $I$ (when possible).

It follows from work of Barnard that for any $y \in L$, among edges incident to $y$ (i.e., cover relations in which $y$ is involved, either in the form $x \lessdot y$ or $y\lessdot z$), each join irreducible element appears as a $\gamma$-label at most once. Indeed, she showed moreover that the canonical join representation of any $y \in L$ is $\join \{\gamma(x\lessdot y)\colon x \in P \textrm{ with $x\lessdot y$}\}$ (see~\cite[Lemma 3.3]{barnard2016canonical}). 

Barnard's results allow us to define a notion of toggling in this semidistributive context (see also Thomas-Williams~\cite{thomas2017rowmotion}). For each join irreducible element $p\in \mathrm{Irr}(L)$ we define \emph{toggling at $p$} to be the involution $\tau_p\colon L \to L$ defined by
\[ \tau_p(y) \coloneqq  \begin{cases} x &\textrm{ if $\gamma(x\lessdot y) = p$}; \\ z &\textrm{ if $\gamma(y\lessdot z) = p$}; \\ y &\textrm{otherwise}. \end{cases} \]
Note that $\tau_p(y)$ is well-defined precisely because at most one edge incident to $y$ has $\gamma$-label $p$. This notion of toggle generalizes the toggles studied by Striker and Williams~\cite{striker2012promotion} in the distributive lattice setting (and discussed in Section~\ref{sec:intro}).

For $y\in L$ and $p \in \mathrm{Irr}(L)$, we say that \emph{$p$ can be toggled into $y$} if $y \lessdot \tau_p(y)$; similarly, we say that \emph{$p$ can be toggled out of~$y$} if $\tau_p(y)\lessdot y$. Let us also define the toggleability statistics $\mathcal{T}^+_p, \mathcal{T}^-_p,\mathcal{T}_p\colon L\to\mathbb{Z}$ by
\begin{align*}
\mathcal{T}^+_p(y) &\coloneqq  \begin{cases} $1$ &\textrm{if $p$ can be toggled into $y$}; \\ 0 &\textrm{otherwise};\end{cases} \\
\mathcal{T}^-_p(y) &\coloneqq  \begin{cases} $1$ &\textrm{if $p$ can be toggled out of $y$}; \\ 0 &\textrm{otherwise};\end{cases}  \\
\mathcal{T}_p(y) &\coloneqq  \mathcal{T}^+_p(y)-\mathcal{T}^-_p(y). \\
\end{align*}

\begin{definition}
Let $\mu$ be a probability distribution on $L$. We say that $\mu$ is \emph{toggle-symmetric} if $\mathbb{E}(\mu;\mathcal{T}_p)=0$ for all $p\in \mathrm{Irr}(L)$.
\end{definition}

This notion of toggle-symmetric distribution generalizes the notion for distributive lattices defined in~\cite{chan2017expected} (and discussed in Section~\ref{sec:intro}). 

\begin{prop} \label{prop:uni_toggle_sym}
The uniform distribution $\mathrm{uni}_L$ is toggle-symmetric.
\end{prop}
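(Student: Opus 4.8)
The plan is to show $\mathbb{E}(\mathrm{uni}_L; \mathcal{T}_p) = 0$ for each $p \in \mathrm{Irr}(L)$, which by definition is what it means for $\mathrm{uni}_L$ to be toggle-symmetric. Since $\mathrm{uni}_L$ assigns mass $1/\#L$ to each element, the condition $\mathbb{E}(\mathrm{uni}_L; \mathcal{T}_p) = 0$ is equivalent to $\sum_{y \in L} \mathcal{T}_p(y) = 0$, i.e., $\sum_{y \in L} \mathcal{T}^+_p(y) = \sum_{y \in L} \mathcal{T}^-_p(y)$. In words, I want to show that the number of elements into which $p$ can be toggled equals the number of elements out of which $p$ can be toggled.

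The key observation is that the toggle $\tau_p\colon L \to L$ is an involution (as stated in the definition of toggling at $p$), and it pairs up exactly those elements $y$ for which some incident edge carries the $\gamma$-label $p$. Concretely, whenever $x \lessdot y$ is a cover relation with $\gamma(x \lessdot y) = p$, the involution $\tau_p$ swaps $x$ and $y$: we have $\tau_p(y) = x$ (so $p$ can be toggled out of $y$, contributing to $\mathcal{T}^-_p(y) = 1$) and $\tau_p(x) = y$ (so $p$ can be toggled into $x$, contributing to $\mathcal{T}^+_p(x) = 1$). Thus the cover relations labeled $p$ set up a bijection between the elements $y$ with $\mathcal{T}^-_p(y) = 1$ (the larger endpoints of such edges) and the elements $x$ with $\mathcal{T}^+_p(x) = 1$ (the smaller endpoints). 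For every element $y$ that is fixed by $\tau_p$, both $\mathcal{T}^+_p(y)$ and $\mathcal{T}^-_p(y)$ vanish, so such elements contribute nothing. Summing, I would conclude $\sum_y \mathcal{T}^+_p(y) = \sum_y \mathcal{T}^-_p(y)$, giving $\mathbb{E}(\mathrm{uni}_L; \mathcal{T}_p) = 0$.

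The crucial fact that makes this work cleanly is that each edge incident to a fixed $y$ carries a distinct $\gamma$-label, so $\tau_p(y)$ is well-defined; this is exactly the property of Barnard's labeling quoted in the excerpt (that among edges incident to $y$, each join irreducible appears at most once). This ensures there is no ambiguity about which edge the toggle uses, so the pairing described above is genuinely a well-defined involution whose non-fixed points are precisely the endpoints of the edges labeled $p$.

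The main (and really only) potential obstacle is bookkeeping: one must verify carefully that $\mathcal{T}^+_p(y) = 1$ forces $y$ to be the bottom endpoint of a $p$-labeled edge and $\mathcal{T}^-_p(y) = 1$ forces $y$ to be the top endpoint, so that the two sums are literally counting the two endpoint-sets of the same edge set. But this is immediate from the definitions of ``can be toggled into'' ($y \lessdot \tau_p(y)$) and ``can be toggled out of'' ($\tau_p(y) \lessdot y$) together with the definition of $\tau_p$. I expect no real difficulty; the statement is essentially a formal consequence of $\tau_p$ being an involution that respects the cover structure, and the proof is short.
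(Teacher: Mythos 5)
Your proof is correct and is essentially the paper's own argument: the paper also observes that $\tau_p$ pairs each $y$ with $\tau_p(y)$ so that $\mathcal{T}_p(y) = -\mathcal{T}_p(\tau_p(y))$, and that these two elements are equally probable under $\mathrm{uni}_L$, which forces $\mathbb{E}(\mathrm{uni}_L;\mathcal{T}_p)=0$. Your version just unpacks this pairing explicitly in terms of the $p$-labeled edges, but the underlying involution argument is the same.
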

\begin{proof}
For each $p \in \mathrm{Irr}(L)$ and $y \in L$, we have $\mathcal{T}_p(y) = -\mathcal{T}_p(\tau_p(y))$; but $y$ and $\tau_p(y)$ are equally probable in the uniform distribution.
\end{proof}

For any semidistributive lattice~$L$ there are some other toggle-symmetric distributions, beyond the uniform distribution, coming from ``rowmotion.'' We will explain these in Section~\ref{sec:semi_rowmotion}.

\begin{definition}
We say that the semidistributive lattice $L$ is \emph{toggle CDE (tCDE)} if $\mathbb{E}(\mu;\mathrm{ddeg})=\mathbb{E}(\mathrm{uni}_L;\mathrm{ddeg})$ for every toggle-symmetric distribution~$\mu$ on~$L$.
\end{definition}

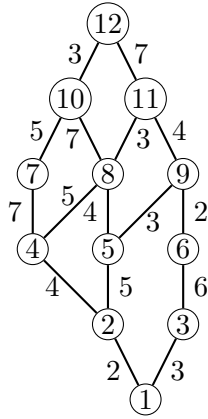
\begin{figure}
 \begin{tikzpicture}
	\node[draw,circle,inner sep=1pt,minimum size=1pt] (1) at (0,0) {1};
	\node[draw,circle,inner sep=1pt,minimum size=1pt] (2) at (-0.5,1) {2};
	\node[draw,circle,inner sep=1pt,minimum size=1pt] (3) at (0.5,1) {3};
	\node[draw,circle,inner sep=1pt,minimum size=1pt] (4) at (-1.5,2) {4};
	\node[draw,circle,inner sep=1pt,minimum size=1pt] (5) at (-0.5,2) {5};
	\node[draw,circle,inner sep=1pt,minimum size=1pt] (6) at (0.5,2) {6};
	\node[draw,circle,inner sep=1pt,minimum size=1pt] (7) at (-1.5,3) {7};
	\node[draw,circle,inner sep=1pt,minimum size=1pt] (8) at (-0.5,3) {8};
	\node[draw,circle,inner sep=1pt,minimum size=1pt] (9) at (0.5,3) {9};
	\node[draw,circle,inner sep=1pt,minimum size=1pt] (10) at (-1,4) {10};
	\node[draw,circle,inner sep=1pt,minimum size=1pt] (11) at (0,4) {11};
	\node[draw,circle,inner sep=1pt,minimum size=1pt] (12) at (-0.5,5) {12};
	\path[draw,thick] (1) edge node[left,pos=0.3] {2} (2);
	\path[draw,thick] (1) edge node[right,pos=0.3] {3} (3);
	\path[draw,thick] (2) edge node[left,pos=0.5] {4} (4);
	\path[draw,thick] (2) edge node[right,pos=0.5] {5} (5);
	\path[draw,thick] (3) edge node[right,pos=0.5] {6} (6);
	\path[draw,thick] (4) edge node[left,pos=0.5] {7} (7);
	\path[draw,thick] (4) edge node[left,pos=0.8] {5} (8);
	\path[draw,thick] (5) edge node[left,pos=0.5] {4} (8);
	\path[draw,thick] (5) edge node[right,pos=0.3] {3} (9);
	\path[draw,thick] (6) edge node[right,pos=0.5] {2} (9);
	\path[draw,thick] (7) edge node[left,pos=0.7] {5} (10);
	\path[draw,thick] (8) edge node[left,pos=0.5] {7} (10);
	\path[draw,thick] (8) edge node[right,pos=0.5] {3} (11);
	\path[draw,thick] (9) edge node[right,pos=0.7] {4} (11);
	\path[draw,thick] (10) edge node[left,pos=0.7] {3} (12);
	\path[draw,thick] (11) edge node[right,pos=0.7] {7} (12);
\end{tikzpicture}
\caption{Example~\ref{ex:tcde_not_cde}: a semidistributive lattice which is tCDE but not CDE.} \label{fig:tcde_not_cde}
\end{figure}

This notion of tCDE generalizes the notion for distributive lattices defined in~\cite{hopkins2017cde} (and discussed in Section~\ref{sec:intro}). In~\cite[Corollary 2.20]{chan2017expected} it was shown that for a distributive lattice $L$ the distribution $\mathrm{maxchain}_L$ is toggle-symmetric; hence, if $L$ is tCDE, then it is CDE. As the next example shows, for a semidistributive lattice $L$ it need not be the case that $\mathrm{maxchain}_L$ is toggle-symmetric, and moreover $L$ being tCDE does {\bf not} necessarily imply that is CDE. (At this point the reader may with some cause object to our terminology.)

\begin{example} \label{ex:tcde_not_cde}
Let $L$ be the semidistributive lattice depicted in Figure~\ref{fig:tcde_not_cde}. In this figure the edges are labeled by the canonical $\gamma$-labels. One can check that in this example $\mathbb{E}(\mathrm{maxchain}_L;\mathcal{T}_3) = \frac{1}{14}$, and so $\mathrm{maxchain}_L$ is not a toggle-symmetric distribution. We claim that $L$ is tCDE with edge density $\frac{4}{3}$. To see this, one can check that we have the following equality of functions $L\to\mathbb{R}$:
\[-\mathcal{T}_2 -\frac{1}{3}\mathcal{T}_3 -\frac{2}{3}\mathcal{T}_4 -\frac{2}{3}\mathcal{T}_5 -\frac{2}{3}\mathcal{T}_6 -\frac{1}{3}\mathcal{T}_7+\frac{4}{3}\mathbf{1}=\mathrm{ddeg},\]
where $\mathbf{1}\colon L\to\mathbb{R}$ denotes the constant function $\mathbf{1}(x)=1$. Hence, if $\mu$ is any toggle-symmetric distribution on $L$, then $\mathbb{E}(\mu;\mathrm{ddeg}) = \frac{4}{3}=\mathbb{E}(\mathrm{uni}_L;\mathrm{ddeg})$ by taking $\mathbb{E}(\mu;\cdot)$ of both sides of the above equality. But~$L$ is {\bf not} CDE since $\mathbb{E}(\mathrm{maxchain}_L;\mathrm{ddeg})=\frac{55}{42}$.
\end{example}
 
In spite of this example, in the rest of this section we will show that in our case of interest (namely, intervals of weak order) tCDE does imply CDE. We do not know exactly which properties are required of semidistributive lattices for tCDE to imply CDE. Note that the $L$ appearing in Example~\ref{ex:tcde_not_cde} is even graded.

\subsection{The canonical edge labeling for weak order} \label{sec:weak_order_edge_labels}

We now explain what that the canonical $\gamma$-labels look like for weak order intervals $[e,w]$. We basically follow the account of Reading~\cite[\S2]{reading2015noncrossing}; see that paper for proofs of these statements. 

Recall that the join irreducible elements of weak order are the nonidentity Grassmannian permutations. We want a way to record these Grassmannian permutations more compactly. Let $(i,j)$ be a pair with $1 \leq i < j \leq n$ and let $\mathbf{x} \subseteq \{i+1,i+2,\ldots,j-1\}$ be any subset. We define the permutation $g((i,j),\mathbf{x})\in \mathfrak{S}_n$ in one-line notation as
\[g((i,j),\mathbf{x}) \coloneqq  1,2,\cdots,(i-1), x_1,x_2,\cdots, x_m, j, i, y_1,y_2,\ldots,y_{(j-i-1)-m},j+1,j+2,\cdots,n\]
where $\mathbf{x}=\{x_1 <\ldots <x_m\}$ and $\{y_1<\ldots<y_{(j-i)-m}\}=\{i+1,\ldots,j-1\}\setminus \mathbf{x}$. The unique descent of $g((i,j),\mathbf{x})$ is $(j,i)$ and hence $g((i,j),\mathbf{x})$ is a Grassmannian permutation. Moreover, all nonidentity Grassmannian permutations are of this form:
\[\mathrm{Irr}(\mathfrak{S}_n) = \{g((i,j),\mathbf{x})\colon1\leq i < j \leq n, \mathbf{x}\subseteq \{i+1,i+2,\ldots,j-1\} \}.\]

Let $u,w \in \mathfrak{S}_n$ with $u \lessdot w$. Thus $w = us_k$ for some $1 \leq k < n$. Then we have that~$\gamma(u\lessdot w) = g((i,j),\mathbf{x})$ where $(i,j) = (u_k,u_{k+1}) = (w_{k+1},w_k)$ and 
\[\mathbf{x} = \{x\colon i+1\leq x\leq j-1, (i,x)\in \mathrm{Inv}^{-1}(w)\}.\]
Recall that in this situation we have that $\mathrm{Inv}^{-1}(w) = \mathrm{Inv}^{-1}(u)\cup \{(i,j)\}$. So the canonical $\gamma$-labels record which inverse inversion is added/removed along each edge, but they record more information than this as well.

Once we know the $\gamma$-labels for weak order, we know all the canonical join representations as well: we can just apply Barnard's result mentioned in Section~\ref{sec:tcde_def} which tells that the canonical join representation of $w\in \mathfrak{S}_n$ is $\join \{\gamma(u\lessdot w)\colon u\in \mathfrak{S}_n, u\lessdot w\}$.

Finally, we remark that for any initial interval $[e,w]$ of weak order, the $\gamma$-labels of cover relations are exactly the same as in the full weak order. Note in particular that $\mathrm{Irr}([e,w]) \subseteq \mathrm{Irr}(\mathfrak{S}_n)$. (These facts are true more generally for initial intervals of any semidistributive lattice.)

\begin{figure}
\begin{tikzpicture}
\node (1) at (0,0) {$123$};
\node (2) at (-1,1) {$213$};
\node (3) at (1,1) {$132$};
\node (4) at (-1,2) {$231$};
\node (5) at (1,2) {$312$};
\node (6) at (0,3) {$321$};
\path[draw,thick] (1) edge node[left,pos=0.3] {\tiny $g((1,2),\varnothing)$} (2);
\path[draw,thick] (1) edge node[right,pos=0.3] {\tiny $g((2,3),\varnothing)$} (3);
\path[draw,thick] (2) edge node[left,pos=0.5] {\tiny$g((1,3),\{2\})$} (4);
\path[draw,thick] (3) edge node[right,pos=0.5] { \tiny $g((1,3),\varnothing)$} (5);
\path[draw,thick] (4) edge node[left,pos=0.8] {\tiny$g((2,3),\varnothing)$} (6);
\path[draw,thick] (5) edge node[right, pos=0.8] { \tiny $g((1,2),\varnothing)$} (6);
\end{tikzpicture}
\caption{The canonical $\gamma$-labeling for $\mathfrak{S}_n$.} \label{fig:321_labels}
\end{figure}
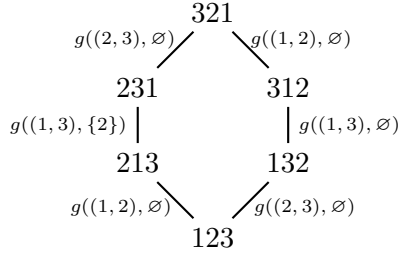

See Figure~\ref{fig:321_labels} for the canonical labeling of~$\mathfrak{S}_3$.

\subsection{The maxchain distribution is toggle-symmetric for weak order intervals} \label{sec:wo_maxchain}

Fix $w\in \mathfrak{S}_n$. In this subsection we will prove that the maxchain distribution is toggle-symmetric for the initial weak order interval $[e,w]$. 

As explained in the previous subsection, the canonical $\gamma$-labels of the edges of $[e,w]$ contain more information than just which inverse inversion is added/removed along each edge. But which inverse inversion is added/removed is of particular salience. Thus for any~$(i,j) \in \mathrm{Inv}^{-1}(w)$ we define the toggleability statistics $\mathcal{T}^{+}_{(i,j)}, \mathcal{T}^{-}_{(i,j)}, \mathcal{T}_{(i,j)}\colon[e,w]\to\mathbb{Z}$ by
\begin{align*}
\mathcal{T}^{+}_{(i,j)} &\coloneqq  \sum_{g((i,j),\mathbf{x})\in\mathrm{Irr}([e,w])} \mathcal{T}^{+}_{g((i,j),\mathbf{x})}; \\
\mathcal{T}^{-}_{(i,j)} &\coloneqq  \sum_{g((i,j),\mathbf{x})\in\mathrm{Irr}([e,w])} \mathcal{T}^{-}_{g((i,j),\mathbf{x})}; \\
\mathcal{T}_{(i,j)} &\coloneqq  \mathcal{T}^{+}_{(i,j)} - \mathcal{T}^{-}_{(i,j)}.
\end{align*}

The next proposition says that, at least if we are focused on maximal chains, we do not lose too much information when passing from the $\mathcal{T}_{g((i,j),\mathbf{x})}$ statistics to the the $\mathcal{T}_{(i,j)}$ statistics.

\begin{prop} \label{prop:gr_nonzero}
Let $u_0\lessdot \cdots \lessdot u_{\ell(w)}$ be a maximal chain of $[e,w]$. Let $(i,j)\in\mathrm{Inv}^{-1}(w)$. If~$\mathcal{T}_{g((i,j),\mathbf{x})}(u_k) \neq 0$ for some $\mathbf{x}$ and some~$0 \leq k \leq \ell(w)$, then for any $\mathbf{y}\neq \mathbf{x}$ we have that $\mathcal{T}_{g((i,j),\mathbf{y})}(u_m) = 0$ for all $0\leq m \leq \ell(w)$.
\end{prop}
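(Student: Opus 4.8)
The plan is to show that the only subset $\mathbf{x}$ which can contribute a nonzero value of $\mathcal{T}_{g((i,j),\mathbf{x})}$ anywhere along the chain is a single canonically determined set $\mathbf{x}^*$; the proposition follows immediately from this.

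First I would observe that the value of $\mathbf{x}$ is forced by $u_k$. Using the formula for the canonical $\gamma$-label from Section~\ref{sec:weak_order_edge_labels}, if $g((i,j),\mathbf{x})$ labels any edge incident to $u_k$ (equivalently $\mathcal{T}_{g((i,j),\mathbf{x})}(u_k)\neq 0$), then evaluating the label formula gives, in either orientation, $\mathbf{x}=\{x\colon i+1\leq x\leq j-1,\ (i,x)\in\mathrm{Inv}^{-1}(u_k)\}=:\mathbf{x}(u_k)$. The key point is that adding or removing the inverse inversion $(i,j)$ across the edge does not affect which pairs $(i,x)$ with $i<x<j$ lie in $\mathrm{Inv}^{-1}$, so the same $\mathbf{x}(u_k)$ is produced whether the edge goes up from $u_k$ or down into $u_k$. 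Thus the proposition reduces to proving that $\mathbf{x}(u_k)$ takes the same value at every $u_k$ along the chain for which $\mathcal{T}_{g((i,j),\cdot)}(u_k)\neq 0$.

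Next I would characterize those $u_k$. Since a cover relation of weak order is given by an adjacent transposition, and since $(i,j)\in\mathrm{Inv}^{-1}(w)$ guarantees that the relevant cover stays inside $[e,w]$, one checks that $\mathcal{T}_{g((i,j),\cdot)}(u_k)\neq 0$ holds exactly when $i$ and $j$ are adjacent in the one-line notation of $u_k$. Because the chain is maximal it only ever adds inverse inversions, so $i$ and $j$ are swapped exactly once, at the unique step $m^*$ where $(i,j)$ enters $\mathrm{Inv}^{-1}$; hence $i$ precedes $j$ for $k<m^*$ and $j$ precedes $i$ for $k\geq m^*$. Setting $\mathbf{x}^*:=\mathbf{x}(u_{m^*-1})=\mathbf{x}(u_{m^*})$ (these agree, as they differ only in the status of $(i,j)$), I would aim to show $\mathbf{x}(u_k)=\mathbf{x}^*$ at every adjacency time $k$.

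For this, note that $\mathbf{x}(u_k)$ is weakly increasing in $k$ since $\mathrm{Inv}^{-1}(u_k)$ grows along the chain, which already gives $\mathbf{x}(u_k)\subseteq\mathbf{x}^*$ for $k\leq m^*-1$ and $\mathbf{x}(u_k)\supseteq\mathbf{x}^*$ for $k\geq m^*$. The reverse inclusions are the crux, and I expect this to be the main obstacle. The key is a no-crossing argument: when $i,j$ are adjacent, any value $x$ with $i<x<j$ lies entirely to the left or entirely to the right of the two-letter block $\{i,j\}$, and $x\in\mathbf{x}(u_k)$ precisely when $x$ sits on the side making $(i,x)$ an inverse inversion. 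I would argue that such an $x$ can never move across the block in the direction that would change $\mathbf{x}(u_k)$, because crossing $j$ in that direction would delete the inverse inversion $(x,j)$, which is impossible along an upward maximal chain. Carrying this out for both $k<m^*$ and $k\geq m^*$ supplies the missing inclusions, so $\mathbf{x}(u_k)=\mathbf{x}^*$ at every adjacency time. Combining the three steps, every nonzero $\mathcal{T}_{g((i,j),\mathbf{x})}(u_k)$ forces $\mathbf{x}=\mathbf{x}^*$, so a given nonzero $\mathcal{T}_{g((i,j),\mathbf{x})}(u_k)$ has $\mathbf{x}=\mathbf{x}^*$ and hence $\mathcal{T}_{g((i,j),\mathbf{y})}$ vanishes on the entire chain for every $\mathbf{y}\neq\mathbf{x}$. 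Everything outside the no-crossing step is bookkeeping with the $\gamma$-label formula and the monotonicity of inversion sets.
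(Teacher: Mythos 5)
Your proof is correct. It ultimately rests on the same elementary mechanism as the paper's proof --- along an upward maximal chain inverse inversions are only added, so a letter $x$ with $i<x<j$ can never cross $j$ from right to left --- but the organization is genuinely different. The paper reads the chain backwards as a bubble-sorting process and runs a positional ``barrier'' argument in two phases: it defines $k^-$ (the last time $j,i$ are adjacent) and $k^+$ (the last time before $(i,j)$ is added), shows that no value-range letter can sit between $j$ and $i$ for any $k$ with $k^+<k\leq k^-$, and shows that below $k^+$ any letter entering the gap is trapped there, so that $i$ and $j$ can never again become adjacent. You instead work with inversion-set membership directly: the forced label $\mathbf{x}(u_k)$ is weakly increasing in $k$, while at adjacency times $x\in\mathbf{x}(u_k)$ is equivalent to $(x,j)\notin\mathrm{Inv}^{-1}(u_k)$, a weakly \emph{decreasing} condition; together these pin $\mathbf{x}(u_k)$ to a single value at every adjacency time, and adjacency times are exactly the times when toggles are possible. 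Your version is cleaner and more symmetric; in fact your case split $k<m^*$ versus $k\geq m^*$ is unnecessary, since for any two adjacency times $k_1<k_2$ the monotonicity of $\mathbf{x}(\cdot)$ gives $\mathbf{x}(u_{k_1})\subseteq\mathbf{x}(u_{k_2})$ while the monotonicity of the $(x,j)$-inversions gives the reverse inclusion. What the paper's positional formulation buys is a by-product your argument does not produce: the fact that the gap between $j$ and $i$ contains no value-range letter throughout the whole range $k^+<k\leq k^-$ (not merely at adjacency times) is quoted and reused in the proof of Proposition~\ref{prop:maxchain_toggle1}, so if you went on to prove that proposition you would need to establish this stronger positional statement separately.
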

\begin{proof}
The sequence $u_{\ell(w)}, u_{\ell(w)-1},\ldots,u_0$ is a ``bubble-sorting process'' for~$w$, i.e., a way of starting from $w$ and successively transposing descents to reach the identity permutation. For instance, with $w=53124$ we could have the sequence
\[5\underline{31}24 \to 51\underline{32}4 \to \underline{51}234 \to 1\underline{52}34 \to 12\underline{53}4 \to 123\underline{54} \to 12345. \]
Let us consider the way the one-line notations of the permutations $u_k$ evolve during this process. At some point in this process the letters $j$ and $i$ will become adjacent: so define $k^-$ be the maximal $0 \leq k^{-} \leq \ell(w)$ such that $(j,i)$ is a descent of~$u_{k^-}$. Also, at some point in this process we will have to transpose $j$ and $i$: so define~$k^+$ to be the maximal $0 \leq k^+ \leq \ell(w)$ such that $(i,j) \notin \mathrm{Inv}^{-1}(u_{k^+})$. Note~$k^+ < k^-$. In the above example, with $(i,j)=(2,5)$, we have that $u_{k^-} = 15234$ and $u_{k^+}=12534$. By definition of~$k^-$ we have $\mathcal{T}_{g((i,j),\mathbf{x})}(u_{k^-}) = -1$ for some $\mathbf{x}$. First note that clearly $\mathcal{T}_{g((i,j),\mathbf{y})}(u_{k}) = 0$ for any $\mathbf{y}$ and $k^- < k \leq \ell(w)$. Next, we claim that for any $k^+ < k \leq k^-$, no letter~$x$ with $x \in \{i+1,\ldots,j-1\}$ can be between $j$ and $i$ in $u_{k}$. Indeed, if $x$ is to the left of~$j$ in $u_{k^-}$ then it cannot be moved to the right of~$j$ because it will not form a descent with $j$; and similarly if $x$ is to the right of $i$ it cannot be moved to the left of $i$; so~$j$ and $i$ act as ``barriers'' which prevent such an~$x$ from coming between them. This implies that~$\mathcal{T}_{g((i,j),\mathbf{y})}(u_{k}) = 0$ for any $\mathbf{y}\neq\mathbf{x}$ and $k^+ < k \leq k^-$. Now, once we have transposed~$j$ and $i$, it is possible for an $x \in \{i+1,\ldots,j-1\}$ to come between $i$ and $j$. But if such an~$x$ does come between them, then by the same logic which says it cannot form a descent on the left with $j$ or on the right with $i$, this $x$ must always remain between $i$ and~$j$ during our sorting process. So in particular if this happens then $i$ and $j$ will never again be adjacent. This implies that  $\mathcal{T}_{g((i,j),\mathbf{y})}(u_{k}) = 0$ for any $\mathbf{y}\neq\mathbf{x}$ and~$0 \leq k \leq k^+$, completing the proof of the proposition.
\end{proof}

\begin{figure}
\begin{tikzpicture}
\node (1) at (0,0) {$12345$};
\node (2) at (-1,1) {$13245$};
\node (3) at (1,1) {$12354$};
\node (4) at (-2,2) {$31245$};
\node (5) at (0,2) {$13254$};
\node (6) at (2,2) {$12534$};
\node (7) at (-2,3) {$31254$};
\node (8) at (0,3) {$13524$};
\node (9) at (2,3) {$15234$};
\node (10) at (-2,4) {$31524$};
\node (11) at (0,4) {$15324$};
\node (12) at (2,4) {$51234$};
\node (13) at (-2,5) {$35124$};
\node (14) at (0,5) {$51324$};
\node (15) at (-1,6) {$53124$};

\tikzset{
bicolor/.style 2 args={
  dashed,dash pattern=on 5pt off 5pt,#1,
  postaction={draw,dashed,dash pattern=on 5pt off 5pt,#2,dash phase=5pt}
  },
}

\draw[bicolor={red}{blue},line width=0.2cm] (1) -- (3) -- (5);
\draw[red,line width=0.2cm] (5)--(8)--(11)--(14)--(15);
\draw[blue,line width=0.2cm] (5)--(7)--(10)--(13)--(15);

\path[draw,thick] (1) edge node[left,pos=0.2] {\small $(2,3)$} (2);
\path[draw,thick] (1) edge node[right,pos=0.2] {\small $(4,5)$} (3);
\path[draw,thick] (2) edge node[left,pos=0.2] {\small $(1,3)$} (4);
\path[draw,thick] (2) edge node[left,pos=0.8] {\small $(4,5)$} (5);
\path[draw,thick] (3) edge node[right,pos=0.8] {\small $(2,3)$} (5);
\path[draw,thick] (3) edge node[right,pos=0.3] {\small $(3,5)$} (6);
\path[draw,thick] (4) edge node[left,pos=0.5] {\small $(4,5)$} (7);
\path[draw,thick] (5) edge node[left=2mm,pos=0.2] {\small $(1,3)$} (7);
\path[draw,thick] (5) edge node[right,pos=0.5] {\tiny $g((2,5),\{3\})$} (8);
\path[draw,thick] (6) edge node[right,pos=0.5] {\tiny $g((2,5),\varnothing)$} (9);
\path[draw,thick] (7) edge node[left,pos=0.5] {\tiny $g((2,5),\{3\})$} (10);
\path[draw,thick] (8) edge node[left=2mm,pos=0.2] {\small $(1,3)$} (10);
\path[draw,thick] (8) edge node[left=0.1mm,pos=0.8] {\small $(3,5)$} (11);
\path[draw,thick] (9) edge node[right=2mm,pos=1.2] {\small $(2,3)$} (11);
\path[draw,thick] (9) edge node[right,pos=0.5] {\small $(1,5)$} (12);
\path[draw,thick] (10) edge node[left,pos=0.5] {\small $(1,5)$} (13);
\path[draw,thick] (11) edge node[left,pos=0.5] {\small $(1,5)$} (14);
\path[draw,thick] (12) edge node[right=2mm,pos=1.2] {\small $(2,3)$} (14);
\path[draw,thick] (13) edge node[left,pos=0.8] {\small $(3,5)$} (15);
\path[draw,thick] (14) edge node[right,pos=0.8] {\small $(1,3)$} (15);
\end{tikzpicture}
\caption{Example~\ref{ex:disjoint} illustrating Proposition~\ref{prop:gr_nonzero}, and also Example~\ref{ex:involution} depicting the $\mathcal{T}_{(i,j)}$-negating involution $\tau_{(i,j)}$.} \label{fig:involution}
\end{figure}

\begin{example} \label{ex:disjoint}
Let $w = 53124$. The interval $[e,w]$ is depicted in Figure~\ref{fig:involution}. Note that in this figure we have suppressed the $\mathbf{x}$ information on the $\gamma$-labels $g((i,j),\mathbf{x})$ of the edges, except in the case $(i,j)=(2,5)$. Observe that there are two values of $\mathbf{x}\subseteq \{3,4\}$ with $g((2,5),\mathbf{x})\in \mathrm{Irr}([e,w])$: namely, $\mathbf{x}=\varnothing$ and $\mathbf{x}=\{3\}$. The set $A$ of $u\in [e,w]$ with $\mathcal{T}_{g((2,5),\varnothing)}(u)\neq0$ is $A = \{12534,15234\}$. The set $B$ of $u\in [e,w]$ with $\mathcal{T}_{g((2,5),\{3\})}(u)\neq0$ is $B = \{13254,31254,13524,31524\}$. For any maximal chain $C$ of $[e,w]$, either:
\begin{itemize}
\item $C \cap A=\varnothing$ and $C\cap B\neq\varnothing$;
\item or $C\cap B=\varnothing$ and $C\cap A \neq \varnothing$.
\end{itemize}
This is an illustration of Proposition~\ref{prop:gr_nonzero}. The reader can also check in this example that the same thing happens for $(i,j)=(1,5)$.
\end{example}

The next two propositions will allow us to define an involution on the set of maximal chains which negates the $\mathcal{T}_{(i,j)}$ statistic.

\begin{prop} \label{prop:maxchain_toggle1}
Let $u_0  \lessdot \cdots \lessdot u_{\ell(w)}$ be a maximal chain of $[e,w]$. Let $(i,j)\in\mathrm{Inv}^{-1}(w)$. Let $k_1^{-} > k_2^{-} > \cdots > k_a^{-}$ be all the indices $k$ for which $\mathcal{T}_{(i,j)}(u_{k})=-1$. Then for any $1 \leq c \leq a$, the following is also a maximal chain of $[e,w]$:
\[u_0\lessdot \ldots \lessdot u_{k_a^{-}-1} \lessdot s_{(i,j)}u_{k_a^{-}+1}  \lessdot s_{(i,j)}u_{k_a^{-}+2} \lessdot \cdots \lessdot s_{(i,j)}u_{k^{-}_c} \lessdot u_{k^{-}_c}\lessdot \cdots \lessdot u_{\ell(w)}. \]
\end{prop}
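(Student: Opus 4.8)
The plan is to work entirely with inverse inversion sets, using the characterization that $u \lessdot v$ in weak order precisely when $\mathrm{Inv}^{-1}(u) \subseteq \mathrm{Inv}^{-1}(v)$ and $\ell(v) = \ell(u)+1$. First I would unpack the hypothesis $\mathcal{T}_{(i,j)}(u_k) = -1$: by the description of the $\gamma$-labels in Section~\ref{sec:weak_order_edge_labels} together with Proposition~\ref{prop:gr_nonzero}, this holds exactly when the letters $j$ and $i$ occur in consecutive positions, in that order, in the one-line notation of $u_k$. Since the inverse inversion $(i,j)$ is present in $u_k$ for $k \geq k^{*}$ and absent for $k < k^{*}$, where $k^{*}$ is the unique index at which the edge $u_{k^{*}-1} \lessdot u_{k^{*}}$ adds $(i,j)$, the smallest index $k_a^{-}$ must equal $k^{*}$; this identifies the bottom of the modified segment with the single crossing of the letters $i$ and $j$. (When $c=a$ the displayed chain is literally the original one, because $s_{(i,j)} u_{k_a^{-}} = u_{k_a^{-}-1}$, so I may assume $c < a$.)

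The key step, where the real content lies, is the following computation. Recall from the proof of Proposition~\ref{prop:gr_nonzero} the ``barrier'' property: for all $k_a^{-} \le k \le k_1^{-}$, no letter $x$ with $i < x < j$ lies strictly between the letters $j$ and $i$ in $u_k$. I claim that for every such $k$ with $(i,j) \in \mathrm{Inv}^{-1}(u_k)$ one has $\mathrm{Inv}^{-1}(s_{(i,j)} u_k) = \mathrm{Inv}^{-1}(u_k) \setminus \{(i,j)\}$. Left multiplication by $s_{(i,j)}$ interchanges the positions of the letters $i$ and $j$ while fixing every other letter, so the only inverse inversions that can possibly change are $(i,j)$ itself and those of the form $(m,i),(i,m),(m,j),(j,m)$ for letters $m$ lying strictly between the positions of $j$ and $i$. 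The barrier property forces every such $m$ to satisfy $m < i$ or $m > j$, and a short case check shows that in either case exactly one inverse inversion involving $i$ is traded for the corresponding one involving $j$ (or vice versa); hence the net change is precisely the loss of $(i,j)$, and in particular $\ell(s_{(i,j)} u_k) = k-1$.

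Granting this, assembling the chain is routine bookkeeping. For an interior cover $s_{(i,j)} u_k \lessdot s_{(i,j)} u_{k+1}$ with $k_a^{-}+1 \le k \le k_c^{-}-1$, both indices lie in the barrier range and carry $(i,j)$, so by the key step their inverse inversion sets are obtained from $\mathrm{Inv}^{-1}(u_k) \subseteq \mathrm{Inv}^{-1}(u_{k+1})$ by deleting $(i,j)$ from each; the containment and the ``exactly one new element'' condition survive (the added element is not $(i,j)$, since $i,j$ cross only at $k^{*}$), giving the cover. For the bottom cover $u_{k_a^{-}-1} \lessdot s_{(i,j)} u_{k_a^{-}+1}$ I would combine $\mathrm{Inv}^{-1}(u_{k_a^{-}-1}) = \mathrm{Inv}^{-1}(u_{k_a^{-}}) \setminus \{(i,j)\}$ (this is the crossing at $k^{*}$) with the key step at index $k_a^{-}+1$. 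For the top cover $s_{(i,j)} u_{k_c^{-}} \lessdot u_{k_c^{-}}$, the key step at index $k_c^{-}$ shows the two sets differ exactly in $(i,j)$. Since the unchanged initial and final portions are already saturated, the whole displayed sequence is a saturated chain each of whose steps enlarges $\mathrm{Inv}^{-1}$ by one element, running from $e$ to $w$; as its length equals $\ell(w)$, every element lies in $[e,w]$ and it is a maximal chain.

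The main obstacle is the computation of the second paragraph: one must verify that left multiplication by the (non-simple) transposition $s_{(i,j)}$ changes the inverse inversion set by a single element. This is false for a general permutation, where $s_{(i,j)}$ can alter many inversions at once, and it is exactly the barrier property of Proposition~\ref{prop:gr_nonzero} that rescues us, by guaranteeing that no interposed letter value obstructs the swap. Stating the case analysis cleanly (according to whether an interposed letter lies below $i$ or above $j$) is the only delicate point; everything else reduces to manipulations of inverse inversion sets.
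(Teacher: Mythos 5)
Your overall strategy is the paper's: identify $k_a^-$ with the unique crossing of the letters $i$ and $j$, invoke the barrier property from Proposition~\ref{prop:gr_nonzero}, show that left multiplication by $s_{(i,j)}$ is well behaved on the relevant segment of the chain, and assemble the covers. But your key step is false as stated. The identity $\mathrm{Inv}^{-1}(s_{(i,j)}u_k) = \mathrm{Inv}^{-1}(u_k) \setminus \{(i,j)\}$ fails whenever some letter $m$ with $m<i$ or $m>j$ lies between the letters $j$ and $i$ in $u_k$ --- a configuration the barrier property explicitly allows (it only excludes interposed letters with values strictly between $i$ and $j$). For such an $m$ the pairs genuinely trade: if $m<i$, then $(m,j)\in\mathrm{Inv}^{-1}(u_k)$ is replaced by $(m,i)\in\mathrm{Inv}^{-1}(s_{(i,j)}u_k)$; if $m>j$, then $(i,m)$ is replaced by $(j,m)$. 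Your own case check describes exactly these trades, and then contradicts them by asserting the deletion identity: trading preserves cardinality (so $\ell(s_{(i,j)}u_k)=k-1$ does follow), but it changes the set. Concretely, in the paper's Example~\ref{ex:involution} ($w=53124$, $(i,j)=(1,3)$, blue chain) take $u_5=35124$: then $s_{(1,3)}u_5 = 15324$ has $\mathrm{Inv}^{-1}(15324)=\{(2,3),(2,5),(3,5),(4,5)\}$, whereas $\mathrm{Inv}^{-1}(u_5)\setminus\{(1,3)\} = \{(1,5),(2,3),(2,5),(4,5)\}$; the letter $5$ sits between $3$ and $1$, and $(1,5)$ is traded for $(3,5)$. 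Note also that your identity would force $s_{(i,j)}u_k \lessdot u_k$ for every $k$ in the range, which fails here: $15324 \not\leq 35124$ since $(3,5)\in\mathrm{Inv}^{-1}(15324)\setminus\mathrm{Inv}^{-1}(35124)$.

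The error propagates into your bookkeeping paragraph: the interior covers are not ``delete $(i,j)$ from both sides,'' and computing the modified chain with your identity identifies the wrong permutations (it does not produce the paper's red chain in the example above). The repair is what the paper's proof actually asserts: $\mathrm{Inv}^{-1}(s_{(i,j)}u_k)$ is the image of $\mathrm{Inv}^{-1}(u_k)\setminus\{(i,j)\}$ under the relabeling of letter values by $s_{(i,j)}$, and it is precisely the barrier property that makes this description valid (an interposed letter with value in $\{i+1,\ldots,j-1\}$ would wreck it). Since the \emph{same} relabeling map is applied for every $k$ with $k_a^- \leq k \leq k_1^-$, containments and one-element differences among the sets $\mathrm{Inv}^{-1}(u_k)$ are inherited by the sets $\mathrm{Inv}^{-1}(s_{(i,j)}u_k)$, which is exactly what the interior covers require. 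At the two ends ($k=k_a^-$ and $k=k_c^-$) the letters $j,i$ are adjacent, so there are no interposed letters at all, the relabeling acts trivially, and your deletion identity does hold there --- which is why your bottom and top covers are fine even though the middle of your argument is not.
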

\begin{proof}
That $k_a^{-}$ is minimal with $\mathcal{T}_{(i,j)}(u_{k_a^{-}})=-1$ means that going from $u_{k_a^{-}}$ to $u_{k_a^{-}-1}$, we transposed $j$ and $i$. Hence, as was shown in the proof of Proposition~\ref{prop:gr_nonzero}, no letter~$x$ with $x \in \{i+1,\ldots,j-1\}$ can be between $j$ and $i$ in the one-line notation of~$u_{k}$ for any $k_a^{-} \leq k \leq k_1^{-}$; that is, in the language of that proof we have $k^{-} = k_1^{-}$ and $k^{+}=k_a^{-}-1$. Therefore, for any such~$k_a^{-} \leq k \leq k_1^{-}$ we have that
\[\mathrm{Inv}^{-1}(s_{(i,j)}u_k) = \{(s_{(i,j)}(x),s_{(i,j)}(y))\colon (x,y)\in\mathrm{Inv}^{-1}(u_k), (x,y)\neq(i,j) \}. \]
Also note that $(i,j) \in \mathrm{Inv}^{-1}(u_k)$ for all $k_a^{-} \leq k \leq k_1^{-}$. Thus for  $k_a^{-} \leq k < k_1^{-}$, the fact that $u_k\lessdot u_{k+1}$ implies that $s_{(i,j)}u_k\lessdot s_{(i,j)}u_{k+1}$. And we certainly have $s_{(i,j)}u_{k^{-}_c} \lessdot u_{k^{-}_c}$ because $\mathcal{T}_{(i,j)}(u_{k_c})=-1$. Finally, as mentioned above, $u_{k_a^{-}-1}=s_{(i,j)}u_{k_a^{-}}$, which gives us~$u_{k_a^{-}-1} \lessdot s_{(i,j)}u_{k_a^{-}+1}$.
\end{proof}

\begin{prop} \label{prop:maxchain_toggle2}
Let $u_0  \lessdot \cdots \lessdot u_{\ell(w)}$ be a maximal chain of $[e,w]$. Let $(i,j)\in\mathrm{Inv}^{-1}(w)$. Let $k_1^{+} < k_2^{+} < \cdots < k_b^{+}$ be all the indices $k$ for which $\mathcal{T}_{(i,j)}(u_{k})=1$. Then for any $1 \leq c \leq b$, the following is also a maximal chain of $[e,w]$:
\[u_0\lessdot \ldots \lessdot u_{k^+_c} \lessdot s_{(i,j)}u_{k^+_c} \lessdot  s_{(i,j)}u_{k^+_c+1} \lessdot \cdots \lessdot s_{(i,j)}u_{k^+_b-1} \lessdot u_{k^+_{b}+1}\lessdot \cdots \lessdot u_{\ell(w)}. \]
\end{prop}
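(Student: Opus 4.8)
The plan is to mirror the proof of Proposition~\ref{prop:maxchain_toggle1}, exchanging the roles of toggling-in and toggling-out. The guiding picture is that along the chain $u_0\lessdot\cdots\lessdot u_{\ell(w)}$ (read upward from $e$ to $w$) the relative order of the values $i$ and $j$ in one-line notation flips exactly once: inverse inversions are only ever gained as we move up a maximal chain, and $(i,j)\notin\mathrm{Inv}^{-1}(e)$ while $(i,j)\in\mathrm{Inv}^{-1}(w)$, so there is a unique step $k^{*}$ at which $u_{k^{*}}=s_{(i,j)}u_{k^{*}-1}$ transposes $i$ and $j$, turning the ascent $\cdots i\,j\cdots$ into the descent $\cdots j\,i\cdots$. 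Every $+1$-position (where $i$ immediately precedes $j$) then lies at an index $\leq k^{*}-1$, and every $-1$-position at an index $\geq k^{*}$. Because the swap transposes \emph{adjacent} occurrences of $i$ and $j$, the permutation $u_{k^{*}-1}$ has $i$ immediately before $j$; hence $u_{k^{*}-1}$ is a $+1$-position, and it is the largest one. So $k^{+}_{b}=k^{*}-1$ and $u_{k^{+}_{b}+1}=s_{(i,j)}u_{k^{+}_{b}}$, which is the exact mirror of the identity $u_{k^{-}_{a}-1}=s_{(i,j)}u_{k^{-}_{a}}$ used in Proposition~\ref{prop:maxchain_toggle1}.

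Next I would transport the ``barrier'' statement of Proposition~\ref{prop:gr_nonzero} to the side \emph{before} the swap: for all $k^{+}_{1}\leq k\leq k^{+}_{b}$, the values $i$ and $j$ occur in $u_k$ in the order $i$ before $j$ with no letter $x\in\{i+1,\ldots,j-1\}$ between them. Indeed, at $k^{+}_{b}=k^{*}-1$ the values $i$ and $j$ are adjacent, so nothing lies between them; and reading the chain upward from $k^{+}_{1}$, an intermediate letter $x$ could only come to lie between $i$ and $j$ by crossing $i$ rightward or crossing $j$ leftward, each of which flips a descent into an ascent and is therefore a down-move—impossible as we ascend. Thus once $i$ and $j$ are adjacent, no intermediate letter intrudes before the swap. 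This is precisely the same argument as in Proposition~\ref{prop:gr_nonzero}, applied on the opposite side of $k^{*}$.

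With these two facts the cover relations of the displayed sequence follow as in Proposition~\ref{prop:maxchain_toggle1}. The bottom segment $u_0\lessdot\cdots\lessdot u_{k^{+}_{c}}$ and the top segment $u_{k^{+}_{b}+1}\lessdot\cdots\lessdot u_{\ell(w)}$ are unchanged. The step $u_{k^{+}_{c}}\lessdot s_{(i,j)}u_{k^{+}_{c}}$ is a cover because $\mathcal{T}_{(i,j)}(u_{k^{+}_{c}})=1$ means $i$ and $j$ are adjacent in the order $i,j$, so transposing them raises the length by one and adjoins $(i,j)$ to $\mathrm{Inv}^{-1}$. For the middle steps $s_{(i,j)}u_{k}\lessdot s_{(i,j)}u_{k+1}$ with $k^{+}_{c}\leq k\leq k^{+}_{b}-1$ (the last of which, via $u_{k^{+}_{b}+1}=s_{(i,j)}u_{k^{+}_{b}}$, is exactly the closing cover $s_{(i,j)}u_{k^{+}_{b}-1}\lessdot u_{k^{+}_{b}+1}$), the barrier yields for each such $k$ the identity $\mathrm{Inv}^{-1}(s_{(i,j)}u_k)=\{(s_{(i,j)}(x),s_{(i,j)}(y))\colon (x,y)\in \mathrm{Inv}^{-1}(u_k)\}\cup\{(i,j)\}$. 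Since $u_k\lessdot u_{k+1}$ adds a single inverse inversion $(p,q)\neq(i,j)$, applying this identity to both $u_k$ and $u_{k+1}$ shows that $\mathrm{Inv}^{-1}(s_{(i,j)}u_{k+1})$ is obtained from $\mathrm{Inv}^{-1}(s_{(i,j)}u_{k})$ by adjoining the single pair $(s_{(i,j)}(p),s_{(i,j)}(q))$, so $s_{(i,j)}u_{k}\lessdot s_{(i,j)}u_{k+1}$. A length count confirms the new sequence has $\ell(w)+1$ elements, so it is a maximal chain; the degenerate case $c=b$ simply returns the original chain.

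The main obstacle is the same as in Proposition~\ref{prop:maxchain_toggle1}: one must guarantee that conjugation by $s_{(i,j)}$ sends covers to covers, and this rests entirely on the barrier ensuring that no intermediate letter lies between $i$ and $j$ throughout the modified band $[k^{+}_{c},k^{+}_{b}]$. Once the correct swap step $k^{*}=k^{+}_{b}+1$ is pinned down and the barrier is transported to the before-the-swap side, the index bookkeeping and the inverse-inversion computation are routine and formally identical to the preceding proposition.
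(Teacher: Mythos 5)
Your proof is correct and takes essentially the same route as the paper: the paper's entire proof of this proposition is the sentence ``This is directly analogous to Proposition~\ref{prop:maxchain_toggle1},'' and your argument is exactly that analogy spelled out --- pinning down the unique transposition step so that $k_b^{+}=k^{*}-1$, transporting the barrier argument of Proposition~\ref{prop:gr_nonzero} to the indices before the swap, and deducing that left multiplication by $s_{(i,j)}$ carries covers to covers. (One small remark: your displayed identity for $\mathrm{Inv}^{-1}(s_{(i,j)}u_k)$ has the same harmless imprecision as the paper's own formula in the proof of Proposition~\ref{prop:maxchain_toggle1}, since pairs $(i,x)$ or $(x,j)$ with $i<x<j$ are in fact left unchanged rather than relabeled; but under the barrier no such pair is ever the one added along a cover in the relevant range, so the cover-preservation conclusion is unaffected.)
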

\begin{proof}
This is directly analogous to Proposition~\ref{prop:maxchain_toggle1}.
\end{proof}

For $C$ a chain of $[e,w]$ and $f\colon [e,w]\to \mathbb{R}$ a statistic, we write $f(C) \coloneqq  \sum_{u\in C}f(u)$.

Fix some $(i,j) \in \mathrm{Inv}^{-1}(w)$. We will now define an involution $\tau_{(i,j)}$ on the set of maximal chains of $[e,w]$, which will negate the $\mathcal{T}_{(i,j)}$ statistic. Let $C=u_0  \lessdot \cdots \lessdot u_{\ell(w)}$ be a maximal chain of $[e,w]$.  Let $k_1^{-} > k_2^{-} > \cdots > k_a^{-}$ be all the indices for which $\mathcal{T}_{(i,j)}(u_{k})=-1$, and let $k_1^{+} < k_2^{+} < \cdots < k_b^{+}$ be all the indices for which $\mathcal{T}_{(i,j)}(u_{k})=1$. Note that we must have $a,b \geq 1$ because at some point we have to add the inverse inversion~$(i,j)$ to obtain $w$ from $e$. We proceed to define $\tau_{(i,j)}(C)$. First suppose that~$a > b$. Then define $\tau_{(i,j)}(C)$ to be
\[u_0\lessdot \ldots \lessdot u_{k_a^{-}-1} \lessdot s_{(i,j)}u_{k_a^{-}+1}  \lessdot s_{(i,j)}u_{k_a^{-}+2} \lessdot \cdots \lessdot s_{(i,j)}u_{k^{-}_b} \lessdot u_{k^{-}_b}\lessdot \cdots \lessdot u_{\ell(w)}, \]
which by Proposition~\ref{prop:maxchain_toggle1} really is a maximal chain of $[e,w]$. Next suppose that $a < b$. Then define $\tau_{(i,j)}(C)$ to be 
\[u_0\lessdot \ldots \lessdot u_{k^+_a} \lessdot s_{(i,j)}u_{k^+_a} \lessdot  s_{(i,j)}u_{k^+_a+1} \lessdot \cdots \lessdot s_{(i,j)}u_{k^+_b-1} \lessdot u_{k^+_{b}+1}\lessdot \cdots \lessdot u_{\ell(w)}, \]
which by Proposition~\ref{prop:maxchain_toggle2} is a maximal chain of $[e,w]$. Finally, if $a=b$ then we set~$\tau_{(i,j)}(C)\coloneqq  C$. It is easy to see that $\mathcal{T}_{(i,j)}(C) = -\mathcal{T}_{(i,j)}(\tau_{(i,j)}(C))$ because $\mathcal{T}_{(i,j)}(C) = b-a$ and $\mathcal{T}_{(i,j)}(\tau_{(i,j)}(C))=a-b$. It is also clear that $\tau_{(i,j)}$ is an involution.

\begin{example} \label{ex:involution}
Let $w = 53124$. The interval $[e,w]$ is depicted above in Figure~\ref{fig:involution} (which was used also for Example~\ref{ex:disjoint}). Let $C$ be the maximal chain:
\[ 12345 \lessdot 12354 \lessdot  13254 \lessdot 31254 \lessdot 31524 \lessdot 35124 \lessdot 53124. \]
In Figure~\ref{fig:involution} the chain $C$ is drawn in blue. Note that $\mathcal{T}_{(1,3)}(C) = -2$. We can compute that $\tau_{(1,3)}(C)$ is:
\[ 12345 \lessdot 12354 \lessdot  13254 \lessdot 13524 \lessdot 15324 \lessdot 51324 \lessdot 53124.\]
In Figure~\ref{fig:involution} the chain $\tau_{(1,3)}(C)$ is drawn in red. Observe that $\mathcal{T}_{(1,3)}(\tau_{(1,3)}(C)) = 2$.
\end{example}

\begin{lemma} \label{lem:tcde_toggle_sym}
The distribution $\mathrm{maxchain}_{[e,w]}$ on $[e,w]$ is toggle-symmetric.
\end{lemma}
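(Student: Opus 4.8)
The plan is to show that $\mathbb{E}(\mathrm{maxchain}_{[e,w]};\mathcal{T}_p)=0$ for every join irreducible $p\in\mathrm{Irr}([e,w])$. Since every such $p$ has the form $g((i,j),\mathbf{x})$ for a unique pair $(i,j)\in\mathrm{Inv}^{-1}(w)$ and a unique subset $\mathbf{x}$, it suffices to argue one $(i,j)$ at a time and exploit the involution $\tau_{(i,j)}$ constructed just above the lemma. The key input from Proposition~\ref{prop:gr_nonzero} is that, along any single maximal chain $C$, at most one value of $\mathbf{x}$ can contribute: if $\mathcal{T}_{g((i,j),\mathbf{x})}(u_k)\neq0$ for some vertex $u_k\in C$ and some $\mathbf{x}$, then $\mathcal{T}_{g((i,j),\mathbf{y})}$ vanishes identically on $C$ for all $\mathbf{y}\neq\mathbf{x}$. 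This means that on each chain $C$ the aggregated statistic $\mathcal{T}_{(i,j)}$ actually equals the single relevant $\mathcal{T}_{g((i,j),\mathbf{x})}$, so proving the balance for the aggregate $\mathcal{T}_{(i,j)}$ will immediately give balance for each individual $\mathcal{T}_{g((i,j),\mathbf{x})}$, which is what we really need.

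Concretely, I would first rewrite the expectation as a sum over maximal chains. Recall $\mathrm{maxchain}_{[e,w]}$ assigns each $u$ probability proportional to the number of maximal chains through $u$, so for any statistic $f$ on $[e,w]$ one has, with $N$ the total number of maximal chains and $r=\ell(w)$,
\[
\mathbb{E}(\mathrm{maxchain}_{[e,w]};f)=\frac{1}{(r+1)\,N}\sum_{C}\,f(C),
\]
where the sum runs over all maximal chains $C$ and $f(C)=\sum_{u\in C}f(u)$ as defined in the text. Applying this with $f=\mathcal{T}_{(i,j)}$, it suffices to prove $\sum_{C}\mathcal{T}_{(i,j)}(C)=0$. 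Here the involution $\tau_{(i,j)}$ does the work: it is a bijection on the set of maximal chains, and it was verified above that $\mathcal{T}_{(i,j)}(\tau_{(i,j)}(C))=-\mathcal{T}_{(i,j)}(C)$ for every $C$. Pairing each chain with its image under $\tau_{(i,j)}$ therefore cancels the sum termwise (the fixed points, where $a=b$, already contribute $\mathcal{T}_{(i,j)}(C)=b-a=0$), so $\sum_{C}\mathcal{T}_{(i,j)}(C)=0$ and hence $\mathbb{E}(\mathrm{maxchain}_{[e,w]};\mathcal{T}_{(i,j)})=0$.

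It remains to pass from the aggregate statistics back to the genuine join irreducibles. Fix a join irreducible $g((i,j),\mathbf{x})\in\mathrm{Irr}([e,w])$. By Proposition~\ref{prop:gr_nonzero}, each maximal chain $C$ either has $\mathcal{T}_{g((i,j),\mathbf{x})}$ identically zero on it, or $\mathbf{x}$ is the unique contributing subset on $C$ and then $\mathcal{T}_{g((i,j),\mathbf{x})}(C)=\mathcal{T}_{(i,j)}(C)$. Thus summing over chains, $\sum_{C}\mathcal{T}_{g((i,j),\mathbf{x})}(C)=\sum_{C:\,\mathbf{x}\text{ contributes}}\mathcal{T}_{(i,j)}(C)$, and I would check that $\tau_{(i,j)}$ preserves the set of chains on which $\mathbf{x}$ is the contributing subset (since $\tau_{(i,j)}$ only relocates where the letters $i,j$ get transposed and does not introduce any new letter from $\{i+1,\dots,j-1\}$ between them, it cannot change which $\mathbf{x}$ is active). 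Hence $\tau_{(i,j)}$ restricts to a sign-reversing involution on that subset of chains, giving $\sum_{C}\mathcal{T}_{g((i,j),\mathbf{x})}(C)=0$ and therefore $\mathbb{E}(\mathrm{maxchain}_{[e,w]};\mathcal{T}_{g((i,j),\mathbf{x})})=0$. Since $g((i,j),\mathbf{x})$ was an arbitrary join irreducible, $\mathrm{maxchain}_{[e,w]}$ is toggle-symmetric. The main obstacle is this last bookkeeping step: one must be careful that $\tau_{(i,j)}$ genuinely respects the partition of chains by their active $\mathbf{x}$, so that the cancellation happens within each refined class rather than merely in aggregate; everything else follows mechanically from Propositions~\ref{prop:gr_nonzero}, \ref{prop:maxchain_toggle1}, and~\ref{prop:maxchain_toggle2} together with the already-established properties of $\tau_{(i,j)}$.
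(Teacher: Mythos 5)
Your proposal is correct and follows essentially the same route as the paper's proof: reduce to showing $\sum_{C}\mathcal{T}_{g((i,j),\mathbf{x})}(C)=0$ over maximal chains, pair chains via the involution $\tau_{(i,j)}$, and invoke Proposition~\ref{prop:gr_nonzero} to ensure the sign-reversing cancellation respects the refinement of chains by their active subset $\mathbf{x}$. The only cosmetic difference is in how that last bookkeeping step is justified: the paper notes that $C$ and $\tau_{(i,j)}(C)$ share a vertex $u$ with $\mathcal{T}_{(i,j)}(u)\neq 0$, so Proposition~\ref{prop:gr_nonzero} applied to both chains forces the same active $\mathbf{x}$, whereas you argue directly from the structure of $\tau_{(i,j)}$ (no letter of $\{i+1,\ldots,j-1\}$ is introduced between $i$ and $j$); both justifications are valid.
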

\begin{proof}
Let $g((i,j),\mathbf{x})\in\mathrm{Irr}([e,w])$. It is enough to show that $\sum_{C}\mathcal{T}_{g((i,j),\mathbf{x})}(C)=0$, where the sum is over all maximal chains $C$ of $[e,w]$. To show this sum is zero, we group together the term $\mathcal{T}_{g((i,j),\mathbf{x})}(C)$ and the term $\mathcal{T}_{g((i,j),\mathbf{x})}(\tau_{(i,j)}(C))$. Indeed, we claim that $\mathcal{T}_{g((i,j),\mathbf{x})}(C)=-\mathcal{T}_{g((i,j),\mathbf{x})}(\tau_{(i,j)}(C))$. It is clear from the construction of~$\tau_{(i,j)}(C)$ that there is some $u\in C$ for which $\mathcal{T}_{(i,j)}(u)\neq 0$ and for which $u \in \tau_{(i,j)}(C)$. So thanks to Proposition~\ref{prop:gr_nonzero}, we conclude that either:
\begin{itemize}
\item $\mathcal{T}_{g((i,j),\mathbf{x})}(C)=0$ and $\mathcal{T}_{g((i,j),\mathbf{x})}(\tau_{(i,j)}(C))$=0;
\item or $\mathcal{T}_{g((i,j),\mathbf{x})}(C)=\mathcal{T}_{(i,j)}(C)$ and $\mathcal{T}_{g((i,j),\mathbf{x})}(C)=\mathcal{T}_{(i,j)}(\tau_{(i,j)}(C))$.
\end{itemize}
But as explained above, $\tau_{(i,j)}$ negates $\mathcal{T}_{(i,j)}$: $\mathcal{T}_{(i,j)}(C) = -\mathcal{T}_{(i,j)}(\tau_{(i,j)}(C))$. So either way, we have $\mathcal{T}_{g((i,j),\mathbf{x})}(C)=-\mathcal{T}_{g((i,j),\mathbf{x})}(\tau_{(i,j)}(C))$, as claimed.
\end{proof}

\begin{cor} \label{cor:tcde_implies_cde}
If the initial interval $[e,w]$ of weak order is tCDE, then it is CDE.
\end{cor}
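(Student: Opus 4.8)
The plan is to observe that this corollary follows immediately by combining the definitions of CDE and tCDE with the preceding Lemma~\ref{lem:tcde_toggle_sym}. Recall that $[e,w]$ is CDE precisely when $\mathbb{E}(\mathrm{uni}_{[e,w]};\mathrm{ddeg}) = \mathbb{E}(\mathrm{maxchain}_{[e,w]};\mathrm{ddeg})$, while the tCDE property asserts that $\mathbb{E}(\mu;\mathrm{ddeg}) = \mathbb{E}(\mathrm{uni}_{[e,w]};\mathrm{ddeg})$ for \emph{every} toggle-symmetric distribution $\mu$. So the only thing needed to deduce CDE from tCDE is the assurance that the particular distribution $\mathrm{maxchain}_{[e,w]}$ is itself toggle-symmetric.

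First I would invoke Lemma~\ref{lem:tcde_toggle_sym}, which establishes exactly this: the maxchain distribution on the initial weak order interval $[e,w]$ is toggle-symmetric (that is, $\mathbb{E}(\mathrm{maxchain}_{[e,w]};\mathcal{T}_p)=0$ for all $p\in\mathrm{Irr}([e,w])$). This is the substantive input, and it is precisely what the earlier involution argument via $\tau_{(i,j)}$ was built to supply. Note that, unlike the general semidistributive situation exhibited in Example~\ref{ex:tcde_not_cde}, for weak order intervals we genuinely do have toggle-symmetry of $\mathrm{maxchain}_{[e,w]}$, which is what rescues the implication here.

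Then I would simply apply the hypothesis. Assuming $[e,w]$ is tCDE, we may take $\mu \coloneqq \mathrm{maxchain}_{[e,w]}$ in the defining equation of the tCDE property; since $\mu$ is toggle-symmetric by Lemma~\ref{lem:tcde_toggle_sym}, we obtain $\mathbb{E}(\mathrm{maxchain}_{[e,w]};\mathrm{ddeg}) = \mathbb{E}(\mathrm{uni}_{[e,w]};\mathrm{ddeg})$. This is exactly the equality defining the CDE property, so $[e,w]$ is CDE.

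I do not anticipate any real obstacle in this corollary, as all the work has been front-loaded into Lemma~\ref{lem:tcde_toggle_sym}; the proof of the corollary itself is a one-line deduction. The only point requiring care is to emphasize why the argument is specific to weak order intervals and does not contradict Example~\ref{ex:tcde_not_cde}: the implication hinges entirely on toggle-symmetry of the maxchain distribution, which fails in general but holds here.
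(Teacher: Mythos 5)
Your proposal is correct and matches the paper's own (implicit) argument exactly: the corollary follows immediately from Lemma~\ref{lem:tcde_toggle_sym}, since tCDE applied to the toggle-symmetric distribution $\mathrm{maxchain}_{[e,w]}$ yields $\mathbb{E}(\mathrm{maxchain}_{[e,w]};\mathrm{ddeg}) = \mathbb{E}(\mathrm{uni}_{[e,w]};\mathrm{ddeg})$. Your remark contrasting this with Example~\ref{ex:tcde_not_cde} is also precisely the point the paper emphasizes.
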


\begin{remark} \label{rem:mcde}
Reiner-Tenner-Yong defined for any poset $P$ and any $m \geq 0$ the distribution $\mathrm{multichain}(m)_P$ in which each $p\in P$ occurs with probability proportional to the number of multichains of length $m$ in which it is contained~\cite[Definition~2.2]{reiner2018poset}. And they called $P$ \emph{mCDE} if $\mathbb{E}(\mathrm{multichain}(m)_P;\mathrm{ddeg})$ is the same for all values of~$m\geq 0$~\cite[Definition 2.3]{reiner2018poset} (see also~\cite[Proposition 1.4]{hopkins2017cde}). For graded $P$, $P$ being mCDE implies it is CDE because for such $P$ we have $\mathrm{uni}_P=\mathrm{multichain}(0)_P$ and $\mathrm{maxchain}_P=\mathrm{lim}_{m\to\infty}\mathrm{multichain}(m)_P$. For a distributive lattice $L=\mathcal{J}(P)$, the multichain distributions are toggle-symmetric (see~\cite[Lemma 2.8]{chan2017expected}~\cite[Proposition 2.5]{reiner2018poset}) and hence $L$ being tCDE implies it is mCDE. But in fact the multichain distributions need not be toggle-symmetric for weak order intervals. Indeed, Reiner-Tenner-Yong observed that for the permutation $w=53124$, the interval~$[e,w]$ is not mCDE~\cite[Remark 2.7]{reiner2018poset} (this interval is depicted in Figure~\ref{fig:involution}). But this $w$ is a vexillary permutation of the balanced straight shape $\lambda=(4,2)$, so it will follow from our main result that~$[e,w]$ is tCDE. We will not consider the mCDE property further in this paper.
\end{remark}

\section{Skew vexillary permutations of balanced shape are tCDE} \label{sec:main}

In this section we prove the main result of this paper which says that the initial weak order intervals corresponding to skew vexillary permutations of balanced shape are tCDE.

\subsection{Balanced shapes} \label{sec:balanced}

In this subsection we review the results and techniques o Chan-Haddadan-Hopkins-Moci~\cite{chan2017expected}, in order to prepare for the next section where we will generalize them to apply to weak order intervals.

Recall the definition of balanced shapes from Section~\ref{sec:young_lat_defs}. Chan-Haddadan-Hopkins-Moci introduced the balanced shapes in order to prove the following theorem about them.

\begin{thm}[{See~\cite[Theorem 3.4]{chan2017expected}}] \label{thm:chhm}
Let $\sigma=\lambda/\nu$ be a balanced shape of height~$a$ and width~$b$. Then $[\nu,\lambda]$ is tCDE with edge density $ab/(a+b)$.
\end{thm}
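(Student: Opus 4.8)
The plan is to establish the result through a single pointwise linear identity among the toggleability statistics, exactly in the spirit of the computation in Example~\ref{ex:tcde_not_cde}. Writing $L = [\nu,\lambda] = \mathcal{J}(P_\sigma)$, the definition of tCDE together with the defining property $\mathbb{E}(\mu;\mathcal{T}_p)=0$ of a toggle-symmetric distribution means it suffices to exhibit real coefficients $c_p$, one for each box $p \in P_\sigma$, such that
\[ \mathrm{ddeg} = \frac{ab}{a+b}\,\mathbf{1} + \sum_{p \in P_\sigma} c_p\,\mathcal{T}_p \]
holds as an identity of functions $L \to \mathbb{R}$. Indeed, applying $\mathbb{E}(\mu;\cdot)$ to both sides and using toggle-symmetry immediately gives $\mathbb{E}(\mu;\mathrm{ddeg}) = ab/(a+b)$ for every toggle-symmetric $\mu$, which is precisely the tCDE conclusion (and, taking $\mu=\mathrm{uni}_L$, the asserted edge density). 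So the entire problem reduces to finding the $c_p$ and verifying this one identity.

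To find and verify the identity I would work with the boundary-path description of order ideals. Since $L$ is distributive, $\mathrm{ddeg}(I) = \sum_p \mathcal{T}_p^-(I)$ counts the removable boxes of $I$, and $\mathcal{T}_p = \mathcal{T}_p^+ - \mathcal{T}_p^-$, so the right-hand side is a signed, position-weighted count of the addable and removable boxes of $I$. Each order ideal $I$ is a partition $\mu$ with $\nu \subseteq \mu \subseteq \lambda$, and its removable (resp.\ addable) boxes lying in $\sigma$ are exactly the concave (resp.\ convex) corners of the monotone boundary lattice path of $\mu$ inside the $a\times b$ rectangle. The essential combinatorial fact is that these corners strictly alternate in type as one traverses the path. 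The coefficients $c_p$ should then be chosen to be (essentially) affine-linear in the antidiagonal coordinate of $p$, normalized so that the main antidiagonal of the $a \times b$ rectangle plays the pivotal role, so that the alternating pattern, weighted by $c_p$, telescopes: the contributions of corners interior to $\sigma$ cancel in consecutive pairs, and one is left only with terms recording how the path meets the boundary of $\sigma$.

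This is where the balanced hypothesis enters, and it is the crux of the argument. The balanced condition says that every outward corner of $\sigma$ sits exactly on the main antidiagonal; this is precisely the constraint needed to guarantee that the surviving boundary terms of the telescoping do not depend on the particular ideal $I$, and instead collapse to the single constant $ab/(a+b)$. (Equivalently, one can package this bookkeeping using the ``rook'' random variables associated to the antidiagonals, which track where the boundary path crosses each antidiagonal; this is the viewpoint we later extend to weak order.) I expect the main obstacle to be exactly this boundary analysis: checking that, for the antidiagonal-linear choice of $c_p$, the telescoped expression is genuinely independent of $I$ and equals $ab/(a+b)$, and confirming that the balanced condition is what forces the cancellation. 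The base case of the rectangle $b^a$, where the path runs corner-to-corner and the computation can be carried out directly, is worth settling first, both to pin down the normalization of the $c_p$ and to anchor the general telescoping argument.
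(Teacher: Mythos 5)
Your proposal is sound in outline and can be completed into a correct proof, but it takes a genuinely different route from the one the paper records (the paper defers to Chan--Haddadan--Hopkins--Moci and reviews their ``rook'' method, which is also what it later needs). The paper's argument is modular: for each cross-saturated box one has a rook statistic $R_{(i,j)}$, a quadrant-wise signed sum of the $\mathcal{T}^{\pm}_{p}$, satisfying two local lemmas --- under any toggle-symmetric distribution it ``attacks'' its row and column in expectation (Lemma~\ref{lem:rook_rect_attack}), and pointwise it is identically $1$ (Lemma~\ref{lem:rook_rect_one}); balancedness then enters only through the placement argument (cf.\ Lemma~\ref{lem:rook_placement}), which weights rooks on the diagonal cross-saturated rectangles so that every row attack-sum is $b$ and every column attack-sum is $a$, giving $(a+b)\,\mathbb{E}(\mu;\mathrm{ddeg})=ab$. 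Your argument is global: a single identity $\mathrm{ddeg}=\tfrac{ab}{a+b}\mathbf{1}+\sum_p c_p\mathcal{T}_p$, verified by alternation and telescoping along the boundary lattice path, with balancedness entering because the corners that must be excluded (removable boxes of $\mu$ lying in $\nu$, addable boxes lying outside $\lambda$) sit at outward corners of $\sigma$, hence on the main antidiagonal, where your weight is normalized to vanish; for a connected shape spanning its bounding rectangle these excluded corners really are outward corners, so they contribute $0$ and the sum collapses to the corner-to-corner rectangle computation. Both routes produce the same kind of pointwise identity (the rook combination, expanded, is one), so the reduction step is common; what yours buys is a shorter, self-contained proof in the distributive case, while what the rook formulation buys is locality --- it is the individual rooks, not the global identity, that the paper is able to transport to inverse inversion sets of permutations as the $\widehat{R}_{(i,j)}$ of Section~\ref{sec:rooks}, which is the reason Theorem~\ref{thm:chhm} is reviewed in that form.

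One step needs more precision than your wording gives it, and it is exactly where a naive execution would fail. ``Affine-linear in the antidiagonal coordinate'' must mean affine in the functional $bi+aj$, which is constant on lines parallel to the main antidiagonal of the $a\times b$ rectangle --- not a function of $i+j$. Indeed, for the $3\times 2$ rectangle and the ideal $I=(2)$, the removable box $(1,2)$ and the addable box $(2,1)$ lie on the same unit antidiagonal, so any $c_p$ depending only on $i+j$ makes their contributions cancel and forces $1=\tfrac{6}{5}$, a contradiction. The correct choice is
\begin{equation*}
c_{(i,j)}=\frac{bi+aj}{a+b}-1-\frac{ab}{a+b},
\end{equation*}
for which a removable box contributes $1+c_p=F(P)$ and an addable box contributes $c_p=F(P)$, where $P$ is the associated corner of the path and $F$ is an affine function on lattice points vanishing exactly on the main antidiagonal; the telescoping over a full monotone path (four cases, according to whether the path leaves the NE corner and enters the SW corner with a horizontal or a vertical run) then yields $\tfrac{ab}{a+b}$ independently of the path. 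That computation is the ``main obstacle'' you flagged; it does go through, and the balanced hypothesis is used only to guarantee $F=0$ at every excluded corner.
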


The main technical tool in the proof of Theorem~\ref{thm:chhm} was the use of certain ``rook'' random variables, which we will now explain. 

\begin{figure}
{\ytableausetup{boxsize=2em}\begin{ytableau}
\scalebox{1.0}[1.1]{$^{1}\,\,_{\text{-}1}$} & \scalebox{1.0}[1.1]{$^{1}\,\,\,\,\,\,$} & &  \\
\scalebox{1.0}[1.1]{$^{1}\,\,_{\text{-}1}$} & \scalebox{1.0}[1.1]{$^{1}\,\,\,\,\,\,$} & & \\
\scalebox{1.0}[1.1]{$^{1}\,\,\,\,\,\,$} & \scalebox{1.0}[1.1]{$^{1}\,\,\,_{1}$} & \scalebox{1.0}[1.1]{$\,\,\,\,\,\,_{1}$} & \scalebox{1.0}[1.1]{$\,\,\,\,_{1}$}  \\
 & \scalebox{1.0}[1.1]{$\,\,\,\,\,\,_{1}$} & \scalebox{1.0}[1.1]{$^{\text{-}1}\,\,_{1}$} & \scalebox{1.0}[1.1]{$^{\text{-}1}\,\,_{1}$}  \\
  & \scalebox{1.0}[1.1]{$\,\,\,\,\,\,_{1}$} & \scalebox{1.0}[1.1]{$^{\text{-}1}\,\,_{1}$} & \scalebox{1.0}[1.1]{$^{\text{-}1}\,\,_{1}$} 
\end{ytableau}\ytableausetup{boxsize=1.5em} }
\caption{The ``rook'' $R_{(3,2)}$ for a $5\times 4$ rectangle.}\label{fig:rook_rect}
\end{figure}

Let $a,b \in \mathbb{N}$. Let $1 \leq i \leq a$ and $1 \leq j \leq b$. The \emph{rook} statistic $R_{(i,j)}\colon [\varnothing,b^a]\to \mathbb{Z}$ on the distributive lattice $\mathcal{J}([a]\times[b])=[\varnothing,b^a]$ is the following linear combination of toggleability statistics:
\begin{equation} \label{eqn:rook_rect}
R_{(i,j)} \coloneqq  \sum_{\substack{1\leq i' \leq i, \\ 1 \leq j' \leq j}} \mathcal{T}^{+}_{(i',j')} + \sum_{\substack{i\leq i' \leq a, \\ j \leq j' \leq b}} \mathcal{T}^{-}_{(i',j')} - \sum_{\substack{1\leq i' < i, \\ 1 \leq j' < j}} \mathcal{T}^{-}_{(i',j')} - \sum_{\substack{i < i' \leq a, \\ j < j' \leq b}} \mathcal{T}^{+}_{(i',j')}.
\end{equation}
For example, Figure~\ref{fig:rook_rect} depicts the rook $R_{(3,2)}$ for $a=5$ and $b=4$. In this picture, the number in the northwest corner of a box $(i',j')$ is the coefficient of $ \mathcal{T}^{+}_{(i',j')}$ in the rook, and the number in the southeast corner is the coefficient of  $\mathcal{T}^{-}_{(i',j')}$. We omit the coefficients of zero.

The following two lemmas explain the significance of the rooks.

\begin{lemma}[{See~\cite[Lemma 3.5]{chan2017expected}}] \label{lem:rook_rect_attack}
For any $a,b\in \mathbb{N}$ and $1 \leq i \leq a$, $1 \leq j \leq b$, and any toggle-symmetric distribution $\mu$ on $\mathcal{J}([a]\times[b])=[\varnothing,b^a]$, we have 
\[\mathbb{E}(\mu; R_{(i,j)})=\sum_{1 \leq i' \leq a} \mathbb{E}(\mu;\mathcal{T}^{-}_{(i',j)}) + \sum_{1 \leq j' \leq b} \mathbb{E}(\mu;\mathcal{T}^{-}_{(i,j')}) .\]
\end{lemma}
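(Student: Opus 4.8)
The plan is to take the expectation $\mathbb{E}(\mu;\cdot)$ of the defining identity~\eqref{eqn:rook_rect} and feed in the single fact that $\mu$ is toggle-symmetric. The only consequence of toggle-symmetry I will use is that $\mathbb{E}(\mu;\mathcal{T}^{+}_{(i',j')}) = \mathbb{E}(\mu;\mathcal{T}^{-}_{(i',j')})$ for every box $(i',j')$ of $[a]\times[b]$, which is immediate from $\mathcal{T}_{(i',j')} = \mathcal{T}^{+}_{(i',j')} - \mathcal{T}^{-}_{(i',j')}$ and $\mathbb{E}(\mu;\mathcal{T}_{(i',j')}) = 0$. Write $e_{(i',j')}$ for this common value. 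By linearity of expectation, applying $\mathbb{E}(\mu;\cdot)$ to~\eqref{eqn:rook_rect} and replacing each $\mathbb{E}(\mu;\mathcal{T}^{\pm}_{(i',j')})$ by $e_{(i',j')}$ converts $\mathbb{E}(\mu;R_{(i,j)})$ into a signed sum of the quantities $e_{(i',j')}$, where the coefficient of $e_{(i',j')}$ is $\mathbf{1}_A - \mathbf{1}_C + \mathbf{1}_B - \mathbf{1}_D$, with $A = [1,i]\times[1,j]$, $B = [i,a]\times[j,b]$, $C = [1,i-1]\times[1,j-1]$, and $D = [i+1,a]\times[j+1,b]$ the four axis-aligned blocks appearing in the rook.

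The heart of the matter is then a purely combinatorial inclusion–exclusion on these nested rectangles. Since $C \subseteq A$ and $D \subseteq B$, the coefficient function simplifies to $\mathbf{1}_{A \setminus C} + \mathbf{1}_{B \setminus D}$. The difference $A \setminus C$ is the "L-shape" consisting of the boxes of the $i$-th row with $j' \leq j$ together with the boxes of the $j$-th column with $i' \leq i$; symmetrically, $B \setminus D$ is the boxes of the $i$-th row with $j' \geq j$ together with the boxes of the $j$-th column with $i' \geq i$. Superimposing the two L-shapes therefore recovers exactly the full $i$-th row and the full $j$-th column. Since $e_{(i',j)} = \mathbb{E}(\mu;\mathcal{T}^{-}_{(i',j)})$ and $e_{(i,j')} = \mathbb{E}(\mu;\mathcal{T}^{-}_{(i,j')})$, reading off the resulting coefficients yields precisely $\sum_{1 \leq i' \leq a} \mathbb{E}(\mu;\mathcal{T}^{-}_{(i',j)}) + \sum_{1 \leq j' \leq b} \mathbb{E}(\mu;\mathcal{T}^{-}_{(i,j')})$, the claimed right-hand side.

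The only point requiring care is the coefficient bookkeeping along the row and column, especially at the corner box $(i,j)$, which lies in both. On the target side $(i,j)$ is counted once in the row-sum and once in the column-sum, so it carries coefficient $2$; one must verify the left-hand side matches. Indeed $(i,j)$ sits in the closed block $A$ (contributing $+\mathcal{T}^{+}_{(i,j)}$) and in the closed block $B$ (contributing $+\mathcal{T}^{-}_{(i,j)}$) but in neither open block, so after toggle-symmetry its coefficient is likewise $2e_{(i,j)}$. Boxes lying strictly on the row or strictly on the column pick up coefficient $1$, while every box off both the row and the column lies either in two blocks of opposite sign (those northwest or southeast of the corner, whose $\mathcal{T}^{+}$ and $\mathcal{T}^{-}$ contributions cancel under toggle-symmetry) or in no block at all (those northeast or southwest), and hence contributes $0$. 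I expect this rectangle-and-corner accounting, rather than any conceptual difficulty, to be the only thing that needs to be gotten right; everything else is linearity of expectation together with the elementary geometry of the four nested blocks.
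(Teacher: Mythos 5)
Your proposal is correct and is essentially the paper's own argument: the paper treats this lemma as ``immediate from the definition~\eqref{eqn:rook_rect}'' (deferring details to Chan--Haddadan--Hopkins--Moci), and what you have done is simply spell out that immediacy --- linearity of expectation, toggle-symmetry giving $\mathbb{E}(\mu;\mathcal{T}^{+}_{(i',j')})=\mathbb{E}(\mu;\mathcal{T}^{-}_{(i',j')})$, and the inclusion--exclusion on the four nested blocks, with the corner box correctly counted twice. No gap; this matches the intended proof.
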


\begin{lemma}[{See~\cite[Lemma 3.6]{chan2017expected}}] \label{lem:rook_rect_one}
For any $a,b\in \mathbb{N}$ and $1 \leq i \leq a$, $1 \leq j \leq b$, we have $R_{(i,j)}(\nu)=1$ for all $\nu \in [\varnothing,b^a]$.
\end{lemma}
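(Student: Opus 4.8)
The plan is to evaluate $R_{(i,j)}(\nu)$ directly by a two-dimensional discrete summation-by-parts, writing everything in terms of the membership indicator of the order ideal. Identify $\nu\in[\varnothing,b^a]$ with its indicator $f\colon\mathbb{Z}^2\to\{0,1\}$, where $f(i',j')=1$ exactly when the box $(i',j')$ lies in $\nu$, extended by the conventions $f(i',j')=1$ whenever $i'\le 0$ or $j'\le 0$ and $f(i',j')=0$ whenever $i'>a$ or $j'>b$. Because $\nu$ is an order ideal, $f$ is monotone (weakly decreasing in each coordinate), and this is precisely what lets us rewrite the $\{0,1\}$-valued toggleability statistics as polynomials in $f$:
\[ \mathcal{T}^{+}_{(i',j')}(\nu) = f(i'-1,j')\,f(i',j'-1) - f(i',j'), \]
\[ \mathcal{T}^{-}_{(i',j')}(\nu) = f(i',j') - f(i'+1,j') - f(i',j'+1) + f(i'+1,j')\,f(i',j'+1). \]
The first identity holds because when $f(i',j')=1$ monotonicity forces both lower neighbours to equal $1$, so both sides vanish, whereas when $f(i',j')=0$ the right side reduces to the addability condition; the second is the analogous statement (it is the image of the first under the $180^\circ$ rotation sending $\nu$ to its rotated complement).

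Next I would substitute these expressions into the definition of $R_{(i,j)}$ and collect terms by degree in $f$. Writing $c(i',j') \coloneqq f(i'-1,j')\,f(i',j'-1)$ and noting that the quadratic part of $\mathcal{T}^{-}_{(i',j')}$ is exactly $c(i'+1,j'+1)$, the quadratic contributions from the four regions telescope: the closed northwest quadrant (contributing $+c$) cancels against the open northwest quadrant (contributing $-c$ after the index shift) except along the first row and first column, and likewise the two southeast quadrants cancel except along row $a+1$ and column $b+1$. Using the conventions, the southeast boundary terms vanish (they involve $f$ at $i'>a$ or $j'>b$), while the northwest boundary terms collapse, via $f(1,0)=f(0,1)=1$, to
\[ 1 + \sum_{k=1}^{j-1} f(1,k) + \sum_{k=1}^{i-1} f(k,1). \]
The remaining linear-in-$f$ terms are handled by ordinary inclusion–exclusion on rectangular regions: the southeast pair telescopes to $+f(i,j)$, and the northwest pair telescopes to $-\sum_{k=1}^{i-1}f(k,1) - f(i,j) - \sum_{k=1}^{j-1}f(1,k)$. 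Adding the quadratic and the linear contributions, every sum cancels and one is left with $R_{(i,j)}(\nu)=1$.

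The only real work is the bookkeeping, and I expect the main obstacle to be the behaviour along the ``seam'' formed by row $i$ and column $j$, where the coefficient pattern of $R_{(i,j)}$ switches between its closed-quadrant and open-quadrant descriptions. The delicate point is that the single box $(i,j)$ is counted in both closed quadrants, and the cancellation of the linear terms hinges on the southeast pair producing exactly $+f(i,j)$ to kill the $-f(i,j)$ produced by the northwest pair; this is precisely the feature that the particular choice of closed versus open regions and of signs in~\eqref{eqn:rook_rect} was designed to guarantee. A cleaner but more case-heavy alternative would be to show that $R_{(i,j)}$ is invariant under every single toggle $\tau_p$ (so that it is constant on the toggle-connected lattice $[\varnothing,b^a]$) and then evaluate it at $\nu=\varnothing$, where the unique addable box $(1,1)$ lies in the closed northwest quadrant and yields $R_{(i,j)}(\varnothing)=1$.
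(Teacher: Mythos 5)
Your proposal is correct. I verified both indicator identities (they hold for every box of $[a]\times[b]$ once one uses the order-ideal property of $\nu$ together with your boundary conventions) and each of the four claimed telescopings: the two northwest quadratic sums cancel except on the first row and column, leaving $1+\sum_{k=1}^{j-1}f(1,k)+\sum_{k=1}^{i-1}f(k,1)$; the two southeast quadratic sums cancel except on row $a+1$ and column $b+1$, where every term vanishes; the southeast linear terms collapse to $+f(i,j)$; and the northwest linear terms collapse to $-f(i,j)-\sum_{k=1}^{j-1}f(1,k)-\sum_{k=1}^{i-1}f(k,1)$. The grand total is $1$, as required.

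Your route is, however, genuinely different from the paper's. The paper does not prove this lemma at all: it is imported from~\cite[Lemma~3.6]{chan2017expected} and flagged as ``the subtle fact which makes rooks useful.'' The argument there is combinatorial rather than algebraic: in essence it splits $R_{(i,j)}$ into its northwest half (the first and third sums in~\eqref{eqn:rook_rect}) and its southeast half (the second and fourth sums) and shows, by tracking the inner and outer corners of the boundary lattice path of $\nu$, that these halves are the indicator functions of $(i,j)\notin\nu$ and $(i,j)\in\nu$, respectively. Your summation-by-parts recovers exactly this structure --- your intermediate totals say the northwest half equals $1-f(i,j)$ and the southeast half equals $f(i,j)$ --- and it would be worth isolating that statement explicitly, since it is cleaner than the raw bookkeeping. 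What your version buys is a short, self-contained, picture-free verification; what the combinatorial version buys is that it applies essentially verbatim to the generalization invoked immediately after this lemma (cross-saturated boxes of arbitrary skew shapes), where your conventions ($f=1$ off the north/west sides, $f=0$ off the south/east sides) would need reworking because the ambient shape is no longer a rectangle. One small repair: your two conventions contradict each other on boxes such as $(0,b+1)$, where $i'\le 0$ and $j'>b$ hold simultaneously. No conflicting evaluation ever arises in the computation --- the only off-grid values used are $f(0,q)$, $f(p,0)$, $f(a+1,q)$, $f(p,b+1)$ with the other coordinate in range --- but this should be stated, since as written $f$ is not well defined on all of $\mathbb{Z}^2$.
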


In particular, note that Lemma~\ref{lem:rook_rect_attack} explains the name ``rook'' for these random variables: they ``attack'' in expectation every box in the same row or same column as the box they are placed on (and they ``doubly attack'' the box they are placed on). Lemma~\ref{lem:rook_rect_attack} is immediate from the definition~\eqref{eqn:rook_rect} of the rooks; but Lemma~\ref{lem:rook_rect_one} is the subtle fact which makes rooks useful.

In fact, Chan-Haddadan-Hopkins-Moci~\cite{chan2017expected} defined rooks not just for the boxes of rectangles, but for the boxes of any skew shape $\sigma=\lambda/\nu$: these rooks are statistics~$R_{(i,j)}\colon [\nu,\lambda]\to \mathbb{Z}$ for $(i,j)$ a box of $\sigma$ defined exactly as in~\eqref{eqn:rook_rect}. Lemma~\ref{lem:rook_rect_attack} continues to hold for these more general rooks; and Lemma~\ref{lem:rook_rect_one} holds assuming that the boxes of~$\sigma$ weakly northwest of the box $(i,j)$ form a rectangle, and that the boxes of~$\sigma$ weakly southeast of $(i,j)$ also form a rectangle. In the next subsection we will refer to this situation as $(i,j)$ being a ``cross-saturated'' box of $\sigma$. What makes balanced shapes special is that they have many cross-saturated boxes: enough to be able to place rooks on the cross-saturated boxes so that all the boxes of the shape are attacked the same number of times.

This same fact (that balanced shapes have many cross-saturated boxes) will also explain the significance of skew vexillary permutations of balanced shape. Indeed, the key to proving our main result will be generalizing the rectangle rooks to apply to permutations, which we do in the next subsection.

\subsection{Rooks for permutations} \label{sec:rooks} 

This is the most involved and technical part of the paper, in which we define rooks for permutations.

Given that Rothe diagrams appear in the definition of skew vexillary permutations, one might expect that we will place rooks on Rothe diagrams. But actually, rather than Rothe diagrams we will work with the inverse inversion sets of permutations. This is because inverse inversion sets are directly related to weak order: recall that~$u \leq w$ if and only if $\mathrm{Inv}^{-1}(u)\subseteq \mathrm{Inv}^{-1}(w)$. Of course, there is not really a big difference between Rothe diagrams and inverse inversion sets: $w\in \mathfrak{S}_n$ is skew vexillary of shape $\sigma$ if and only if some permutation of rows and columns transforms $\mathrm{Inv}^{-1}(w)$ to $\sigma^t$. 

Hence we now take some time to discuss inversion sets in more detail. From now on in this subsection we fix $n\in \mathbb{N}$. Set $\Phi^{+} \coloneqq   \{(i,j)\colon 1\leq i <j \leq n\}$. We view $\Phi^{+}$ as a poset, but not with the partial order induced from $\mathbb{Z}^2$: rather, for $(i,j), (i',j')\in\Phi^{+}$ we have $(i,j) \leq_{\Phi^{+}} (i',j')$ if and only if $i \geq i'$ and $j \leq j'$. (Thus $\Phi^{+}$ is the positive root poset of the root system of Type $A_{n-1}$.) The minimal elements of~$\Phi^{+}$ are $(k,k+1)$ for~$1 \leq k < n$; and $\Phi^{+}$ has a unique maximal element $(1,n)$.

For any $w\in \mathfrak{S}_n$ we have~$\mathrm{Inv}(w)\subseteq\Phi^{+}$, but not all subset of $\Phi^{+}$ arise as inversion sets of permutations. The first thing we want to review is when a subset of $\Phi^{+}$ is an inversion set of a permutation. In fact, there is a well-known classification:

\begin{lemma}[{See, e.g.,~\cite[Lemma 4.1]{dyer2011weak}} for the more general case of Coxeter groups] \label{lem:inv_classification}
Let $S\subseteq \Phi^{+}$. Then there exists $w\in \mathfrak{S}_n$ with $\mathrm{Inv}(w) = S$ if and only if for every $1 \leq a < b < c \leq n$ we have:
\begin{itemize}
\item if $(a,c) \in S$, then $(a,b) \in S$ or $(b,c)\in S$;
\item if $(a,b)\in S$ and $(b,c) \in S$, then $(a,c) \in S$.
\end{itemize}
\end{lemma}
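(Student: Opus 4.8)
The plan is to prove both directions, with the forward (necessity) direction being essentially immediate and the reverse (sufficiency) direction requiring a short combinatorial argument.

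For necessity, suppose $S = \mathrm{Inv}(w)$ for some $w \in \mathfrak{S}_n$. Both conditions follow directly from the definition of inversions together with the fact that $w$ assigns distinct values. For the first condition, if $(a,c) \in S$ then $w(a) > w(c)$; for any $b$ with $a < b < c$, either $w(a) > w(b)$, giving $(a,b) \in S$, or else $w(b) > w(a) > w(c)$, giving $(b,c) \in S$. For the second condition, $(a,b),(b,c) \in S$ give $w(a) > w(b) > w(c)$, hence $(a,c) \in S$.

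For sufficiency, the idea is to reconstruct $w$ from $S$. I would define a binary relation $\prec$ on $\{1, \ldots, n\}$ encoding the relative order that $w$ would have to assign to its values: for $i < j$, declare $i \prec j$ if $(i,j) \notin S$ and $j \prec i$ if $(i,j) \in S$. By construction $\prec$ is a tournament, since for each pair exactly one of $i \prec j$ and $j \prec i$ holds. The crux is to show that the two hypotheses on $S$ force $\prec$ to be transitive, equivalently that $\prec$ has no directed $3$-cycle. To rule out such cycles I would fix three elements and write them in increasing numerical order as $a < b < c$; on these there are exactly two possible directed $3$-cycles. Translating each back through the definition of $\prec$: the cycle $a \prec b \prec c \prec a$ corresponds precisely to $(a,b) \notin S$, $(b,c) \notin S$, and $(a,c) \in S$, which is exactly the configuration forbidden by the first hypothesis; and the cycle $a \prec c \prec b \prec a$ corresponds to $(a,c) \notin S$, $(b,c) \in S$, and $(a,b) \in S$, which is exactly the configuration forbidden by the second hypothesis. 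Hence neither $3$-cycle can occur, so $\prec$ is a total order. I expect this translation step---correctly matching each cyclic orientation to one of the two conditions while keeping track of which direction of $\prec$ corresponds to membership in $S$---to be the only place requiring genuine care.

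Finally, having shown $\prec$ is a total order, I would define $w(i) \coloneqq \#\{k : k \preceq i\}$, the rank of $i$ in the order $\prec$; this is a bijection $\{1,\ldots,n\} \to \{1,\ldots,n\}$ precisely because $\prec$ is a total order. For $i < j$ we then have $w(i) > w(j)$ if and only if $j \prec i$, which by definition of $\prec$ holds if and only if $(i,j) \in S$; thus $\mathrm{Inv}(w) = S$, completing the proof.
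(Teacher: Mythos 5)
Your proof is correct. Note, however, that the paper does not prove this lemma at all: it is stated as a known result with a citation to Dyer (Lemma 4.1 of the cited work), where it is established in the much more general setting of Coxeter groups via the theory of closed and coclosed subsets of positive root systems (your two conditions are exactly coclosure and closure in type $A$). So the comparison here is between a citation and a self-contained argument. Your route---encode $S$ as a tournament on $\{1,\ldots,n\}$, use the two hypotheses to kill both possible directed $3$-cycles on any triple $a<b<c$, invoke the standard fact that a tournament without $3$-cycles is a transitive (hence total) order, and read off $w$ as the rank function---is the standard elementary type-$A$ proof, and all the details check out: the necessity direction is immediate, the translation of the two cyclic orientations to the two forbidden configurations is accurate, and the verification $\mathrm{Inv}(w)=S$ at the end is correct since for $i<j$ one has $w(i)>w(j)$ iff $j\prec i$ iff $(i,j)\in S$. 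What your approach buys is that the paper's development becomes independent of the external reference at the cost of about a page; what the citation buys is uniformity across all Coxeter types, which the tournament argument cannot give since it relies on the one-line-notation model special to $\mathfrak{S}_n$. The only point worth making explicit if this were written up is the (one-line) proof that a tournament with no directed $3$-cycle is transitive, which you currently assert as an equivalence.
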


Now before we define the rooks for permutations, let us explain how the set of partitions~$\nu \in [\varnothing,b^a]$ contained in a rectangle~$b^a$ arises when studying inversion sets.

Let $1 \leq k \leq n-1$. Set $\square_k \coloneqq  \{1,2,\ldots,k\}\times \{k+1,k+2,\ldots,n\} \subseteq \Phi^{+}$. We consider~$\square_k$ as a poset with its partial order induced from $\Phi^{+}$. Note that we have a natural isomorphism of posets $\Psi\colon\square_k\simeq  [n-k]\times[k]$ (where $[n-k]\times [k] = k^{n-k}$ has its partial order induced from $\mathbb{Z}^2$). This isomorphism $\Psi$ is just the $90^\circ$ clockwise rotation plus translation which sends $(k,k+1)$ to $(1,1)$ and $(1,n)$ to $(n-k,k)$. Consequently, we also have the natural isomorphism $\Psi\colon\mathcal{J}(\square_k)\simeq[\varnothing,k^{n-k}]$. 

The point of $\square_k $ is the following:

\begin{prop} \label{prop:perm_rect}
Let $w \in \mathfrak{S}_n$ and $1 \leq k < n$. Then:
\begin{itemize}
\item $\mathrm{Inv}^{-1}(w) \setminus \square_k = \mathrm{Inv}^{-1}((u,v))$ for some $(u,v) \in \mathfrak{S}_k \times \mathfrak{S}_{n-k}$;
\item viewing $u$ as a permutation $\pi_{r}$ of the rows of $\square_k$, and $v$ as a permutation $\pi_{c}$ of the columns of $\square_k$, and with $\Pi \coloneqq  \pi_{c} \circ \pi_{r}\colon  \square_k \to \square_k $ and $S \coloneqq  \Pi(\mathrm{Inv}^{-1}(w) \cap \square_k)$, we have that~$\Psi(S)  \in [\varnothing,k^{n-k}]$;
\item for $(i,j) \in \square_k$ with $(i,j) \notin \mathrm{Inv}^{-1}(w)$, we have $\mathrm{Inv}^{-1}(w)\cup\{(i,j)\}=\mathrm{Inv}^{-1}(w')$ for some $w' \in \mathfrak{S}_n$ if and only if $\Psi(S\cup\{\Pi((i,j))\})\in[\varnothing,k^{n-k}]$;
\item for $(i,j) \in \square_k$ with $(i,j) \in \mathrm{Inv}^{-1}(w)$, we have $\mathrm{Inv}^{-1}(w)\setminus\{(i,j)\}=\mathrm{Inv}^{-1}(w')$ for some $w' \in \mathfrak{S}_n$ if and only if $\Psi(S\setminus\{\Pi((i,j))\})\in[\varnothing,k^{n-k}]$;
\end{itemize}
\end{prop}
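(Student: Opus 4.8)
The plan is to translate everything into one-line notation and the combinatorics of positions. Throughout I use $\mathrm{Inv}^{-1}(w)=\mathrm{Inv}(w^{-1})$, so that Lemma~\ref{lem:inv_classification} applies verbatim to inverse inversion sets, together with the basic dictionary that $(i,j)\in\mathrm{Inv}^{-1}(w)$ (for $i<j$) holds exactly when $w^{-1}(i)>w^{-1}(j)$, i.e.\ when the value $i$ stands to the right of the value $j$ in the one-line notation of $w$. Write $p(m)\coloneqq w^{-1}(m)$ for the position of the value $m$. For the first bullet, note that $\Phi^{+}\setminus\square_k$ consists exactly of the pairs lying entirely within $\{1,\dots,k\}$ or entirely within $\{k+1,\dots,n\}$, i.e.\ the positive roots of the parabolic subgroup $\mathfrak{S}_k\times\mathfrak{S}_{n-k}$. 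I would check directly that intersecting the valid inversion set $\mathrm{Inv}^{-1}(w)$ with this subsystem preserves both conditions of Lemma~\ref{lem:inv_classification}: for any $a<b<c$, if the outer pair $(a,c)$ of a triple avoids $\square_k$ then so do $(a,b)$ and $(b,c)$, and the transitivity condition cannot produce a pair in $\square_k$ out of two pairs avoiding it. Hence $\mathrm{Inv}^{-1}(w)\setminus\square_k$ is a valid inverse inversion set; being confined to the two blocks, it splits as $\mathrm{Inv}^{-1}((u,v))$ for a unique $(u,v)\in\mathfrak{S}_k\times\mathfrak{S}_{n-k}$.

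The heart of the matter is a position description of $\mathrm{Inv}^{-1}(w)\cap\square_k$, which yields the second bullet. For a small value $i\le k$ let $C_i$ be the set of large values $j>k$ lying to the left of $i$ in one-line notation; then $(i,j)\in\mathrm{Inv}^{-1}(w)\cap\square_k$ if and only if $j\in C_i$. Two observations drive everything: first, $C_i$ is always an initial segment of the large values listed in increasing order of position, since it is cut out by the single threshold $p(i)$; and second, if $i$ lies to the left of $i'$ then $C_i\subseteq C_{i'}$. Moreover, unwinding the first bullet shows that $u$ (resp.\ $v$) is precisely the permutation recording the relative order of the positions of the small (resp.\ large) values, so the induced $\pi_r$ and $\pi_c$ simply re-sort the rows and columns of $\square_k$ into increasing order of position. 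After this re-sorting, the two observations say that $\mathrm{Inv}^{-1}(w)\cap\square_k$ becomes the set of boxes $(r,c)$ with $c\le m_{(r)}$, where $m_{(r)}\coloneqq |C_{i_{(r)}}|$ (here $i_{(r)}$ is the $r$-th small value by position) and $m_{(1)}\le\cdots\le m_{(k)}\le n-k$. Applying the rotation $\Psi$ turns this monotone staircase into a genuine partition fitting inside $k^{n-k}$, giving $\Psi(S)\in[\varnothing,k^{n-k}]$ once the orientation conventions of $\Psi$ are pinned down.

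For the last two bullets I would use that a one-element enlargement $\mathrm{Inv}^{-1}(w)\cup\{(i,j)\}$ is a valid inverse inversion set exactly when it equals $\mathrm{Inv}^{-1}(w')$ for some cover $w\lessdot w'$, which by the description of weak order covers happens if and only if the values $i$ and $j$ are adjacent in one-line notation with $i$ immediately to the left of $j$; on the partition side, $\Psi(S\cup\{\Pi(i,j)\})$ is again a partition if and only if the new box is an addable corner, and dually for removal. The main obstacle, and the only real computation, is to match these two conditions. Using the position description above, I would show that for $(i,j)=(i_{(r)},j_{[c]})\notin\mathrm{Inv}^{-1}(w)$ (so $c>m_{(r)}$), the required adjacency can occur only when $c=m_{(r)}+1$, in which case the value at position $p(i_{(r)})+1$ is a large value precisely when $m_{(r)}<m_{(r+1)}$ (or $m_{(k)}<n-k$ when $r=k$); this is exactly the condition for $(r,m_{(r)}+1)$ to be an addable corner, and the large value occupying that slot is $j_{[m_{(r)}+1]}$, matching the corner's column. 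The removal statement (fourth bullet) is proved symmetrically, using a cover $w'\lessdot w$, a removable corner, and the condition $m_{(r-1)}<m_{(r)}$. Beyond this corner/adjacency bookkeeping, the remaining work is purely a matter of fixing the conventions for $\pi_r,\pi_c$, and $\Psi$ consistently so that ``sorted by position'' genuinely produces the claimed partition.
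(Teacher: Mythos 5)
Your proposal is correct, but it follows a genuinely different route from the paper. The paper's proof is structural: it invokes the parabolic coset decomposition $w=(u,v)\cdot w'$, where $w'$ is the minimal-length representative of the right coset $(\mathfrak{S}_k\times\mathfrak{S}_{n-k})w$ (an inverse Grassmannian ``shuffle''), observes that $\mathrm{Inv}^{-1}(w)\setminus\square_k=\mathrm{Inv}^{-1}((u,v))$ and $\Pi(\mathrm{Inv}^{-1}(w)\cap\square_k)=\mathrm{Inv}^{-1}(w')$, and then classifies the inverse inversion sets of such shuffles as exactly the order ideals of $\square_k$; all four bullets, including the toggling ones, then fall out of the resulting bijection (for fixed $(u,v)$) between permutations and elements of $[\varnothing,k^{n-k}]$. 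You instead verify the first bullet by checking the two closure conditions of Lemma~\ref{lem:inv_classification} directly, prove the second by the explicit position-sorting description (your sets $C_i$ being nested initial segments is precisely what makes $\Psi(S)$ a partition), and --- this is where the routes really diverge --- prove the last two bullets via a different key fact: one-element modifications of $\mathrm{Inv}^{-1}(w)$ are exactly weak-order covers, i.e.\ adjacent placements of the values $i$ and $j$ in one-line notation, which you then match against addable/removable corners of the staircase ($c=m_{(r)}+1$ together with $m_{(r)}<m_{(r+1)}$, etc.; I checked this bookkeeping and it is right, including the boundary cases $r=k$ and $r=1$). Your approach buys self-containedness and makes the cover-relation-versus-corner dictionary completely explicit, which is good intuition for how the toggleability statistics behave; the paper's approach buys brevity and uniformity --- the third and fourth bullets require no separate corner analysis, since they are instances of the same classification --- and it leverages Coxeter-theoretic facts (minimal coset representatives) that generalize beyond type $A$. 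The one place you should be careful when writing this up in full is the convention for $\Pi=\pi_c\circ\pi_r$: as you note, ``sorting by position'' corresponds to $(a,b)\mapsto(u^{-1}(a),v^{-1}(b))$, and this must be fixed once and used consistently in both the second bullet and the corner-matching.
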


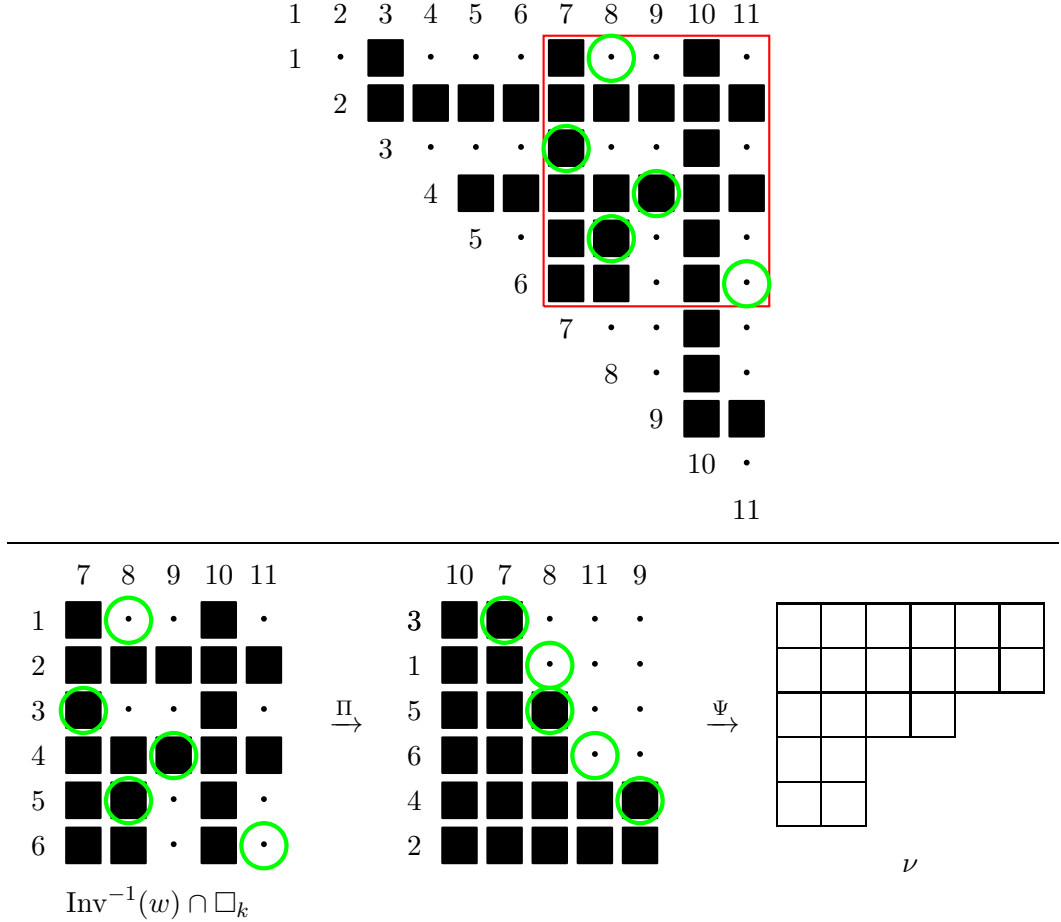
\begin{figure}
\begin{center}
\begin{tikzpicture}[scale=0.6]
\node at (1,0) {1};
\node at (2,0) {2};
\node at (3,0) {3};
\node at (4,0) {4};
\node at (5,0) {5};
\node at (6,0) {6};
\node at (7,0) {7};
\node at (8,0) {8};
\node at (9,0) {9};
\node at (10,0) {10};
\node at (11,0) {11};
\node at (1,-1) {1};
\node at (2,-2) {2};
\node at (3,-3) {3};
\node at (4,-4) {4};
\node at (5,-5) {5};
\node at (6,-6) {6};
\node at (7,-7) {7};
\node at (8,-8) {8};
\node at (9,-9) {9};
\node at (10,-10) {10};
\node at (11,-11) {11};
\draw[red,thick] (6.5,-6.5) -- (6.5,-0.5) -- (11.5,-0.5) -- (11.5,-6.5) -- (6.5,-6.5);
\node at (2,-1) {\huge $\cdot$};
\node at (3,-1) {\huge $\blacksquare$};
\node at (4,-1) {\huge $\cdot$};
\node at (5,-1) {\huge $\cdot$};
\node at (6,-1) {\huge $\cdot$};
\node at (7,-1) {\huge $\blacksquare$};
\node at (8,-1) {\huge $\cdot$};
\node at (9,-1) {\huge $\cdot$};
\node at (10,-1) {\huge $\blacksquare$};
\node at (11,-1) {\huge $\cdot$};
\node at (3,-2) {\huge $\blacksquare$};
\node at (4,-2) {\huge $\blacksquare$};
\node at (5,-2) {\huge $\blacksquare$};
\node at (6,-2) {\huge $\blacksquare$};
\node at (7,-2) {\huge $\blacksquare$};
\node at (8,-2) {\huge $\blacksquare$};
\node at (9,-2) {\huge $\blacksquare$};
\node at (10,-2) {\huge $\blacksquare$};
\node at (11,-2) {\huge $\blacksquare$};
\node at (4,-3) {\huge $\cdot$};
\node at (5,-3) {\huge $\cdot$};
\node at (6,-3) {\huge $\cdot$};
\node at (7,-3) {\huge $\blacksquare$};
\node at (8,-3) {\huge $\cdot$};
\node at (9,-3) {\huge $\cdot$};
\node at (10,-3) {\huge $\blacksquare$};
\node at (11,-3) {\huge $\cdot$};
\node at (5,-4) {\huge $\blacksquare$};
\node at (6,-4) {\huge $\blacksquare$};
\node at (7,-4) {\huge $\blacksquare$};
\node at (8,-4) {\huge $\blacksquare$};
\node at (9,-4) {\huge $\blacksquare$};
\node at (10,-4) {\huge $\blacksquare$};
\node at (11,-4) {\huge $\blacksquare$};
\node at (6,-5) {\huge $\cdot$};
\node at (7,-5) {\huge $\blacksquare$};
\node at (8,-5) {\huge $\blacksquare$};
\node at (9,-5) {\huge $\cdot$};
\node at (10,-5) {\huge $\blacksquare$};
\node at (11,-5) {\huge $\cdot$};
\node at (7,-6) {\huge $\blacksquare$};
\node at (8,-6) {\huge $\blacksquare$};
\node at (9,-6) {\huge $\cdot$};
\node at (10,-6) {\huge $\blacksquare$};
\node at (11,-6) {\huge $\cdot$};
\node at (8,-7) {\huge $\cdot$};
\node at (9,-7) {\huge $\cdot$};
\node at (10,-7) {\huge $\blacksquare$};
\node at (11,-7) {\huge $\cdot$};
\node at (9,-8) {\huge $\cdot$};
\node at (10,-8) {\huge $\blacksquare$};
\node at (11,-8) {\huge $\cdot$};
\node at (10,-9) {\huge $\blacksquare$};
\node at (11,-9) {\huge $\blacksquare$};
\node at (11,-10) {\huge $\cdot$};
\draw[ultra thick, green] (7,-3) circle(0.5cm);
\draw[ultra thick, green] (8,-5) circle(0.5cm);
\draw[ultra thick, green] (9,-4) circle(0.5cm);
\draw[ultra thick, green] (8,-1) circle(0.5cm);
\draw[ultra thick, green] (11,-6) circle(0.5cm);
\end{tikzpicture}
\vspace{-0.25cm} \;
\rule{5.5in}{0.5pt}
\begin{tikzpicture}
\node at (-5,0) {\begin{tikzpicture}[scale=0.6]
\node at (0,-1) {1};
\node at (0,-2) {2};
\node at (0,-3) {3};
\node at (0,-4) {4};
\node at (0,-5) {5};
\node at (0,-6) {6};
\node at (1,0) {7};
\node at (2,0) {8};
\node at (3,0) {9};
\node at (4,0) {10};
\node at (5,0) {11};
\node at (1,-1) {\huge $\blacksquare$};
\node at (2,-1) {\huge $\cdot$};
\node at (3,-1) {\huge $\cdot$};
\node at (4,-1) {\huge $\blacksquare$};
\node at (5,-1) {\huge $\cdot$};
\node at (1,-2) {\huge $\blacksquare$};
\node at (2,-2) {\huge $\blacksquare$};
\node at (3,-2) {\huge $\blacksquare$};
\node at (4,-2) {\huge $\blacksquare$};
\node at (5,-2) {\huge $\blacksquare$};
\node at (1,-3) {\huge $\blacksquare$};
\node at (2,-3) {\huge $\cdot$};
\node at (3,-3) {\huge $\cdot$};
\node at (4,-3) {\huge $\blacksquare$};
\node at (5,-3) {\huge $\cdot$};
\node at (1,-4) {\huge $\blacksquare$};
\node at (2,-4) {\huge $\blacksquare$};
\node at (3,-4) {\huge $\blacksquare$};
\node at (4,-4) {\huge $\blacksquare$};
\node at (5,-4) {\huge $\blacksquare$};
\node at (1,-5) {\huge $\blacksquare$};
\node at (2,-5) {\huge $\blacksquare$};
\node at (3,-5) {\huge $\cdot$};
\node at (4,-5) {\huge $\blacksquare$};
\node at (5,-5) {\huge $\cdot$};
\node at (1,-6) {\huge $\blacksquare$};
\node at (2,-6) {\huge $\blacksquare$};
\node at (3,-6) {\huge $\cdot$};
\node at (4,-6) {\huge $\blacksquare$};
\node at (5,-6) {\huge $\cdot$};
\draw[ultra thick, green] (1,-3) circle(0.5cm);
\draw[ultra thick, green] (2,-5) circle(0.5cm);
\draw[ultra thick, green] (3,-4) circle(0.5cm);
\draw[ultra thick, green] (2,-1) circle(0.5cm);
\draw[ultra thick, green] (5,-6) circle(0.5cm);
\end{tikzpicture}};
\node at (-5,-2.5){$\mathrm{Inv}^{-1}(w)\cap\square_k$};
\node at (-2.5,0) {$\xrightarrow[]{\Pi}$};
\node at (0,0) {\begin{tikzpicture}[scale=0.6]
\node at (0,-1) {3};
\node at (0,-2) {1};
\node at (0,-3) {5};
\node at (0,-4) {6};
\node at (0,-5) {4};
\node at (0,-6) {2};
\node at (0,-1) {3};
\node at (1,0) {10};
\node at (2,0) {7};
\node at (3,0) {8};
\node at (4,0) {11};
\node at (5,0) {9};
\node at (1,-1) {\huge $\blacksquare$};
\node at (2,-1) {\huge $\blacksquare$};
\node at (3,-1) {\huge $\cdot$};
\node at (4,-1) {\huge $\cdot$};
\node at (5,-1) {\huge $\cdot$};
\node at (1,-2) {\huge $\blacksquare$};
\node at (2,-2) {\huge $\blacksquare$};
\node at (3,-2) {\huge $\cdot$};
\node at (4,-2) {\huge $\cdot$};
\node at (5,-2) {\huge $\cdot$};
\node at (1,-3) {\huge $\blacksquare$};
\node at (2,-3) {\huge $\blacksquare$};
\node at (3,-3) {\huge $\blacksquare$};
\node at (4,-3) {\huge $\cdot$};
\node at (5,-3) {\huge $\cdot$};
\node at (1,-4) {\huge $\blacksquare$};
\node at (2,-4) {\huge $\blacksquare$};
\node at (3,-4) {\huge $\blacksquare$};
\node at (4,-4) {\huge $\cdot$};
\node at (5,-4) {\huge $\cdot$};
\node at (1,-5) {\huge $\blacksquare$};
\node at (2,-5) {\huge $\blacksquare$};
\node at (3,-5) {\huge $\blacksquare$};
\node at (4,-5) {\huge $\blacksquare$};
\node at (5,-5) {\huge $\blacksquare$};
\node at (1,-6) {\huge $\blacksquare$};
\node at (2,-6) {\huge $\blacksquare$};
\node at (3,-6) {\huge $\blacksquare$};
\node at (4,-6) {\huge $\blacksquare$};
\node at (5,-6) {\huge $\blacksquare$};
\draw[ultra thick, green] (2,-1) circle(0.5cm);
\draw[ultra thick, green] (3,-2) circle(0.5cm);
\draw[ultra thick, green] (3,-3) circle(0.5cm);
\draw[ultra thick, green] (4,-4) circle(0.5cm);
\draw[ultra thick, green] (5,-5) circle(0.5cm);
\end{tikzpicture}};
\node at (2.5,0) {$\xrightarrow[]{\Psi}$};
\node at (5,0) {\ydiagram{6,6,4,2,2}};
\node at (5,-2) {$\nu$};
\end{tikzpicture}
\end{center}
\caption{Example~\ref{ex:perm_rect} of Proposition~\ref{prop:perm_rect} showing how toggleable boxes of~$\mathrm{Inv}^{-1}(w)\cap\square_k$ correspond to toggleable boxes of~$\nu\subseteq k^{n-k}$.} \label{fig:perm_rect}
\end{figure}

Before proving Proposition~\ref{prop:perm_rect}, let us give an example of this proposition in action.

\begin{example} \label{ex:perm_rect}
Let $n =11$. Let $w= 10\; 7\; 3\; 1\; 8\; 5\; 6\; 11\; 9\; 4\; 2$ (written in one-line notation but with spaces between the letters because $n >9$) and $k=6$. At the top of Figure~\ref{fig:perm_rect} we depict $\mathrm{Inv}^{-1}(w)$: the $(i,j)\in\Phi^{+}$ with $(i,j)\in\mathrm{Inv}^{-1}(w)$ have black boxes drawn on them, and the other $(i,j)\in\Phi^{+}$ have small black dots drawn on them. In this picture we have drawn in red the boundary of $\square_k$. We have also circled in green the boxes of~$\square_k$ which could be added or removed from $\mathrm{Inv}^{-1}(w)$ to remain an inversion set of a permutation. With the language of Proposition~\ref{prop:perm_rect}, we have in this example that~$(u,v) = (3\;1\;5\;6\;4\;2,10\;7\;8\;11\;9)$. At the bottom of Figure~\ref{fig:perm_rect} we show $\Pi=\pi_c\circ\pi_r$ acting on $\mathrm{Inv}^{-1}(w)\cap\square_k$. Observe that $\nu\coloneqq \Psi(\Pi(\mathrm{Inv}^{-1}(w)\cap\square_k) )= (6,6,4,2,2) \in [\varnothing,6^5]$. Also observe that the boxes of~$\square_k$ which can be added to or removed from $\mathrm{Inv}^{-1}(w)$ exactly correspond to the boxes of~$6^5$ which can be added to or removed from~$\nu$.
\end{example}

\begin{proof}[Proof of Proposition~\ref{prop:perm_rect}]
As discussed in Section~\ref{sec:weak_order_def}, Grassmannian permutations with at most one descent at position~$k$ are the unique minimal length coset representatives for the left cosets of $\mathfrak{S}_k \times \mathfrak{S}_{n-k}$ (this follows from the general theory of Coxeter groups; see, e.g.,~\cite[Corollary 2.4.5]{bjorner2005coxeter}). This also implies that the minimal length coset representatives for the {\bf right} cosets of $\mathfrak{S}_k \times \mathfrak{S}_{n-k}$ are the inverse Grassmannian permutations~$w' \in \mathfrak{S}_n$ for which~$(w')^{-1}$ has at most one descent at position~$k$.

Thus for any $w\in\mathfrak{S}_n$ we can write in a unique way $w = (u,v) \cdot w'$, where $w'$ is an inverse Grassmannian permutation such that $(w')^{-1}$ has at most one descent at position~$k$, and $(u,v)\in\mathfrak{S}_k\times\mathfrak{S}_{n-k}$. Note that in the one-line notation of $w'$ the subsequences $1,2,\ldots,k$ and $k+1,k+2,\ldots,n$ appear in increasing order ($w'$ is a ``shuffle'' of these two increasing sequences). It is thus clear that $\mathrm{Inv}^{-1}(w') \subseteq \square_k$. It is also easy to see that $\mathrm{Inv}^{-1}(w) \setminus \square_k = \mathrm{Inv}^{-1}((u,v))$ and that, defining $\Pi\colon\square_k\to\square_k$ as in the statement of the proposition, we have $\Pi(\mathrm{Inv}^{-1}(w) \cap\square_k )=\mathrm{Inv}^{-1}(w')$.

Now let $S\subseteq \square_k$. We claim $S = \mathrm{Inv}^{-1}(w')$ for some $w' \in \mathfrak{S}_n$ an inverse Grassmannian permutation for which $(w')^{-1}$ has at most one descent at position~$k$ if and only if $S$ is an order ideal of $\square_k$ (that is, if and only if $\Psi(S) \in [\varnothing,k^{n-k}]$). To see that this is true one can employ Lemma~\ref{lem:inv_classification}. One can also directly consider the possibilities for the inverse inversion set of a shuffle of two increasing subsequences. 

At any rate, this classification of the inverse inversion sets of inverse Grassmannian permutations finishes the proof of the proposition. It also explains the fact mentioned in Section~\ref{sec:skew_vex_def} that for $w'$ an inverse Grasmmanian permutation we have $[e,w] \simeq [\varnothing,\lambda]$ for some partition~$\lambda$.
\end{proof}

We now proceed to define the rooks for permutations. But before we do that we need to introduce the special class of boxes on which we will allow rooks to be placed. These are the ``cross-saturated'' boxes of $\mathrm{Inv}^{-1}(w)$.

\begin{definition}
Let $D\subseteq \mathbb{Z}^2$ be a diagram. Let $(i,j)\in D$ be a box of~$D$. We say that $(i,j)$ is \emph{cross-saturated in $D$} if whenever $(i,j')\in D$ is a box of $D$ in the same row as~$(i,j)$ and $(i',j)\in D$ is a box of $D$ in the same column as~$(i,j)$, we have~$(i',j')\in D$. If $(i,j)$ is cross-saturated in $D$, then we define its \emph{cross-saturation in $D$} to be the set of all boxes $(i',j')$, where $(i,j')\in D$ is a box of $D$ in the same row as~$(i,j)$ and $(i',j)\in D$ is a box of $D$ in the same column as~$(i,j)$.
\end{definition}

From now in this subsection we fix~$w\in \mathfrak{S}_n$, and fix~$(i,j)$ to be a cross-saturated box of $\mathrm{Inv}^{-1}(w)$, with $\mathcal{C}\subseteq\mathrm{Inv}^{-1}(w)$ its cross-saturation. 

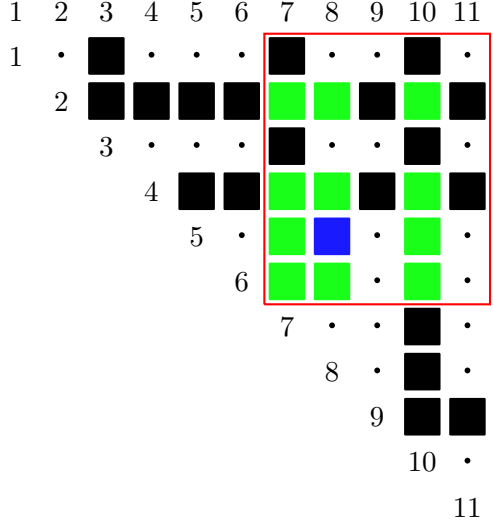
\begin{figure}
\begin{center}
\begin{tikzpicture}[scale=0.6]
\node at (1,0) {1};
\node at (2,0) {2};
\node at (3,0) {3};
\node at (4,0) {4};
\node at (5,0) {5};
\node at (6,0) {6};
\node at (7,0) {7};
\node at (8,0) {8};
\node at (9,0) {9};
\node at (10,0) {10};
\node at (11,0) {11};
\node at (1,-1) {1};
\node at (2,-2) {2};
\node at (3,-3) {3};
\node at (4,-4) {4};
\node at (5,-5) {5};
\node at (6,-6) {6};
\node at (7,-7) {7};
\node at (8,-8) {8};
\node at (9,-9) {9};
\node at (10,-10) {10};
\node at (11,-11) {11};
\draw[red,thick] (6.5,-6.5) -- (6.5,-0.5) -- (11.5,-0.5) -- (11.5,-6.5) -- (6.5,-6.5);
\node at (2,-1) {\huge $\cdot$};
\node at (3,-1) {\huge $\blacksquare$};
\node at (4,-1) {\huge $\cdot$};
\node at (5,-1) {\huge $\cdot$};
\node at (6,-1) {\huge $\cdot$};
\node at (7,-1) {\huge $\blacksquare$};
\node at (8,-1) {\huge $\cdot$};
\node at (9,-1) {\huge $\cdot$};
\node at (10,-1) {\huge $\blacksquare$};
\node at (11,-1) {\huge $\cdot$};
\node at (3,-2) {\huge $\blacksquare$};
\node at (4,-2) {\huge $\blacksquare$};
\node at (5,-2) {\huge $\blacksquare$};
\node at (6,-2) {\huge $\blacksquare$};
\node at (7,-2) {\huge \color{green!90} $\blacksquare$};
\node at (8,-2) {\huge \color{green!90} $\blacksquare$};
\node at (9,-2) {\huge $\blacksquare$};
\node at (10,-2) {\huge \color{green!90} $\blacksquare$};
\node at (11,-2) {\huge $\blacksquare$};
\node at (4,-3) {\huge $\cdot$};
\node at (5,-3) {\huge $\cdot$};
\node at (6,-3) {\huge $\cdot$};
\node at (7,-3) {\huge $\blacksquare$};
\node at (8,-3) {\huge $\cdot$};
\node at (9,-3) {\huge $\cdot$};
\node at (10,-3) {\huge $\blacksquare$};
\node at (11,-3) {\huge $\cdot$};
\node at (5,-4) {\huge $\blacksquare$};
\node at (6,-4) {\huge $\blacksquare$};
\node at (7,-4) {\huge \color{green!90} $\blacksquare$};
\node at (8,-4) {\huge \color{green!90} $\blacksquare$};
\node at (9,-4) {\huge $\blacksquare$};
\node at (10,-4) {\huge \color{green!90} $\blacksquare$};
\node at (11,-4) {\huge $\blacksquare$};
\node at (6,-5) {\huge $\cdot$};
\node at (7,-5) {\huge \color{green!90} $\blacksquare$};
\node at (8,-5) {\huge \color{blue!90} $\blacksquare$};
\node at (9,-5) {\huge $\cdot$};
\node at (10,-5) {\huge \color{green!90} $\blacksquare$};
\node at (11,-5) {\huge $\cdot$};
\node at (7,-6) {\huge \color{green!90} $\blacksquare$};
\node at (8,-6) {\huge \color{green!90} $\blacksquare$};
\node at (9,-6) {\huge $\cdot$};
\node at (10,-6) {\huge \color{green!90} $\blacksquare$};
\node at (11,-6) {\huge $\cdot$};
\node at (8,-7) {\huge $\cdot$};
\node at (9,-7) {\huge $\cdot$};
\node at (10,-7) {\huge $\blacksquare$};
\node at (11,-7) {\huge $\cdot$};
\node at (9,-8) {\huge $\cdot$};
\node at (10,-8) {\huge $\blacksquare$};
\node at (11,-8) {\huge $\cdot$};
\node at (10,-9) {\huge $\blacksquare$};
\node at (11,-9) {\huge $\blacksquare$};
\node at (11,-10) {\huge $\cdot$};
\end{tikzpicture} 
\end{center}
\caption{Example~\ref{ex:cross_sat_rect} of a cross-saturated box of an inverse inversion set.} \label{fig:cross_sat_rect}
\end{figure}

\begin{example} \label{ex:cross_sat_rect}
Let $w= 10\, 7\, 3\, 1\, 8\, 5\, 6\, 11\, 9\, 4\, 2$. Let $(i,j) = (5,8)\in\mathrm{Inv}^{-1}(w)$. Figure~\ref{fig:cross_sat_rect} depicts $\mathrm{Inv}^{-1}(w)$, as in Example~\ref{ex:perm_rect}. This figure shows that $(i,j)$, which is colored blue, is a cross-saturated box of $\mathrm{Inv}^{-1}(w)$. Its cross-saturation $\mathcal{C}$ consists of~$(i,j)$ together with all boxes drawn in green in this figure. The unique $1\leq k < n$ with $(k,k+1) \in \mathcal{C}$ is $k=6$. The border of~$\square_k$ is drawn in red in Figure~\ref{fig:cross_sat_rect}. Observe that~$\mathcal{C}\subseteq \square_k$ and that~$\{i,\ldots,k\}\times\{k+1,\ldots,j\} = \{5,6\}\times\{7,8\} \subseteq \mathcal{C}$.
\end{example}

\begin{prop} \label{prop:cross_sat_rect}
There is a unique $1\leq k < n$ with $(k,k+1) \in \mathcal{C}$. Moreover, we have that $\mathcal{C}\subseteq \square_k$ and that $\{i,i+1,\ldots,k\}\times \{k+1,k+2,\ldots,j\} \subseteq \mathcal{C}$.
\end{prop}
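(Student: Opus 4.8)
The plan is to forget the permutation and reason entirely about the biconvex set $S := \mathrm{Inv}^{-1}(w) = \mathrm{Inv}(w^{-1})$, using only the classification in Lemma~\ref{lem:inv_classification}. I would record the row and column of $(i,j)$ inside $S$ by setting $A := \{\,j' : (i,j') \in S\,\}$ (the columns occupied in row $i$) and $B := \{\,i' : (i',j) \in S\,\}$ (the rows occupied in column $j$); note $j \in A$ and $i \in B$. By definition the cross-saturation is exactly the rectangle of pairs $\mathcal{C} = \{\,(i',j') : i' \in B,\ j' \in A\,\}$, and cross-saturatedness of $(i,j)$ says precisely that $\mathcal{C} \subseteq S$.

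The single structural input I would draw from cross-saturation is an inequality: since $\mathcal{C} \subseteq S \subseteq \Phi^{+}$, each pair $(i',j')$ with $i' \in B$ and $j' \in A$ must actually lie in $\Phi^{+}$, i.e. satisfy $i' < j'$; applied to $i' = \max B$ and $j' = \min A$ this forces $\max B < \min A$. Writing $k := \max B$ and $q := \min A$, we have $i \leq k < q \leq j$. Uniqueness in the first assertion is then immediate: any $(m,m+1) \in \mathcal{C}$ has $m \in B$ (so $m \leq k$) and $m+1 \in A$ (so $m \geq q-1 \geq k$), whence $m = k$.

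Everything else rests on the identity $q = k+1$. To prove it I would rule out any $m$ with $k < m < q$: such an $m$ lies strictly between $i$ and $j$, and since $m > \max B$ and $m < \min A$ we have $(m,j) \notin S$ and $(i,m) \notin S$; but the first condition of Lemma~\ref{lem:inv_classification} applied to $(i,j) \in S$ with $i < m < j$ demands $(i,m) \in S$ or $(m,j) \in S$, a contradiction. Hence $q = k+1$. This yields the remaining assertions at once: $(k,k+1) = (\max B, \min A)$ now lies in $\mathcal{C}$ (proving existence in the first assertion), and $B \subseteq \{1,\dots,k\}$ together with $A \subseteq \{k+1,\dots,n\}$ gives $\mathcal{C} = B \times A \subseteq \square_k$.

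For the final inclusion it suffices, again because $\mathcal{C} = B \times A$, to show $\{i,\dots,k\} \subseteq B$ and $\{k+1,\dots,j\} \subseteq A$, and both are one more application of the same condition of Lemma~\ref{lem:inv_classification}: for $i < m < k$, were $(m,j) \notin S$, then this condition applied to $(i,j)$ would force $(i,m) \in S$, i.e. $m \in A$, contradicting $m < k+1 = \min A$; symmetrically, for $k+1 < m < j$, were $(i,m) \notin S$, then it would force $(m,j) \in S$, i.e. $m \in B$, contradicting $m > k = \max B$ (the endpoints $m \in \{i,k\}$ and $m \in \{k+1,j\}$ being in $B$, $A$ by construction). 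The one point requiring care here is conceptual rather than computational: the sets $A$ and $B$ themselves generally have gaps (as the figure for Example~\ref{ex:cross_sat_rect} shows), so the real content of the proposition is that no gaps occur in the ``corner'' ranges $[i,k]$ and $[k+1,j]$ adjacent to the diagonal, and it is precisely the first biconvexity condition of Lemma~\ref{lem:inv_classification} that closes up those ranges. Notably, this approach never uses the second condition of that lemma.
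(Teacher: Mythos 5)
Your proof is correct and takes essentially the same route as the paper's: both arguments rest on the first (dichotomy) condition of Lemma~\ref{lem:inv_classification} applied to $(i,j)$, combined with the observation that cross-saturation forces the rectangle $B \times A$ to lie in $\Phi^{+}$ (the paper phrases this as the contradictions $(x,x) \notin \Phi^{+}$ and $(x,x') \notin \Phi^{+}$, which together are exactly your inequality $\max B < \min A$). Your organization via $k = \max B$, $q = \min A$ and then closing the gap is a mild repackaging of the paper's threshold claim, not a different argument.
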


\begin{proof}
Lemma~\ref{lem:inv_classification} implies that for any $i < x < j$, we either have~$(i,x)\in\mathrm{Inv}^{-1}(w)$ or~$(x,j) \in \mathrm{Inv}^{-1}(w)$. We claim that there exists some~$i \leq k \leq j$ such that for any $i < x < j$ we have  $(x,j) \in \mathrm{Inv}^{-1}(w)$ if and only if $x \leq k$, and $(i,x) \in \mathrm{Inv}^{-1}(w)$ if and only if $k < x$. 

To see this, first note that we cannot have for any $i < x < j$ that $(i,x)\in\mathrm{Inv}^{-1}(w)$ and that~$(x,j) \in \mathrm{Inv}^{-1}(w)$, because then the cross-saturation condition would require that~$(x,x) \in\mathrm{Inv}^{-1}(w)$ but $\mathrm{Inv}^{-1}(w)\subseteq\Phi^{+}$ and $(x,x)\notin \Phi^{+}$. Next, observe that if~$(i,x) \notin \mathrm{Inv}^{-1}(w)$ for some $i < x < j$ then $(i,x') \notin \mathrm{Inv}^{-1}(w)$ for any $x' < x$: indeed, $(i,x) \notin \mathrm{Inv}^{-1}(w)$ implies $(x,j)\in \mathrm{Inv}^{-1}(w)$, and so if $(i,x') \in \mathrm{Inv}^{-1}(w)$ for some $x' < x$, then the cross-saturation condition would imply that $(x,x')\in\mathrm{Inv}^{-1}(w)$, which again is a contradiction because $(x,x')\notin\Phi^{+}$. Similarly, if $(x,j) \notin \mathrm{Inv}^{-1}(w)$ for some $i < x < j$ then $(x',j) \notin \mathrm{Inv}^{-1}(w)$ for any $x' > x$. Together, these observations do imply that there is some~$i \leq k \leq j$ such that for any $i < x < j$ we have  $(x,j) \in \mathrm{Inv}^{-1}(w)$ if and only if $x \leq k$, and $(i,x) \in \mathrm{Inv}^{-1}(w)$ if and only if $k < x$.

It is then easy to see that the unique $1\leq k < n$ with $(k,k+1)\in\mathcal{C}$ is the $k$ from the previous paragraph. Similarly, it is easy to see from our definition of $k$ that $\mathcal{C}\subseteq \square_k$ and that~$\{i,i+1,\ldots,k\}\times \{k+1,k+2,\ldots,j\} \subseteq \mathcal{C}$, as required.
\end{proof}

Now we can give the formula defining permutation rooks. The \emph{rook $\widehat{R}_{(i,j)}\colon[e,w]\to \mathbb{Z}$} is the following linear combination of toggleability statistics:
\begin{align} \label{eqn:rook_perm}
\widehat{R}_{(i,j)}  \hspace{-0.05cm} &\coloneqq  \hspace{-0.3cm} \sum_{\substack{i'>i, \, j'<j, \\ (i',j')\in \mathcal{C}}} \mathcal{T}_{(i',j')} + \sum_{\substack{j'<j, \\ (i,j')\in \mathcal{C}}} \mathcal{T}^{+}_{(i,j')} + \sum_{\substack{i'>i, \\ (i',j)\in \mathcal{C}}} \mathcal{T}^{+}_{(i',j)}+\left(\mathcal{T}^{+}_{(i,j)}+\mathcal{T}^{-}_{(i,j)}\right) \\ \nonumber
& + \sum_{\substack{i'<i, \, j'<j, \\ (i',j')\in \mathcal{C}}} \sum_{\substack{g((i',j'),\mathbf{x}) \\ \in\mathrm{Irr}([e,w]),\\ i \in \mathbf{x}}} \mathcal{T}_{g((i',j'),\mathbf{x})} +  \sum_{\substack{i'>i, \, j'>j, \\ (i',j')\in \mathcal{C}}} \sum_{\substack{g((i',j'),\mathbf{x}) \\ \in\mathrm{Irr}([e,w]),\\ j \notin \mathbf{x}}} \mathcal{T}_{g((i',j'),\mathbf{x})} \\ \nonumber
&+  \hspace{-0.35cm} \sum_{\substack{j'>j, \\ (i,j')\in \mathcal{C}}} \hspace{-0.1cm} \left(  \hspace{-0.15cm} \sum_{\substack{g((i,j'),\mathbf{x}) \\ \in\mathrm{Irr}([e,w]),\\ j \in \mathbf{x}}} \hspace{-0.4cm} \mathcal{T}^{-}_{g((i,j'),\mathbf{x})} + \hspace{-0.45cm} \sum_{\substack{g((i,j'),\mathbf{x}) \\ \in\mathrm{Irr}([e,w]),\\ j \notin \mathbf{x}}} \hspace{-0.4cm} \mathcal{T}^{+}_{g((i,j'),\mathbf{x})} \hspace{-0.15cm} \right) \hspace{-0.1cm} + \hspace{-0.35cm} \sum_{\substack{i'<i, \\ (i',j)\in \mathcal{C}}} \hspace{-0.1cm} \left(  \hspace{-0.15cm} \sum_{\substack{g((i',j),\mathbf{x}) \\ \in\mathrm{Irr}([e,w]),\\ i \notin \mathbf{x}}} \hspace{-0.4cm} \mathcal{T}^{-}_{g((i',j),\mathbf{x})} + \hspace{-0.45cm} \sum_{\substack{g((i',j),\mathbf{x}) \\ \in\mathrm{Irr}([e,w]),\\ i \in \mathbf{x}}} \hspace{-0.4cm} \mathcal{T}^{+}_{g((i',j),\mathbf{x})} \hspace{-0.15cm} \right) \\ \nonumber
&+ \sum_{\substack{i'<i, \, j'>j, \\ (i',j')\in \mathcal{C}}} \left( \sum_{\substack{g((i',j'),\mathbf{x}) \\ \in\mathrm{Irr}([e,w]),\\ i \in \mathbf{x}, j \notin \mathbf{x}}} \mathcal{T}_{g((i',j'),\mathbf{x})}  -\sum_{\substack{g((i',j'),\mathbf{x}) \\ \in\mathrm{Irr}([e,w]),\\ i \notin \mathbf{x}, j \in \mathbf{x}}} \mathcal{T}_{g((i',j'),\mathbf{x})} \right).
\end{align}
(We use a hat on top of the $R$ just to differentiate it from the rectangle rooks.) 

This definition might look like a bit of a mess. One thing to notice right away about the definition is that for each $(i',j')\in \mathcal{C}$ and $g((i',j'),\mathbf{x}) \in \mathrm{Irr}([e,w])$, we have in $\widehat{R}_{(i,j)}$ a term of~$\mathcal{T}^{+}_{g((i',j'),\mathbf{x})}$ and a term of~$\mathcal{T}^{-}_{g((i',j'),\mathbf{x})}$, each with coefficient $-1$, $0$, or $+1$, and these are all the terms of $\widehat{R}_{(i,j)}$. We give a schematic of what these coefficients look like in Figure~\ref{fig:rook_rect_perm}. In this figure we have shifted the coordinated system by applying the map $\Psi$ in order to make it more easily comparable to Figure~\ref{fig:rook_rect}.  In this picture, the number in the northwest corner of a box $\Psi((i',j'))$ is the coefficient of $\mathcal{T}^{+}_{g((i',j'),\mathbf{x})}$ in the rook, and the number in the southeast corner is the coefficient of $\mathcal{T}^{-}_{g((i',j'),\mathbf{x})}$. We omit the coefficients of zero (and we assuming that $(i',j')\in \mathcal{C}$ for the relevant $(i',j')$). The boxes with ``$1/1$'' indicate that we either have a term of $\mathcal{T}^{+}_{g((i',j'),\mathbf{x})}$ or $\mathcal{T}^{-}_{g((i',j'),\mathbf{x})}$ with coefficient $1$, but which one depends on the particular $\mathbf{x}$. The boxes with ``$\pm1$'' in the northwest corner and ``$\mp1$'' in the southeast corner indicate that we have terms of~$\mathcal{T}^{+}_{g((i',j'),\mathbf{x})}$ and $\mathcal{T}^{-}_{g((i',j'),\mathbf{x})}$ with opposing signs, but whether $\mathcal{T}^{+}_{g((i',j'),\mathbf{x})}$ has a positive or negative coefficient depends on the particular~$\mathbf{x}$.

\begin{figure}
{\ytableausetup{boxsize=2em}\begin{ytableau}
\scalebox{1.0}[1.1]{$^{1}\,\,_{\text{-}1}$} & \scalebox{1.0}[1.1]{$^{1}\,\,\,\,\,\,$} & &  \\
\scalebox{1.0}[1.1]{$^{1}\,\,_{\text{-}1}$} & \scalebox{1.0}[1.1]{$^{1}\,\,\,\,\,\,$} & & \\
\scalebox{1.0}[1.1]{$^{1}\,\,\,\,\,\,$} & \scalebox{1.0}[1.1]{$^{1}\,\,\,_{1}$} & \scalebox{1.0}[1.1]{$^{1} / \,_{1}$} & \scalebox{1.0}[1.1]{$^{1} / \,_{1}$}  \\
 & \scalebox{1.0}[1.1]{$^{1} / \,_{1}$} & \scalebox{0.82}[1.2]{$^{\pm 1}\,\,_{\mp 1}$} & \scalebox{0.82}[1.2]{$^{\pm 1}\,\,_{\mp 1}$}  \\
  & \scalebox{1.0}[1.1]{$^{1} / \,_{1}$} & \scalebox{0.82}[1.2]{$^{\pm 1}\,\,_{\mp 1}$} & \scalebox{0.82}[1.2]{$^{\pm 1}\,\,_{\mp1}$} 
\end{ytableau}\ytableausetup{boxsize=1.5em} }
\caption{A schematic for the coefficients defining the permutation rook $\widehat{R}_{(i,j)}$, where $\Psi((i,j))=(3,2)$. Compare to Figure~\ref{fig:rook_rect}.}\label{fig:rook_rect_perm}
\end{figure}

We want the rooks for permutations defined by~\eqref{eqn:rook_perm} to ``behave like'' the rooks for rectangles defined by~\eqref{eqn:rook_rect}. The previous propositions suggest this might be possible: Proposition~\ref{prop:cross_sat_rect} says that our cross-saturation $\mathcal{C}$ is contained in a rectangle~$\square_k$; and Proposition~\ref{prop:perm_rect} says that for any $w' \in [e,w]$ the toggleable boxes of~$\mathrm{Inv}^{-1}(w')\cap\square_k$ correspond to toggleable boxes of a partition~$\nu\in[\varnothing,k^{n-k}]$. Indeed, using these propositions we will show that any evaluation of one of the permutation rooks is equal to an evaluation of a rectangle rook, and hence by Lemma~\ref{lem:rook_rect_one} is always equal to~$1$. This explains, to some degree, the complicated form of the permutation rook $\widehat{R}_{(i,j)}$ above: we want it to be that after applying an appropriate row-and-column permutation $\Pi$ and the map $\Psi$, the toggleability statistics  appearing in $\widehat{R}_{(i,j)}$ (for the toggleable boxes) correspond to those of a rectangle rook. We will see this in action in Example~\ref{ex:rook_perm_one} below.

Our goal is now to show that the two fundamental lemmas about the rectangle rooks (Lemmas~\ref{lem:rook_rect_attack} and~\ref{lem:rook_rect_one}) continue to hold for the permutation rooks. The first of these, which says that for a toggle-symmetric distribution the rook random variable attacks in expectation each box in its row and column, again follows essentially from the definition of $\widehat{R}_{(i,j)}$.

\begin{lemma} \label{lem:rook_perm_attack}
For any toggle-symmetric distribution $\mu$ on $[e,w]$, we have 
\[\mathbb{E}(\mu;\widehat{R}_{(i,j)})=\sum_{(i',j)\in\mathrm{Inv}^{-1}(w)} \mathbb{E}(\mu;\mathcal{T}^{-}_{(i',j)}) +\sum_{(i,j')\in\mathrm{Inv}^{-1}(w)} \mathbb{E}(\mu;\mathcal{T}^{-}_{(i,j')}) .\]
\end{lemma}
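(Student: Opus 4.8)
The plan is to compute $\mathbb{E}(\mu;\widehat{R}_{(i,j)})$ directly from the defining expression~\eqref{eqn:rook_perm} by applying $\mathbb{E}(\mu;\cdot)$ termwise and exploiting toggle-symmetry. Recall that $\mu$ being toggle-symmetric means $\mathbb{E}(\mu;\mathcal{T}_p)=0$ for every $p\in\mathrm{Irr}([e,w])$, equivalently $\mathbb{E}(\mu;\mathcal{T}^{+}_{g((i',j'),\mathbf{x})})=\mathbb{E}(\mu;\mathcal{T}^{-}_{g((i',j'),\mathbf{x})})$ for each join-irreducible $g((i',j'),\mathbf{x})$. Summing the latter over all admissible $\mathbf{x}$ gives $\mathbb{E}(\mu;\mathcal{T}^{+}_{(i',j')})=\mathbb{E}(\mu;\mathcal{T}^{-}_{(i',j')})$ for every box $(i',j')$. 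These two identities are the only facts about $\mu$ that the argument will use.

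The key observation is that the terms of $\widehat{R}_{(i,j)}$ fall into two kinds according to whether the box $(i',j')\in\mathcal{C}$ lies on the row $i$ or the column $j$ of the distinguished box. For boxes strictly off this row and column --- namely the first summand on the first line and all of the summands on the second and fourth lines of~\eqref{eqn:rook_perm} --- every contributing statistic is a full $\mathcal{T}_{g((i',j'),\mathbf{x})}$ (or a full $\mathcal{T}_{(i',j')}$), so its expectation vanishes by toggle-symmetry, regardless of the particular $\mathbf{x}$-conditions imposed. Thus the entire contribution of the off-row, off-column boxes is zero.

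What survives are the one-sided terms supported on row $i$ and column $j$. I would first record that, because $(i,j)$ is cross-saturated, the columns occurring in $\mathcal{C}$ are exactly $\{j'\colon (i,j')\in\mathrm{Inv}^{-1}(w)\}$ and the rows occurring in $\mathcal{C}$ are exactly $\{i'\colon (i',j)\in\mathrm{Inv}^{-1}(w)\}$, so that the right-hand side of the lemma is a sum over precisely the rows and columns of $\mathcal{C}$. Then I would treat the surviving terms by position: the $\mathcal{T}^{+}_{(i,j')}$ (for $j'<j$) and $\mathcal{T}^{+}_{(i',j)}$ (for $i'>i$) terms on the first line each become $\mathbb{E}(\mu;\mathcal{T}^{-}_{(i,j')})$ and $\mathbb{E}(\mu;\mathcal{T}^{-}_{(i',j)})$ in expectation; the symmetric term $\mathcal{T}^{+}_{(i,j)}+\mathcal{T}^{-}_{(i,j)}$ contributes $2\,\mathbb{E}(\mu;\mathcal{T}^{-}_{(i,j)})$, matching the fact that the box $(i,j)$ is counted once in the row sum and once in the column sum of the right-hand side; and the mixed $\mathcal{T}^{\pm}$ sums on the third line, after replacing each $\mathcal{T}^{\pm}_{g((i,j'),\mathbf{x})}$ by its common expectation and recombining over all $\mathbf{x}$, collapse to $\mathbb{E}(\mu;\mathcal{T}^{-}_{(i,j')})$ for each $j'>j$ and to $\mathbb{E}(\mu;\mathcal{T}^{-}_{(i',j)})$ for each $i'<i$. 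Adding these up recovers the full row $i$ and full column $j$ sums of $\mathbb{E}(\mu;\mathcal{T}^{-})$, which is exactly the claimed right-hand side.

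There is no serious obstacle here: as the surrounding text suggests, this is the ``easy'' half of the rook machinery, following essentially from the definition together with the single identity $\mathbb{E}(\mu;\mathcal{T}_p)=0$. The only thing requiring care is the bookkeeping --- matching each of the summation blocks of~\eqref{eqn:rook_perm} to its role (vanishing, single row/column term, or doubly-counted corner), and confirming that the $j\in\mathbf{x}$ versus $j\notin\mathbf{x}$ (resp.\ $i\in\mathbf{x}$ versus $i\notin\mathbf{x}$) splits on the third line genuinely partition the relevant join-irreducibles, so that they reassemble into the full $\mathcal{T}^{-}_{(i,j')}$ (resp.\ $\mathcal{T}^{-}_{(i',j)}$). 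The genuinely subtle fact --- that $\widehat{R}_{(i,j)}$ evaluates to the constant $1$, the analogue of Lemma~\ref{lem:rook_rect_one} --- is deferred to the next lemma and plays no role in the present argument.
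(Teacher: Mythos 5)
Your proposal is correct and takes essentially the same route as the paper's proof: apply $\mathbb{E}(\mu;\cdot)$ termwise to~\eqref{eqn:rook_perm}, use $\mathbb{E}(\mu;\mathcal{T}_{g((i',j'),\mathbf{x})})=0$ and $\mathbb{E}(\mu;\mathcal{T}^{+}_{g((i',j'),\mathbf{x})})=\mathbb{E}(\mu;\mathcal{T}^{-}_{g((i',j'),\mathbf{x})})$, and note that the boxes of $\mathrm{Inv}^{-1}(w)$ in row $i$ or column $j$ all lie in $\mathcal{C}$. The paper compresses all of this into one sentence, whereas you spell out the bookkeeping (which blocks vanish, how the $\mathbf{x}$-splits reassemble, and the double-counting at $(i,j)$), but the underlying argument is identical.
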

\begin{proof}
For any $(i',j)\in\mathrm{Inv}^{-1}(w)$ we certainly have $(i',j)\in\mathcal{C}$, and similarly for any $(i,j')\in\mathrm{Inv}^{-1}(w)$. Hence, the claimed equality follows immediately from the definition~\eqref{eqn:rook_perm} of the permutation rook $\widehat{R}_{(i,j)}$, together with the fact that for a toggle-symmetric distribution $\mu$ on $[e,w]$ we have $\mathbb{E}(\mu;\mathcal{T}^{+}_{g((i',j'),\mathbf{x})})=\mathbb{E}(\mu;\mathcal{T}^{-}_{g((i',j'),\mathbf{x})})$ and $\mathbb{E}(\mu;\mathcal{T}_{g((i',j'),\mathbf{x})})=0$ for any $g((i',j'),\mathbf{x})\in\mathrm{Irr}([e,w])$.
\end{proof}

The next fundamental lemma, which says that $\widehat{R}_{(i,j)}$ is always equal to~$1$, is more subtle.

\begin{lemma} \label{lem:rook_perm_one}
We have~$\widehat{R}_{(i,j)}(w')=1$ for all $w' \in [e,w]$.
\end{lemma}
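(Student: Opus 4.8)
The plan is to reduce the evaluation $\widehat{R}_{(i,j)}(w')$ of the permutation rook to an evaluation of an ordinary rectangle rook, and then invoke Lemma~\ref{lem:rook_rect_one}. First I would fix $w' \in [e,w]$ and let $k$ be the unique index from Proposition~\ref{prop:cross_sat_rect} with $(k,k+1) \in \mathcal{C}$, so that $\mathcal{C} \subseteq \square_k$ and $\{i,\ldots,k\}\times\{k+1,\ldots,j\} \subseteq \mathcal{C}$. Applying Proposition~\ref{prop:perm_rect} to $w'$ with this $k$ produces the row/column relabeling $\Pi = \pi_c\circ\pi_r$ (which depends on $w'$), the fixed order-isomorphism $\Psi\colon\square_k\simeq[n-k]\times[k]$, and the partition $\nu \coloneqq \Psi(\Pi(\mathrm{Inv}^{-1}(w')\cap\square_k)) \in [\varnothing,k^{n-k}]$. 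Writing $(i'',j'') \coloneqq \Psi(\Pi((i,j)))$ for the image of the rook's box in the rectangle, the goal is to establish the identity $\widehat{R}_{(i,j)}(w') = R_{(i'',j'')}(\nu)$, since then Lemma~\ref{lem:rook_rect_one} gives $R_{(i'',j'')}(\nu)=1$ immediately.

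To prove this identity I would set up a box-by-box dictionary between the toggleability statistics appearing in the permutation rook~\eqref{eqn:rook_perm} and those appearing in the rectangle rook~\eqref{eqn:rook_rect}, matching the schematic coefficients of Figure~\ref{fig:rook_rect_perm} against those of Figure~\ref{fig:rook_rect}. For each box $(i',j')\in\mathcal{C}$, with image $\Psi(\Pi((i',j')))$ in the rectangle, the main input is Proposition~\ref{prop:perm_rect}: adding (resp.\ removing) the inverse inversion $(i',j')$ to $\mathrm{Inv}^{-1}(w')$ gives a valid permutation if and only if the corresponding box can be added to (resp.\ removed from) $\nu$, and moreover $(i',j')\in\mathrm{Inv}^{-1}(w')$ iff $\Psi(\Pi((i',j')))\in\nu$. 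This immediately handles the boxes carrying a plain $\mathcal{T}_{(i',j')}$, $\mathcal{T}^{+}_{(i,j')}$, $\mathcal{T}^{+}_{(i',j)}$, or the doubled term $\mathcal{T}^{+}_{(i,j)}+\mathcal{T}^{-}_{(i,j)}$, so that these collapse to the corresponding entries of~\eqref{eqn:rook_rect}.

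The delicate part, which I expect to be the main obstacle, is the treatment of the boxes $(i',j')\in\mathcal{C}$ for which the rook retains the full $\gamma$-label information through conditions such as $i\in\mathbf{x}$ or $j\notin\mathbf{x}$. Here a label $g((i',j'),\mathbf{x})$ can be toggled at $w'$ only if, in addition to $(i',j')$ being addable or removable, the profile $\{x : i'+1\le x\le j'-1,\ (i',x)\in\mathrm{Inv}^{-1}(w')\}$ equals the prescribed $\mathbf{x}$. I would show, using the inversion-set classification of Lemma~\ref{lem:inv_classification} together with the fact from Proposition~\ref{prop:cross_sat_rect} that the full rectangle $\{i,\ldots,k\}\times\{k+1,\ldots,j\}$ lies in $\mathcal{C}$, that the membership conditions on $i$ and $j$ in $\mathbf{x}$ precisely record whether the neighboring box of $\Psi(\Pi((i',j')))$ that controls its addability or removability in $\nu$ is already present. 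In this way the $\gamma$-label toggleability collapses to the ordinary box toggleability of the rectangle, reproducing exactly the ``$1/1$'' and ``$\pm1/\mp1$'' entries of Figure~\ref{fig:rook_rect_perm}. A case analysis over the four quadrants of $\mathcal{C}$ relative to $(i,j)$, together with the row and column through $(i,j)$, then completes the matching, yielding $\widehat{R}_{(i,j)}(w') = R_{(i'',j'')}(\nu) = 1$ as desired; Example~\ref{ex:rook_perm_one} illustrates this reduction concretely.
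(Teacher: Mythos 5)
Your strategy coincides with the paper's: transport the evaluation to the rectangle via $\Psi\circ\Pi$, prove the identity $\widehat{R}_{(i,j)}(w')=R_{\Psi(\Pi((i,j)))}(\nu)$ where $\nu=\Psi(\Pi(\mathrm{Inv}^{-1}(w')\cap\square_k))$, and finish with Lemma~\ref{lem:rook_rect_one}. But as written your matching argument has a genuine gap. Your dictionary is indexed only by boxes $(i',j')\in\mathcal{C}$, while the rectangle rook $R_{\Psi(\Pi((i,j)))}$ is a sum over \emph{all} boxes of $[n-k]\times[k]$ lying in its two quadrants. Since $\Psi\circ\Pi$ is a bijection from $\square_k$ onto the whole rectangle, those quadrants in general contain many images of boxes of $\square_k$ that are \emph{not} in $\mathcal{C}$; these carry nonzero coefficients in~\eqref{eqn:rook_rect} but have no counterpart whatsoever in~\eqref{eqn:rook_perm}. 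The identity you want can hold only if every such box contributes $0$ to the evaluation at $\nu$, and proving this is a substantive step, not bookkeeping: it is exactly the content of the paper's Propositions~\ref{prop:rook_help1} and~\ref{prop:rook_help2}, which show (using Lemma~\ref{lem:inv_classification} and cross-saturation) that a box of $\square_k\setminus\mathcal{C}$ never lands weakly northwest of $\Psi(\Pi((i,j)))$, and that when it lands weakly southeast it is not in $\nu$ (so its $\mathcal{T}^{-}$ term vanishes) and can be addable only if it sits in the same row or column as $\Psi(\Pi((i,j)))$, where the coefficient of $\mathcal{T}^{+}$ in the rectangle rook is $0$. Your proposal never addresses these boxes, so the term-matching as planned does not establish the identity.

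A secondary inaccuracy: your reading of the conditions $i\in\mathbf{x}$, $j\notin\mathbf{x}$ as recording ``whether the neighboring box of $\Psi(\Pi((i',j')))$ that controls its addability or removability in $\nu$ is already present'' is not right. Addability and removability of the image box in $\nu$ are already equivalent, via Proposition~\ref{prop:perm_rect}, to addability and removability of $(i',j')$ in $\mathrm{Inv}^{-1}(w')$; the extra data in $\mathbf{x}$ records something else, namely the position of $\Pi((i',j'))$ \emph{relative to} $\Pi((i,j))$. For instance, when $i'<i$ the condition $i\in\mathbf{x}$ says $(i',i)\in\mathrm{Inv}^{-1}(w')$, which is precisely the datum deciding whether $\pi_r$ places row $i'$ above or below row $i$, hence which quadrant of $R_{\Psi(\Pi((i,j)))}$ the ($w'$-dependent) image box falls into, hence which coefficient it receives. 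This is also where the paper needs Proposition~\ref{prop:rook_help3}, to rule out certain relative positions for boxes of $\mathcal{C}$ lying in the row or column of $(i,j)$. With the non-$\mathcal{C}$ step added and the $\mathbf{x}$-conditions interpreted this way, your quadrant-by-quadrant case analysis would indeed complete the proof along the paper's lines.
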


Before proving Lemma~\ref{lem:rook_perm_one}, let's show some examples.

\begin{figure}
\begin{center}
\begin{tikzpicture}
\node at (-10.25,0){\begin{tikzpicture}[scale=0.6]
\node at (1.1,0) {1};
\node at (2,0) {2};
\node at (3,0) {3};
\node at (4,0) {4};
\node at (5,0) {5};
\node at (6,0) {6};
\node at (7,0) {7};
\node at (8,0) {8};
\node at (9,0) {9};
\node at (10,0) {10};
\node at (11,0) {11};
\node at (1.1,-1) {1};
\node at (2,-2) {2};
\node at (3,-3) {3};
\node at (4,-4) {4};
\node at (5,-5) {5};
\node at (6,-6) {6};
\node at (7,-7) {7};
\node at (8,-8) {8};
\node at (9,-9) {9};
\node at (10,-10) {10};
\node at (11,-11) {11};
\draw[red,thick] (6.5,-6.5) -- (6.5,-0.5) -- (11.5,-0.5) -- (11.5,-6.5) -- (6.5,-6.5);
\node at (2,-1) {\huge $\cdot$};
\node at (3,-1) {\huge $\blacksquare$};
\node at (4,-1) {\huge $\cdot$};
\node at (5,-1) {\huge $\cdot$};
\node at (6,-1) {\huge $\cdot$};
\node at (7,-1) {\huge $\square$};
\node at (8,-1) {\huge $\cdot$};
\node at (9,-1) {\huge $\cdot$};
\node at (10,-1) {\huge $\square$};
\node at (11,-1) {\huge $\cdot$};
\node at (3,-2) {\huge $\blacksquare$};
\node at (4,-2) {\huge $\square$};
\node at (5,-2) {\huge $\blacksquare$};
\node at (6,-2) {\huge $\square$};
\node at (7,-2) {\huge \color{green!90} $\square$};
\node at (8,-2) {\huge \color{green!90} $\square$};
\node at (9,-2) {\huge $\square$};
\node at (10,-2) {\huge \color{green!90} $\blacksquare$};
\node at (11,-2) {\huge $\square$};
\node at (4,-3) {\huge $\cdot$};
\node at (5,-3) {\huge $\cdot$};
\node at (6,-3) {\huge $\cdot$};
\node at (7,-3) {\huge $\square$};
\node at (8,-3) {\huge $\cdot$};
\node at (9,-3) {\huge $\cdot$};
\node at (10,-3) {\huge $\square$};
\node at (11,-3) {\huge $\cdot$};
\node at (5,-4) {\huge $\blacksquare$};
\node at (6,-4) {\huge $\blacksquare$};
\node at (7,-4) {\huge \color{green!90} $\blacksquare$};
\node at (8,-4) {\huge \color{green!90} $\blacksquare$};
\node at (9,-4) {\huge $\blacksquare$};
\node at (10,-4) {\huge \color{green!90} $\blacksquare$};
\node at (11,-4) {\huge $\square$};
\node at (6,-5) {\huge $\cdot$};
\node at (7,-5) {\huge \color{green!90} $\square$};
\node at (8,-5) {\huge \color{blue!90} $\square$};
\node at (9,-5) {\huge $\cdot$};
\node at (10,-5) {\huge \color{green!90} $\blacksquare$};
\node at (11,-5) {\huge $\cdot$};
\node at (7,-6) {\huge \color{green!90} $\blacksquare$};
\node at (8,-6) {\huge \color{green!90} $\square$};
\node at (9,-6) {\huge $\cdot$};
\node at (10,-6) {\huge \color{green!90} $\blacksquare$};
\node at (11,-6) {\huge $\cdot$};
\node at (8,-7) {\huge $\cdot$};
\node at (9,-7) {\huge $\cdot$};
\node at (10,-7) {\huge $\blacksquare$};
\node at (11,-7) {\huge $\cdot$};
\node at (9,-8) {\huge $\cdot$};
\node at (10,-8) {\huge $\blacksquare$};
\node at (11,-8) {\huge $\cdot$};
\node at (10,-9) {\huge $\blacksquare$};
\node at (11,-9) {\huge $\square$};
\node at (11,-10) {\huge $\cdot$};
\end{tikzpicture} };
\draw (-6.8,4) -- (-6.8,-4);
\node at (-5,0) {\begin{tikzpicture}[scale=0.6]
\node at (0,-1) {1};
\node at (0,-2) {2};
\node at (0,-3) {3};
\node at (0,-4) {4};
\node at (0,-5) {5};
\node at (0,-6) {6};
\node at (1,0) {7};
\node at (2,0) {8};
\node at (3,0) {9};
\node at (4,0) {10};
\node at (5,0) {11};
\node at (1,-1) {\huge $\square$};
\node at (2,-1) {\huge $\cdot$};
\node at (3,-1) {\huge $\cdot$};
\node at (4,-1) {\huge $\square$};
\node at (5,-1) {\huge $\cdot$};
\node at (1,-2) {\huge \color{green!90} $\square$};
\node at (2,-2) {\huge \color{green!90} $\square$};
\node at (3,-2) {\huge $\square$};
\node at (4,-2) {\huge \color{green!90} $\blacksquare$};
\node at (5,-2) {\huge $\square$};
\node at (1,-3) {\huge $\square$};
\node at (2,-3) {\huge $\cdot$};
\node at (3,-3) {\huge $\cdot$};
\node at (4,-3) {\huge $\square$};
\node at (5,-3) {\huge $\cdot$};
\node at (1,-4) {\huge \color{green!90} $\blacksquare$};
\node at (2,-4) {\huge \color{green!90} $\square$};
\node at (3,-4) {\huge $\blacksquare$};
\node at (4,-4) {\huge \color{green!90} $\blacksquare$};
\node at (5,-4) {\huge $\square$};
\node at (1,-5) {\huge \color{green!90} $\square$};
\node at (2,-5) {\huge \color{blue!90} $\square$};
\node at (3,-5) {\huge $\cdot$};
\node at (4,-5) {\huge \color{green!90} $\blacksquare$};
\node at (5,-5) {\huge $\cdot$};
\node at (1,-6) {\huge \color{green!90} $\blacksquare$};
\node at (2,-6) {\huge \color{green!90} $\blacksquare$};
\node at (3,-6) {\huge $\cdot$};
\node at (4,-6) {\huge \color{green!90} $\blacksquare$};
\node at (5,-6) {\huge $\cdot$};
\end{tikzpicture}};
\node at (-5,-2.5){$\mathrm{Inv}^{-1}(w')\cap\square_k$};
\node at (-3.1,0) {$\xrightarrow[]{\Pi}$};
\node at (-1.1,0) {\begin{tikzpicture}[scale=0.6]
\node at (0,-1) {3};
\node at (0,-2) {1};
\node at (0,-3) {5};
\node at (0,-4) {2};
\node at (0,-5) {6};
\node at (0,-6) {4};
\node at (1,0) {10};
\node at (2,0) {7};
\node at (3,0) {8};
\node at (4,0) {9};
\node at (5,0) {11};
\node at (1,-1) {\huge $\square$};
\node at (2,-1) {\huge $\square$};
\node at (3,-1) {\huge $\cdot$};
\node at (4,-1) {\huge $\cdot$};
\node at (5,-1) {\huge $\cdot$};
\node at (1,-2) {\huge $\square$};
\node at (2,-2) {\huge $\square$};
\node at (3,-2) {\huge $\cdot$};
\node at (4,-2) {\huge $\cdot$};
\node at (5,-2) {\huge $\cdot$};
\node at (1,-3) {\huge \color{green!90} $\blacksquare$};
\node at (2,-3) {\huge \color{green!90} $\square$};
\node at (3,-3) {\huge \color{blue!90} $\square$};
\node at (4,-3) {\huge $\cdot$};
\node at (5,-3) {\huge $\cdot$};
\node at (1,-4) {\huge \color{green!90} $\blacksquare$};
\node at (2,-4) {\huge \color{green!90} $\square$};
\node at (3,-4) {\huge \color{green!90} $\square$};
\node at (4,-4) {\huge $\square$};
\node at (5,-4) {\huge $\square$};
\node at (1,-5) {\huge \color{green!90} $\blacksquare$};
\node at (2,-5) {\huge \color{green!90} $\blacksquare$};
\node at (3,-5) {\huge \color{green!90} $\square$};
\node at (4,-5) {\huge $\cdot$};
\node at (5,-5) {\huge $\cdot$};
\node at (1,-6) {\huge \color{green!90} $\blacksquare$};
\node at (2,-6) {\huge \color{green!90} $\blacksquare$};
\node at (3,-6) {\huge \color{green!90} $\blacksquare$};
\node at (4,-6) {\huge $\blacksquare$};
\node at (5,-6) {\huge $\square$};
\end{tikzpicture}};
\end{tikzpicture}
\end{center}
\caption{Example~\ref{ex:rook_perm_one} showing an instance of $\widehat{R}_{(i,j)}(w')=1$. } \label{fig:rook_perm_one_ex1}
\end{figure}

\begin{figure}
\begin{center}
\begin{tikzpicture}
\node at (-10.25,0){\begin{tikzpicture}[scale=0.6]
\node at (1.1,0) {1};
\node at (2,0) {2};
\node at (3,0) {3};
\node at (4,0) {4};
\node at (5,0) {5};
\node at (6,0) {6};
\node at (7,0) {7};
\node at (8,0) {8};
\node at (9,0) {9};
\node at (10,0) {10};
\node at (11,0) {11};
\node at (1.1,-1) {1};
\node at (2,-2) {2};
\node at (3,-3) {3};
\node at (4,-4) {4};
\node at (5,-5) {5};
\node at (6,-6) {6};
\node at (7,-7) {7};
\node at (8,-8) {8};
\node at (9,-9) {9};
\node at (10,-10) {10};
\node at (11,-11) {11};
\draw[red,thick] (6.5,-6.5) -- (6.5,-0.5) -- (11.5,-0.5) -- (11.5,-6.5) -- (6.5,-6.5);
\node at (2,-1) {\huge $\cdot$};
\node at (3,-1) {\huge $\square$};
\node at (4,-1) {\huge $\cdot$};
\node at (5,-1) {\huge $\cdot$};
\node at (6,-1) {\huge $\cdot$};
\node at (7,-1) {\huge $\square$};
\node at (8,-1) {\huge $\cdot$};
\node at (9,-1) {\huge $\cdot$};
\node at (10,-1) {\huge $\square$};
\node at (11,-1) {\huge $\cdot$};
\node at (3,-2) {\huge $\blacksquare$};
\node at (4,-2) {\huge $\square$};
\node at (5,-2) {\huge $\square$};
\node at (6,-2) {\huge $\square$};
\node at (7,-2) {\huge \color{green!90} $\blacksquare$};
\node at (8,-2) {\huge \color{green!90} $\blacksquare$};
\node at (9,-2) {\huge $\square$};
\node at (10,-2) {\huge \color{green!90} $\square$};
\node at (11,-2) {\huge $\square$};
\node at (4,-3) {\huge $\cdot$};
\node at (5,-3) {\huge $\cdot$};
\node at (6,-3) {\huge $\cdot$};
\node at (7,-3) {\huge $\blacksquare$};
\node at (8,-3) {\huge $\cdot$};
\node at (9,-3) {\huge $\cdot$};
\node at (10,-3) {\huge $\square$};
\node at (11,-3) {\huge $\cdot$};
\node at (5,-4) {\huge $\blacksquare$};
\node at (6,-4) {\huge $\square$};
\node at (7,-4) {\huge \color{green!90} $\blacksquare$};
\node at (8,-4) {\huge \color{green!90} $\blacksquare$};
\node at (9,-4) {\huge $\square$};
\node at (10,-4) {\huge \color{green!90} $\blacksquare$};
\node at (11,-4) {\huge $\square$};
\node at (6,-5) {\huge $\cdot$};
\node at (7,-5) {\huge \color{green!90} $\blacksquare$};
\node at (8,-5) {\huge \color{blue!90} $\blacksquare$};
\node at (9,-5) {\huge $\cdot$};
\node at (10,-5) {\huge \color{green!90} $\blacksquare$};
\node at (11,-5) {\huge $\cdot$};
\node at (7,-6) {\huge \color{green!90} $\blacksquare$};
\node at (8,-6) {\huge \color{green!90} $\blacksquare$};
\node at (9,-6) {\huge $\cdot$};
\node at (10,-6) {\huge \color{green!90} $\blacksquare$};
\node at (11,-6) {\huge $\cdot$};
\node at (8,-7) {\huge $\cdot$};
\node at (9,-7) {\huge $\cdot$};
\node at (10,-7) {\huge $\square$};
\node at (11,-7) {\huge $\cdot$};
\node at (9,-8) {\huge $\cdot$};
\node at (10,-8) {\huge $\square$};
\node at (11,-8) {\huge $\cdot$};
\node at (10,-9) {\huge $\blacksquare$};
\node at (11,-9) {\huge $\blacksquare$};
\node at (11,-10) {\huge $\cdot$};
\end{tikzpicture} };
\draw (-6.8,4) -- (-6.8,-4);
\node at (-5,0) {\begin{tikzpicture}[scale=0.6]
\node at (0,-1) {1};
\node at (0,-2) {2};
\node at (0,-3) {3};
\node at (0,-4) {4};
\node at (0,-5) {5};
\node at (0,-6) {6};
\node at (1,0) {7};
\node at (2,0) {8};
\node at (3,0) {9};
\node at (4,0) {10};
\node at (5,0) {11};
\node at (1,-1) {\huge $\square$};
\node at (2,-1) {\huge $\cdot$};
\node at (3,-1) {\huge $\cdot$};
\node at (4,-1) {\huge $\square$};
\node at (5,-1) {\huge $\cdot$};
\node at (1,-2) {\huge \color{green!90} $\blacksquare$};
\node at (2,-2) {\huge \color{green!90} $\blacksquare$};
\node at (3,-2) {\huge $\square$};
\node at (4,-2) {\huge \color{green!90} $\square$};
\node at (5,-2) {\huge $\square$};
\node at (1,-3) {\huge $\blacksquare$};
\node at (2,-3) {\huge $\cdot$};
\node at (3,-3) {\huge $\cdot$};
\node at (4,-3) {\huge $\square$};
\node at (5,-3) {\huge $\cdot$};
\node at (1,-4) {\huge \color{green!90} $\blacksquare$};
\node at (2,-4) {\huge \color{green!90} $\blacksquare$};
\node at (3,-4) {\huge $\square$};
\node at (4,-4) {\huge \color{green!90} $\blacksquare$};
\node at (5,-4) {\huge $\square$};
\node at (1,-5) {\huge \color{green!90} $\blacksquare$};
\node at (2,-5) {\huge \color{blue!90} $\blacksquare$};
\node at (3,-5) {\huge $\cdot$};
\node at (4,-5) {\huge \color{green!90} $\blacksquare$};
\node at (5,-5) {\huge $\cdot$};
\node at (1,-6) {\huge \color{green!90} $\blacksquare$};
\node at (2,-6) {\huge \color{green!90} $\blacksquare$};
\node at (3,-6) {\huge $\cdot$};
\node at (4,-6) {\huge \color{green!90} $\blacksquare$};
\node at (5,-6) {\huge $\cdot$};
\end{tikzpicture}};
\node at (-5,-2.5){$\mathrm{Inv}^{-1}(w')\cap\square_k$};
\node at (-3.1,0) {$\xrightarrow[]{\Pi}$};
\node at (-1.1,0) {\begin{tikzpicture}[scale=0.6]
\node at (0,-1) {1};
\node at (0,-2) {3};
\node at (0,-3) {2};
\node at (0,-4) {5};
\node at (0,-5) {4};
\node at (0,-6) {6};
\node at (1,0) {7};
\node at (2,0) {8};
\node at (3,0) {10};
\node at (4,0) {11};
\node at (5,0) {9};
\node at (1,-1) {\huge $\square$};
\node at (2,-1) {\huge $\cdot$};
\node at (3,-1) {\huge $\square$};
\node at (4,-1) {\huge $\cdot$};
\node at (5,-1) {\huge $\cdot$};
\node at (1,-2) {\huge $\blacksquare$};
\node at (2,-2) {\huge $\cdot$};
\node at (3,-2) {\huge $\square$};
\node at (4,-2) {\huge $\cdot$};
\node at (5,-2) {\huge $\cdot$};
\node at (1,-3) {\huge \color{green!90} $\blacksquare$};
\node at (2,-3) {\huge \color{green!90} $\blacksquare$};
\node at (3,-3) {\huge \color{green!90} $\square$};
\node at (4,-3) {\huge $\square$};
\node at (5,-3) {\huge $\square$};
\node at (1,-4) {\huge \color{green!90} $\blacksquare$};
\node at (2,-4) {\huge \color{blue!90} $\blacksquare$};
\node at (3,-4) {\huge \color{green!90} $\blacksquare$};
\node at (4,-4) {\huge $\cdot$};
\node at (5,-4) {\huge $\cdot$};
\node at (1,-5) {\huge \color{green!90} $\blacksquare$};
\node at (2,-5) {\huge \color{green!90} $\blacksquare$};
\node at (3,-5) {\huge \color{green!90} $\blacksquare$};
\node at (4,-5) {\huge $\square$};
\node at (5,-5) {\huge $\square$};
\node at (1,-6) {\huge \color{green!90} $\blacksquare$};
\node at (2,-6) {\huge \color{green!90} $\blacksquare$};
\node at (3,-6) {\huge \color{green!90} $\blacksquare$};
\node at (4,-6) {\huge $\cdot$};
\node at (5,-6) {\huge $\cdot$};
\end{tikzpicture}};
\end{tikzpicture}
\end{center}
\caption{Example~\ref{ex:rook_perm_one} showing another instance of $\widehat{R}_{(i,j)}(w')=1$. } \label{fig:rook_perm_one_ex2}
\end{figure}

\begin{example} \label{ex:rook_perm_one}
Again, let $n =11$ and $w= 10\; 7\; 3\; 1\; 8\; 5\; 6\; 11\; 9\; 4\; 2$. And let~$(i,j) = (5,8)\in\mathrm{Inv}^{-1}(w)$, which is cross-saturated. Recall that the cross-saturation~$\mathcal{C}$ of $(5,8)$ in $\mathrm{Inv}^{-1}(w)$ is depicted in Figure~\ref{fig:cross_sat_rect}, and that $\mathcal{C}\subseteq \square_k$ for $k=6$. From~\eqref{eqn:rook_perm}, we get that
\begin{align} \label{eqn:rook_perm_ex}
\widehat{R}_{(5,8)} = &\mathcal{T}_{(6,7)}+\mathcal{T}^{+}_{(5,7)} +\mathcal{T}^{+}_{(6,8)}+\mathcal{T}^{+}_{(5,8)}+\mathcal{T}^{-}_{(5,8)}+ \hspace{-0.2cm} \sum_{A\subseteq \{6\}} \mathcal{T}_{g((4,7),\{5\}\cup A)} + \hspace{-0.4cm} \sum_{A\subseteq\{3,4,6\}} \hspace{-0.1cm} \mathcal{T}_{g((2,7),\{5\}\cup A)}\\ \nonumber
&+ \sum_{A\subseteq\{7\}} \mathcal{T}_{g((6,10),A)} + \sum_{A\subseteq\{7\}}\mathcal{T}^{-}_{g((5,10),\{8\}\cup A)} +  \hspace{-0.1cm} \sum_{A\subseteq\{7\}}\mathcal{T}^{+}_{g((5,10),A)} +  \hspace{-0.1cm} \sum_{A\subseteq \{6,7\}} \mathcal{T}^{-}_{g((4,8),A)} \\ \nonumber
&+\sum_{A\subseteq \{6,7\}} \mathcal{T}^{+}_{g((4,8),\{5\}\cup A)} + \sum_{A\subseteq \{3,4,6,7\}} \mathcal{T}^{-}_{g((2,8),A)} + \sum_{A\subseteq \{3,4,6,7\}} \mathcal{T}^{+}_{g((2,8),\{5\}\cup A)}  \\ \nonumber
&+ \sum_{A\subseteq\{3,4,6,7,9\}} \mathcal{T}_{g((2,10),\{5\}\cup A)} - \sum_{A\subseteq \{3,4,6,7,9\}}  \mathcal{T}_{g((2,10),\{8\}\cup A)}.
\end{align}

Let's check that~$\widehat{R}_{(5,8)}(w')=1$ for a couple of $w'\in[e,w]$. 

First let $w' = 3\; 1\; 10\; 5\; 2\; 7, \; 6\; 8\; 9\; 4\; 11$.  The left side of Figure~\ref{fig:rook_perm_one_ex1} depicts $\mathrm{Inv}^{-1}(w')$ as subset of $\mathrm{Inv}^{-1}(w)$: in this figure the boxes of $\mathrm{Inv}^{-1}(w')$ are filled-in, while the boxes of $\mathrm{Inv}^{-1}(w)$ which do not belong to $\mathrm{Inv}^{-1}(w')$ are outlined. On the right side of Figure~\ref{fig:rook_perm_one_ex1} we show that the permutation $w'$ corresponds, in the sense of Proposition~\ref{prop:perm_rect}, to the partition $(4,2,1,1)\in [\varnothing,6^5]$. Recalling the definition of the canonical edge labeling for the weak order from Section~\ref{sec:weak_order_edge_labels}, one can easily check that the $g((i,j),\mathbf{x})\in\mathrm{Irr}([e,w])$ with $\mathcal{T}^{+}_{g((i,j),\mathbf{x})}(w')=1$ are
\[g((1,10),\{3\}), g((2,7),\{3,5\}), g((6,8),\{7\}), g((8,9),\varnothing), g((4,11),\{5,6,7,8,9,10\}).\]
And the $g((i,j),\mathbf{x})\in\mathrm{Irr}([e,w])$ with $\mathcal{T}^{-}_{g((i,j),\mathbf{x})}(w')=1$ are
\[g((1,3),\varnothing), \, g((5,10),\varnothing), \, g((2,5),\{3\}), \, g((6,7),\varnothing), \, g((4,9),\{5,6,7,8\}).\]
So, using~\eqref{eqn:rook_perm_ex}, we see that the terms contributing to $\widehat{R}_{(5,8)}(w')$ are 
\[ \widehat{R}_{(5,8)}(w') = \mathcal{T}^{+}_{g((2,7),\{3,5\})}(w') + \mathcal{T}^{+}_{g((6,8),\{7\})} (w') - \mathcal{T}^{-}_{g((6,7),\varnothing)}(w') = 1+1-1 = 1.  \] 
Observe (by looking at the right side of Figure~\ref{fig:rook_perm_one_ex1}) how these terms exactly correspond to the terms in the evaluation $R_{(3,4)}( (4,2,1,1))$ of the rectangle rook $R_{(3,4)}\colon [\varnothing,6^5]\to\mathbb{Z}$:
\[R_{(3,4)}((4,2,1,1))=\mathcal{T}^{+}_{(2,3)}((4,2,1,1))+ \mathcal{T}^{+}_{(3,2)}((4,2,1,1))-\mathcal{T}^{-}_{(2,2)}((4,2,1,1))=1+1-1=1.\]
 Note that 
 \[(3,4)=\Psi(\Pi((5,8))), (2,3)=\Psi(\Pi((2,7))), (3,2)=\Psi(\Pi((6,8))), (2,2)=\Psi(\Pi((6,7))),\]
where $\Pi\colon\square_k\to\square_k$ is as depicted in Figure~\ref{fig:rook_perm_one_ex1} and $\Psi\colon\square_k\simeq k^{n-k}$ is the natural isomorphism. 

Next, let us consider $w' = 1\; 7\; 3\; 8\; 2\; 10\; 5\; 4\; 6\; 11\; 9$. The left side of Figure~\ref{fig:rook_perm_one_ex2} depicts $\mathrm{Inv}^{-1}(w')$ as subset of $\mathrm{Inv}^{-1}(w)$. On the right side of Figure~\ref{fig:rook_perm_one_ex2}, we see that~$w'$ corresponds to the partition $(5,4,3)\in [\varnothing,6^5]$. The $g((i,j),\mathbf{x})\in\mathrm{Irr}([e,w])$ with $\mathcal{T}^{+}_{g((i,j),\mathbf{x})}(w')=1$ are
\[g((1,7),\varnothing), \; g((2,10),\{3,7,8\}), \; g((4,6),\{5\}).\]
And the $g((i,j),\mathbf{x})\in\mathrm{Irr}([e,w])$ with $\mathcal{T}^{-}_{g((i,j),\mathbf{x})}(w')=1$ are
\[g((3,7),\varnothing), \; g((2,8),\{3,7\}),\; g((5,10),\{7,8\}), \; g((4,5),\varnothing), \; g((9,11),\{10\}).\]
So the terms contributing to $\widehat{R}_{(5,8)}(w')$ are 
\[ \widehat{R}_{(5,8)}(w') = -\mathcal{T}^{+}_{g((2,10),\{3,7,8\})}(w')+\mathcal{T}^{-}_{g((2,8),\{3,7\})}(w')+\mathcal{T}^{-}_{g((5,10),\{7,8\})} (w')=-1+1+1=1.\]
These terms exactly correspond to the terms in the evaluation $R_{(2,3)}((5,4,3))$ of the rectangle rook $R_{(2,3)}\colon [\varnothing,6^5]\to\mathbb{Z}$:
\[R_{(2,3)}((5,4,3))=-\mathcal{T}^{+}_{(3,4)}((5,4,3))+ \mathcal{T}^{-}_{(2,4)}((5,4,3))+\mathcal{T}^{-}_{(3,3)}((5,4,3))=-1+1+1=1.\]
 Note that 
 \[ (2,3)=\Psi(\Pi((5,8))), (3,4)=\Psi(\Pi((2,10))), (2,4)=\Psi(\Pi((2,8))), (3,3)=\Psi(\Pi((5,10))),\]
 where $\Pi\colon\square_k\to\square_k$ is as depicted in Figure~\ref{fig:rook_perm_one_ex2} and $\Psi\colon\square_k\simeq k^{n-k}$ is the natural isomorphism. 
\end{example}

We proceed to prove Lemma~\ref{lem:rook_perm_one}. To do this we will need a few more preliminary propositions. Continue to fix $w\in \mathfrak{S}_n$, with $(i,j)\in\mathrm{Inv}^{-1}(w)$ a cross-saturated box, and~$\mathcal{C}$ its cross-saturation. Also fix $k$ to be the $1 \leq k < n$ from Proposition~\ref{prop:cross_sat_rect} for which $\mathcal{C}\subseteq \square_k$. 

And now let us also fix some $w' \in [e,w]$. Let $(u,v)$ be the $(u,v) \in \mathfrak{S}_k\times \mathfrak{S}_{n-k}$ from Proposition~\ref{prop:perm_rect} applied to $w'$, and let $\Pi$ be the corresponding permutation $\Pi\colon\square_k\to\square_k$. Proposition~\ref{prop:perm_rect} tells us that, setting $S \coloneqq \Pi(\mathrm{Inv}^{-1}(w')\cap\square_k)$, we have $\Psi(S) \in [\varnothing,k^{n-k}]$.

\begin{prop} \label{prop:rook_help1}
Let $(i',j') \in \square_k$ with $(i',j') \notin \mathcal{C}$. Then $\Pi((i',j')) \not\leq_{\Phi^+} \Pi((i,j))$.
\end{prop}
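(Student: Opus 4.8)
The plan is to prove the contrapositive: assuming $\Pi((i',j')) \leq_{\Phi^{+}} \Pi((i,j))$, I will show $(i',j') \in \mathcal{C}$. First I would extract from the coset decomposition $w' = (u,v)\cdot w''$ used in the proof of Proposition~\ref{prop:perm_rect} the explicit description of $\Pi$ on coordinates. Since $u$ permutes only the rows and $v$ only the columns of $\square_k$, the box $(i',j')$ is sent to the box of $\square_k$ whose row coordinate records the slot of the value $i'$ among the letters $\{1,\ldots,k\}$ of $w'$ (that is, $u^{-1}(i')$), and whose column coordinate records the slot of $j'$ among the letters $\{k+1,\ldots,n\}$ (that is, $v^{-1}(j')$). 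Because $\leq_{\Phi^{+}}$ compares the two coordinates separately, the hypothesis $\Pi((i',j')) \leq_{\Phi^{+}} \Pi((i,j))$ decouples into two independent statements: (i) $i'$ occurs weakly to the right of $i$ among the letters $\leq k$ of $w'$, and (ii) $j'$ occurs weakly to the left of $j$ among the letters $>k$ of $w'$. Verifying this decoupling carefully (tracking the $90^{\circ}$ rotation built into $\Psi$ and the left-coset bookkeeping) is the one genuinely fiddly step; once it is in place, the combinatorial core is short.

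Write $\mathcal{C} = R \times C$ with $R = \{i'' : (i'',j) \in \mathrm{Inv}^{-1}(w)\}$ and $C = \{j'' : (i,j'') \in \mathrm{Inv}^{-1}(w)\}$, so that the desired conclusion $(i',j') \in \mathcal{C}$ is exactly $i' \in R$ and $j' \in C$. I would prove $i' \in R$ and then invoke the evident symmetry for $j' \in C$. Suppose, for contradiction, that $i' \notin R$. Proposition~\ref{prop:cross_sat_rect} gives $\{i,i+1,\ldots,k\} \subseteq R$, and since $(i',j') \in \square_k$ forces $i' \leq k$, the assumption $i' \notin R$ forces $i' < i$. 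Now $i' \notin R$ means $(i',j) \notin \mathrm{Inv}^{-1}(w)$, i.e.\ (as $i' < j$) the value $i'$ lies to the left of $j$ in the one-line notation of $w$; whereas $(i,j) \in \mathrm{Inv}^{-1}(w)$ places $i$ to the right of $j$. Hence $i'$ lies to the left of $i$ in $w$, so $(i',i) \notin \mathrm{Inv}^{-1}(w)$. Since $w' \leq w$ yields $\mathrm{Inv}^{-1}(w') \subseteq \mathrm{Inv}^{-1}(w)$, we also get $(i',i) \notin \mathrm{Inv}^{-1}(w')$, which means $i'$ lies to the left of $i$ in $w'$ as well. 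This directly contradicts statement (i). Therefore $i' \in R$, and symmetrically (now using $\{k+1,\ldots,j\} \subseteq C$ from Proposition~\ref{prop:cross_sat_rect} together with statement (ii), so that $j' \notin C$ forces $j' > j$) one obtains $j' \in C$, completing the argument.

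The conceptual heart of the proof is the interaction between two different permutations: the cross-saturation data $(i,j)$, $\mathcal{C}$ is attached to $w$, while $\Pi$ is attached to the smaller permutation $w'$, and the bridge between them is precisely the containment $\mathrm{Inv}^{-1}(w') \subseteq \mathrm{Inv}^{-1}(w)$ coupled with the threshold structure of a cross-saturated box recorded in Proposition~\ref{prop:cross_sat_rect}. I expect the main obstacle to be not this combinatorial core but step one, namely pinning down the coordinate action of $\Pi$ and justifying that $\leq_{\Phi^{+}}$ separates into the row condition (i) and the column condition (ii); this is where the conventions for $\Psi$ and for the left-coset factorization must be handled with care.
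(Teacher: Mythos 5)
Your proposal is correct and is essentially the paper's own argument, just reorganized: the paper argues by contradiction (reducing via symmetry to the case $i'<i$, $(i',j)\notin\mathrm{Inv}^{-1}(w)$, then deducing $(i',i)\in\mathrm{Inv}^{-1}(u)\subseteq\mathrm{Inv}^{-1}(w)$ and contradicting cross-saturation via Lemma~\ref{lem:inv_classification}), while you run the same chain in contrapositive form, using Proposition~\ref{prop:cross_sat_rect}, the coordinate description $\Pi((i',j'))=(u^{-1}(i'),v^{-1}(j'))$, the containment $\mathrm{Inv}^{-1}(w')\subseteq\mathrm{Inv}^{-1}(w)$, and transitivity of inversion sets phrased through one-line notation. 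The logical content and key ingredients are identical.
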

\begin{proof}
Let $(i',j') \in \square_k$ with $(i',j')\notin \mathcal{C}$. Because $(i',j')\notin \mathcal{C}$, either $(i',j) \notin \mathrm{Inv}^{-1}(w)$ or $(i,j') \notin \mathrm{Inv}^{-1}(w)$. By Proposition~\ref{prop:cross_sat_rect} we have~$(i',j') \not \leq_{\Phi^{+}} (i,j)$. Hence, again by Proposition~\ref{prop:cross_sat_rect}, either $(i,j') \geq_{\Phi^{+}} (i,j)$ and $(i,j') \notin \mathrm{Inv}^{-1}(w)$, or $(i',j) \geq_{\Phi^{+}} (i,j)$ and $(i',j) \notin \mathrm{Inv}^{-1}(w)$. Assume by symmetry that $(i',j) \geq_{\Phi^{+}} (i,j)$ and $(i',j) \notin \mathrm{Inv}^{-1}(w)$. Note that this means~$i'<i$.

Now suppose to the contrary that $\Pi((i',j')) \leq_{\Phi^+} \Pi((i,j))$. For this to be the case, we need that~$(i',i)\in\mathrm{Inv}^{-1}(u)$. In particular, $(i',i)\in\mathrm{Inv}^{-1}(w)$. But if $(i',i)\in\mathrm{Inv}^{-1}(w)$, then since~$(i,j)\in\mathrm{Inv}^{-1}(w)$, by Lemma~\ref{lem:inv_classification} we get that $(i',j)\in\mathrm{Inv}^{-1}(w)$. This is a contradiction. Hence,  $\Pi((i',j')) \not\leq_{\Phi^+} \Pi((i,j))$, as desired.
\end{proof}

\begin{prop}  \label{prop:rook_help2}
Let $(i',j') \in \square_k$ with $(i',j') \notin \mathcal{C}$ and with $\Pi((i',j')) \geq_{\Phi^+} \Pi((i,j))$. Then:
\begin{itemize}
\item $\Pi((i',j')) \notin S$;
\item if $\Psi(S\cup\{\Pi((i',j'))\})\in [\varnothing,k^{n-k}]$, then $\Pi((i',j'))$ is either in the same row or same column of $\square_k$ as $\Pi((i,j))$.
\end{itemize}
\end{prop}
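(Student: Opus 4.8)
The plan is to exploit two facts: that $\Pi$ permutes the rows and columns of $\square_k$ \emph{separately}, hence acts coordinate-wise, and that $S=\Pi(\mathrm{Inv}^{-1}(w')\cap\square_k)$ is an order ideal of $\square_k$ by Proposition~\ref{prop:perm_rect}. Write $\Pi((i,j))=(a,b)$ and $\Pi((i',j'))=(a',b')$. Because $\Pi$ relabels the two coordinates independently, the two ``off-corners'' are $\Pi((i',j))=(a',b)$ and $\Pi((i,j'))=(a,b')$. The hypothesis $\Pi((i',j'))\geq_{\Phi^+}\Pi((i,j))$ says precisely $a'\leq a$ and $b'\geq b$, and from this one reads off immediately that $(a',b)\leq_{\Phi^+}(a',b')$ and $(a,b')\leq_{\Phi^+}(a',b')$; that is, both off-corners lie weakly below $\Pi((i',j'))$ in $\Phi^{+}$. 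Identifying these off-corners and placing them below $\Pi((i',j'))$ is really the only idea needed; both bullets then follow in one line each.

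For the first bullet I would argue by contradiction, assuming $\Pi((i',j'))\in S$. Since $(i',j')\notin\mathcal{C}$, the cross-saturation condition fails, so $(i,j')\notin\mathrm{Inv}^{-1}(w)$ or $(i',j)\notin\mathrm{Inv}^{-1}(w)$; the two possibilities are interchanged by swapping the roles of the two off-corners, so take $(i',j)\notin\mathrm{Inv}^{-1}(w)$. As $\mathrm{Inv}^{-1}(w')\subseteq\mathrm{Inv}^{-1}(w)$, this gives $(i',j)\notin\mathrm{Inv}^{-1}(w')$, i.e.\ the off-corner $\Pi((i',j))=(a',b)\notin S$. But $(a',b)\leq_{\Phi^+}\Pi((i',j'))$, so if $\Pi((i',j'))\in S$ then the order-ideal property of $S$ would force $(a',b)\in S$ as well --- a contradiction. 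Hence $\Pi((i',j'))\notin S$.

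For the second bullet I again argue by contradiction: suppose $\Pi((i',j'))$ lies in neither the same row nor the same column of $\square_k$ as $\Pi((i,j))$. Together with $a'\leq a$ and $b'\geq b$ this forces $a'<a$ and $b'>b$, so now both off-corners satisfy $(a',b)<_{\Phi^+}\Pi((i',j'))$ and $(a,b')<_{\Phi^+}\Pi((i',j'))$ \emph{strictly}. The added hypothesis is that $S\cup\{\Pi((i',j'))\}$ is an order ideal of $\square_k$; hence everything strictly below $\Pi((i',j'))$ lies in $S\cup\{\Pi((i',j'))\}$ and, being distinct from $\Pi((i',j'))$, actually lies in $S$. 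Thus $(a',b),(a,b')\in S$, which pulls back through $\Pi$ to $(i',j),(i,j')\in\mathrm{Inv}^{-1}(w')\subseteq\mathrm{Inv}^{-1}(w)$. By the definition of cross-saturation this places $(i',j')\in\mathcal{C}$, contradicting $(i',j')\notin\mathcal{C}$. Therefore $\Pi((i',j'))$ shares a row or a column with $\Pi((i,j))$.

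The main obstacle is genuinely just the first step --- seeing that the complementary corners $(a',b)$ and $(a,b')$ are forced below $\Pi((i',j'))$ by the coordinate-wise nature of $\Pi$ and the inequalities $a'\leq a$, $b'\geq b$. After that, the first bullet is immediate from $S$ being an order ideal and the second from $S\cup\{\Pi((i',j'))\}$ being one. Notably this route seems to need neither Proposition~\ref{prop:rook_help1} nor any description of $\Pi$ in terms of $u$ and $v$. The only points I would double-check are the degenerate cases $i'=i$ and $j'=j$ in the first bullet, where an off-corner coincides with $\Pi((i',j'))$ or $\Pi((i,j))$ and the conclusion becomes even more direct rather than harder.
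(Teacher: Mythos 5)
Your proof is correct, and its engine is the same as the paper's: the two off-corners $\Pi((i',j))$ and $\Pi((i,j'))$ sit weakly below $\Pi((i',j'))$ in $\Phi^{+}$ because $\Pi$ acts coordinate-wise, the failure of cross-saturation at $(i',j')$ makes the preimage of one of them absent from $\mathrm{Inv}^{-1}(w)\supseteq\mathrm{Inv}^{-1}(w')$, and the order-ideal property of $S$, respectively of $S\cup\{\Pi((i',j'))\}$, finishes each bullet. The genuine differences are in the packaging. The paper opens exactly as in Proposition~\ref{prop:rook_help1}, citing Proposition~\ref{prop:cross_sat_rect} to record additionally that the missing off-corner satisfies $(i',j)\geq_{\Phi^{+}}(i,j)$ (or its row analogue); as your argument makes visible, that extra information is never actually used in this proposition, so your route is leaner. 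For the second bullet the paper, in its WLOG case, concludes directly that $S\cup\{\Pi((i',j'))\}$ can only be an order ideal when $(i',j')=(i',j)$, naming the shared column explicitly; you instead suppose neither coordinate agrees, force both off-corners strictly below $\Pi((i',j'))$ and hence into $S$, pull back to get $(i',j),(i,j')\in\mathrm{Inv}^{-1}(w)$, and contradict $(i',j')\notin\mathcal{C}$ via the definition of cross-saturation. Your finish is symmetric in the two off-corners (no WLOG needed for that bullet) at the cost of being indirect, while the paper's identifies the row or column concretely. Your closing remark is also right: when $i'=i$ or $j'=j$ the missing box is $(i',j')$ itself, so $\Pi((i',j'))\notin S$ is immediate and the second bullet's conclusion holds trivially.
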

\begin{proof}
We begin as in the previous proposition. Let $(i',j') \in \square_k$ with $(i',j')\notin \mathcal{C}$. Because $(i',j')\notin \mathcal{C}$, either $(i',j) \notin \mathrm{Inv}^{-1}(w)$ or $(i,j') \notin \mathrm{Inv}^{-1}(w)$. By Proposition~\ref{prop:cross_sat_rect} we have~$(i',j') \not \leq_{\Phi^{+}} (i,j)$. Hence, again by Proposition~\ref{prop:cross_sat_rect}, either $(i,j') \geq_{\Phi^{+}} (i,j)$ and $(i,j') \notin \mathrm{Inv}^{-1}(w)$, or $(i',j) \geq_{\Phi^{+}} (i,j)$ and $(i',j) \notin \mathrm{Inv}^{-1}(w)$. Assume by symmetry that $(i',j) \geq_{\Phi^{+}} (i,j)$ and $(i',j) \notin \mathrm{Inv}^{-1}(w)$. 

Now suppose that~$\Pi((i',j')) \geq_{\Phi^+} \Pi((i,j))$. This means $\Pi((i',j')) \geq_{\Phi^+} \Pi((i',j))$. Since $(i',j) \notin \mathrm{Inv}^{-1}(w)$, certainly $\Pi((i',j)) \notin S$. But because $\Psi(S)\in[\varnothing,k^{n-k}]$, i.e., because $S$ is an order ideal of $\square_k$, we must have $\Pi((i',j'))\notin S$. This establishes the first bullet point.

Moreover, the only way $S\cup\{\Pi((i',j'))\}$ could be an order ideal of $\square_k$ is if we have~$(i',j')=(i',j)$. In other words, this is only possible if $\Pi((i',j'))$ is in the same column of $\square_k$ as $\Pi((i,j))$. This establishes the second bullet point.
\end{proof}

\begin{prop}  \label{prop:rook_help3}
Let $(i',j') \in \mathrm{Inv}^{-1}(w)$ be a box either in the same row or same column as~$(i,j)$, and with $(i',j')\leq_{\Phi^+} (i,j)$. Then $\Pi((i',j')) \leq_{\Phi^+} \Pi((i,j))$.
\end{prop}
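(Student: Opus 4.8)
The plan is to reduce both geometric cases (the box $(i',j')$ in the same column as $(i,j)$, and in the same row) to a single combinatorial criterion describing when the row/column permutation $\Pi = \pi_c\circ\pi_r$ reverses the $\Phi^{+}$-order of a pair of boxes, and then to kill the resulting obstructions using the cross-saturation of $(i,j)$ together with the fact that $\mathrm{Inv}^{-1}(w')\subseteq\mathrm{Inv}^{-1}(w)$.

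First I would pin down how $\Pi$ acts. Since $\pi_r$ only permutes rows (according to $u$) and $\pi_c$ only permutes columns (according to $v$), the map $\Pi$ sends boxes sharing a column of $\square_k$ to boxes sharing a column, and likewise for rows. Writing a box as $(r,c)$ with $1\leq r\leq k<c\leq n$, one checks that $\Pi((r,c)) = (u^{-1}(r),\, k+v^{-1}(c-k))$. Hence for two boxes in a common column $\Pi$ preserves the $\Phi^{+}$-order exactly when $u$ does not invert their rows, and for two boxes in a common row exactly when $v$ does not invert their columns.

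Next I would translate ``order reversed'' into inverse inversions. In the same-column case $i'\geq i$ and $j'=j$, so $\Pi((i',j))\leq_{\Phi^{+}}\Pi((i,j))$ amounts to $u^{-1}(i')\geq u^{-1}(i)$, which fails precisely when $i'$ precedes $i$ in the one-line notation of $u$, i.e.\ precisely when $(i,i')\in\mathrm{Inv}^{-1}(u)$. By Proposition~\ref{prop:perm_rect} we have $\mathrm{Inv}^{-1}(w')\setminus\square_k = \mathrm{Inv}^{-1}((u,v))$, and since $(i,i')$ lies in the upper block ($i<i'\leq k$), this is equivalent to $(i,i')\in\mathrm{Inv}^{-1}(w')$. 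Symmetrically, in the same-row case ($i'=i$, $j'\leq j$) failure of $\Pi((i,j'))\leq_{\Phi^{+}}\Pi((i,j))$ is equivalent to $(j',j)\in\mathrm{Inv}^{-1}(w')$, via the $v$-part of $\mathrm{Inv}^{-1}((u,v))$ in the lower block.

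Finally I would eliminate both obstructions with cross-saturation. As $w'\leq w$ gives $\mathrm{Inv}^{-1}(w')\subseteq\mathrm{Inv}^{-1}(w)$, it suffices to show $(i,i')\notin\mathrm{Inv}^{-1}(w)$ (resp.\ $(j',j)\notin\mathrm{Inv}^{-1}(w)$). In the same-column case, $(i,i')$ lies in the same row as $(i,j)$ while $(i',j)$ lies in its column; were both in $\mathrm{Inv}^{-1}(w)$, the cross-saturation of $(i,j)$ would force the diagonal box $(i',i')\in\mathrm{Inv}^{-1}(w)$, which is impossible since $(i',i')\notin\Phi^{+}$. The same-row case is identical with rows and columns swapped, forcing $(j',j')$. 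The main obstacle is purely bookkeeping: getting the direction of $\Pi$ exactly right (whether row $r$ moves by $u$ or $u^{-1}$, and the corresponding orientation in the $\Phi^{+}$-order), so that ``$\Pi$ reverses the pair'' matches membership of the correct off-diagonal box in an inverse inversion set; once that orientation is fixed, each case collapses to a one-line contradiction with a box on the forbidden diagonal.
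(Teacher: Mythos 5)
Your proof is correct and follows essentially the same route as the paper's: assume the $\Phi^{+}$-order is reversed by $\Pi$, translate this into $(i,i')\in\mathrm{Inv}^{-1}(u)$ (resp.\ $(j',j)\in\mathrm{Inv}^{-1}(v)$), lift it via Proposition~\ref{prop:perm_rect} and $\mathrm{Inv}^{-1}(w')\subseteq\mathrm{Inv}^{-1}(w)$ to an inverse inversion of $w$, and then contradict the cross-saturation of $(i,j)$. The only cosmetic difference is the endgame: the paper contradicts $\mathcal{C}\subseteq\square_k$ (Proposition~\ref{prop:cross_sat_rect}), while you force the diagonal box $(i',i')$ or $(j',j')$ into $\mathrm{Inv}^{-1}(w)\subseteq\Phi^{+}$, which is exactly the argument used inside the proof of that proposition, so the two steps are interchangeable.
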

\begin{proof}
Let~$(i',j') \in \mathrm{Inv}^{-1}(w)$ be a box either in the same row or same column as~$(i,j)$, and with $(i',j')\leq_{\Phi^+} (i,j)$. Either we have $(i',j') = (i',j)$ with $i' > i$ or $(i',j')=(i,j')$ with $j' < j$. Assume by symmetry that $(i',j') = (i,j')$ with $j' <j$. Note that we certainly have~$(i,j') \in \mathcal{C}$. Also note that $j'>k$, since $\mathcal{C}\subseteq\square_k$ (by Proposition~\ref{prop:cross_sat_rect}).

Suppose to the contrary that $\Pi((i',j')) \not\leq_{\Phi^+} \Pi((i,j))$.  For this to be the case, we need that~$(j',j)\in\mathrm{Inv}^{-1}(v)$. In particular, $(j',j)\in\mathrm{Inv}^{-1}(w)$.  But then $(j',j)$ would be a box of $\mathrm{Inv}^{-1}(w)$ in the same column as $(i,j)$, and hence we must have $(j',j)\in\mathcal{C}$. But then the fact that $j'>k$ means that $(j',j)\notin \square_k$. This contradicts that~$\mathcal{C}\subseteq\square_k$. Hence we conclude that $\Pi((i',j')) \not\leq_{\Phi^+} \Pi((i,j))$, as desired.
\end{proof}

We can now prove Lemma~\ref{lem:rook_perm_one}.

\begin{proof}[Proof of Lemma~\ref{lem:rook_perm_one}]
We will show that $\widehat{R}_{(i,j)}(w')=R_{\Psi(\Pi((i,j)))}(\Psi(S))$ by matching up their terms. 

First of all, we claim that if $(i',j')\in\square_k$ is a box for which $\mathcal{T}^{\pm}_{\Psi(\Pi((i',j')))}(\Psi(S))=1$ and $\mathcal{T}^{\pm}_{\Psi(\Pi((i',j')))}$ has nonzero coefficient in $R_{\Psi(\Pi((i,j)))}$ (where $\pm$ is some choice of sign $\pm\in\{+,-\}$), then $(i',j')\in\mathcal{C}$. In fact, this follows immediately from Propositions~\ref{prop:rook_help1} and~\ref{prop:rook_help2}, together with the definition~\eqref{eqn:rook_rect} of the rectangle rooks. 

Now let $(i',j') \in \mathcal{C}$ be a box for which $\mathcal{T}^{\pm}_{\Psi(\Pi((i',j')))}(\Psi(S))=1$ and $\mathcal{T}^{\pm}_{\Psi(\Pi((i',j')))}$ has nonzero coefficient in $R_{\Psi(\Pi((i,j)))}$ (where $\pm$ is some choice of sign $\pm\in\{+,-\}$). We know from Proposition~\ref{prop:perm_rect} that $\mathcal{T}^{\pm}_{(i',j')}(w')=1$. Hence in particular there is a unique~$g((i',j'),\mathbf{x})\in\mathrm{Irr}([e,w])$ such that $\mathcal{T}^{\pm}_{g((i',j'),\mathbf{x})}(w')=1$. Explicitly, the $\mathbf{x}$ in question is:
\[\mathbf{x} \coloneqq  \{x: i'+1 \leq x \leq j'-1, (i',x) \in \mathrm{Inv}^{-1}(w')\}.\]
What we now want is that the coefficient of $\mathcal{T}^{\pm}_{g((i',j'),\mathbf{x})}$ in $\widehat{R}_{(i,j)}$ is the same as the coefficient of~$\mathcal{T}^{\pm}_{\Psi(\Pi((i',j')))}$ in $R_{\Psi(\Pi((i,j)))}$. Let us explain why this is indeed the case. Proposition~\ref{prop:rook_help3} says that certain possibilities for the relative position in $\square_k$ of $\Pi((i',j'))$ and $\Pi((i,j))$ cannot occur: e.g., this proposition implies that if $i' < i$ and $j' < j$, then we cannot have $\Pi((i',j'))\geq_{\Phi^{+}}\Pi((i,j))$. Among the possibilities for the relative position in $\square_k$ of~$\Pi((i',j'))$ and~$\Pi((i,j))$ which do actually occur, this relative position is dictated by the status of $i$ and $j$ in $\mathbf{x}$. One can then check that the definition~\eqref{eqn:rook_perm} of the permutation rook is exactly what is required so that the coefficient of $\mathcal{T}^{\pm}_{g((i',j'),\mathbf{x})}$ in $\widehat{R}_{(i,j)}$ is the same as the coefficient of $\mathcal{T}^{\pm}_{\Psi(\Pi((i',j')))}$ in $R_{\Psi(\Pi((i,j)))}$. 

Conversely, if $g((i',j'),\mathbf{x}) \in \mathrm{Irr}([e,w])$ is such that $\mathcal{T}^{\pm}_{g((i',j'),\mathbf{x})}(w)=1$ and $\mathcal{T}^{\pm}_{g((i',j'),\mathbf{x})}$ has nonzero coefficient in $\widehat{R}_{(i,j)}$, then it similarly follows that $\mathcal{T}^{\pm}_{\Psi(\Pi((i',j')))}(\Psi(S))=1$ and $\mathcal{T}^{\pm}_{\Psi(\Pi((i',j')))}$ has the same coefficient in $R_{\Psi(\Pi((i,j)))}$.

So we have shown that the nonzero terms in  $\widehat{R}_{(i,j)}(w')$ and $R_{\Psi(\Pi((i,j)))}(\Psi(S))$ can be matched up, and thus that $\widehat{R}_{(i,j)}(w')=R_{\Psi(\Pi((i,j)))}(\Psi(S))$. By Lemma~\ref{lem:rook_rect_one} we have that~$R_{\Psi(\Pi((i,j)))}(\Psi(S))=1$, and hence that~$\widehat{R}_{(i,j)}(w')=1$.
\end{proof}

This completes our development of rooks for permutations. In the next subsection we will show how to place rooks on the inverse inversion set of a skew vexillary permutation of balanced shape to deduce that its initial weak order interval is tCDE.

\subsection{The rook placement for skew vexillary permutations of balanced shape}

Having done the hard work of constructing the rooks for permutations in the previous subsection, it will now be easy to use them to prove our main result.

First we prove a fact mentioned earlier: that balanced shapes have enough cross-saturated boxes to be able to place rooks on cross-saturated boxes so that every box is attacked the same number of times.

\begin{figure}
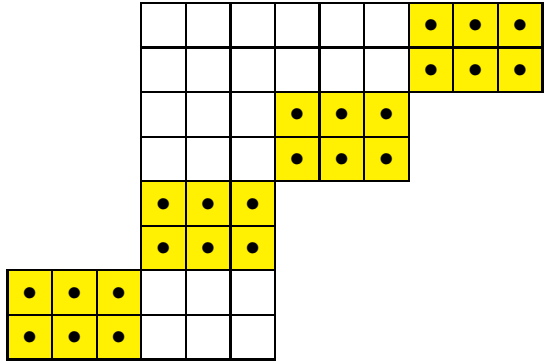

\begin{center}
\begin{ytableau} 
\none & \none & \none & & & & & & & *(yellow)  \bullet & *(yellow)  \bullet & *(yellow)  \bullet \\ 
\none & \none & \none & & & & & & & *(yellow)  \bullet & *(yellow)  \bullet & *(yellow)  \bullet  \\ 
\none & \none & \none & & & &   *(yellow)  \bullet & *(yellow)  \bullet & *(yellow)  \bullet  \\ 
\none & \none & \none & & & &   *(yellow)  \bullet & *(yellow)  \bullet & *(yellow)  \bullet \\ 
\none & \none & \none &   *(yellow)  \bullet & *(yellow)  \bullet & *(yellow)  \bullet \\
\none & \none & \none &   *(yellow)  \bullet & *(yellow)  \bullet & *(yellow)  \bullet \\
*(yellow)  \bullet & *(yellow) \bullet & *(yellow)  \bullet & & & \\
*(yellow)  \bullet & *(yellow)  \bullet & *(yellow)  \bullet & & & \\
\end{ytableau}
\end{center}
\caption{The ``diagonal'' cross-saturated rectangles of a balanced shape, highlighted in yellow (and with bullets in them).} \label{fig:balanced_cross_sat}
\end{figure}

\begin{lemma} \label{lem:rook_placement}
Let $\sigma = \lambda/\nu$ be a balanced shape of height $a$ and width $b$. Then there exist coefficients $c_{(i,j)} \in \mathbb{Z}$ for the boxes $(i,j) \in \sigma$ such that:
\begin{itemize}
\item $c_{(i,j)} \neq 0$ only if $(i,j)$ is a cross-saturated box of $\sigma$;
\item for any fixed box $(i,j)\in \sigma$, we have~$\sum_{(i',j)\in \sigma} c_{(i',j)}=a$ and $\sum_{(i,j')\in \sigma} c_{(i,j')}=b$;
\item $\sum_{(i,j)\in \sigma} c_{(i,j)} = ab$.
\end{itemize}
\end{lemma}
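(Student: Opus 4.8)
The plan is to write down the coefficients explicitly, by decomposing $\sigma$ along its main antidiagonal into $\mathrm{gcd}(a,b)$ congruent cross-saturated rectangles and placing a constant weight on each. Set $g \coloneqq \mathrm{gcd}(a,b)$, $p \coloneqq a/g$, $q \coloneqq b/g$, and translate $\sigma$ so that it occupies the $a\times b$ bounding rectangle, with rows $1,\ldots,a$ from top to bottom and columns $1,\ldots,b$ from left to right. For $1\le k\le g$ let $\rho_k \coloneqq R_k\times C_k$, where $R_k \coloneqq \{(k-1)p+1,\ldots,kp\}$ and $C_k \coloneqq \{(g-k)q+1,\ldots,(g-k+1)q\}$. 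The $R_k$ partition the rows and the $C_k$ partition the columns, and the rectangles $\rho_1,\ldots,\rho_g$ run from the northeast corner to the southwest corner along the antidiagonal: a direct computation in grid-line coordinates shows that the southwest corner of $\rho_k$ coincides with the northeast corner of $\rho_{k+1}$ at the $k$-th division point $P_k$ of the antidiagonal (these are lattice points because $p,q\in\mathbb{Z}$). These are precisely the rectangles highlighted in Figure~\ref{fig:balanced_cross_sat}. Granting the structural claim below that each $\rho_k$ is contained in $\sigma$ with all of its boxes cross-saturated, I define $c_{(i,j)}\coloneqq g$ for $(i,j)\in\rho_1\cup\cdots\cup\rho_g$ and $c_{(i,j)}\coloneqq 0$ otherwise.

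The three bullet points are then immediate. The support of $c$ consists of cross-saturated boxes by the structural claim, giving the first bullet. For the second, since $\sigma$ is connected it has no empty row or column strictly between occupied ones, so every index $i\in[a]$ and $j\in[b]$ is occupied; a fixed row $i$ lies in a unique $R_k$, and the only weighted boxes in that row are the $q$ boxes $\{i\}\times C_k\subseteq\rho_k$, each of weight $g$, whence $\sum_{(i,j')\in\sigma}c_{(i,j')}=qg=b$, and symmetrically every column sums to $pg=a$. Summing the row totals over all $a$ rows yields $\sum_{(i,j)\in\sigma}c_{(i,j)}=ab$, the third bullet. Note that using the uniform weight $g$, rather than weights $b/q_k=a/p_k$ on blocks of other proportions, is exactly what keeps the coefficients integral.

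It remains to prove the structural claim, which is the heart of the matter and the only place the balanced hypothesis enters. The plan is to analyze the boundary lattice path of $\sigma$ one antidiagonal segment at a time. Writing $L$ for the main antidiagonal, I would first show that the inner boundary (the boundary of $\nu$) stays weakly northwest of $L$ and the outer boundary (the boundary of $\lambda$) stays weakly southeast of $L$, the two being allowed to touch $L$ only at the points $P_0,\ldots,P_g$; this is essentially a restatement of ``every outward corner lies on $L$,'' since a boundary corner strictly off $L$ would be an outward corner off the antidiagonal. Because $\rho_k$ has its northeast corner at $P_{k-1}$ and its southwest corner at $P_k$, it is then trapped between the two boundaries and so lies in $\sigma$. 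Cross-saturation of a box of $\rho_k$ follows from the same pinching: the boxes of $\sigma$ in its row and in its column form intervals, and the rectangle they span is again caught between the inner and outer boundaries and hence lies in $\sigma$.

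I expect this last claim to be the main obstacle, because making ``weakly northwest/southeast of $L$, touching only at the $P_k$'' precise requires controlling all the ways a balanced boundary can behave near a junction $P_k$. I would handle this by enumerating the local configurations at each of the $g-1$ interior division points — there are three at each, matching the count $3^{\mathrm{gcd}(a,b)-1}$ of balanced shapes quoted in Section~\ref{sec:young_lat_defs} — and checking that none of the three removes a box of an adjacent diagonal rectangle or breaks the cross of such a box. An induction on $g$, peeling off the northeast block $\rho_1$ (restricting to rows $p+1,\ldots,a$ and columns $1,\ldots,b-q$) to reduce to a balanced shape whose height and width have greatest common divisor $g-1$, is an alternative route to the same structural claim.
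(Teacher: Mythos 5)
Your coefficients are different from the paper's, and they work: the paper places non-uniform weights within each diagonal rectangle of Figure~\ref{fig:balanced_cross_sat} (the value $a$ along the bottom row, $b$ along the right column, and $a+b-\tfrac{ab}{m}$, which can be negative, in the corner), whereas your uniform weight $\gcd(a,b)$ on every box of every diagonal rectangle makes all three bullets one-line computations. So, granting the structural claim that each diagonal rectangle lies in $\sigma$ with all of its boxes cross-saturated, your proof is complete and in fact cleaner than the paper's arithmetic. You are also right that this structural claim is the heart of the matter; the paper disposes of it by observing that $\sigma=\sigma'\circ (b')^{a'}$ for a \emph{square} balanced shape $\sigma'$, for which the antidiagonal boxes are easily seen to be cross-saturated, and then blowing back up.

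It is in your route to the structural claim that there is a genuine gap, not merely unfinished case analysis. First, the assertion that ``a boundary corner strictly off $L$ would be an outward corner off the antidiagonal'' is false: in the paper's own example $\sigma=(8,8,8,2)/(4,4)$ the inner boundary turns at the three grid points $(2,0)$, $(2,4)$, $(0,4)$, and only the middle one is an outward corner; the other two lie strictly off $L$ and are harmless. Only the right-then-up turns of the inner boundary (and, dually, the up-then-right turns of the outer boundary) are outward corners. Second, and more seriously, the intermediate property you reduce balancedness to --- boundaries weakly northwest/southeast of $L$, meeting $L$ only at $P_0,\dots,P_g$ --- is strictly weaker than balancedness and does \emph{not} imply your ``trapped'' conclusion. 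Take $a=4$, $b=6$, $\sigma=(6,6,6,6)/(4)$: both boundaries are weakly on the correct sides of $L$ and meet $L$ only at $P_0$ and $P_2$, yet the box $(1,4)$ of $\rho_1$ is not in $\sigma$. (That shape is not balanced, of course; the point is that your property fails to remember this.) The repair is to keep the full strength of balancedness: every right-then-up corner of the inner boundary is an outward corner, hence lies on $L$, hence equals some $P_l$, because the lattice points of $L$ are exactly $P_0,\dots,P_g$ (from $br+ac=ab$ one gets $p\mid r$, using $\gcd(p,q)=1$). Trapping is then a two-line contradiction: if some box of $\rho_k$ were in $\nu$, the inner path would have a right-then-up corner with row $>(k-1)p$ and column $>(g-k)q$, i.e.\ some $P_l$ with both $l\ge k$ and $l\le k-1$. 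Cross-saturation follows by the same device: a descent of $\nu$ between two rows meeting a column of $\rho_k$ would create a right-then-up corner at some $P_l$ with $l\le k-1$, which forces that column to be empty in the higher row, a contradiction; and dually for $\lambda$. If you would rather avoid this bookkeeping, the paper's blow-down to the square case (where $p=q=1$ and weak containment alone suffices) sidesteps the local case enumeration you anticipate entirely.
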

\begin{proof}
Let $m \coloneqq  \mathrm{gcd}(a,b)$, and let $a' \coloneqq  a/m$ and $b' \coloneqq  b/m$. By considering the possible locations of outward corners in $\sigma$, it is easy to see that we must have $\sigma = \sigma' \circ (b')^{(a')}$ for $\sigma'$ a balanced shape of height and width both equal to $m$. Because $\sigma'$ is balanced of square dimensions, the ``diagonal'' boxes of $\sigma'$ (i.e., the boxes $(m,1),(m-1,2),\ldots,(1,m)$) are cross-saturated in $\sigma'$. Moreover, for each diagonal box of $\sigma'$, all the boxes in the $a'\times b'$ rectangle that this box becomes when we obtain $\sigma$ from $\sigma'$ are cross-saturated in $\sigma$. Figure~\ref{fig:balanced_cross_sat} depicts these ``diagonal'' cross-saturated $a'\times b'$ rectangles for a balanced shape. Within each of these diagonal $a'\times b'$ rectangles of cross-saturated boxes, let us define the coefficients $c_{(i,j)}$ as in Figure~\ref{fig:rect_coeffs} (if the box is empty in the figure, that means the corresponding coefficient is zero). It is easy to see that the sum of the coefficients in each row of the rectangle is $b$, and the sum of the coefficients in each column is $a$: this requires checking
\[ (b'-1)a + (a + b - \frac{ab}{m}) = \frac{ab}{m} - a + a + b - \frac{ab}{m} = b\]
and
\[(a'-1)b + (a + b - \frac{ab}{m}) = \frac{ab}{m} - b + a + b - \frac{ab}{m} = a.\]
Then observe that each row of $\sigma$ intersects a unique one of these diagonal $a'\times b'$ rectangles, and so does each column. Thus these coefficients satisfy the first two bulleted conditions. For the third bullet point: we compute that the sum of the coefficients in each $a'\times b'$ rectangle is
\[ (b'-1)a+(a'-1)b + (a + b - \frac{ab}{m}) = \frac{ab}{m} - a + \frac{ab}{m} - b + a + b - \frac{ab}{m} = \frac{ab}{m}; \]
and there are $m$ such rectangles in $\sigma$, so the total sum of the $c_{(i,j)}$ coefficients is $ab$, as required.
\end{proof}

\begin{figure}
\begin{tikzpicture}
\node at (0,0) {\begin{ytableau}
 \, & & & &  b \\
 \, & & & &  b \\
 \, a & a & a & a &  X \\
\end{ytableau}};
\draw [decorate,decoration={brace,amplitude=10pt},xshift=-4pt,yshift=0pt] (-1.5,-0.8) -- (-1.5,0.8) node [black,midway,xshift=-0.6cm]  {\footnotesize $a'$};
\draw [decorate,decoration={brace,amplitude=10pt},xshift=-4pt,yshift=0pt] (-1.3,1) -- (1.5,1) node [black,midway,yshift=0.6cm]  {\footnotesize $b'$};
\end{tikzpicture}
\caption{An illustration of how to define coefficients $c_{(i,j)}$ in an $a'\times b'$ diagonal rectangle, as in Lemma~\ref{lem:rook_placement}. Here $X \coloneqq  a + b - \frac{ab}{m}$.}  \label{fig:rect_coeffs}
\end{figure}
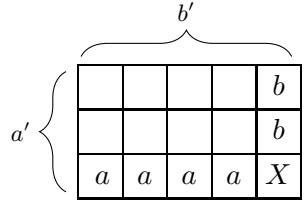

Since there is a good way to place rooks on cross-saturated boxes of a balanced shape, there is also a good way to place rooks on the inverse inversion set of a skew vexillary permutation of balanced shape. This is because if $D$ is a diagram and $(i,j)$ a cross-saturated box of $D$, and if $\Pi$ is a permutation of the rows and columns of $D$, then $\Pi((i,j))$ is a cross-saturated box of $\Pi(D)$. Thus, we have the following:

\begin{cor} \label{cor:rook_placement}
Let $\sigma = \lambda/\nu$ be a balanced shape of height $a$ and width $b$, and $w\in \mathfrak{S}_n$ be a skew vexillary permutation of shape~$\sigma$. Then there exist coefficients $c_{(i,j)} \in \mathbb{Z}$ for the boxes $(i,j) \in \mathrm{Inv}^{-1}(w)$ such that:
\begin{itemize}
\item $c_{(i,j)} \neq 0$ only if $(i,j)$ is a cross-saturated box of $\mathrm{Inv}^{-1}(w)$;
\item for any fixed box $(i,j)\in \mathrm{Inv}^{-1}(w)$, we have~$\sum_{(i',j)\in \mathrm{Inv}^{-1}(w)} c_{(i',j)}=b$ and $\sum_{(i,j')\in \mathrm{Inv}^{-1}(w)} c_{(i,j')}=a$;
\item $\sum_{(i,j)\in \mathrm{Inv}^{-1}(w)} c_{(i,j)} = ab$.
\end{itemize}
\end{cor}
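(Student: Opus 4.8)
The plan is to deduce this corollary from Lemma~\ref{lem:rook_placement} by transport of structure along the row-and-column permutation witnessing skew vexillarity, so that all of the genuine combinatorial content remains in the lemma. First I would record the precise diagram relationship. As noted at the start of Section~\ref{sec:rooks}, some permutation of rows and columns transforms $\mathrm{Inv}^{-1}(w)$ into $\sigma^t$; inverting it (the inverse of a row-and-column permutation is again a row-and-column permutation, since rows go to rows and columns go to columns separately), we obtain a permutation $\Pi$ of rows and columns identifying $\sigma^t$ with $\mathrm{Inv}^{-1}(w)$.

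Next I would apply Lemma~\ref{lem:rook_placement} to $\sigma^t$ in place of $\sigma$. Here I would first observe that $\sigma^t$ is again balanced: transposition is the reflection $(i,j)\mapsto (j,i)$ across the main diagonal, which carries the $a\times b$ bounding rectangle to the $b\times a$ rectangle, carries the main antidiagonal to the main antidiagonal, and carries outward corners to outward corners, so the defining condition of a balanced shape is preserved. Since $\sigma^t$ has height $b$ and width $a$, Lemma~\ref{lem:rook_placement} (with the roles of $a$ and $b$ interchanged) produces integer coefficients $c'_{(p,q)}$ for the boxes $(p,q)\in\sigma^t$, supported on cross-saturated boxes of $\sigma^t$, whose sum along each column is $b$, whose sum along each row is $a$, and whose total is $ab$.

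Then I would transport these coefficients back by setting $c_{\Pi((p,q))} \coloneqq c'_{(p,q)}$ for each $(p,q)\in\sigma^t$, i.e.\ $c_{(i,j)}\coloneqq c'_{\Pi^{-1}((i,j))}$ for $(i,j)\in\mathrm{Inv}^{-1}(w)$. Because $\Pi$ permutes rows among themselves and columns among themselves, it sends each row (respectively column) of $\sigma^t$ bijectively onto a row (respectively column) of $\mathrm{Inv}^{-1}(w)$; hence the per-column sum $b$, the per-row sum $a$, and the total sum $ab$ are all preserved. Moreover, by the transport fact stated in the paragraph preceding the corollary---that $\Pi$ carries cross-saturated boxes of a diagram to cross-saturated boxes of its image---the support of the $c_{(i,j)}$ remains within the cross-saturated boxes of $\mathrm{Inv}^{-1}(w)$. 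This yields exactly the three bulleted conditions of the corollary.

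I do not expect any real obstacle: the corollary is a bookkeeping argument once Lemma~\ref{lem:rook_placement} is in hand. The only two points demanding care are the interchange of $a$ and $b$ forced by passing from $\sigma$ to $\sigma^t$ (which is precisely why the column sums become $b$ and the row sums become $a$ in the statement, as opposed to the reverse in the lemma), and the verification that balancedness survives transposition; both are routine, and the latter is essentially the transpose symmetry of the definition of balanced shape.
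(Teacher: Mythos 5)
Your proof is correct and follows essentially the same route as the paper's: apply Lemma~\ref{lem:rook_placement} to $\sigma^t$ (a balanced shape of height $b$ and width $a$) and transport the resulting coefficients to $\mathrm{Inv}^{-1}(w)$ along the row-and-column permutation, using that such a permutation preserves rows and columns and hence cross-saturation, row sums, column sums, and the total sum. The only (harmless) difference is that you explicitly verify that transposition preserves balancedness, a point the paper simply asserts.
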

\begin{proof}
That $w$ is a skew vexillary permutation of shape~$\sigma$ means that $\mathrm{Inv}^{-1}(w)$ can be transformed by some permutation of rows and columns into $\sigma^t$, a balanced shape of height~$b$ and width~$a$. Let $\Pi\colon\mathrm{Inv}^{-1}(w)\to\sigma^t$ be the permutation that achieves this. Let the coefficients $\widetilde{c}_{(i,j)}\in\mathbb{Z}$ for $(i,j)\in\sigma^t$ be as guaranteed by Lemma~\ref{lem:rook_placement}. Then define $c_{(i,j)} \coloneqq  \widetilde{c}_{\Pi((i,j))}$ for each $(i,j)\in \mathrm{Inv}^{-1}$.  These $c_{(i,j)}$ will satisfy the required properties because $\Pi$ preserves rows and columns, and hence preserves the property of being cross-saturated.
\end{proof}

Now we can put everything together and prove the main result:

\begin{thm} \label{thm:main}
Let $\sigma = \lambda/\nu$ be a balanced shape of height $a$ and width $b$, and $w\in \mathfrak{S}_n$ a skew vexillary permutation of shape~$\sigma$. Then $[e,w]$ is tCDE with edge density~$ab/(a+b)$.
\end{thm}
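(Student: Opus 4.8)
The plan is to exhibit the down-degree statistic as a combination of toggleability statistics and then push everything through the two rook lemmas of the previous subsection. The starting point is the identity
\[\mathrm{ddeg} = \sum_{(i,j)\in\mathrm{Inv}^{-1}(w)} \mathcal{T}^{-}_{(i,j)}\]
of functions $[e,w]\to\mathbb{Z}$. This holds because the down-degree of $w'\in[e,w]$ is its number of descents, which equals the number of cover relations $u\lessdot w'$; each such cover relation removes a single inverse inversion $(i,j)$ and carries a unique $\gamma$-label $g((i,j),\mathbf{x})$, and distinct descents of $w'$ produce distinct pairs $(i,j)$ (since $j = w'_k$ determines the descent position $k$). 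Thus $\mathcal{T}^{-}_{(i,j)}(w')\in\{0,1\}$ for each box, and summing over all $(i,j)$ counts the cover relations below $w'$ exactly once each. First I would record this identity.

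Next I would fix an arbitrary toggle-symmetric distribution $\mu$ on $[e,w]$ and examine the rooks on cross-saturated boxes. For any cross-saturated box $(i,j)$ of $\mathrm{Inv}^{-1}(w)$, Lemma~\ref{lem:rook_perm_one} gives $\widehat{R}_{(i,j)}\equiv 1$, hence $\mathbb{E}(\mu;\widehat{R}_{(i,j)})=1$; on the other hand Lemma~\ref{lem:rook_perm_attack} evaluates this same expectation as the sum of $\mathbb{E}(\mu;\mathcal{T}^{-}_{(i',j)})$ over the column through $(i,j)$ plus the sum of $\mathbb{E}(\mu;\mathcal{T}^{-}_{(i,j')})$ over the row through $(i,j)$. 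Combining these, every cross-saturated box yields the scalar identity
\[1 = \sum_{(i',j)\in\mathrm{Inv}^{-1}(w)}\mathbb{E}(\mu;\mathcal{T}^{-}_{(i',j)}) + \sum_{(i,j')\in\mathrm{Inv}^{-1}(w)}\mathbb{E}(\mu;\mathcal{T}^{-}_{(i,j')}).\]

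Now I would take the weighted sum of these identities using the coefficients $c_{(i,j)}$ from Corollary~\ref{cor:rook_placement}, which are supported precisely on cross-saturated boxes where the identity above is available. The left-hand side becomes $\sum_{(i,j)}c_{(i,j)} = ab$. On the right-hand side I would interchange the order of summation: collecting the coefficient in front of each $\mathbb{E}(\mu;\mathcal{T}^{-}_{(i',j')})$, the first group of terms contributes the column sum $\sum_{(i,j')}c_{(i,j')}=b$ and the second group contributes the row sum $\sum_{(i',j)}c_{(i',j)}=a$, by the two balance conditions of Corollary~\ref{cor:rook_placement}. Hence the right-hand side collapses to $(a+b)\sum_{(i',j')}\mathbb{E}(\mu;\mathcal{T}^{-}_{(i',j')}) = (a+b)\,\mathbb{E}(\mu;\mathrm{ddeg})$ by the first-step identity. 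Equating the two sides gives $\mathbb{E}(\mu;\mathrm{ddeg}) = ab/(a+b)$ for every toggle-symmetric $\mu$. Since this value is independent of $\mu$, the interval $[e,w]$ is tCDE; taking $\mu=\mathrm{uni}_{[e,w]}$ (toggle-symmetric by Proposition~\ref{prop:uni_toggle_sym}) identifies the common value $ab/(a+b)$ as the edge density.

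The genuine difficulty has already been absorbed into the previous subsection, above all into Lemma~\ref{lem:rook_perm_one}, whose proof required the cross-saturation analysis and the reduction to rectangle rooks. The remaining work here is essentially formal linear algebra, so I do not expect a serious obstacle. The only points demanding care are the down-degree identity $\mathrm{ddeg}=\sum\mathcal{T}^{-}_{(i,j)}$ and the bookkeeping of the summation interchange, where one must verify that the row and column balance conditions of Corollary~\ref{cor:rook_placement} line up correctly with the row/column structure appearing in Lemma~\ref{lem:rook_perm_attack}. Finally I would note that, by Corollary~\ref{cor:tcde_implies_cde}, tCDE for this interval also yields the CDE statement of Theorem~\ref{thm:intro_main}.
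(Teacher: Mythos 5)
Your proposal is correct and takes essentially the same approach as the paper: the paper merely packages your weighted sum of rook identities as the single statistic $f \coloneqq \sum_{(i,j)\in\mathrm{Inv}^{-1}(w)} c_{(i,j)}\widehat{R}_{(i,j)}$ and computes $\mathbb{E}(\mu;f)$ in two ways (via Lemma~\ref{lem:rook_perm_attack} together with the identity $\mathbb{E}(\mu;\mathrm{ddeg})=\sum_{(i,j)}\mathbb{E}(\mu;\mathcal{T}^{-}_{(i,j)})$, which you state and justify explicitly while the paper uses it implicitly, and via Lemma~\ref{lem:rook_perm_one}), arriving at $(a+b)\,\mathbb{E}(\mu;\mathrm{ddeg})=ab$ exactly as you do. The only blemish is notational: in your summation interchange the displayed subscript formulas are transposed relative to Corollary~\ref{cor:rook_placement} (there the column sums $\sum_{(i',j)}c_{(i',j)}$ equal $b$ and the row sums $\sum_{(i,j')}c_{(i,j')}$ equal $a$), but since only the total $a+b$ enters the computation, this has no effect on the argument.
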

\begin{proof}
Let $\mu$ be a toggle-symmetric distribution on $[e,w]$. Let the coefficients $c_{(i,j)} \in \mathbb{Z}$ for boxes $(i,j) \in \mathrm{Inv}^{-1}(w)$ be as in Corollary~\ref{cor:rook_placement}. Consider the statistic 
\[f \coloneqq  \sum_{(i,j)\in \mathrm{Inv}^{-1}(w)}c_{(i,j)}\widehat{R}_{(i,j)}\colon [e,w]\to\mathbb{Z},\] 
where the rooks $\widehat{R}_{(i,j)}$ are defined by~\eqref{eqn:rook_perm} (this statistic is well-defined because the~$(i,j)$ for which $c_{(i,j)}\neq 0$ are cross-saturated boxes). For any fixed box $(i,j)\in \mathrm{Inv}^{-1}(w)$, we have~$\sum_{(i',j)\in \mathrm{Inv}^{-1}(w)} c_{(i',j)}=b$ and $\sum_{(i,j')\in \mathrm{Inv}^{-1}(w)} c_{(i,j')}=a$, so Lemma~\ref{lem:rook_perm_attack} tells us that
\[ \mathbb{E}(\mu; f) = \sum_{(i,j) \in  \mathrm{Inv}^{-1}(w)}(a+b) \mathbb{E}(\mu;\mathcal{T}^{-}_{(i,j)})=(a+b) \mathbb{E}(\mu;\mathrm{ddeg}).\]
On the other hand, since $\sum_{(i,j)\in \mathrm{Inv}^{-1}(w)} c_{(i,j)} = ab$, Lemma~\ref{lem:rook_perm_one} tells us that $f(w') = ab$ for any $w'\in[e,w]$. Thus, $\mathbb{E}(\mu; f) = ab$. Putting these two expressions for $\mathbb{E}(\mu; f)$ together, we get that $(a+b)\mathbb{E}(\mu;\mathrm{ddeg})=ab$, or in other words, $\mathbb{E}(\mu;\mathrm{ddeg})=ab/(a+b)$, as required.
\end{proof}

\begin{cor} \label{cor:main}
Let $\sigma = \lambda/\nu$ be a balanced shape of height $a$ and width $b$, and $w\in \mathfrak{S}_n$ a skew vexillary permutation of shape~$\sigma$. Then $[e,w]$ is CDE with edge density~$ab/(a+b)$.
\end{cor}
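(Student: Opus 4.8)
The plan is to obtain Corollary~\ref{cor:main} as a formal consequence of Theorem~\ref{thm:main} together with Corollary~\ref{cor:tcde_implies_cde}; essentially all of the substantive work has already been carried out in the proof of Theorem~\ref{thm:main} (via the permutation rooks), so no new combinatorial input is needed.

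First I would invoke Theorem~\ref{thm:main} to conclude that $[e,w]$ is tCDE with edge density $ab/(a+b)$. Recall from the discussion following Definition~\ref{def:cde} that the edge density of a poset $P$ is by definition the quantity $\mathbb{E}(\mathrm{uni}_P;\mathrm{ddeg})$. Hence the tCDE conclusion already delivers the value
\[ \mathbb{E}(\mathrm{uni}_{[e,w]};\mathrm{ddeg}) = \frac{ab}{a+b}, \]
which takes care of the edge density assertion. It then remains only to verify the coincidence of expectations $\mathbb{E}(\mathrm{uni}_{[e,w]};\mathrm{ddeg}) = \mathbb{E}(\mathrm{maxchain}_{[e,w]};\mathrm{ddeg})$ required by the CDE property.

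This is precisely Corollary~\ref{cor:tcde_implies_cde}, whose proof I would recall runs as follows. Because $[e,w]$ is an initial interval of weak order, Lemma~\ref{lem:tcde_toggle_sym} tells us that $\mathrm{maxchain}_{[e,w]}$ is toggle-symmetric. Since $[e,w]$ is tCDE, the expected down-degree is the same for every toggle-symmetric distribution, and in particular $\mathbb{E}(\mathrm{maxchain}_{[e,w]};\mathrm{ddeg}) = \mathbb{E}(\mathrm{uni}_{[e,w]};\mathrm{ddeg}) = ab/(a+b)$. This establishes that $[e,w]$ is CDE with edge density $ab/(a+b)$.

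The one point worth flagging is that this last implication genuinely uses the fact that we are dealing with a weak order interval rather than an arbitrary semidistributive lattice: as Example~\ref{ex:tcde_not_cde} shows, tCDE does not imply CDE in general, because $\mathrm{maxchain}_L$ need not be toggle-symmetric. Thus the only ``obstacle'' here---already overcome earlier in the paper---is Lemma~\ref{lem:tcde_toggle_sym}, the toggle-symmetry of the maxchain distribution for weak order intervals; granting that, the corollary is immediate.
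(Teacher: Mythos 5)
Your proposal is correct and coincides with the paper's own proof, which likewise deduces Corollary~\ref{cor:main} by combining Theorem~\ref{thm:main} with Corollary~\ref{cor:tcde_implies_cde}. The additional detail you supply (unwinding Corollary~\ref{cor:tcde_implies_cde} via Lemma~\ref{lem:tcde_toggle_sym} and flagging that tCDE does not imply CDE for general semidistributive lattices) accurately reflects the paper's reasoning.
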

\begin{proof}
This follows from Theorem~\ref{thm:main} and Corollary~\ref{cor:tcde_implies_cde}.
\end{proof}

\subsection{Refined down-degree expectations for the full weak order} \label{sec:full_weak_order}

Our main result (Theorem~\ref{thm:main}) applies to the full weak order $\mathfrak{S}_n$. This is because $\mathfrak{S}_n=[e,w_0]$ for the longest word $w_0 \in \mathfrak{S}_n$, and $w_0$ is a vexillary permutation of the staircase shape~$\delta_n = (n-1,n-2,\ldots,1)$. In fact, the original result of Reiner-Tenner-Yong~\cite[Theorem~1.1]{reiner2018poset} mentioned in the introduction also applies to the full weak order $\mathfrak{S}_n$ since $w_0$ is dominant of shape~$\delta_n$. 

But actually, the CDE property is not interesting for the full weak order because the full weak order $\mathfrak{S}_n$ is self-dual and has constant Hasse diagram degree~$n-1$. As mentioned in the introduction, for such posets the CDE property holds for simple reasons (see~\cite[Proposition 2.20]{reiner2018poset}). Moreover, for the tCDE property we do not even need to assume self-duality; that is to say, any semidistributive lattice~$L$ which has constant Hasse diagram degree is always tCDE for simple reasons. Let us explicitly spell out why this is. Suppose~$L$ is a semidistributive lattice of constant Hasse diagram degree~$d$ and let $\mu$ be a toggle-symmetric distribution on~$L$. Then certainly $\mathbb{E}(\mu;\sum_{p \in \mathrm{Irr}(L)} \mathcal{T}^{-}_p) = \mathbb{E}(\mu;\sum_{p \in \mathrm{Irr}(L)} \mathcal{T}^{+}_p)$ and hence
\[\mathbb{E}(\mu;\sum_{p \in \mathrm{Irr}(L)} \mathcal{T}^{-}_p) = \frac{1}{2}\left(\mathbb{E}(\mu;\hspace{-0.2cm}\sum_{p \in \mathrm{Irr}(L)} \hspace{-0.3cm} \mathcal{T}^{-}_p)+\mathbb{E}(\mu;\hspace{-0.2cm}\sum_{p \in \mathrm{Irr}(L)} \hspace{-0.3cm} \mathcal{T}^{+}_p)\right) = \frac{1}{2}\mathbb{E}(\mu;\hspace{-0.2cm} \sum_{p \in \mathrm{Irr}(L)}\hspace{-0.3cm} \mathcal{T}^{-}_p+\sum_{p \in \mathrm{Irr}(L)} \hspace{-0.3cm} \mathcal{T}^{+}_p).\]
But $(\sum_{p \in \mathrm{Irr}(L)} \mathcal{T}^{-}_p+\sum_{p \in \mathrm{Irr}(L)} \mathcal{T}^{+}_p)(w)=d$ for any $w\in \mathfrak{S}_n$ since $L$ has constant Hasse diagram degree~$d$, and thus we have~$\mathbb{E}(\mu;\sum_{p \in \mathrm{Irr}(L)} \mathcal{T}^{-}_p)=\frac{d}{2}$.

So Theorem~\ref{thm:main} does not yield any interesting result when applied to all of~$\mathfrak{S}_n$.

However, via our method of rooks we can obtain an interesting result which ``refines'' this down-degree expectation for the full weak order. Namely, define for each $1 \leq k < n$ the statistic $f_k\colon \mathfrak{S}_n\to\mathbb{Z}$ by
\[f_k \coloneqq \sum_{j=1}^{k} \mathcal{T}^{-}_{(j,k+1)} + \sum_{j=k+1}^{n} \mathcal{T}^{-}_{(k,j)}.\]
Note that $\sum_{k=1}^{n-1}f_k = 2\cdot \mathrm{ddeg}$, and this is the sense in which the $f_k$ ``refine'' the down-degree statistic. The refined down-degree expectation for the full weak order is:

\begin{thm} \label{thm:full_weak_order}
Let $\mu$ be a toggle-symmetric distribution on $\mathfrak{S}_n$. Then $\mathbb{E}(\mu; f_k) = 1$ for any $1\leq k < n$.
\end{thm}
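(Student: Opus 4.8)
The plan is to realize $f_k$ as the ``attack sum'' of a single permutation rook placed at a well-chosen cross-saturated box, and then to invoke the two fundamental rook lemmas of Section~\ref{sec:rooks}. First I would note that $\mathfrak{S}_n = [e,w_0]$ for the longest element $w_0$, and that $\mathrm{Inv}^{-1}(w_0) = \Phi^{+}$ is the entire positive root poset. The natural box on which to place a rook is $(k,k+1)$. Its row in $\Phi^{+}$ consists of the boxes $(k,j)$ with $j \in \{k+1,\ldots,n\}$, and its column consists of the boxes $(i,k+1)$ with $i \in \{1,\ldots,k\}$. Since every pair $(i',j')$ with $i' \leq k < k+1 \leq j'$ satisfies $i' < j'$ and hence lies in $\Phi^{+}$, the box $(k,k+1)$ is cross-saturated in $\mathrm{Inv}^{-1}(w_0)$, and a direct check shows that its cross-saturation is exactly $\mathcal{C} = \square_k = \{1,\ldots,k\}\times\{k+1,\ldots,n\}$.

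With this box fixed, I would form the permutation rook $\widehat{R}_{(k,k+1)}\colon \mathfrak{S}_n \to \mathbb{Z}$ defined by~\eqref{eqn:rook_perm}. The crux is that Lemma~\ref{lem:rook_perm_attack} computes, for any toggle-symmetric $\mu$,
\[\mathbb{E}(\mu;\widehat{R}_{(k,k+1)}) = \sum_{(i',k+1)\in\mathrm{Inv}^{-1}(w_0)} \mathbb{E}(\mu;\mathcal{T}^{-}_{(i',k+1)}) + \sum_{(k,j')\in\mathrm{Inv}^{-1}(w_0)} \mathbb{E}(\mu;\mathcal{T}^{-}_{(k,j')}).\]
Since $(i',k+1)\in\Phi^{+}$ forces $i'\in\{1,\ldots,k\}$ while $(k,j')\in\Phi^{+}$ forces $j'\in\{k+1,\ldots,n\}$, the right-hand side is precisely $\mathbb{E}(\mu;f_k)$ by the very definition of $f_k$. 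On the other hand, Lemma~\ref{lem:rook_perm_one} asserts $\widehat{R}_{(k,k+1)}(w')=1$ for every $w'\in[e,w_0]=\mathfrak{S}_n$, so $\mathbb{E}(\mu;\widehat{R}_{(k,k+1)})=1$ for an arbitrary distribution. Combining the two computations yields $\mathbb{E}(\mu;f_k)=1$, as claimed.

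Since all the genuine difficulty has already been packaged into the rook machinery, the only real content here is the bookkeeping: identifying the cross-saturation of $(k,k+1)$ in $\Phi^{+}$ with $\square_k$, and matching the attack formula of Lemma~\ref{lem:rook_perm_attack} against the definition of $f_k$. The main (minor) point to verify carefully is that no box of $\mathrm{Inv}^{-1}(w_0)$ in the row or column of $(k,k+1)$ is omitted or counted twice, so that the attack sum has exactly the terms appearing in $f_k$ and no others. As a consistency check, summing over $k$ recovers $\sum_{k=1}^{n-1} f_k = 2\,\mathrm{ddeg}$, because each $\mathcal{T}^{-}_{(a,b)}$ (with $a<b$) occurs once as a row-term of $f_a$ and once as a column-term of $f_{b-1}$; taking expectations then gives $\mathbb{E}(\mu;\mathrm{ddeg}) = \tfrac{n-1}{2}$, agreeing with the elementary constant down-degree expectation for the self-dual, constant-degree lattice $\mathfrak{S}_n$ noted at the start of this subsection.
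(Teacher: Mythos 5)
Your proposal is correct and matches the paper's own proof essentially verbatim: the paper likewise observes that $(k,k+1)$ is cross-saturated in $\mathrm{Inv}^{-1}(w_0)=\Phi^{+}$, places the permutation rook $\widehat{R}_{(k,k+1)}$ there, and combines Lemma~\ref{lem:rook_perm_attack} (giving $\mathbb{E}(\mu;\widehat{R}_{(k,k+1)})=\mathbb{E}(\mu;f_k)$) with Lemma~\ref{lem:rook_perm_one} (giving $\widehat{R}_{(k,k+1)}\equiv 1$). Your additional bookkeeping—identifying the cross-saturation with $\square_k$ and the consistency check $\sum_k f_k = 2\,\mathrm{ddeg}$—is sound but not needed beyond what the paper records.
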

\begin{proof}
Let $1\leq k < n$. It is easy to see that $(k,k+1)$ is a cross-saturated box of $\mathrm{Inv}^{-1}(w_0)$ because $\mathrm{Inv}^{-1}(w_0)=\Phi^{+}$. Hence we can consider the rook $R_{(k,k+1)}\colon \mathfrak{S}_n\to\mathbb{Z}$. By Lemma~\ref{lem:rook_perm_attack} we have that $\mathbb{E}(\mu; R_{(k,k+1)})=\mathbb{E}(\mu; f_k)$. And by Lemma~\ref{lem:rook_perm_one} we have that~$\mathbb{E}(\mu; R_{(k,k+1)})(w)=1$ for all~$w\in\mathfrak{S}_n$. Hence $\mathbb{E}(\mu; f_k) = 1$, as claimed.
\end{proof}

It would be interesting to find any applications of Theorem~\ref{thm:full_weak_order}. In the next section we will explain how this theorem does give a homomesy result for rowmotion acting on the full weak order.

\section{Down-degree homomesy for rowmotion on semidistributive lattices} \label{sec:rowmotion}

In this section we explain an application of our result to dynamical algebraic combinatorics and the study of the rowmotion operator.

\subsection{Review of rowmotion for distributive lattices}

Let $L = \mathcal{J}(P)$ be a distributive lattice. \emph{Rowmotion} on $L$ is the map $\mathrm{row}\colon \mathcal{J}(P)\to\mathcal{J}(P)$ defined by
\[ \mathrm{row}(I) \coloneqq  \{p \in P\colon p \leq q \textrm{ for some $q \in \mathrm{min}(P\setminus I)$}\}, \]
where $\mathrm{min}(P\setminus I)$ denotes the minimal elements of $P$ not in $I$. Rowmotion and its generalizations have been the focus of research of many authors~\cite{brouwer1974period, fonderflaass1993orbits, cameron1995orbits, panyushev2009orbits, armstrong2013uniform, striker2012promotion, rush2013orbits, rush2015orbits, grinberg2016birational1, grinberg2015birational2, striker2018rowmotion}.

The first thing to observe about rowmotion is that it is invertible. This might not be immediately obvious from the above definition, but it does follow from a description, due to Cameron and Fon-der-Flaass~\cite{cameron1995orbits}, of rowmotion as a composition of toggles:

\begin{thm}[{Cameron-Fon-der-Flaass~\cite[Lemma 1]{cameron1995orbits}}] \label{thm:rowmotion_toggles}
Let $p_1, p_2,\ldots, p_{\#P}$ be any linear extension of $P$. Then $\mathrm{row} = \tau_{p_1}\circ\tau_{p_2}\circ \cdots \circ \tau_{p_{\#P}}$.
\end{thm}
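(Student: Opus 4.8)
The plan is to show directly that $\Phi \coloneqq \tau_{p_1}\circ\tau_{p_2}\circ\cdots\circ\tau_{p_{\#P}}$ agrees with $\mathrm{row}$ on every order ideal $I \in \mathcal{J}(P)$, by tracking how $I$ evolves as the toggles are applied one at a time. First note that in the distributive setting each toggle sends an order ideal to an order ideal, so if we set $N \coloneqq \#P$, $M_0 \coloneqq I$, and $M_t \coloneqq \tau_{p_{N-t+1}}(M_{t-1})$ for $1 \le t \le N$, then every $M_t$ is an order ideal and $M_N = \Phi(I)$. The crucial feature of a linear extension is that every element covering $p_m$ in $P$ has index exceeding $m$, and every element covered by $p_m$ has index smaller than $m$. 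Since the toggles are applied in the order $\tau_{p_N}, \tau_{p_{N-1}}, \ldots, \tau_{p_1}$, at the instant $\tau_{p_m}$ is applied the elements $p_{m+1}, \ldots, p_N$ have already been toggled (and will never be touched again) while $p_1, \ldots, p_m$ remain in their original state. Thus, writing $K \coloneqq \Phi(I)$, the ideal just before $p_m$ is toggled agrees with $K$ on $\{p_{m+1}, \ldots, p_N\}$ and with $I$ on $\{p_1, \ldots, p_m\}$.

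First I would establish a recursive membership criterion for rowmotion: for every $q \in P$,
\[ q \in \mathrm{row}(I) \iff \begin{cases} \text{some element covering } q \text{ lies in } \mathrm{row}(I), & \text{if } q \in I;\\ \text{every element covered by } q \text{ lies in } I, & \text{if } q \notin I. \end{cases} \]
This follows directly from $\mathrm{row}(I) = \{p : p \le r \text{ for some } r \in \mathrm{min}(P\setminus I)\}$: if $q \notin I$, then $q \in \mathrm{row}(I)$ forces $q$ itself to be minimal in $P \setminus I$ (anything strictly below a minimal $r \ge q$ lies in $I$), which is the second case; and if $q \in I$, then $q < r$ for some minimal $r \notin I$, so the cover of $q$ along a saturated chain up to $r$ witnesses the first case, the converse implications being immediate. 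Next I would record the parallel criterion for $K$: since $p_m$ can be toggled out of an ideal precisely when it is maximal there, and toggled in precisely when every element it covers is already present, the bookkeeping of the previous paragraph yields that $p_m \in K$ iff (when $p_m \in I$) some cover of $p_m$ lies in $K$, and iff (when $p_m \notin I$) every element covered by $p_m$ lies in $I$ — using that the covers of $p_m$ sit at their finalized $K$-values and the covered elements at their $I$-values at the moment $p_m$ is processed.

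With both criteria in hand, the theorem follows by downward induction on $m$ from $N$ to $1$: the two recursions have identical right-hand sides, and each refers only to the $I$-status of elements covered by $p_m$ (unconditional data) and to the $\mathrm{row}(I)$- or $K$-status of elements covering $p_m$, all of which have index exceeding $m$ and hence already satisfy $p_{m'} \in K \iff p_{m'} \in \mathrm{row}(I)$ by the inductive hypothesis. Therefore $p_m \in K \iff p_m \in \mathrm{row}(I)$ for every $m$, giving $\Phi(I) = \mathrm{row}(I)$. I expect the main obstacle to be purely organizational: one must argue cleanly, and uniformly in $I$, that the partially-toggled ideal really does present each element's covers in their finalized state and the elements it covers in their initial state at the exact step it is processed. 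Once this invariant is pinned down the argument is essentially forced, and the invertibility of $\mathrm{row}$ comes for free, since each $\tau_{p_i}$ is an involution.
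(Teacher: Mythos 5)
Your proof is correct. Note that the paper itself gives no proof of this statement: it is quoted as a known result of Cameron and Fon-der-Flaass, so there is no internal argument to compare against. Your argument is essentially the standard one for this lemma: the observation that each toggle changes only the status of its own element, so that when $\tau_{p_m}$ is processed the covers of $p_m$ (all of later index) already carry their final $K$-status while the elements covered by $p_m$ (all of earlier index) still carry their $I$-status, combined with the matching recursive membership criterion for $\mathrm{row}(I)$, is exactly what makes the downward induction close. The one detail worth making explicit in a written version is the base case $m=N$: since $p_N$ is maximal in $P$, both criteria reduce in the case $p_N\in I$ to a vacuously false condition (no covers exist), so $p_N$ is excluded from both $K$ and $\mathrm{row}(I)$, and the induction is well-founded.
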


The poset on which the action of rowmotion has been studied the most is the distributive lattice $L = \mathcal{J}([a]\times[b]) = [\varnothing,b^a]$ corresponding to the product of two chains.

\begin{example}
Let $a=b=2$ and consider rowmotion acting on $\mathcal{J}([2]\times[2]) = [\varnothing,2^2]$. Then rowmotion has the following two orbits (where we depict an order ideal $\nu \in [\varnothing,2^2]$ by shading its boxes in yellow):
\[  \left\{\cdots \xrightarrow{\mathrm{row}} \begin{ytableau} \, & \, \\ \, & \, \end{ytableau}  \xrightarrow{\mathrm{row}} \begin{ytableau} *(yellow) \bullet  & \, \\ \, & \, \end{ytableau} \xrightarrow{\mathrm{row}} \begin{ytableau} *(yellow) \bullet  & *(yellow) \bullet   \\ *(yellow) \bullet   & \, \end{ytableau} \xrightarrow{\mathrm{row}} \begin{ytableau} *(yellow) \bullet  & *(yellow) \bullet   \\ *(yellow) \bullet   & *(yellow) \bullet \end{ytableau} \cdots\right \} ;\]
\[  \left\{ \cdots \xrightarrow{\mathrm{row}} \begin{ytableau} *(yellow)\bullet & \, \\ *(yellow)\bullet & \, \end{ytableau}  \xrightarrow{\mathrm{row}} \begin{ytableau} *(yellow) \bullet  & *(yellow) \bullet  \\ \, & \, \end{ytableau} \cdots\right\} . \]
In particular, observe that the order of rowmotion is $4$, and that the average value of~$\mathrm{ddeg}$ along each orbit is~$1$.
\end{example}

Initially the main interest was in understanding the orbit structure of rowmotion acting on $\mathcal{J}([a]\times[b])$, and in particular in computing its order. For example, Brouwer and Schrijver~\cite{brouwer1974period} proved the following:

\begin{thm}[{Brouwer-Schrijver~\cite[Theorem 3.6]{brouwer1974period}}]
The order of $\mathrm{row}$ acting on $\mathcal{J}([a]\times[b])$ is $a+b$.
\end{thm}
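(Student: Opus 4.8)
The plan is to encode order ideals of $[a]\times[b]$ as lattice paths and to show that rowmotion acts on these paths by cyclic rotation. Concretely, an order ideal $I \in \mathcal{J}([a]\times[b])$ is the same data as a partition $\mu \subseteq b^a$, whose boundary traces a monotone lattice path inside the $a\times b$ box from the southwest corner to the northeast corner; recording this path as a binary word $\omega(I)$ of length $a+b$ with exactly $a$ vertical steps and $b$ horizontal steps yields a bijection between $\mathcal{J}([a]\times[b])$ and the set $W$ of all such words. First I would fix this bijection and a precise convention for reading off the path.

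The key step is the equivariance: $\omega(\mathrm{row}(I))$ is the cyclic rotation of $\omega(I)$ by one position. To establish it I would invoke the Cameron--Fon-der-Flaass description (Theorem~\ref{thm:rowmotion_toggles}), writing $\mathrm{row}$ as a product of toggles $\tau_p$ in a linear-extension order. Because the elements $(i,j)$ of $[a]\times[b]$ with $i+j$ constant form an antichain, their toggles commute, so $\mathrm{row}$ can be performed one antidiagonal at a time; tracking how toggling a whole antidiagonal displaces the boundary path then shows that each application of $\mathrm{row}$ slides $\omega(I)$ cyclically by one letter. This is the main obstacle: it requires a careful (and somewhat tedious) verification that antidiagonal-by-antidiagonal toggling implements exactly a single rotation. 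Equivalently, one can argue directly from the definition of $\mathrm{row}$, analyzing how the minimal elements of $P\setminus I$ generate the new ideal and translating this into the motion of the path. I would confirm the correspondence against the $2\times 2$ example in the text, where the two rowmotion orbits $\{\varnothing,(1),(2,1),(2,2)\}$ and $\{(1,1),(2)\}$ match the rotation orbits of the words $1100$ and $1010$.

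Granting the equivariance, the conclusion is immediate. Cyclic rotation of a word of length $a+b$ has order dividing $a+b$ on each orbit, so every rowmotion orbit has size dividing $a+b$ and the order of $\mathrm{row}$ divides $a+b$. Conversely, the word $1^a 0^b$ is primitive—it equals a proper power $(1^{a/k}0^{b/k})^k$ only for $k=1$—so its rotation orbit, and hence the corresponding rowmotion orbit, has full size $a+b$. Therefore the order of $\mathrm{row}$ acting on $\mathcal{J}([a]\times[b])$ is exactly $a+b$. As a consistency check, every orbit size has the form $(a+b)/d$ for some $d \mid \gcd(a,b)$, and since the case $d=1$ occurs their least common multiple is indeed $a+b$.
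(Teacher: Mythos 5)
The overall strategy you chose (an equivariant bijection onto cyclic rotation of binary words, followed by primitivity of $1^a0^b$) is the right flavor of argument, and note the paper itself offers no proof here — it simply cites Brouwer--Schrijver. However, your key equivariance claim is false: rowmotion does \emph{not} act as cyclic rotation on the boundary-path word $\omega(I)$. You can see this already in the $2\times 2$ example you offer as a consistency check. Reading the path from the southwest to the northeast corner with $N=1$, $E=0$, the six ideals encode as $\varnothing\mapsto 1100$, $(1)\mapsto 1010$, $(2)\mapsto 1001$, $(1,1)\mapsto 0110$, $(2,1)\mapsto 0101$, $(2,2)\mapsto 0011$. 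The rowmotion orbit $\{\varnothing,(1),(2,1),(2,2)\}$ therefore maps to $\{1100,1010,0101,0011\}$, which is \emph{not} closed under rotation: the rotation orbit of $1100$ is $\{1100,1001,0011,0110\}$, containing the words of $(2)$ and $(1,1)$, which lie in the other rowmotion orbit. The orbit \emph{sizes} match ($4$ and $2$), which is presumably why the check seemed to pass, but the orbits themselves do not; and no change of reading convention (reversal, complementation, transposition) can repair this, since such operations conjugate rotation back to rotation. The step where your argument actually breaks is the toggling analysis: by Cameron--Fon-der-Flaass, toggling antidiagonal by antidiagonal (rank by rank, $i+j$ constant) is indeed rowmotion, but this operation does not slide the path by one step. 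The operation that rotates $\omega(I)$ is \emph{promotion}, i.e., toggling file by file ($j-i$ constant) from left to right, which is a different element of the toggle group.

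There are two standard ways to repair the proof, each requiring a genuinely new ingredient. Either (i) replace $\omega(I)$ by the Stanley--Thomas word of the antichain $\mathrm{max}(I)$ — positions $1,\dots,a$ record which rows contain an element of $\mathrm{max}(I)$, and positions $a+1,\dots,a+b$ record which columns do \emph{not} — for which rowmotion genuinely is rotation; but proving that equivariance is the real content of the theorem, not a routine verification. Or (ii) prove your rotation statement for promotion (where the antidiagonal-by-antidiagonal bookkeeping you describe actually works, file by file), and then invoke the Striker--Williams theorem that promotion and rowmotion are conjugate in the toggle group, hence have the same orbit structure and in particular the same order. Your concluding deduction — every orbit length divides $a+b$, and the word $1^a0^b$ is primitive so some orbit has length exactly $a+b$ — is correct and can be kept verbatim once the equivariance is established for a correct pairing of operator and encoding.
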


Recently, in the context of dynamical algebraic combinatorics, various authors have become interested in other aspects of rowmotion beyond its orbit structure. One particular goal has been to exhibit ``homomesies'' for rowmotion. So let's review the homomesy paradigm of Propp-Roby~\cite{propp2015homomesy, einstein2013combinatorial}.

\begin{definition}
Let $X$ be a finite combinatorial set, $\Phi\colon X\to X$ an invertible operator, and $f\colon X \to \mathbb{R}$ some statistic. We say that the triple $(X,\Phi,f)$ exhibits \emph{homomesy} if the average of $f$ along every $\Phi$-orbit of $X$ is the same. In this case we also say that $f$ is \emph{homomesic} with respect to the action of $\Phi$ on $X$, and we say $f$ is \emph{$c$-mesic} if the average of $f$ along every $\Phi$-orbit is $c\in\mathbb{R}$.
\end{definition}

Propp-Roby~\cite{propp2015homomesy} exhibited homomesies with a number of different statistics for rowmotion acting on $\mathcal{J}([a]\times[b])$. The statistic that will be most relevant for us is the ``antichain cardinality'' statistic: this is the statistic $\mathcal{J}(P)\to \mathbb{Z}$ defined by $I \mapsto \#\mathrm{max}(I)$, where $\mathrm{max}(I)$ is the set of maximal elements of $I$.

\begin{thm}[{Propp-Roby~\cite[Theorem 27]{propp2015homomesy}}] \label{thm:propp-roby}
The statistic $\#\mathrm{max}(I)$ is $ab/(a+b)$-mesic with respect to the action of $\mathrm{row}$ on $\mathcal{J}([a]\times[b])$.
\end{thm}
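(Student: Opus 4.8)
The plan is to realize this classical homomesy as an immediate consequence of the tCDE property of the rectangle (Theorem~\ref{thm:chhm}), combined with the observation that orbit-uniform distributions are toggle-symmetric. Throughout write $L = \mathcal{J}(P)$ with $P = [a]\times[b]$, and identify $\mathrm{Irr}(L)$ with $P$ in the usual way, so that toggling at $p \in P$ adds or removes the box $p$ from an order ideal.

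First I would reduce the antichain cardinality statistic to the down-degree statistic. In the distributive lattice $L$, the order ideals covered by $I$ are precisely those of the form $I \setminus \{p\}$ with $p \in \mathrm{max}(I)$, and distinct maximal $p$ give distinct covered ideals; hence $\mathrm{ddeg}(I) = \#\mathrm{max}(I)$ for every $I \in L$. Thus it suffices to prove that $\mathrm{ddeg}$ is $ab/(a+b)$-mesic.

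The key step is to show that for any rowmotion orbit $\mathcal{O}$, the distribution $\mu_{\mathcal{O}}$ which is uniform on $\mathcal{O}$ and zero elsewhere is toggle-symmetric. For this I would establish the pointwise identity $\mathcal{T}^{+}_p(I) = \mathcal{T}^{-}_p(\mathrm{row}(I))$ for every $p \in P$ and every $I \in L$. Indeed, in the distributive setting $p$ can be toggled into $I$ exactly when $p \in \mathrm{min}(P\setminus I)$, and $p$ can be toggled out of a state $J$ exactly when $p \in \mathrm{max}(J)$; unwinding the definition $\mathrm{row}(I) = \{q : q \leq r \textrm{ for some } r \in \mathrm{min}(P\setminus I)\}$ shows that $p \in \mathrm{min}(P \setminus I)$ if and only if $p \in \mathrm{max}(\mathrm{row}(I))$. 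Summing this identity over $\mathcal{O}$, on which $\mathrm{row}$ acts as a bijection, gives $\sum_{I\in\mathcal{O}}\mathcal{T}^{+}_p(I) = \sum_{I\in\mathcal{O}}\mathcal{T}^{-}_p(I)$, so $\mathbb{E}(\mu_{\mathcal{O}}; \mathcal{T}_p) = 0$ for all $p$, which is exactly the assertion that $\mu_{\mathcal{O}}$ is toggle-symmetric. (This is Striker's observation; alternatively it can be extracted from the toggle description of rowmotion in Theorem~\ref{thm:rowmotion_toggles}.)

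Finally I would assemble the pieces. The rectangle $b^a$ is a balanced shape of height $a$ and width $b$, so Theorem~\ref{thm:chhm} gives that $L = [\varnothing,b^a]$ is tCDE with edge density $ab/(a+b)$; hence $\mathbb{E}(\mu;\mathrm{ddeg}) = ab/(a+b)$ for every toggle-symmetric distribution $\mu$. Applying this with $\mu = \mu_{\mathcal{O}}$, the average of $\#\mathrm{max}$ along each orbit $\mathcal{O}$ equals $\tfrac{1}{|\mathcal{O}|}\sum_{I\in\mathcal{O}}\mathrm{ddeg}(I) = \mathbb{E}(\mu_{\mathcal{O}};\mathrm{ddeg}) = ab/(a+b)$, which is the claimed mesicity. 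The substance of the argument lives entirely in Theorem~\ref{thm:chhm}, which I am assuming; the only genuine verification is the toggle-symmetry of orbit distributions, and I expect this to be the main (though modest) hurdle, since it collapses to the clean identity $\mathcal{T}^{+}_p(I) = \mathcal{T}^{-}_p(\mathrm{row}(I))$.
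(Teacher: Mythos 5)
Your proposal is correct and follows essentially the same route the paper takes to recover this result: the reduction $\#\mathrm{max}(I)=\mathrm{ddeg}(I)$, Striker's observation that rowmotion-orbit-uniform distributions are toggle-symmetric (Lemma~\ref{lem:striker}, which you reprove via the clean identity $\mathcal{T}^{+}_p(I)=\mathcal{T}^{-}_p(\mathrm{row}(I))$), and the tCDE property of the balanced shape $b^a$ from Theorem~\ref{thm:chhm}, exactly as in Corollary~\ref{cor:striker} and Corollary~\ref{cor:balanced_homo} specialized to rectangles. No gaps.
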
 

Note that for $I \in L=\mathcal{J}(P)$, the antichain cardinality $\#\mathrm{max}(I)$ is just $\mathrm{ddeg}_L(I)$. This observation, together with an observation of Striker~\cite{striker2015toggle}, connects the study of tCDE distributive lattices to rowmotion homomesies. Let's review Striker's observation:

\begin{lemma}[{Striker~\cite[Lemma~6.2]{striker2015toggle}}] \label{lem:striker}
Let $\mathcal{O}$ be an orbit of $\mathrm{row}$ acting on $\mathcal{J}(P)$. Then the distribution $\mu$ which is uniform on $\mathcal{O}$ and zero outside of~$\mathcal{O}$ is toggle-symmetric.
\end{lemma}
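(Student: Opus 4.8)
The plan is to verify the definition directly: I must show $\mathbb{E}(\mu;\mathcal{T}_p)=0$ for every join-irreducible $p$, which here (since $L=\mathcal{J}(P)$) means every $p\in P$. Because $\mu$ is uniform on the orbit $\mathcal{O}$ and zero elsewhere, this is equivalent to the combinatorial statement $\sum_{I\in\mathcal{O}}\mathcal{T}^{+}_p(I)=\sum_{I\in\mathcal{O}}\mathcal{T}^{-}_p(I)$ for each fixed $p$. Rather than tracking the membership of $p$ through the toggle decomposition of $\mathrm{row}$ afforded by Theorem~\ref{thm:rowmotion_toggles}, I would look for a pointwise identity relating the two toggleability statistics through a single application of rowmotion, which then passes to the orbit level for free.

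The key step is the observation that $\mathcal{T}^{-}_p(\mathrm{row}(I))=\mathcal{T}^{+}_p(I)$ for every order ideal $I$ and every $p$. To establish this I would first unwind the two statistics into antichain language: $\mathcal{T}^{+}_p(I)=1$ precisely when $p$ is a minimal element of $P\setminus I$, and $\mathcal{T}^{-}_p(J)=1$ precisely when $p$ is a maximal element of $J$. Now by its defining formula $\mathrm{row}(I)$ is the order ideal generated by the antichain $\mathrm{min}(P\setminus I)$; since the maximal elements of the order ideal generated by an antichain are exactly that antichain, this yields $\mathrm{max}(\mathrm{row}(I))=\mathrm{min}(P\setminus I)$. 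Combining these three facts gives $p\in\mathrm{max}(\mathrm{row}(I))$ if and only if $p\in\mathrm{min}(P\setminus I)$, which is exactly the claimed equality $\mathcal{T}^{-}_p(\mathrm{row}(I))=\mathcal{T}^{+}_p(I)$.

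With this identity in hand the conclusion is purely formal. Summing over the orbit and using that $\mathrm{row}$ restricts to a bijection $\mathcal{O}\to\mathcal{O}$, I obtain $\sum_{I\in\mathcal{O}}\mathcal{T}^{+}_p(I)=\sum_{I\in\mathcal{O}}\mathcal{T}^{-}_p(\mathrm{row}(I))=\sum_{I'\in\mathcal{O}}\mathcal{T}^{-}_p(I')$, so $\mathbb{E}(\mu;\mathcal{T}^{+}_p)=\mathbb{E}(\mu;\mathcal{T}^{-}_p)$ and hence $\mathbb{E}(\mu;\mathcal{T}_p)=0$ for all $p$, which is exactly toggle-symmetry. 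I do not expect any genuine obstacle beyond correctly identifying and proving the pointwise relation $\mathrm{max}(\mathrm{row}(I))=\mathrm{min}(P\setminus I)$; once that is in place, the remaining argument is just a reindexing of the sum along the finite cycle $\mathcal{O}$. It is worth emphasizing that this argument leverages the specific way rowmotion converts the minimal antichain of the complement into the maximal antichain of the image, a feature particular to $\mathrm{row}$, which is why the statement is phrased for rowmotion orbits rather than for orbits of an arbitrary composition of toggles.
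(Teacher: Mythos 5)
Your proof is correct and takes essentially the same approach as the paper: both hinge on the pointwise identity that $p$ can be toggled into $I$ exactly when $p$ can be toggled out of $\mathrm{row}(I)$ (stated in the paper as $\mathcal{T}_p(I)=1$ iff $\mathcal{T}_p(\mathrm{row}(I))=-1$, and in your write-up as $\mathcal{T}^{-}_p(\mathrm{row}(I))=\mathcal{T}^{+}_p(I)$), followed by averaging over the orbit. Your reindexing of the sum via the bijection $\mathrm{row}\colon\mathcal{O}\to\mathcal{O}$ is just a tidier bookkeeping of the paper's ``the nonzero values of $\mathcal{T}_p$ alternate $1,-1,1,-1,\ldots$'' observation, and your derivation of the key identity from $\mathrm{max}(\mathrm{row}(I))=\mathrm{min}(P\setminus I)$ supplies a detail the paper leaves implicit.
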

\begin{proof}
For any $p\in P$ and $I\in\mathcal{J}(P)$, we have $\mathcal{T}_p(I)=1$ if and only if $\mathcal{T}_p(\mathrm{row}(I))=-1$. So along a rowmotion orbit, the nonzero values of $\mathcal{T}_p$ must alternate $1,-1,1,-1,\ldots$. Hence the average value of $\mathcal{T}_p$ along this orbit must be zero.
\end{proof}

\begin{cor} \label{cor:striker}
Suppose $L$ is tCDE with edge density $c$. Then the antichain cardinality statistic $\#\mathrm{max}(I)$ is $c$-mesic with respect to the action of $\mathrm{row}$ on~$L$.
\end{cor}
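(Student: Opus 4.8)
The plan is to assemble pieces that are already in place: this corollary is an immediate consequence of Striker's observation (Lemma~\ref{lem:striker}), the tCDE hypothesis, and the identification of the antichain cardinality with the down-degree. First I would fix an arbitrary orbit $\mathcal{O}$ of $\mathrm{row}$ acting on $L = \mathcal{J}(P)$, and let $\mu_{\mathcal{O}}$ denote the probability distribution that is uniform on $\mathcal{O}$ and zero outside $\mathcal{O}$. By Lemma~\ref{lem:striker}, $\mu_{\mathcal{O}}$ is toggle-symmetric.

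Next I would invoke the tCDE hypothesis. Since $L$ is tCDE with edge density $c$, we have $\mathbb{E}(\mu_{\mathcal{O}};\mathrm{ddeg}) = \mathbb{E}(\mathrm{uni}_L;\mathrm{ddeg}) = c$. By construction, $\mathbb{E}(\mu_{\mathcal{O}};\mathrm{ddeg})$ is precisely the average value of $\mathrm{ddeg}$ over the elements of the orbit $\mathcal{O}$. Finally, recalling the observation that $\#\mathrm{max}(I) = \mathrm{ddeg}_L(I)$ for every $I \in \mathcal{J}(P)$, this average equals the average of the antichain cardinality statistic along $\mathcal{O}$. As $\mathcal{O}$ was arbitrary, the average of $\#\mathrm{max}(I)$ is the same constant $c$ along every orbit, which is exactly the assertion that $\#\mathrm{max}(I)$ is $c$-mesic.

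I do not anticipate any genuine obstacle here: all of the substantive work has been done in the supporting lemmas, so the argument is just a matter of chaining them together. The one point worth stating carefully is that a distribution supported uniformly on a single orbit encodes exactly the orbit-average of any statistic, so that the equality $\mathbb{E}(\mu_{\mathcal{O}};\mathrm{ddeg})=c$ coming from tCDE translates directly into the homomesy conclusion; and the reliance on $\#\mathrm{max} = \mathrm{ddeg}$ is what ties the dynamical statistic to the poset-theoretic one. Essentially the entire content of the corollary is packaged into Lemma~\ref{lem:striker}, which guarantees that orbit-uniform distributions land inside the class of distributions to which the tCDE property applies.
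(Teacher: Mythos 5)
Your proposal is correct and follows essentially the same argument as the paper's own proof: fix an orbit, apply Lemma~\ref{lem:striker} to see the orbit-uniform distribution is toggle-symmetric, invoke the tCDE hypothesis to get orbit-average down-degree equal to $c$, and identify $\mathrm{ddeg}(I)$ with $\#\mathrm{max}(I)$. There is nothing to add.
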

\begin{proof}
Let $\mathcal{O}$ be an orbit of $\mathrm{row}$ acting on $L$ and let $\mu$ be the distribution which uniform on $\mathcal{O}$ and zero outside of~$\mathcal{O}$. By Lemma~\ref{lem:striker}, $\mu$ is toggle-symmetric. Hence because $L$ is tCDE with edge density $c$, $\mathbb{E}(\mu; \mathrm{ddeg}) = c$. But for $I \in L=\mathcal{J}(P)$, $\mathrm{ddeg}(I)=\#\mathrm{max}(I)$. So in other words, the average of $\#\mathrm{max}(I)$ along the orbit $\mathcal{O}$ is~$c$, which is precisely what was claimed.
\end{proof}

Corollary~\ref{cor:striker} allowed Chan-Haddadan-Hopkins-Moci~\cite{chan2017expected} to deduce from their main result (Theorem~\ref{thm:chhm}) that the antichain cardinality statistic is homomesic with respect to the action of rowmotion on the interval of Young's lattice corresponding to a balanced shape:

\begin{cor}[{Chan-Haddadan-Hopkins-Moci~\cite[Corollary 3.11]{chan2017expected}}] \label{cor:balanced_homo}
Let $\sigma=\lambda/\nu$ be a balanced shape of height $a$ and width $b$. Then the antichain cardinality statistic $\#\mathrm{max}(I)$ is $ab/(a+b)$-mesic with respect to the action of $\mathrm{row}$ on $[\nu,\lambda]$.
\end{cor}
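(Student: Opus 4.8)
The plan is to derive this corollary as an immediate formal consequence of two results already established above, with essentially no new work required. The key observation is that $[\nu,\lambda]$ is an interval of Young's lattice, and such intervals are distributive lattices; indeed, as recalled in Section~\ref{sec:young_lat_defs}, we have $[\nu,\lambda] = \mathcal{J}(P_\sigma)$ where $P_\sigma$ is the poset on the boxes of $\sigma = \lambda/\nu$. In particular, rowmotion $\mathrm{row}$ and the antichain cardinality statistic $\#\mathrm{max}(I)$ are both defined on $[\nu,\lambda]$, so the statement even makes sense. Moreover, for a distributive lattice $L = \mathcal{J}(P)$ and an order ideal $I$, one has $\mathrm{ddeg}_L(I) = \#\mathrm{max}(I)$, since the elements that $I$ covers in $\mathcal{J}(P)$ are precisely those obtained by deleting one maximal element of $I$.

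With this in hand, I would simply combine Theorem~\ref{thm:chhm} and Corollary~\ref{cor:striker}. First I would invoke Theorem~\ref{thm:chhm}, which asserts that because $\sigma = \lambda/\nu$ is a balanced shape of height $a$ and width $b$, the distributive lattice $[\nu,\lambda]$ is tCDE with edge density $c = ab/(a+b)$. Then I would apply Corollary~\ref{cor:striker} with $L = [\nu,\lambda]$ and this value of $c$: that corollary says precisely that for any tCDE distributive lattice $L$ with edge density $c$, the antichain cardinality statistic $\#\mathrm{max}(I)$ is $c$-mesic with respect to the action of $\mathrm{row}$ on $L$. Substituting $c = ab/(a+b)$ yields exactly the desired conclusion.

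There is no genuine obstacle here, since all the substantive content lives in Theorem~\ref{thm:chhm} (the Chan--Haddadan--Hopkins--Moci theorem establishing tCDE for balanced shapes) and in Striker's toggle-symmetry observation Lemma~\ref{lem:striker}, which underlies Corollary~\ref{cor:striker}. The mechanism of Corollary~\ref{cor:striker} is worth recalling as the conceptual heart of the argument: for any rowmotion orbit $\mathcal{O}$, the uniform distribution $\mu$ supported on $\mathcal{O}$ is toggle-symmetric by Lemma~\ref{lem:striker} (the nonzero values of each $\mathcal{T}_p$ alternate $1, -1, 1, -1, \ldots$ along an orbit), whence the tCDE property forces $\mathbb{E}(\mu;\mathrm{ddeg}) = c$, i.e.\ the average of $\#\mathrm{max}$ along $\mathcal{O}$ equals $c$. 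Thus the entire proof of this corollary reduces to citing these two earlier results, and the only thing one must verify independently is the (routine) identification of $\mathrm{ddeg}$ with $\#\mathrm{max}$ on the distributive lattice $[\nu,\lambda]$.
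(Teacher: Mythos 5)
Your proposal is correct and matches the paper's own derivation exactly: the paper obtains this corollary by combining Theorem~\ref{thm:chhm} (balanced shapes are tCDE with edge density $ab/(a+b)$) with Corollary~\ref{cor:striker} (tCDE with edge density $c$ implies the antichain cardinality is $c$-mesic under $\mathrm{row}$), just as you do. The identification $\mathrm{ddeg}(I)=\#\mathrm{max}(I)$ on the distributive lattice $[\nu,\lambda]$ that you verify is the same routine observation the paper relies on inside Corollary~\ref{cor:striker}.
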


Observe that Corollary~\ref{cor:balanced_homo} is a generalization of Theorem~\ref{thm:propp-roby} to many shapes beyond rectangles.

\subsection{Rowmotion for semidistributive lattices} \label{sec:semi_rowmotion}

We want to generalize the story in the previous subsection to semidistributive lattices. Barnard~\cite{barnard2016canonical} and Thomas-Williams~\cite{thomas2017rowmotion} recently explained how rowmotion does generalize in a natural way to the semidistributive setting.

So now let $L$ be a semidistributive lattice, with $\gamma$ the canonical $\gamma$-labeling of the edges of $L$ from Section~\ref{sec:tcde_def}. Recall that $\gamma(x\lessdot y)\in\mathrm{Irr}(L)$ for every cover relation $x\lessdot y \in L$. Following Thomas-Williams, we define the sets $D^{\gamma}(y), U^{\gamma}(y)\subseteq \mathrm{Irr}(L)$ of \emph{downwards} and \emph{upwards labels at $y$} for each $y \in L$ to be
\begin{align*}
D^{\gamma}(y) &\coloneqq  \{\gamma(x\lessdot y)\colon x\in L \textrm{ with } x\lessdot y\}; \\
U^{\gamma}(y) &\coloneqq  \{\gamma(y\lessdot z)\colon z\in L \textrm{ with } y\lessdot z\}.
\end{align*}
It follows from the work of Barnard~\cite{barnard2016canonical} that $\gamma$ is a \emph{descriptive} labeling in the sense of Thomas-Williams; this means that
\begin{itemize}
\item $y\in L$ is determined by $D^{\gamma}(y)$ (in the sense that if $D^{\gamma}(x)=D^{\gamma}(y)$ for $x,y\in L$, then $x=y$);
\item $y\in L$ is also determined by $U^{\gamma}(y)$;
\item $\{D^{\gamma}(y)\colon y \in L\} = \{U^{\gamma}(y)\colon y\in L\}$.
\end{itemize}
See~\cite[\S6.2]{thomas2017rowmotion} for a proof that the $\gamma$-labeling is descriptive.

We define \emph{rowmotion} to be the map $\mathrm{row}\colon L \to L$ as follows: 
\[\mathrm{row}(y) \coloneqq  \textrm{ the unique $x\in L$ with $D^{\gamma}(x) = U^{\gamma}(y)$}.\]
Rowmotion is well-defined because the $\gamma$-labeling is descriptive. And, also because the $\gamma$-labeling is descriptive, we have that~$\mathrm{row}$ is invertible.

Note, however, that for arbitrary semidistributive lattices (or even just for our case of interest, namely,  for weak order intervals), rowmotion {\bf cannot} be computed as a sequence of toggles like in Theorem~\ref{thm:rowmotion_toggles}. In the language of Thomas-Williams, rowmotion for intervals of weak order cannot be computed ``in slow motion.'' 

At any rate, for our purposes what matters is that the observation of Striker continues to apply to this semidistributive rowmotion. 

\begin{lemma}\label{lem:striker_semi}
Let $\mathcal{O}$ be an orbit of $\mathrm{row}$ acting on the semidistributive lattice $L$. Then the distribution $\mu$ which is uniform on $\mathcal{O}$ and zero outside of~$\mathcal{O}$ is toggle-symmetric.
\end{lemma}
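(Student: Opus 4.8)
The plan is to mimic exactly the proof of Striker's original observation (Lemma~\ref{lem:striker}) but phrased in terms of the toggleability statistics $\mathcal{T}_p$ rather than in terms of explicit toggle operators, since as noted in Section~\ref{sec:semi_rowmotion} rowmotion on a general semidistributive lattice cannot be computed as a composition of toggles. The key point is that the definition of $\mathrm{row}$ via $D^{\gamma}(\mathrm{row}(y)) = U^{\gamma}(y)$ already encodes precisely the information we need about how the toggleability statistics transform under one step of rowmotion.

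First I would establish the crucial pointwise identity: for every $p\in\mathrm{Irr}(L)$ and every $y\in L$, we have $\mathcal{T}^{+}_p(y) = \mathcal{T}^{-}_p(\mathrm{row}(y))$. To see this, recall that $p$ can be toggled into $y$ (i.e.\ $\mathcal{T}^{+}_p(y)=1$) exactly when $p\in U^{\gamma}(y)$, because $p$ appears as the label $\gamma(y\lessdot z)$ of some cover relation above $y$; and similarly $p$ can be toggled out of $x$ (i.e.\ $\mathcal{T}^{-}_p(x)=1$) exactly when $p\in D^{\gamma}(x)$. Since $\mathrm{row}$ is defined precisely by the condition $D^{\gamma}(\mathrm{row}(y)) = U^{\gamma}(y)$, we get $p\in U^{\gamma}(y) \iff p\in D^{\gamma}(\mathrm{row}(y))$, which gives $\mathcal{T}^{+}_p(y) = \mathcal{T}^{-}_p(\mathrm{row}(y))$ as desired. (Here I would lean on the facts recorded just before the statement: each join irreducible appears at most once among the edges incident to a given vertex, so these statistics are genuinely $\{0,1\}$-valued indicators of membership in $U^{\gamma}$ or $D^{\gamma}$.)

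From this identity the homomesy-style cancellation is immediate. Fix $p\in\mathrm{Irr}(L)$ and let $\mathcal{O} = \{y, \mathrm{row}(y), \mathrm{row}^2(y), \ldots\}$ be a rowmotion orbit of size $N$. Summing $\mathcal{T}_p = \mathcal{T}^{+}_p - \mathcal{T}^{-}_p$ over the orbit, I would write
\[
\sum_{z\in\mathcal{O}} \mathcal{T}_p(z) = \sum_{z\in\mathcal{O}} \mathcal{T}^{+}_p(z) - \sum_{z\in\mathcal{O}} \mathcal{T}^{-}_p(z).
\]
By the identity above, $\mathcal{T}^{+}_p(z) = \mathcal{T}^{-}_p(\mathrm{row}(z))$ for each $z$, and as $z$ ranges over the orbit $\mathcal{O}$ so does $\mathrm{row}(z)$ (since $\mathrm{row}$ permutes $\mathcal{O}$ cyclically), so $\sum_{z\in\mathcal{O}} \mathcal{T}^{+}_p(z) = \sum_{z\in\mathcal{O}} \mathcal{T}^{-}_p(\mathrm{row}(z)) = \sum_{z\in\mathcal{O}} \mathcal{T}^{-}_p(z)$. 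Hence the two sums cancel and $\sum_{z\in\mathcal{O}}\mathcal{T}_p(z)=0$, so $\mathbb{E}(\mu;\mathcal{T}_p) = \frac{1}{N}\sum_{z\in\mathcal{O}}\mathcal{T}_p(z) = 0$. Since this holds for every $p\in\mathrm{Irr}(L)$, the distribution $\mu$ is toggle-symmetric by definition.

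I do not anticipate a serious obstacle here: unlike the distributive case, we cannot invoke the alternating-signs argument that relied on $\mathrm{row}$ being a composition of toggles, but the descriptive property of the $\gamma$-labeling gives us exactly the substitute we need, namely that a single application of $\mathrm{row}$ sends the ``upward'' labels of $y$ to the ``downward'' labels of $\mathrm{row}(y)$. The only thing to be careful about is to argue purely at the level of the label-set identity $D^{\gamma}(\mathrm{row}(y))=U^{\gamma}(y)$ rather than trying to track individual elements through toggles; once that identity is in hand the cancellation is a one-line reindexing of the orbit sum.
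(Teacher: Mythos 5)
Your proposal is correct and takes essentially the same route as the paper: its proof also rests on the single observation that $\mathcal{T}_p(x)=1$ if and only if $\mathcal{T}_p(\mathrm{row}(x))=-1$, which is precisely your identity $\mathcal{T}^{+}_p(y)=\mathcal{T}^{-}_p(\mathrm{row}(y))$ read off from $D^{\gamma}(\mathrm{row}(y))=U^{\gamma}(y)$. The only cosmetic difference is in how the cancellation is concluded: the paper notes that the nonzero values of $\mathcal{T}_p$ alternate $1,-1,1,-1,\ldots$ along the orbit, whereas you reindex the orbit sum via the bijection $z\mapsto\mathrm{row}(z)$; these are the same argument.
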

\begin{proof}
Exactly the same proof as the proof of Lemma~\ref{lem:striker_semi} works. For any $p\in \mathrm{Irr}(L)$ and $x\in L$, we have $\mathcal{T}_p(x)=1$ if and only if $\mathcal{T}_p(\mathrm{row}(x))=-1$. So along a rowmotion orbit, the nonzero values of $\mathcal{T}_p$ must alternate $1,-1,1,-1,\ldots$. Hence the average value of $\mathcal{T}_p$ along this orbit must be zero.
\end{proof}

\begin{cor} \label{cor:striker_semi}
Suppose the semidistributive lattice $L$ is tCDE with edge density $c$. Then $\mathrm{ddeg}$ is $c$-mesic with respect to the action of $\mathrm{row}$ on~$L$.
\end{cor}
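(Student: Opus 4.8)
The plan is to follow verbatim the template of Corollary~\ref{cor:striker}, simply replacing the distributive-lattice inputs with their semidistributive counterparts; the entire content of the corollary is the combination of Lemma~\ref{lem:striker_semi} with the definition of the tCDE property, so there is essentially no obstacle to overcome.

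First I would fix an arbitrary orbit $\mathcal{O}$ of $\mathrm{row}$ acting on $L$ and form the probability distribution $\mu$ on $L$ which is uniform on $\mathcal{O}$ and zero outside of~$\mathcal{O}$. By Lemma~\ref{lem:striker_semi}, this $\mu$ is toggle-symmetric (this is the one place where the semidistributive generalization of Striker's observation is needed, and it has already been established via the alternation of the nonzero values of $\mathcal{T}_p$ along a rowmotion orbit). Next, since we are assuming $L$ is tCDE with edge density~$c$, by definition every toggle-symmetric distribution $\nu$ on $L$ satisfies $\mathbb{E}(\nu;\mathrm{ddeg})=\mathbb{E}(\mathrm{uni}_L;\mathrm{ddeg})=c$; applying this with $\nu=\mu$ gives $\mathbb{E}(\mu;\mathrm{ddeg})=c$.

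Finally I would observe that, because $\mu$ is the uniform distribution on $\mathcal{O}$, the quantity $\mathbb{E}(\mu;\mathrm{ddeg})$ is exactly the average value of $\mathrm{ddeg}$ along the orbit~$\mathcal{O}$. Hence this average equals $c$, and since $\mathcal{O}$ was an arbitrary orbit of $\mathrm{row}$, the statistic $\mathrm{ddeg}$ takes the same average value $c$ along every orbit, i.e., $\mathrm{ddeg}$ is $c$-mesic with respect to the action of $\mathrm{row}$ on~$L$. The only point that deserves a word of care is that we are using the generalized (semidistributive) notions of $\mathrm{row}$ and of toggle-symmetry introduced in Sections~\ref{sec:semi_rowmotion} and~\ref{sec:tcde_def}, rather than the distributive-lattice versions; but since Lemma~\ref{lem:striker_semi} and the definition of tCDE are both stated in this generalized setting, the argument goes through unchanged.
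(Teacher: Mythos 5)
Your proposal is correct and matches the paper's argument exactly: the paper proves Corollary~\ref{cor:striker_semi} by combining Lemma~\ref{lem:striker_semi} with the definition of tCDE, in the same way Corollary~\ref{cor:striker} follows from Lemma~\ref{lem:striker}. Your write-up simply spells out the steps the paper leaves implicit, with no difference in substance.
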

\begin{proof}
This follows from Lemma~\ref{lem:striker_semi} just as Corollary~\ref{cor:striker} follows from Lemma~\ref{lem:striker}.
\end{proof}

\begin{example}
Let $L$ be the semidistributive lattice $L$ from Example~\ref{ex:tcde_not_cde} depicted in Figure~\ref{fig:tcde_not_cde}. Then there are two orbits of $\mathrm{row}$ acting on $L$:
\[\{ \cdots \xrightarrow{\mathrm{row}} 1  \xrightarrow{\mathrm{row}} 9   \xrightarrow{\mathrm{row}} 4  \xrightarrow{\mathrm{row}} 10  \xrightarrow{\mathrm{row}} 3  \xrightarrow{\mathrm{row}} 6   \xrightarrow{\mathrm{row}} 2  \xrightarrow{\mathrm{row}} 8  \xrightarrow{\mathrm{row}} 12 \cdots \}; \]
\[\{ \cdots \xrightarrow{\mathrm{row}} 5  \xrightarrow{\mathrm{row}}  11  \xrightarrow{\mathrm{row}} 7\cdots \}.\]
The average value of $\mathrm{ddeg}$ along the first orbit is
\[ \frac{1}{9}(0+2+1+2+1+1+1+2+2)=\frac{12}{9} = \frac{4}{3},\]
while the average value of $\mathrm{ddeg}$ along the second orbit is
\[ \frac{1}{3}(1+2+1) = \frac{4}{3}.\]
This agrees with Corollary~\ref{cor:striker_semi}: recall that we showed in Example~\ref{ex:tcde_not_cde} that $L$ is tCDE with edge density~$\frac{4}{3}$.
\end{example}

Our main result (Theorem~\ref{thm:main}) together with Corollary~\ref{cor:striker_semi} yields the following homomesy corollary:

\begin{cor} \label{cor:balanced_vex_homo}
Let $\sigma=\lambda/\nu$ be a balanced shape of height $a$ and width $b$, and $w\in \mathfrak{S}_n$ a skew vexillary permutation of shape~$\sigma$. Then $\mathrm{ddeg}$ is $ab/(a+b)$-mesic with respect to the action of $\mathrm{row}$ on $[e,w]$.
\end{cor}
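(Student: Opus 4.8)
The plan is to deduce this corollary immediately from the main theorem by feeding it into the semidistributive version of Striker's observation, so almost no new work is required. The essential preliminary point is that $[e,w]$ is an interval of weak order; since weak order on $\mathfrak{S}_n$ is a semidistributive lattice and every interval of a semidistributive lattice is again semidistributive (as recalled in Section~\ref{sec:background}), the poset $[e,w]$ is itself a semidistributive lattice. Consequently the generalized rowmotion operator $\mathrm{row}\colon [e,w]\to[e,w]$ built from Barnard's descriptive $\gamma$-labeling, as set up in Section~\ref{sec:semi_rowmotion}, is well-defined and invertible on $[e,w]$.

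First I would invoke Theorem~\ref{thm:main}, which asserts that $[e,w]$ is tCDE with edge density $ab/(a+b)$. This is the substantive input, and it is precisely the hypothesis demanded by the next step. Then I would apply Corollary~\ref{cor:striker_semi} with $L=[e,w]$ and $c=ab/(a+b)$: that corollary states that whenever a semidistributive lattice is tCDE with edge density $c$, the statistic $\mathrm{ddeg}$ is $c$-mesic with respect to $\mathrm{row}$. Chaining these two facts together gives exactly the claimed conclusion.

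For completeness I would recall the mechanism underlying Corollary~\ref{cor:striker_semi}, namely Lemma~\ref{lem:striker_semi}: the uniform distribution $\mu$ on any single rowmotion orbit $\mathcal{O}$ is toggle-symmetric, because along an orbit the nonzero values of each toggleability statistic $\mathcal{T}_p$ must alternate $+1,-1,+1,-1,\ldots$, forcing $\mathbb{E}(\mu;\mathcal{T}_p)=0$ for every $p\in\mathrm{Irr}([e,w])$. Applying the tCDE property to this particular toggle-symmetric $\mu$ then forces $\mathbb{E}(\mu;\mathrm{ddeg})$ to equal the edge density $ab/(a+b)$, which is precisely the statement that the average of $\mathrm{ddeg}$ along the orbit $\mathcal{O}$ is $ab/(a+b)$. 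Since $\mathcal{O}$ was arbitrary, $\mathrm{ddeg}$ is $ab/(a+b)$-mesic.

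I do not expect any genuine obstacle in this argument: all of the difficulty was already absorbed into Theorem~\ref{thm:main} and, beneath it, the construction and analysis of the permutation rooks in Section~\ref{sec:rooks}. The only things left to check here are the routine structural facts that $[e,w]$ is semidistributive and that the hypotheses of Corollary~\ref{cor:striker_semi} are met, after which the corollary follows by simply composing the two cited results.
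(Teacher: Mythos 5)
Your proposal is correct and matches the paper's proof exactly: the paper derives this corollary by combining Theorem~\ref{thm:main} (tCDE with edge density $ab/(a+b)$) with Corollary~\ref{cor:striker_semi}, whose proof in turn rests on Lemma~\ref{lem:striker_semi}, just as you describe. The additional observations you include (semidistributivity of $[e,w]$ and the alternating-sign argument behind toggle-symmetry of orbit distributions) are the same structural facts the paper establishes in Sections~\ref{sec:background} and~\ref{sec:semi_rowmotion}.
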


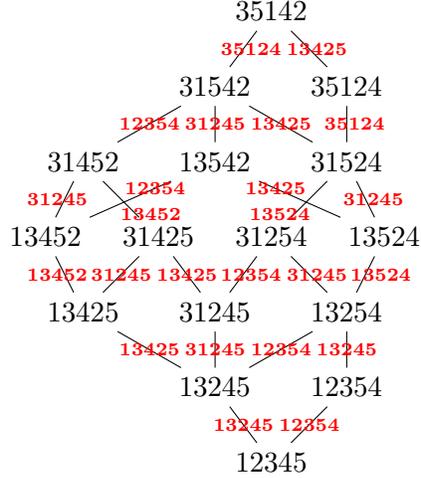
\begin{figure}
\begin{tikzpicture}
\node (1) at (2,0) {$12345$};
\node (2) at (1.25,1) {$13245$};
\node (3) at (3,1) {$12354$};
\node (4) at (-0.5,2) {$13425$};
\node (5) at (1.25,2) {$31245$};
\node (6) at (3,2) {$13254$};
\node (7) at (-1,3) {$13452$};
\node (8) at (0.5,3) {$31425$};
\node (9) at (2,3) {$31254$};
\node (10) at (3.5,3) {$13524$};
\node (11) at (-0.5,4) {$31452$};
\node (12) at (1.25,4) {$13542$};
\node (13) at (3,4) {$31524$};
\node (14) at (1.25,5) {$31542$};
\node (15) at (3,5) {$35124$};
\node (16) at (2,6) {$35142$};
\path (1) edge node[pos=0.5] {\tiny \mbox{\boldmath\color{red} $13245$}} (2);
\path (1) edge node[pos=0.5] {\tiny \mbox{\boldmath\color{red} $12354$}} (3);
\path (2) edge node[pos=0.5] {\tiny \mbox{\boldmath\color{red} $13425$}} (4);
\path (2) edge node[pos=0.5] {\tiny \mbox{\boldmath\color{red} $31245$}} (5);
\path (2) edge node[pos=0.5] {\tiny \mbox{\boldmath\color{red} $12354$}} (6);
\path (3) edge node[pos=0.5] {\tiny \mbox{\boldmath\color{red} $13245$}} (6);
\path (4) edge node[pos=0.5,xshift=-0.1cm] {\tiny \mbox{\boldmath\color{red} $13452$}} (7);
\path (4) edge node[pos=0.5] {\tiny \mbox{\boldmath\color{red} $31245$}} (8);
\path (5) edge node[pos=0.5] {\tiny \mbox{\boldmath\color{red} $13425$}} (8);
\path (5) edge node[pos=0.5,xshift=0.1cm] {\tiny \mbox{\boldmath\color{red} $12354$}} (9);
\path (6) edge node[pos=0.5,xshift=0.1cm] {\tiny \mbox{\boldmath\color{red} $31245$}} (9);
\path (6) edge node[pos=0.5,xshift=0.2cm] {\tiny \mbox{\boldmath\color{red} $13524$}} (10);
\path (7) edge node[pos=0.5,xshift=-0.1cm] {\tiny \mbox{\boldmath\color{red} $31245$}} (11);
\path (7) edge node[pos=0.8] {\tiny \mbox{\boldmath\color{red} $12354$}} (12);
\path (8) edge node[pos=0.1,xshift=0.2cm] {\tiny \mbox{\boldmath\color{red} $13452$}} (11);
\path (9) edge node[pos=0.1,xshift=-0.2cm] {\tiny \mbox{\boldmath\color{red} $13524$}} (13);
\path (10) edge node[pos=0.8] {\tiny \mbox{\boldmath\color{red} $13425$}} (12);
\path (10) edge node[pos=0.5,xshift=0.1cm] {\tiny \mbox{\boldmath\color{red} $31245$}} (13);
\path (11) edge node[pos=0.5] {\tiny \mbox{\boldmath\color{red} $12354$}} (14);
\path (12) edge node[pos=0.5] {\tiny \mbox{\boldmath\color{red} $31245$}} (14);
\path (13) edge node[pos=0.5] {\tiny \mbox{\boldmath\color{red} $13425$}} (14);
\path (13) edge node[pos=0.5,xshift=0.1cm] {\tiny \mbox{\boldmath\color{red} $35124$}} (15);
\path (14) edge node[pos=0.5,xshift=0.1cm] {\tiny \mbox{\boldmath\color{red} $35124$}} (16);
\path (15) edge node[pos=0.5,xshift=0.1cm] {\tiny \mbox{\boldmath\color{red} $13425$}} (16);
\end{tikzpicture}
\caption{Example~\ref{ex:wo_rowmotion} of rowmotion acting on a weak order interval. Here the $\gamma$-labels are written on the edges in red.} \label{fig:wo_rowmotion}
\end{figure}

\begin{example} \label{ex:wo_rowmotion}
Let $w=35142\in\mathfrak{S}_5$, which is a vexillary permutation of the balanced shape $\lambda=\delta_4=(3,2,1)$. The weak order interval $[e,w]$ is depicted in Figure~\ref{fig:wo_rowmotion}, with its canonical $\gamma$-labels written on the edges in red. By Corollary~\ref{cor:balanced_vex_homo} we should have that the action of rowmotion on $[e,w]$ is $3/2$-mesic. Let's check this is the case. The four orbits of $\mathrm{row}$ acting on $[e,w]$ are:
\[ \{ \cdots \xrightarrow{\mathrm{row}} 12345 \xrightarrow{\mathrm{row}} 13254 \xrightarrow{\mathrm{row}} 31524 \xrightarrow{\mathrm{row}} 35142 \cdots\}; \]
\[ \{ \cdots \xrightarrow{\mathrm{row}} 13245 \xrightarrow{\mathrm{row}} 31542 \xrightarrow{\mathrm{row}} 35124 \xrightarrow{\mathrm{row}} 13425 \xrightarrow{\mathrm{row}} 31452 \xrightarrow{\mathrm{row}} 12354 \cdots\}; \]
\[ \{ \cdots \xrightarrow{\mathrm{row}}13452 \xrightarrow{\mathrm{row}} 31254 \xrightarrow{\mathrm{row}}13524\xrightarrow{\mathrm{row}} 31425\cdots\};\]
\[ \{ \cdots \xrightarrow{\mathrm{row}} 13542 \xrightarrow{\mathrm{row}} 31245 \cdots\}.\]
We can compute that the average down-degrees for these orbits are
\begin{align*}
\frac{1}{4}(0+2+2+2) &= \frac{6}{4} = \frac{3}{2};\\
\frac{1}{6}(1+3+1+1+2+1) &=\frac{9}{6} = \frac{3}{2};\\
\frac{1}{4}(1+2+1+2) &= \frac{6}{4} = \frac{3}{2};\\
\frac{1}{2}(2+1) &=\frac{3}{2}.
\end{align*}
This agrees with Corollary~\ref{cor:balanced_vex_homo}.
\end{example}

Our rowmotion down-degree homomesy result (Corollary~\ref{cor:balanced_vex_homo}) applies to rowmotion acting on the full weak order~$\mathfrak{S}_n$. But, as discussed in Section~\ref{sec:full_weak_order}, because the full weak order has constant Hasse diagram degree, this homomesy result is not very interesting (indeed, it is easy to see that the down-degree statistic along any rowmotion orbit will be $d,n-1-d,d,n-1-d,\ldots$ for some $0\leq d \leq n-1$). However, we can apply our result about the expectation of the ``refined'' down-degree statistics (Theorem~\ref{thm:full_weak_order}) to obtain a more interesting ``refined'' down-degree homomesy for rowmotion acting on the full weak order. (This is reminiscent of various homomesy refinements obtained by Propp-Roby~\cite{propp2015homomesy} in their original paper.)

Recall the statistics $f_k\colon \mathfrak{S}_n\to\mathbb{Z}$ for $1 \leq k < n$ defined by
\[f_k \coloneqq \sum_{j=1}^{k} \mathcal{T}^{-}_{(j,k+1)} + \sum_{j=k+1}^{n} \mathcal{T}^{-}_{(k,j)}.\]

\begin{cor} \label{cor:full_weak_order_homo}
For any $1\leq k < n$, the statistic $f_k$ is $1$-mesic  with respect to the action of $\mathrm{row}$ on the full weak order $\mathfrak{S}_n$.
\end{cor}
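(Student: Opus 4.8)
The plan is to deduce this immediately from Theorem~\ref{thm:full_weak_order} via Striker's observation, in exactly the same way that Corollary~\ref{cor:striker_semi} was deduced from Lemma~\ref{lem:striker_semi}. First I would recall that $\mathfrak{S}_n$ is a semidistributive lattice (indeed $\mathfrak{S}_n = [e,w_0]$), so Lemma~\ref{lem:striker_semi} applies to the action of $\mathrm{row}$ on $\mathfrak{S}_n$. Fix $1 \leq k < n$ and let $\mathcal{O}$ be an arbitrary orbit of $\mathrm{row}$ acting on $\mathfrak{S}_n$. Let $\mu$ be the probability distribution which is uniform on $\mathcal{O}$ and zero outside of $\mathcal{O}$.

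Next I would invoke Lemma~\ref{lem:striker_semi} to conclude that $\mu$ is toggle-symmetric. Since $\mu$ is supported uniformly on $\mathcal{O}$, the expectation $\mathbb{E}(\mu; f_k)$ is precisely the average value of $f_k$ along the orbit $\mathcal{O}$. Then Theorem~\ref{thm:full_weak_order}, applied to the toggle-symmetric distribution $\mu$, gives $\mathbb{E}(\mu; f_k) = 1$. Hence the average of $f_k$ along $\mathcal{O}$ equals $1$; as $\mathcal{O}$ was an arbitrary $\mathrm{row}$-orbit, $f_k$ is $1$-mesic with respect to $\mathrm{row}$ acting on $\mathfrak{S}_n$, which is the claim.

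There is essentially no obstacle here: the corollary is a formal consequence of the two previously established results, and the only point requiring any care is the (already noted) fact that $\mu$ being uniform on an orbit makes $\mathbb{E}(\mu;\cdot)$ coincide with the orbit average, so that the expectation statement of Theorem~\ref{thm:full_weak_order} translates directly into the homomesy statement. All the genuine content lives in Theorem~\ref{thm:full_weak_order} (proved via the rook $R_{(k,k+1)}$ on $\mathrm{Inv}^{-1}(w_0) = \Phi^{+}$) and in Lemma~\ref{lem:striker_semi}; this corollary merely packages them together.
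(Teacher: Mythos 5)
Your proposal is correct and is exactly the paper's argument: the paper proves this corollary by combining Lemma~\ref{lem:striker_semi} (the orbit-uniform distribution is toggle-symmetric) with Theorem~\ref{thm:full_weak_order}, which is precisely the deduction you spell out. Your extra care in noting that the orbit-uniform expectation equals the orbit average is a fine elaboration, but adds nothing beyond what the paper's one-line proof implicitly uses.
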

\begin{proof}
This follows by combining Lemma~\ref{lem:striker_semi} and Theorem~\ref{thm:full_weak_order}.
\end{proof}

\begin{example}
Let $n=3$ and consider the full weak order $\mathfrak{S}_3$ as depicted in Figure~\ref{fig:321_labels}. There are two orbits of rowmotion acting on $\mathfrak{S}_3$:
\[ \{ \cdots \xrightarrow{\mathrm{row}} 123 \xrightarrow{\mathrm{row}} 321 \cdots\}; \]
\[ \{ \cdots \xrightarrow{\mathrm{row}} 132 \xrightarrow{\mathrm{row}} 312 \xrightarrow{\mathrm{row}} 213 \xrightarrow{\mathrm{row}} 231 \cdots\}; \]
With $k=1$ we have $f_1 = 2\cdot \mathcal{T}^{-}_{(1,2)} + \mathcal{T}^{-}_{(1,3)}$.  We can compute that the averages of $f_1$ for these orbits are
\begin{align*}
\frac{1}{2}(0+2) &= \frac{2}{2} = 1;\\
\frac{1}{4}(0+1+2+1) &=\frac{4}{4} = 1;\\
\end{align*}
With $k=2$ we have $f_2 = \mathcal{T}^{-}_{(1,3)} + 2\cdot \mathcal{T}^{-}_{(2,3)}$.  We can compute that the averages of $f_2$ for these orbits are
\begin{align*}
\frac{1}{2}(0+2) &= \frac{2}{2} = 1;\\
\frac{1}{4}(2+1+0+1) &=\frac{4}{4} = 1;\\
\end{align*}
This agrees with Corollary~\ref{cor:full_weak_order_homo}.
\end{example}

An equivalent way to state Corollary~\ref{cor:full_weak_order_homo}, which avoids the ``homomesy'' terminology, is the following.

\begin{cor} \label{cor:full_weak_order_homo_equiv}
Let $\mathcal{O}$ be an orbit of $\mathrm{row}$ acting on the full weak order~$\mathfrak{S}_n$. Let us use the convention that for any $w\in \mathfrak{S}_n$, we have $w(0)=0$ and $w(n+1)=n+1$. Then for any~$1\leq k < n$ we have
\[ \#\{w\in \mathcal{O}\colon w(w^{-1}(k)-1) > k\} = \#\{w\in\mathcal{O}\colon w(w^{-1}(k+1)+1) > k+1\}.\]
(In words, the set on the left-hand side consists of the $w\in\mathcal{O}$ for which the letter $k$ forms a descent with the letter to its left, and the set on the right-hand side consists of the $w\in\mathcal{O}$ for which the letter $k+1$ forms an ascent with the letter to its right.)
\end{cor}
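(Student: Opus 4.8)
The plan is to translate the statistic $f_k$ into an explicit pair of descent/ascent indicators and then invoke Corollary~\ref{cor:full_weak_order_homo}, which already tells us that $f_k$ is $1$-mesic on $\mathfrak{S}_n=[e,w_0]$. First I would unwind the meaning of $\mathcal{T}^{-}_{(i,j)}$ using the description of the canonical $\gamma$-labeling from Section~\ref{sec:weak_order_edge_labels}. There a cover $u\lessdot w$ with $w=us_m$ carries the label $g((i,j),\mathbf{x})$ with $(i,j)=(w_{m+1},w_m)$, so the descent it removes is $(j,i)$ with $j$ sitting immediately to the left of $i$ in one-line notation. Summing over $\mathbf{x}$, this shows that $\mathcal{T}^{-}_{(i,j)}(w)=1$ exactly when the letters $j$ and $i$ are consecutive (in that order) in $w$, and $\mathcal{T}^{-}_{(i,j)}(w)=0$ otherwise; in particular each such statistic is a $0/1$ indicator, since two given values are adjacent in at most one place.

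Next I would evaluate the two sums defining $f_k$. The sum $\sum_{j=k+1}^{n}\mathcal{T}^{-}_{(k,j)}(w)$ equals $1$ precisely when some letter $j>k$ lies immediately to the left of the letter $k$, i.e.\ when $w(w^{-1}(k)-1)>k$; denote this indicator $A_k(w)$. Dually, $\sum_{j=1}^{k}\mathcal{T}^{-}_{(j,k+1)}(w)$ equals $1$ precisely when some letter $j\le k$ lies immediately to the right of the letter $k+1$, i.e.\ when $w(w^{-1}(k+1)+1)<k+1$. Under the boundary convention $w(0)=0$ and $w(n+1)=n+1$ this second indicator is exactly the complement of the right-hand indicator $C_k(w)\coloneqq[\,w(w^{-1}(k+1)+1)>k+1\,]$ appearing in the statement (the boundary cases, where $k$ is the first letter or $k+1$ is the last letter, being handled correctly by the convention). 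Hence
\[ f_k(w) = A_k(w) + \bigl(1 - C_k(w)\bigr) \qquad \text{for all } w\in\mathfrak{S}_n. \]

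Finally I would sum over a rowmotion orbit $\mathcal{O}$. By Corollary~\ref{cor:full_weak_order_homo} the average of $f_k$ over $\mathcal{O}$ is $1$, so $\sum_{w\in\mathcal{O}} f_k(w)=\#\mathcal{O}$. Substituting the identity above and cancelling the $\#\mathcal{O}$ contributed by the constant term gives $\sum_{w\in\mathcal{O}}A_k(w)=\sum_{w\in\mathcal{O}}C_k(w)$, which is exactly the asserted equality
\[ \#\{w\in\mathcal{O}\colon w(w^{-1}(k)-1)>k\} = \#\{w\in\mathcal{O}\colon w(w^{-1}(k+1)+1)>k+1\}. \]

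The proof requires no new mathematical input beyond Corollary~\ref{cor:full_weak_order_homo}; the only delicate point is the first step, namely checking carefully that $\mathcal{T}^{-}_{(i,j)}$ detects exactly the adjacency ``$j$ immediately left of $i$,'' and then matching the two descent conventions (descent with the left neighbor versus descent with the right neighbor) against the boundary convention so that the right-neighbor descent indicator is literally $1-C_k$. This is pure bookkeeping, and I expect it to be the main, though minor, obstacle.
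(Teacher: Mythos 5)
Your proposal is correct and follows essentially the same route as the paper: both identify $\sum_{j=k+1}^{n}\mathcal{T}^{-}_{(k,j)}$ and $\sum_{j=1}^{k}\mathcal{T}^{-}_{(j,k+1)}$ as the descent/ascent indicators (with the boundary convention handling the edge cases) and then apply the refined down-degree result to the orbit-uniform toggle-symmetric distribution. The only cosmetic difference is that you invoke the packaged homomesy statement of Corollary~\ref{cor:full_weak_order_homo}, while the paper applies Theorem~\ref{thm:full_weak_order} together with Lemma~\ref{lem:striker_semi} directly; these are interchangeable, and your extra care in verifying that $\mathcal{T}^{-}_{(i,j)}$ is the adjacency indicator is a detail the paper leaves implicit.
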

\begin{proof}
The statistic $\sum_{j=k+1}^{n} \mathcal{T}^{-}_{(k,j)}$ is the indicator function for the $w \in \mathfrak{S}_n$ which have the property $w(w^{-1}(k)-1) > k$. And $\sum_{j=1}^{k} \mathcal{T}^{-}_{(j,k+1)}$ is the indicator function for the property $w(w^{-1}(k+1)+1) < k+1$. So $\mathbf{1}-\sum_{j=1}^{k} \mathcal{T}^{-}_{(j,k+1)}$ is the indicator function for the complementary property $ w(w^{-1}(k+1)+1) > k+1$ (where $\mathbf{1}\colon\mathfrak{S}_n\to\mathbb{Z}$ is the constant function $\mathbf{1}(w)=1$). By Theorem~\ref{thm:full_weak_order} we have
\[ \mathbb{E}\left(\mu; \sum_{j=k+1}^{n} \mathcal{T}^{-}_{(k,j)}\right) = \mathbb{E}\left(\mu; \mathbf{1}-\sum_{j=1}^{k} \mathcal{T}^{-}_{(j,k+1)}\right)\]
for any toggle-symmetric distribution $\mu$ on $\mathfrak{S}_n$. By taking $\mu$ to be the distribution which is uniform on~$\mathcal{O}$ and zero outside~$\mathcal{O}$ (which is toggle-symmetric thanks to Lemma~\ref{lem:striker_semi}), we obtain the desired equality.
\end{proof}

\section{Future directions} \label{sec:future}

In this section we briefly discuss some possible threads of future research.

\subsection{Constructing all skew vexillary permutations of given shape} \label{sec:constructing_skew_vex}

Given a connected shape~$\sigma=\lambda/\nu$, we showed in Section~\ref{sec:skew_vex_basic_props} that there are finitely many permutations~$w$ which are skew vexillary of shape~$\sigma$ up to some trivial equivalences (namely, adding or removing initial or terminal fixed points). But how do we find all of them? We showed (Proposition~\ref{prop:finite}) that if $\sigma$ has height~$a$ and width~$b$ then they all occur in $\mathfrak{S}_{a+b}$, but checking each permutation in~$\mathfrak{S}_{a+b}$ is computationally unreasonable. Is there a better method than brute force?

Let's show what can be done in the vexillary case. Recall that for $w\in\mathfrak{S}_n$, the \emph{code} of $w$, denoted $c(w)$, is the vector $c(w)=(c_1,c_2,\ldots,c_{n}) \in\mathbb{N}^n$ where 
\[c_i \coloneqq  \#\{(i,j)\in\mathrm{Inv}(w)\}.\] 
A vector $c=(c_1,\ldots,c_n) \in \mathbb{N}^n$ is the code of some permutation in $\mathfrak{S}_n$ if and only if~$c_i < n-i$ for all $i=1,\ldots,n$. 

Let $\lambda$ be a straight shape. Then clearly a permutation $w\in\mathfrak{S}_n$ is vexillary of shape~$\lambda$ if and only if it is vexillary (of some shape) and the weakly decreasing rearrangement of $c(w)$ is equal to $\lambda$. 

Moreover, there is an explicit description of the codes of vexillary permutations. A code $c=(c_1,\ldots,c_n)\in\mathbb{N}^n$ corresponds to a vexillary permutation if and only if it satisfies the following two conditions:
\begin{itemize}
\item if $1\leq i < j \leq n$ and $c_i > c_j$ then $\#\{k\colon i < k < j \textrm{ and } c_k < c_j\} \leq c_i - c_j$;
\item if if $1\leq i < j\leq n$ and $c_i \leq c_j$, then $c_k \geq c_i$ whenever $i < k < j$.
\end{itemize}
This description of vexillary codes appears for instance in the monograph of Macdonald~\cite[(1.32)]{macdonald1991notes}. With this description of vexillary codes, it becomes easy to find all the permutations which are vexillary of a given shape. We would like a similar description for skew vexillary permutations (possibly involving both the code and the ``cocode.'')

A related problem is to enumerate, exactly or approximately, the number of skew vexillary permutations in~$\mathfrak{S}_n$. The number of vexillary permutations in~$\mathfrak{S}_n$ has an exact formula due to the work of Gessel~\cite{gessel1990symmetric} and West~\cite{west1995generating}, and the asymptotics are also well-understood. However, as we mentioned in Remark~\ref{rem:pattern_avoid}, we do not believe there is a pattern avoidance description of skew vexillary, so enumerating the skew vexillary permutations might be quite different from the vexillary permutations.

\subsection{Stable Grothendieck polynomials for skew vexillary permutations}

As discussed in more detail in Remark~\ref{rem:stable_grothendieck}, we suspect that for a skew vexillary permutation $w\in\mathfrak{S}_n$ of shape~$\sigma=\lambda/\nu$, we have the equality  $G_w=G_{\lambda/\nu}$, where $G_w$ is the stable Grothendieck polynomial of $w$, and $G_{\lambda/\nu}$ is the skew stable Grothendieck polynomial of skew shape~$\lambda/\nu$. It would be nice to verify or disprove this suspicion.

\subsection{Other semidistributive lattices where tCDE implies CDE} \label{sec:tcde_implies_cde}

We showed that for initial weak order intervals, being tCDE implies CDE (Corollary~\ref{cor:tcde_implies_cde}), but we also showed that this is not true for general semidistributive lattices (Example~\ref{ex:tcde_not_cde}). It would be nice to find more necessary and/or sufficient conditions for tCDE to imply CDE in a semidistributive lattice. Note that the counterexample in Example~\ref{ex:tcde_not_cde} is even a graded semidistributive lattice, so gradedness alone is not sufficient. In conversations with Emily Barnard, she explained to us that one should be able to extend the results of Section~\ref{sec:wo_maxchain} (showing that the maxchain distribution is toggle-symmetric for weak order intervals) to the intervals of the semidistributive lattice associated to any simplicial hyperplane arrangement using the theory of \emph{shards}, as in~\cite{reading2003lattice}. We thank her for this very helpful explanation. Note in particular that the class of semidistributive lattice associated to simplicial hyperplane arrangements includes the weak order for all finite Coxeter groups.

\subsection{Rowmotion on weak order intervals}

Our homomesy result for rowmotion acting on intervals of weak order (Corollary~\ref{cor:balanced_vex_homo}) suggests that it might be worth further exploring rowmotion acting on intervals of weak order. The problem is knowing what questions to ask. For instance, it would be nice to give a formula for the order of rowmotion. The most obvious weak order intervals to consider in this context would be~$[e,w]$ for $w$ a vexillary permutation of rectangular shape~$\lambda=b^a$. However, as mentioned in Section~\ref{sec:skew_vex_basic_props}, all of these posets are actually distributive lattices isomorphic to $[\varnothing,b^a]$. So we don't get any new examples from vexillary permutations of rectangular shape. The next most obvious case to consider would be $[e,w]$ for $w$ a vexillary permutation of staircase shape~$\lambda=\delta_d$. However, by taking $w=w_0 \in \mathfrak{S}_d$, we get as one poset in this family the full weak order $[e,w_0]$ for the symmetric group~$\mathfrak{S}_d$. And as observed by Thomas-Williams~\cite[\S7.11]{thomas2017rowmotion}, the order of rowmotion acting on the full weak order seems unpredictable. So, as we said, we cannot think of any questions to ask about rowmotion acting on weak order intervals which look like they might have nice answers. It would be nice to apply Corollary~\ref{cor:full_weak_order_homo_equiv} to say something more about rowmotion acting on the full weak order.

\subsection{Other types}

A natural question is how much of this work can be extended to ``other types,'' i.e., to the weak order of other finite Coxeter groups. It is known that weak order intervals for all finite Coxeter groups are semidistributive lattices~\cite{lecontedepolybarbut1994coxeter}. Hence a first step, related to the discussion in Section~\ref{sec:tcde_implies_cde}, would be to show that tCDE implies CDE for intervals of weak order in other types. Next, one would want an appropriate analog of the notion of ``vexillary of balanced shape'' for elements of a general finite Coxeter group. As for what these ``shapes'' should be in other types, it is reasonable to suspect that, for instance in Type D the shape will be a shifted shape, i.e., a strict partition. The shifted shapes are $d$-complete posets in the sense of Proctor (see~\cite{proctor1999minuscule, proctor1999dynkin}) and hence their corresponding posets of order ideals arise as distributive lattices $[e,w]$ for $w$ a fully commutative element in these other Coxeter groups. A notion of ``shifted balanced shape'' was introduced in~\cite{hopkins2017cde}. But the appropriate Coxeter group analog of being ``vexillary of a given shape'' remains elusive. 

\bibliography{cde_vexillary}{}
\bibliographystyle{plain}

\end{document}